\numberwithin{equation}{section}
\newtheorem{theorem}{Theorem}[section]
\newtheorem{lemma}[theorem]{Lemma}
\newtheorem{corollary}[theorem]{Corollary}
\newcommand\Item[1][]{%
  \ifx\relax#1\relax  \item \else \item[#1] \fi
  \abovedisplayskip=0pt\abovedisplayshortskip=0pt~\vspace*{-\baselineskip}}
\theoremstyle{definition}
\theoremstyle{definition}
\newtheorem{remark}[theorem]{Remark}
\DeclareMathOperator{\Prob}{\mathbf{P}}
\DeclareMathOperator{\E}{\mathbf{E}}
\DeclareMathOperator{\NW}{NW}
\DeclareMathOperator{\NL}{NL}
\DeclareMathOperator{\ND}{ND}
\DeclareMathOperator{\nw}{nw}
\DeclareMathOperator{\nl}{n\ell}
\DeclareMathOperator{\MW}{MW}
\DeclareMathOperator{\ML}{ML}
\DeclareMathOperator{\MD}{MD}
\DeclareMathOperator{\mw}{mw}
\DeclareMathOperator{\ml}{m\ell}
\DeclareMathOperator{\nd}{nd}
\DeclareMathOperator{\md}{md}
\title[Several-generation-jump combinatorial games]{Combinatorial games on Galton-Watson trees involving several-generation-jump moves}
\date{}
\author{Dhruv Bhasin, Moumanti Podder}
\address{Moumanti Podder, Indian Institute of Science Education and Research (IISER) Pune, Dr.\ Homi Bhabha Road, Pashan, Pune 411008, Maharashtra, India.}
\address{Dhruv Bhasin, Indian Institute of Science Education and Research (IISER) Pune, Dr.\ Homi Bhabha Road, Pashan, Pune 411008, Maharashtra, India.}
\email{moumanti@iiserpune.ac.in}
\email{bhasin.dhruv@students.iiserpune.ac.in}
\begin{document}
\bibliographystyle{plainnat}

\begin{abstract}
We study the \emph{$k$-jump normal} and \emph{$k$-jump mis\`{e}re} games on rooted Galton-Watson trees, expressing the probabilities of various possible outcomes of these games as specific fixed points of functions that depend on $k$ and the offspring distribution. We discuss phase transition results pertaining to draw probabilities when the offspring distribution is Poisson$(\lambda)$. We compare the probabilities of various outcomes of the $2$-jump normal game with those of the $2$-jump mis\`{e}re game, and a similar comparison is drawn between the $2$-jump normal game and the $1$-jump normal game, under the Poisson regime. We describe the rate of decay of the probability that the first player loses the $2$-jump normal game as $\lambda \rightarrow \infty$. We also discuss a sufficient condition for the average duration of the $k$-jump normal game to be finite.
\end{abstract}

\subjclass[2020]{}%60C05, 68Q87, 05C05, 05C57, 05C80, 05C65, 05D40}

\keywords{two-player combinatorial games; normal games; mis\`{e}re games; rooted Galton-Watson trees; fixed points; Poisson offspring; generalized finite state tree automata}

\maketitle

\section{Introduction}\label{sec:intro}
The simplest yet intriguing versions of the normal and mis\`{e}re games on rooted random trees were studied in \cite{holroyd_martin}. Each game involves two players (henceforth addressed as P1 and P2) and a token, and requires visualizing a given rooted tree as a directed graph in which an edge between a parent vertex $u$ and its child $v$ is assumed to be directed from $u$ to $v$. In each game, the players take turns to move the token along these directed edges. In a normal game, the first player to get stuck at a leaf vertex (i.e.\ unable to move the token any further) loses, whereas in a mis\`{e}re game, this same player wins. The two games share a fair amount of similarities in their analysis, but the objective of each player in a normal game is precisely the opposite of that in a mis\'{e}re game. While the authors of \cite{holroyd_martin} provide an incredibly thorough analysis of these two games when played on rooted Galton-Watson (henceforth abbreviated as GW) trees, they also pose several open questions. Our paper delves deeper into the fascinating world of these two-player combinatorial games and asks: \emph{what if we do not restrict the players to only single-generation moves?} In other words, instead of allowing each player, when it is her turn, to move the token from its current position $u$ to a child of $u$, we now permit her to move the token from $u$ to any descendant of $u$ that is at a distance at most $k$ away from $u$, where $k \geqslant 1$ is a pre-assigned positive integer. We call the corresponding versions of the normal and mis\'{e}re games the \emph{$k$-jump normal} and \emph{$k$-jump mis\`{e}re} games respectively. 

%{\color{red}Add here: in this paper, we focus on / study / investigate -- which aspects of these games?}

\subsection{Introduction to the games}\label{subsec:intro} We begin with a description of the rooted GW trees on which our games are played. A Galton-Watson branching process (henceforth denoted $\mathcal{T}_{\chi}$), introduced in \cite{galton_watson} and independently studied in \cite{bienayme} as a model to investigate the extinction of ancestral family names, begins with a root $\phi$ giving birth to a random number $X_{0}$ of children where $X_{0}$ follows the \emph{offspring distribution} $\chi$ (a probability distribution supported on the set $\mathbb{N}_{0}$ of non-negative integers). If $X_{0} = 0$, we stop the process, whereas if $X_{0} = m$ for some $m \in \mathbb{N}$, the children of $\phi$ are named $v_{1}, \ldots, v_{m}$, and $v_{i}$ gives birth to $X_{i}$ children with $X_{1}, \ldots, X_{m}$ i.i.d.\ $\chi$. Thus the process continues, and the resulting tree is infinite with positive probability iff the expectation of $\chi$ exceeds $1$. We refer the reader to \cite{athreya_vidyashankar}, \cite{athreya_jagers} and \cite{athreya_ney} for further reading on GW trees. 

We now come to a formal description of the games studied in this paper. Given any realization $T$ of $\mathcal{T}_{\chi}$, any vertex $u$ in $T$, and $i \in \mathbb{N}$, let $\Gamma_{i}(u)$ denote the set of all descendants $v$ of $u$ (excluding $u$ itself) such that the distance between $u$ and $v$ is at most $i$. The vertex at which the token is placed at the beginning of a game is known as the \emph{initial vertex}. The players P1 and P2 take turns to make \emph{moves} (with P1 moving in the first round), where a move constitutes relocating the token from its current position, which is some vertex $u$ in $T$, to a vertex $v \in \Gamma_{k}(u)$, where $k \in \mathbb{N}$ is fixed \emph{a priori}. The player who is unable to make a move loses the $k$-jump normal game. Hence, in this game, each of P1 and P2 strives to relocate the token, obeying the rules of the game, from its current position to a leaf vertex of $T$, thereby making her opponent lose in the next round. On the other hand, the player who is unable to make a move wins the $k$-jump mis\`{e}re game. Therefore, in this game, each of P1 and P2 strives to force her opponent to relocate the token to a leaf vertex of $T$, thereby ensuring that she herself wins the game in the next round.

It is important to note here that a realization $T$ of the random tree $\mathcal{T}_{\chi}$ is first generated and then revealed \emph{in its entirety} to \emph{both} P1 and P2, \emph{before} the game begins. These games are thus \emph{complete information} games. We also assume that P1 and P2 are both intelligent agents who play \emph{optimally}, i.e.\ when a game is destined to end in a decision, the player who wins tries to end the game in as few rounds as possible, while the player who loses tries to prolong the game as much as possible. 

\subsection{Motivations for studying these games}\label{subsec:motive+aspects}The primary motivation for studying these games stems from our interest in examining how allowing each player more room to maneuver in each round ends up affecting the probability of each possible outcome (these outcomes being a win for P1, a loss for P1, and a draw for both players). It is also imperative that we view the $k$-jump versions of the games as broad generalizations of the versions studied in \cite{holroyd_martin}. %Investigating the $k$-jump games for \emph{any} $k \in \mathbb{N}$ allows us to condense all our findings and treat them as attributes or characteristics of \emph{entire classes} of combinatorial games (namely the class comprising $k$-jump normal games for \emph{all} values of $k \in \mathbb{N}$, and the class comprising $k$-jump mis\`{e}re games for \emph{all} values of $k \in \mathbb{N}$).

It turns out that it is rather hard to draw a direct, analytical comparison between the ($1$-jump) normal and mis\`{e}re games studied in \cite{holroyd_martin} on one hand and the corresponding $k$-jump versions (for $k \geqslant 2$) on the other, even though our intuitions may suggest otherwise. Almost all such questions remain open and unexplored. For the commonly studied regime where the offspring distribution $\chi$ of the GW tree is Poisson$(\lambda$), we analytically compare the $1$-jump normal game with the $2$-jump normal game for sufficiently large values of $\lambda$ in Theorem~\ref{main:compare_1_vs_2}. Further comparisons can be drawn visually by plotting the curves corresponding to $\nl_{1}$, $\nl_{2}$ and $\nl_{3}$, the curves corresponding to $\nw_{1}$, $\nw_{2}$ and $\nw_{3}$, and the curves corresponding to $\nd_{1}$, $\nd_{2}$ and $\nd_{3}$, as functions of $\lambda$, when $\chi$ is Poisson$(\lambda)$ (see Figures~\ref{nl_comparison}, \ref{nw_comparison} and \ref{nd_comparison}). Here $\nl_{k}$, $\nw_{k}$ and $\nd_{k}$ respectively denote the probabilities of P1's loss, P1's win, and a draw in the $k$-jump normal game (see \S\ref{subsec:notations} for detailed definitions).
\begin{figure}[h!]
  \centering
    \includegraphics[width=0.5\textwidth]{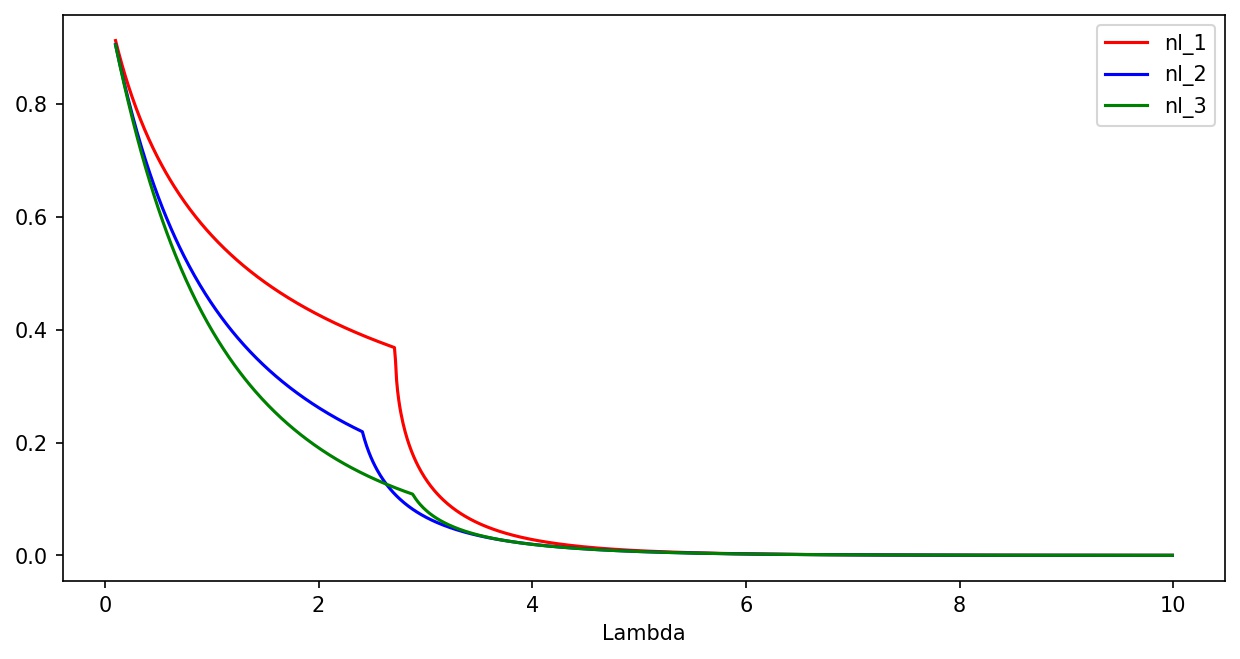}
\caption{Comparing probabilities $\nl_{k}$ of P1 losing as functions of $\lambda$, for $k = 1, 2, 3$}
  \label{nl_comparison}
\end{figure}
\begin{figure}
    \centering
    \begin{minipage}{0.5\textwidth}
        \centering
        \includegraphics[width=1.0\textwidth]{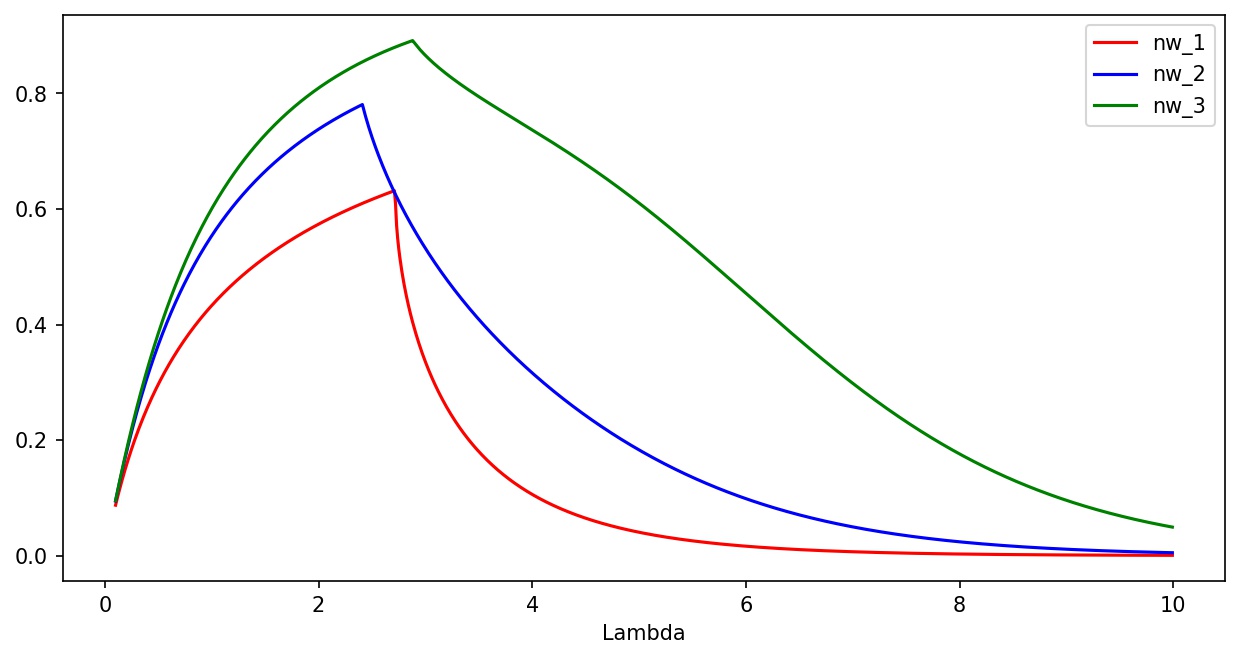} 
        \caption{Comparing probabilities $\nw_{k}$ of P1 winning as functions of $\lambda$, for $k = 1, 2, 3$}
\label{nw_comparison}
    \end{minipage}\hfill
    \begin{minipage}{0.5\textwidth}
        \centering
        \includegraphics[width=1.0\textwidth]{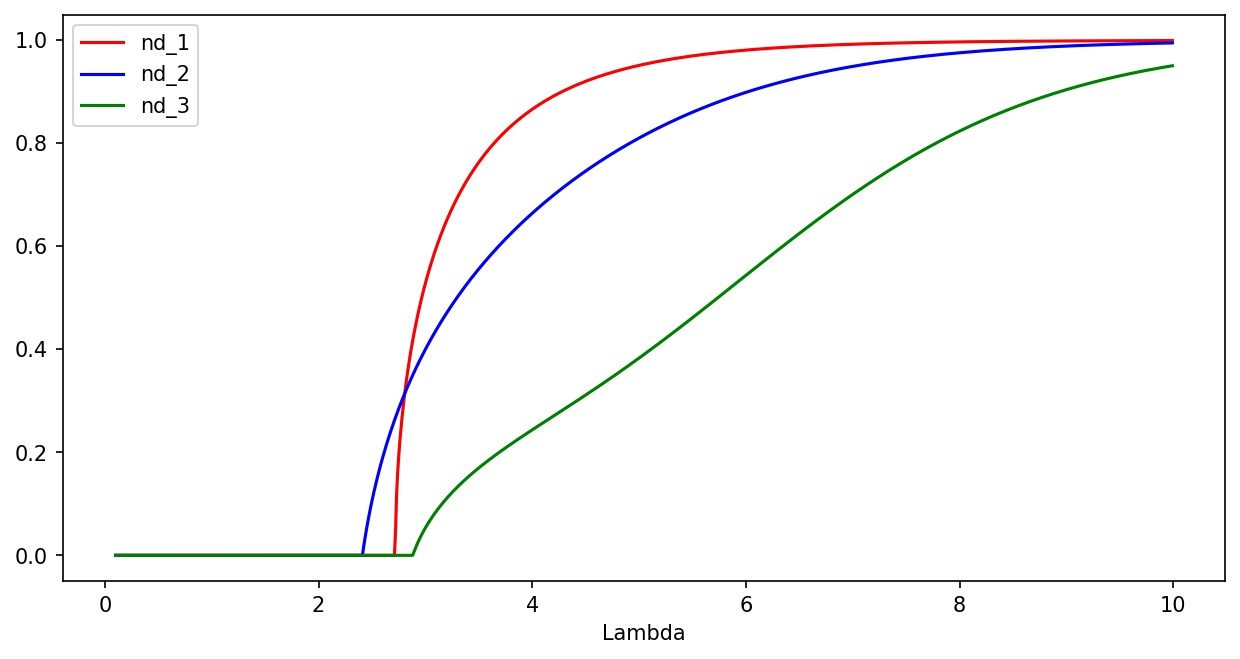} 
        \caption{Comparing probabilities $\nd_{k}$ of draw as functions of $\lambda$, for $k = 1, 2, 3$}
\label{nd_comparison}
    \end{minipage}
\end{figure}

While direct comparisons seem difficult to deduce analytically for $k \geqslant 3$, it is illuminating to explore the many characteristics of the functions $H_{k}$ (see Theorem~\ref{thm:main_1_normal}) and $J_{k}$ (see Theorem~\ref{thm:main_1_misere}) whose minimum positive fixed points equal the probabilities $\nl_{k}$ and $\ml_{k}$ of P1 losing the $k$-jump normal game and the $k$-jump mis\`{e}re game respectively. It is instructive to examine how these functions behave for various values of $k \in \mathbb{N}$. Analyzing these functions is the key to understanding the probabilities of the various possible outcomes of the $k$-jump games, and comparing and contrasting $H_{k}$ (respectively $J_{k}$) for different values of $k$ is instrumental in comparing and contrasting the games themselves. 

For a quick appraisal of how $H_{k}$ (respectively $J_{k}$) behaves as we vary $k$, and the pivotal roles they play in determining various characteristics of the probabilities $\nl_{k}$, $\nw_{k}$, $\nd_{k}$ (respectively $\ml_{k}$, $\mw_{k}$, $\md_{k}$), we mention here some of our findings that have been described in detail later on in the paper. In \S\ref{subsec:thm:main_1_normal_proof_part_2}, we show that $H_{k}$ (and likewise, $J_{k}$, as mentioned in \S\ref{sec:thm_1_misere_proof}) is increasing on $[0,c_{k-1}] \subset [0,1]$, where $\{[0,c_{k}]\}_{k}$ forms a sequence of steadily shrinking intervals (see Lemma~\ref{lem:main_thm_1_1}). The proof is far more involved for higher values of $k$ than when we consider $k=1$ (in fact, for $k = 1$, we have $H_{1}$ and $J_{1}$ defined and increasing on the entire interval $[0,1]$). In \S\ref{sec:Poisson_k=2}, it takes considerable work to show that when the offspring distribution $\chi$ is Poisson$(\lambda)$ with $\lambda \geqslant 2$, the function $H_{2}$ is strictly convex on the interval $[0,c_{2}]$. Plotting $H_{2}$ for various values of $\lambda$ (see, for example, Figure~\ref{H_{2}_between_c_{2}_c_{1}_plot}, where $\lambda = 5$ has been considered) seems to suggest that $H_{2}$ is, in fact, \emph{not} convex on the interval $(c_{2},c_{1}]$. When $\chi$ is Poisson$(\lambda)$, the second assertion of Theorem~\ref{main:normal_draw_probab_limit_Poisson} sheds light on the decay rate of $\nl_{k}$ for \emph{all} values of $k$ as $\lambda \rightarrow \infty$, whereas Theorem~\ref{lem:decay_rate_nl_{2,lambda}} provides an even stronger result on the decay rate of $\nl_{k}$ when $k=2$. It is through a careful analysis of the derivative of $H_{k}$ at the point $c_{k}$ that we obtain Theorems~\ref{main:normal_draw_probab_limit_Poisson} and \ref{main:Poisson_k=2_normal_phase_transition}, both of which shed light on the \emph{phase transition} phenomenon pertaining to the draw probability $\nd_{k}$ when $\chi$ is Poisson$(\lambda)$, i.e.\ how the value of $\nd_{k}$ evolves from $0$ to strictly positive as $\lambda$ is increased gradually. The magnitude of $H'_{k}(c_{k})$ also plays a role, in Theorem~\ref{main:avg_dur}, in determining if the expected duration of a $k$-jump normal game is finite. Finally, comparing $H_{2}$ with $J_{2}$ enables us to compare the probabilities of the various outcomes of the $2$-jump normal game with those of the $2$-jump mis\`{e}re game in Theorem~\ref{main:misere_normal_comparison}, while comparing $H_{2}$ with $H_{1}$ allows a similar comparison between the $1$-jump normal game and the $2$-jump normal game in Theorem~\ref{main:compare_1_vs_2}.
\begin{figure}[h!]
  \centering
    \includegraphics[width=0.5\textwidth]{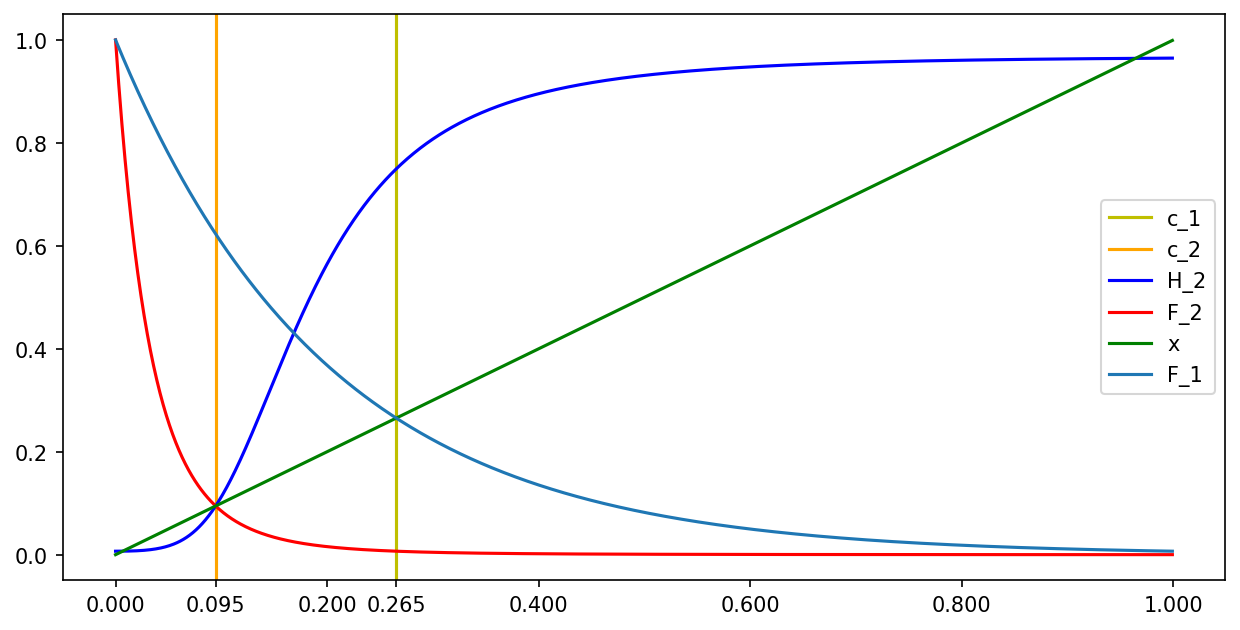}
\caption{The function $H_{2}$ is \emph{not} convex between $c_{2} \approx 0.095$ and $c_{1} \approx 0.265$, when $\chi$ is Poisson$(5)$}
  \label{H_{2}_between_c_{2}_c_{1}_plot}
\end{figure}

A second, and perhaps equally compelling, motivation arises from viewing these games as tools for understanding a generalized notion of \emph{finite state tree automata} (henceforth abbreviated as FSTA). As such, the simplest FSTA is a state machine that comprises a \emph{finite} set $\Sigma = \{1, 2, \ldots, r\}$ of \emph{states} or \emph{colours}, and a \emph{rule} $f_{1}$, which is a function $:\mathbb{N}_{0}^{r} \rightarrow \Sigma$, such that if a vertex $v$ in a rooted tree has $n_{i}$ children that are in state $i$ for each $i \in \Sigma$, then the state of $v$ is given by $f_{1}(n_{1}, n_{2}, \ldots, n_{r})$. Given a fixed $k \in \mathbb{N}$, a \emph{generalized depth-$k$ FSTA} (henceforth abbreviated as a $k$-GFSTA), with rule $f_{k}$, extends the above definition as follows: given a rooted tree $T$, a vertex $v$ of $T$ and an assignment $\sigma: \Gamma_{k}(v) \rightarrow \Sigma$ of states to the vertices of $\Gamma_{k}(v)$ (recall from above that $\Gamma_{k}(v)$ is the set of all descendants of $v$, other than $v$ itself, that are at distance at most $k$ away from $v$), the state of $v$, as dictated by this $k$-GFSTA, is given by $f_{k}\big((\sigma(w): w \in \Gamma_{k}(v))\big)$. For instance, a rule $f_{k}:\mathbb{N}_{0}^{r} \rightarrow \Sigma$ may be considered such that if there are $n_{i}$ vertices in $\Gamma_{k}(v)$ that are in state $i$ for each $i \in \Sigma$, then the state of $v$ is given by $f_{k}(n_{1}, n_{2}, \ldots, n_{r})$. Denoting by $\NW_{k}$ the set of vertices $v$ such that P1 wins the $k$-jump normal game that starts at $v$, by $\NL_{k}$ the set of vertices $v$ such that P1 loses the $k$-jump normal game that starts at $v$, and by $\ND_{k}$ the set of vertices $v$ such that the $k$-jump normal game that starts at $v$ ends in a draw (see also \S\ref{subsec:notations} for these definitions), we obtain a $k$-GFSTA with the state space $\Sigma = \{\NW_{k}, \NL_{k}, \ND_{k}\}$ and the rule $f_{k}$ defined by \eqref{normal_main_recursion_1} and \eqref{normal_main_recursion_2}. Yet another $k$-GFSTA is obtained from the recurrence relations \eqref{misere_main_recursion_1} and \eqref{misere_main_recursion_2} arising from the $k$-jump mis\`{e}re game. It is evident that studying these games may pave the way for a deeper understanding of $k$-GFSTAs for large values of $k$, the associated \emph{recursive distributional equations} and their \emph{fixed points}.

A fixed point of an FSTA is a probability distribution $\nu$ on the state space $\Sigma$ such that if the children of the root $\phi$ of a GW tree $\mathcal{T}_{\chi}$ are assigned i.i.d.\ states from $\Sigma$, each following the common law $\nu$, then the \emph{induced} (random) state (via the rule $f_{1}$) at $\phi$ also follows the law $\nu$. Let $\mathcal{T}$ be the set of \emph{all} possible rooted trees, and let, for any vertex $v$ in a rooted tree $T$, $T(v)$ denote the subtree that comprises $v$ and all its descendants. A map $\iota: \mathcal{T} \rightarrow \Sigma$ is called an \emph{interpretation} of the FSTA if assigning the state or colour $\iota(T(v))$ to each vertex $v$ of \emph{any} arbitrary rooted tree $T$ gives us a colouring of the \emph{entire} tree $T$ that is consistent with the rule $f_{1}$ of the FSTA. We call a fixed point $\nu$ of the FSTA \emph{interpretable} if there exists an interpretation $\iota$ of the FSTA with $\iota(\mathcal{T}_{\chi})$ following the law $\nu$. Necessary and sufficient conditions for fixed points of a certain class of FSTAs to be interpretable were addressed in \cite{podder_4}. We mention here that the notion of interpretability ties in closely with the concept of \emph{endogeny} (see, for instance, \S 2.4, and in particular, Definition 7, of \cite{aldous2005survey}, as well as \cite{mach2018new}, \cite{mach2020recursive}, \cite{martin2020minimax}, \cite{rath2021frozen}, \cite{rath2022phase}).

Let us understand how one may extend the above-mentioned notion of fixed points to the case of the $2$-GFSTA obtained from the $2$-jump normal game (with its rule $f_{2}$ given by \eqref{normal_main_recursion_1} and \eqref{normal_main_recursion_2} for $k=2$). Let $\mathcal{C}_{0,1}$ denote the set of all vertices $v$ such that $v$ has at least one child whose state is $\NL_{2}$, $\mathcal{C}_{0,2}$ the set of all vertices $v$ such that $v$ has no child in state $\NL_{2}$ but at least one grandchild in state $\NL_{2}$, and $\mathcal{C}_{1,2}$ the set of all vertices $v$ such that every child of $v$ is in state $\NW_{2}$ and at least one grandchild of $v$ is in state $\NL_{2}$. It is immediate from these definitions that $\mathcal{C}_{1,2} \subset \mathcal{C}_{0,2}$. We mention here that these subsets are analogous to those defined in \eqref{C_{i,j,n}_defn} (with the superscripts $n$, $n+1$ etc.\ removed). 

We now describe the recurrence relations (these have been elaborated upon in \S\ref{sec:thm_1_normal_proof} for the general case of \emph{any} $k \in \mathbb{N}$) that tie the above-mentioned subsets together. A vertex $v$ is in $\NW_{2}$ if and only if it is either in $\mathcal{C}_{0,1}$ or $\mathcal{C}_{0,2}$, which is equivalent to saying that $v$ either has a child that is in $\NL_{2}$ or a child that is in $\mathcal{C}_{0,1}$. A vertex $v$ is in $\NL_{2}$ if and only if either $v$ is childless or every child of $v$ is in $\mathcal{C}_{1,2}$. In all other situations, $v$ is in $\ND_{2}$. We further note that $v$ is in $\mathcal{C}_{0,1}$ if and only if at least one child of $v$ belongs to $\NL_{2}$. It is in $\mathcal{C}_{1,2}$ if and only if all its children are in $\NW_{2}$ and at least one of its children is in $\mathcal{C}_{0,1}$, which is equivalent to saying that all children of $v$ are in $\mathcal{C}_{0,1} \cup \mathcal{C}_{0,2}$ and at least one of them is in $\mathcal{C}_{0,1}$. Finally, $v$ is in $\mathcal{C}_{0,2}$ if none of its children is in $\NL_{2}$ but at least one of them is in $\mathcal{C}_{0,1}$.

We assign i.i.d.\ states from the state space $\Sigma = \{\mathcal{C}_{0,1}, \mathcal{C}_{1,2}, \mathcal{C}_{0,2} \setminus \mathcal{C}_{1,2}, \NL_{2}, \ND_{2}\}$ to the children of the root $\phi$ of the GW tree $\mathcal{T}_{\chi}$ according to the common law $\nu$, where we set $p_{0,1} = \nu[\mathcal{C}_{0,1}]$, $p_{1,2} = \nu[\mathcal{C}_{1,2}]$, $p_{0,2} = \nu[\mathcal{C}_{0,2}]$ (so that $\nu[\mathcal{C}_{0,2} \setminus \mathcal{C}_{1,2}] = p_{0,2} - p_{1,2}$), $\nl_{2} = \nu[\NL_{2}]$ and $\nd_{2} = \nu[\ND_{2}]$. For $\nu$ to be a fixed point of this $2$-GFSTA, the random state induced at $\phi$ must follow the law $\nu$ as well. This requires that the following equations, derived from the above-mentioned recurrence relations, hold:
\begin{align}
&\nl_{2} = \sum_{m=0}^{\infty}p_{1,2}^{m}\chi(m) = G(p_{1,2}),\label{eq_1}\\
&p_{0,1} = \sum_{m=1}^{\infty}\left[1 - \left(1-\nl_{2}\right)^{m}\right]\chi(m) = 1 - G\left(1-\nl_{2}\right),\label{eq_2}\\
&p_{1,2} = \sum_{m=1}^{\infty}\left[\left(p_{0,1}+p_{0,2}\right)^{m} - p_{0,2}^{m}\right]\chi(m) = G\left(p_{0,1}+p_{0,2}\right) - G\left(p_{0,2}\right),\label{eq_3}\\
&p_{0,2} = \sum_{m=1}^{\infty}\left[\left(1-\nl_{2}\right)^{m} - \left(1-\nl_{2}-p_{0,1}\right)^{m}\right]\chi(m) = G\left(1-\nl_{2}\right) - G\left(1-\nl_{2}-p_{0,1}\right),\label{eq_4}
\end{align}
where $\chi(m)$ denotes the probability of $\phi$ having $m$ children, for $m \in \mathbb{N}_{0}$, and $G$ indicates the probability generating function corresponding to $\chi$. The above equations are used to solve for the fixed point $\nu$. Note that we do not need a separate equation for $\nd_{2}$ since $\nd_{2} = 1 - \nl_{2} - p_{0,1} - p_{0,2}$.

It is obvious that the sets $\mathcal{C}_{0,1}$, $\mathcal{C}_{1,2}$, $\mathcal{C}_{0,2} \setminus \mathcal{C}_{1,2}$, $\NL_{2}$ and $\ND_{2}$ that arise out of the $2$-jump normal game itself provide an interpretation $\iota$ of the above $2$-GFSTA with $\iota(\mathcal{T}_{\chi})$ following the law $\nu$. But there are questions pertaining to interpretability that remain open when the system of equations constituting \eqref{eq_1}, \eqref{eq_2}, \eqref{eq_3} and \eqref{eq_4} does not yield a unique solution for $\nu$. For instance, Theorem~\ref{thm:main_1_normal} asserts that $\nl_{2}$ is the minimum positive fixed point of $H_{2}$, whereas Corollary~\ref{cor:c_{k}_fixed_point} tells us that $c_{2}$ is also a fixed point of $H_{2}$. From  Theorem~\ref{main:Poisson_k=2_normal_phase_transition}, we see that when $\chi$ is Poisson$(\lambda)$, we have $\nl_{2} < c_{2}$ for all $\lambda > \lambda_{c}$ for $\lambda_{c} \approx 2.41$. So now we ask: is the probability distribution obtained by replacing $\nl_{2}$ by $c_{2}$ (and computing the corresponding values of $p_{0,1}$, $p_{0,2}$, $p_{1,2}$ and $\nd_{2}$ from \eqref{eq_1}, \eqref{eq_2}, \eqref{eq_3} and \eqref{eq_4}) interpretable in the sense described earlier (see the definition for interpretability of fixed points of FSTAs in the previous page)? For higher values of $k$, $H_{k}$ may have several fixed points other than these two, and understanding the interpretability of the corresponding probability distributions is also of interest to us.

A third motivation for investigating these games arises from our speculation that these games may serve as precursors to more complicated versions where, for example, each round involves choosing one of P1 and P2 uniformly randomly and then allowing her to make a move (where a move involves relocating the token from its current vertex to a child of that vertex), but ensuring that no player is chosen for more than $k$ consecutive rounds, where $k$ is a pre-fixed positive integer.

\vspace{-0.1in}\subsection{Notations and some definitions}\label{subsec:notations}
Given a rooted tree $T$, we denote by $V(T)$ its vertex set. As previously mentioned, for $u \in V(T)$, we define $\Gamma_{i}(u)$, for $i \in \mathbb{N}$, as the set of descendants $v$ of $u$ (not including $u$ itself) with $\rho(u,v) \leqslant i$, where $\rho$ denotes the graph metric on $T$. As mentioned above in \S\ref{subsec:motive+aspects}, we shall denote by $G$ the probability generating function (pgf) of the offspring distribution $\chi$ of the GW tree $\mathcal{T}_{\chi}$, i.e.\ $G(x) = \sum_{i=0}^{\infty} x^{i} \chi(i)$ for any $x \in [0,1]$. All offspring distributions $\chi$ considered in this paper satisfy $0 < \chi(0) < 1$.%, where the upper bound ensures that $G$ is strictly increasing on $[0,1]$.

Given $k \in \mathbb{N}$, we define $\NL_{k}$ (or simply $\NL$ when the value of $k$ is clear from the context) to be the set of all vertices $v \in V(\mathcal{T}_{\chi})$ such that if $v$ is the initial vertex, then P1, who plays the first round, loses the $k$-jump normal game. Likewise, let $\NW_{k}$ (or simply $\NW$) denote the set of all $v \in V(\mathcal{T}_{\chi})$ such that if $v$ is the initial vertex, then P1, playing the first round, wins the $k$-jump normal game. Let $\ND_{k}$ (or simply $\ND$) denote the set of all $v \in V(\mathcal{T}_{\chi})$ such that if $v$ is the initial vertex, the $k$-jump normal game ends in a draw. 

It is important to consider refinements of the above subsets of vertices in order to understand the probabilities of the game's outcomes better. For every $n \in \mathbb{N}$, we define $\NL_{k}^{(n)}$ (or $\NL^{(n)}$ when the value of $k$ is unambiguous from the context) to be the subset of $\NL_{k}$ comprising vertices $v$ such that if $v$ is the initial vertex, the $k$-jump normal game lasts for less than $n$ rounds. Likewise, $\NW_{k}^{(n)}$ (or simply $\NW^{(n)}$) is the subset of $\NW_{k}$ comprising vertices $v$ such that the $k$-jump normal game starting at $v$ ends in less than $n$ rounds. We define $\ND^{(n)} = \ND_{k}^{(n)} = V(\mathcal{T}_{\chi}) \setminus \left[\NL_{k}^{(n)} \cup \NW_{k}^{(n)}\right]$, or in other words, $v \in \ND_{k}^{(n)}$ iff the $k$-jump normal game starting at $v$ lasts for at least $n$ rounds. We set $\NL_{k}^{(0)} = \NW_{k}^{(0)} = \emptyset$. By definition, we have $\NL_{k}^{(n)} \subset \NL_{k}^{(n+1)} \text{ and } \NW_{k}^{(n)} \subset \NW_{k}^{(n+1)}$ for all $n \in \mathbb{N}_{0}$.

We define $\nl_{k}$ to be the probability of the event that the root $\phi$ of $\mathcal{T}_{\chi}$ belongs to $\NL_{k}$, whereas for each $n \in \mathbb{N}$, we define $\nl_{k}^{(n)}$ to be the probability of the event that $\phi$ belongs to $\NL_{k}^{(n)}$. Likewise, we define $\nw_{k}$, $\nw_{k}^{(n)}$, $\nd_{k}$ and $\nd_{k}^{(n)}$. From above, we have $\nl_{k}^{(0)} = \nw_{k}^{(0)} = 0$. Once again, the subscript $k$ is dropped whenever its value is clear from the context. From the last line of the previous paragraph, we get
\begin{align}\label{increasing_sequence_nl_nw}
\nl_{k}^{(n)} \leqslant \nl_{k}^{(n+1)} \text{ and } \nw_{k}^{(n)} \leqslant \nw_{k}^{(n+1)} \text{ for all } n \in \mathbb{N}_{0}.
\end{align} 
The corresponding subsets for the $k$-jump mis\'{e}re games are denoted by $\ML_{k}$, $\MW_{k}$, $\MD_{k}$, $\ML_{k}^{(n)}$, $\MW_{k}^{(n)}$ and $\MD_{k}^{(n)}$, and the corresponding probabilities by $\ml_{k}$, $\mw_{k}$, $\md_{k}$, $\ml_{k}^{(n)}$, $\mw_{k}^{(n)}$ and $\md_{k}^{(n)}$ (as above, the subscript is removed when the value of $k$ is clear from the context).

\subsection{Main results}\label{sec:main_results}
We begin by introducing a couple of sequences of functions that are defined recursively. The first is $\{F_{i}\}_{i \in \mathbb{N}_{0}}$, where $F_{0}(x) = 1$ for $x \in [0,1]$, and $F_{i}: [0,c_{i-1}] \rightarrow [0,1]$, for all $i \in \mathbb{N}$, is defined as 
\begin{align}\label{F_{i}_defn}
F_{i}(x) = G(F_{i-1}(x) - x), \ x \in [0,c_{i-1}],
\end{align}
where recall that $G$ is the pgf of $\chi$, and $c_{i}$ is the unique (as shown in Lemma~\ref{lem:main_thm_1_1}) fixed point of $F_{i}$. The second sequence of functions $\{g_{i}\}_{i \in \mathbb{N}}$ is defined as follows. The function $g_{1}: \mathbb{R}^{2} \rightarrow \mathbb{R}$ is defined as $g_{1}(x,y) = x-y$, and having defined $g_{i-1}$ for any $i \geqslant 2$, we define $g_{i}: \mathcal{D}_{i} \rightarrow \mathbb{R}$ as
\begin{align}\label{g_{i}_recursive_defn}
g_{i}(x_{0},x_{1},\ldots,x_{i}) = G(g_{i-1}(x_{0},x_{2},\ldots,x_{i})) - G(g_{i-1}(x_{1},x_{2},\ldots,x_{i})),\ (x_{0},x_{1},\ldots,x_{i}) \in \mathcal{D}_{i},
\end{align}
where the sets $\mathcal{D}_{i}$ are also recursively defined, as follows:
\begin{align}\label{D_{i}_domain_defn}
\mathcal{D}_{i} = \{(x_{0},x_{1},\ldots,x_{i}): g_{i-1}(x_{0},x_{2},\ldots,x_{i}) \in [0,1], g_{i-1}(x_{1},x_{2},\ldots,x_{i}) \in [0,1]\}.
\end{align}
As an example, $\mathcal{D}_{2} = \{(x_{0},x_{1},x_{2}): x_{2} \leqslant x_{0} \leqslant x_{2}+1, x_{2} \leqslant x_{1} \leqslant x_{2}+1\}$. The motivation behind defining $\mathcal{D}_{i}$ this way is to simply ensure that the arguments $g_{i-1}(x_{0},x_{2},\ldots,x_{i})$ and $g_{i-1}(x_{1},x_{2},\ldots,x_{i})$ of the function $G$ in \eqref{g_{i}_recursive_defn} belong to the domain $[0,1]$ on which $G$ is defined.

\begin{theorem}\label{thm:main_1_normal}
Consider the $k$-jump normal game for $k \in \mathbb{N}$. Define the function $H_{k}: [0,c_{k-1}] \rightarrow [0,1]$ as
\begin{align}\label{H_{k}_defn}
H_{k}(x) = G(g_{k}(F_{0}(x),F_{1}(x),\ldots,F_{k}(x))).
\end{align}
Then $\nl_{k}$ is the minimum positive fixed point of $H_{k}$. Moreover, $\nw_{k} = 1 - F_{k}(\nl_{k})$. 
\end{theorem}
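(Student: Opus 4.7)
The plan is to analyze the game through the sequence of truncated outcome sets $\{\NL_{k}^{(n)}\}_{n}$ and $\{\NW_{k}^{(n)}\}_{n}$, exploiting the recursive characterization: $v \in \NL_{k}^{(n+1)}$ iff every $u \in \Gamma_{k}(v)$ lies in $\NW_{k}^{(n)}$, while $v \in \NW_{k}^{(n+1)}$ iff some $u \in \Gamma_{k}(v)$ lies in $\NL_{k}^{(n)}$. As a warm-up, I would first derive the formula $\nw_{k} = 1 - F_{k}(\nl_{k})$, which also endows the auxiliary functions $F_{j}$ with a transparent probabilistic interpretation. By induction on $j \in \{0, 1, \ldots, k\}$, I would show that
\[
F_{j}(\nl_{k}) = \Prob[\text{no descendant of } \phi \text{ within distance } j \text{ lies in } \NL_{k}].
\]
The inductive step conditions on the offspring count of $\phi$, invokes the independence of the subtrees rooted at the children, and uses the key observation that, for $j - 1 < k$, the event $\{v_{i} \in \NL_{k}\}$ is automatically contained in the event that no descendant of $v_{i}$ at distance $\leqslant j-1$ from $v_{i}$ lies in $\NL_{k}$. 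Consequently, the intersection of $\{v_{i} \notin \NL_{k}\}$ with the latter event has probability $F_{j-1}(\nl_{k}) - \nl_{k}$, matching \eqref{F_{i}_defn}. Setting $j = k$ gives the event $\{\phi \notin \NW_{k}\}$, so $F_{k}(\nl_{k}) = 1 - \nw_{k}$.

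Next, to establish $\nl_{k} = H_{k}(\nl_{k})$, observe that $\{\phi \in \NL_{k}\}$ is equivalent to requiring that every vertex of $\Gamma_{k}(\phi)$ lies in $\NW_{k}$. Conditioning on the offspring count of $\phi$ and invoking independence of the subtrees rooted at its children yields $\nl_{k} = G(q)$, where $q$ is the probability that a fixed child $v_{1}$ of $\phi$, together with every descendant of $v_{1}$ at distance $1, 2, \ldots, k-1$ from $v_{1}$, lies in $\NW_{k}$. The crux is to identify $q$ with $g_{k}(F_{0}(\nl_{k}), F_{1}(\nl_{k}), \ldots, F_{k}(\nl_{k}))$. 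I would carry this out by induction on $k$: the base case $k = 1$ gives $q = \nw_{1} = 1 - F_{1}(\nl_{1}) = g_{1}(F_{0}(\nl_{1}), F_{1}(\nl_{1}))$, and the inductive step realizes $q$ as an alternating inclusion-exclusion, stratified by the depth at which the first $\NL_{k}$-descendant of each relevant vertex appears, producing precisely the difference $G(\,\cdot\,) - G(\,\cdot\,)$ dictated by the recursion \eqref{g_{i}_recursive_defn}. In the concrete case $k = 2$, this recovers the identification $q = p_{1,2} = G(p_{0,1} + p_{0,2}) - G(p_{0,2})$ that was teased out of \eqref{eq_1}--\eqref{eq_4}.

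For the minimality, I would note that the decomposition underlying $\nl_{k} = H_{k}(\nl_{k})$ applies verbatim to the truncated probabilities, giving the recursion $\nl_{k}^{(2n)} = H_{k}(\nl_{k}^{(2n-2)})$ for every $n \geqslant 1$, with $\nl_{k}^{(0)} = 0$. By \eqref{increasing_sequence_nl_nw}, the sequence $\{\nl_{k}^{(2n)}\}_{n}$ is non-decreasing, and since $\NL_{k} = \bigcup_{n} \NL_{k}^{(2n)}$, it converges to $\nl_{k}$ by monotone convergence. Given any positive fixed point $x^{*} \in [0, c_{k-1}]$ of $H_{k}$, a straightforward induction on $n$---leveraging the monotonicity of $H_{k}$ on $[0, c_{k-1}]$ supplied by Lemma~\ref{lem:main_thm_1_1}---shows $\nl_{k}^{(2n)} \leqslant x^{*}$ for every $n \geqslant 0$, whence $\nl_{k} = \lim_{n \to \infty} \nl_{k}^{(2n)} \leqslant x^{*}$, establishing that $\nl_{k}$ is the minimum positive fixed point.

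The principal obstacle is the inductive identification of $q$ with $g_{k}(F_{0}(\nl_{k}), \ldots, F_{k}(\nl_{k}))$. Unlike the comparatively clean argument for $F_{k}(\nl_{k}) = 1 - \nw_{k}$, where the $\NL_{k}$-event for a child was automatically subsumed by the relevant no-$\NL_{k}$-descendant event, here one faces overlapping constraints: the $\NW_{k}$-status of a vertex $u$ at depth $j < k - 1$ below $v_{1}$ depends on descendants up to distance $k$ below $u$, and these ``look-ahead'' neighbourhoods overlap between different choices of $u$. Unwinding these dependencies so that they faithfully reproduce the discrete-derivative structure encoded in \eqref{g_{i}_recursive_defn} is where most of the combinatorial work will reside, and will likely be handled by strengthening the inductive hypothesis to track, for each depth $j \in \{0, 1, \ldots, k-1\}$, the probability that a vertex admits its first $\NL_{k}$-descendant at exactly depth $j$.
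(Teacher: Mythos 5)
Your overall architecture matches the paper's (truncated sets $\NL^{(n)},\NW^{(n)}$, a recursion of the form $\nl^{(n+2)}=H_k(\nl^{(n)})$, then monotone iteration from $0$ for minimality), and your probabilistic reading of $F_j(\nl_k)$ as the probability of having no $\NL_k$-descendant within distance $j$ is correct and gives $\nw_k=1-F_k(\nl_k)$ cleanly. However, there are two genuine gaps. First, the heart of the theorem --- the identification of $q=\Prob[v_1\in\NW_k,\ \Gamma_{k-1}(v_1)\subset\NW_k]$ with $g_k(F_0(\nl_k),\ldots,F_k(\nl_k))$ --- is not actually carried out, and the route you sketch would not work as stated. ``Induction on $k$'' is the wrong induction: the game, the sets $\NL_k,\NW_k$, and the functions $g_k,F_j$ all change with $k$, so the $(k-1)$-case says nothing about the $k$-jump game. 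Moreover, your proposed strengthening (tracking, per vertex, only the depth of its first $\NL_k$-descendant) captures only the classes the paper calls $\mathcal{C}_{0,j}$. When you unwind the event $\{u\in\NW,\ \Gamma_i(u)\subset\NW\}$ over the children $w$ of $u$, the per-child information you need is the \emph{joint} event ``$\Gamma_{i-1}(w)\cup\{w\}\subset\NW$ \emph{and} first $\NL$-descendant of $w$ at depth exactly $j$'': the $\NW$-status of vertices below $w$ depends on look-ahead neighbourhoods that are not determined by first-$\NL$-descendant depths alone. This forces the two-parameter family of classes \eqref{C_{i,j,n}_defn} and an induction on the first index $i$ within a fixed $k$, which is exactly the content of Lemma~\ref{lem:main_thm_1_3} (via the recursions \eqref{c_{0,1,n}_recursion}--\eqref{c_{i,j,n}_recursion}); your sketch stops short of this.

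Second, your minimality argument leans on ``the monotonicity of $H_k$ on $[0,c_{k-1}]$ supplied by Lemma~\ref{lem:main_thm_1_1}'', but that lemma only concerns the functions $F_i$ (strict decrease, uniqueness of $c_i$, $\chi(0)<c_i<c_{i-1}$); it does not give monotonicity of $H_k$, nor even that $(F_0(x),\ldots,F_k(x))$ stays in the domain $\mathcal{D}_k$ so that $H_k$ is well defined on $[0,c_{k-1}]$. Since $g_k$ is a nested alternating difference built from \eqref{g_{i}_recursive_defn} and each $F_i(x)$ is decreasing, the increasing nature of $x\mapsto g_k(F_0(x),\ldots,F_k(x))$ is genuinely nontrivial for $k\geqslant 2$; in the paper it occupies all of \S\ref{subsec:thm:main_1_normal_proof_part_2} together with \eqref{belongs_to_D_{i}} and \eqref{g_{j}(1,F_{k-j+1},...,F_{k})_derivative_g_{j}(F_{1},F_{k-j+1},...,F_{k})}. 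Without establishing both well-definedness and monotonicity of $H_k$ (and, more minor, the compactness fact $\NL_k=\bigcup_n\NL_k^{(n)}$, which is Lemma~\ref{lem_normal_compactness} rather than a triviality), the comparison $\nl_k^{(2n)}\leqslant H_k^{(n)}(x^*)=x^*$ and hence the minimality claim is unsupported.
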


Some discussions are in order regarding the functions defined above, as they form an integral part of the results in this paper. First, we compare the findings of Theorem~\ref{H_{k}_defn} with those of Theorem 1 (i) of \cite{holroyd_martin}. Let us define the function $R(x) = 1 - G\big(1-G(x)\big)$ for $x \in [0,1]$. According to the notation used in this paper, Theorem 1 (i) of \cite{holroyd_martin} states that $1-\nl_{1}$ equals the maximum fixed point and $\nw_{1}$ the minimum fixed point of $R(x)$ in $[0,1]$. From our definitions of $g_{1}$ and $F_{0}$, \eqref{F_{i}_defn} and \eqref{H_{k}_defn}, we see that $H_{1}(x) = G\big(1 - G(1-x)\big)$. For any $y \in [0,1]$, we observe that $1-y$ is a fixed point of $R(x)$ if and only if
\begin{equation}
1-y = 1 - G\big(1 - G(1-y)\big) \Longleftrightarrow y = G\big(1 - G(1-y)\big) = H_{1}(y),\nonumber
\end{equation}
which is equivalent to $y$ being a fixed point of $H_{1}(x)$. This observation immediately reveals that $\nl_{1}$ is the minimum fixed point of $H_{1}(x)$ in $[0,1]$ if and only if $1-\nl_{1}$ is the maximum fixed point of $R(x)$ in $[0,1]$, i.e.\ our conclusion about $\nl_{1}$ from Theorem~\ref{thm:main_1_normal} matches with what Theorem 1 (i) of \cite{holroyd_martin} yields. Moreover, for every $y \in [0,1]$ that is a fixed point of $H_{1}(x)$, we observe that
\begin{align}
1 - F_{1}(y) = 1 - G(1 - y) = 1 - G\big(1 - H_{1}(y)\big) = R\big(1 - G(1-y)\big) = R(1 - F_{1}(y)),\nonumber
\end{align}
showing that $1-F_{1}(y)$ is a fixed point of $R(x)$ in $[0,1]$. Conversely, under the assumption that $G$ is a strictly increasing function on $[0,1]$ (which is true whenever $\chi(0) < 1$), we observe that $1-F_{1}(y)$ is a fixed point of $R(x)$ in $[0,1]$, for $y \in [0,1]$, if and only if
\begin{align}
& R\big(1 - F_{1}(y)\big) = 1-F_{1}(y) \Longleftrightarrow G\big(1 - G\big(1-F_{1}(y)\big)\big) = F_{1}(y)\nonumber\\
&\Longleftrightarrow G\big(1 - G\big(1-G(1-y)\big)\big) = G(1-y) \Longleftrightarrow 1 - G\big(1-G(1-y)\big) = 1-y \Longleftrightarrow H_{1}(y) = y,\nonumber
\end{align}
thus showing us that $y$ is a fixed point of $H_{1}(x)$ in $[0,1]$. The last two observations tell us that $y$ is a fixed point of $H_{1}(x)$ in $[0,1]$ if and only if $1-F_{1}(y)$ is a fixed point of $R(x)$ in $[0,1]$. Since $F_{1}$ is strictly decreasing on $[0,1]$ and $\nl_{1}$ has already been shown above to be the minimum fixed point of $H_{1}(x)$ in $[0,1]$, this establishes that $1-F_{1}(\nl_{1})$ must be the minimum fixed point of $R(x)$ in $[0,1]$, thus showing that our conclusion about $\nw_{1}$ from Theorem~\ref{thm:main_1_normal} matches with what Theorem 1 (i) of \cite{holroyd_martin} yields. 

We now try to give the reader an idea as to how the functions in \eqref{F_{i}_defn}, \eqref{g_{i}_recursive_defn} and \eqref{H_{k}_defn} come to be defined, without going into the actual technical details (which have been laid out fully in the proof of Theorem~\ref{thm:main_1_normal} in \S\ref{sec:thm_1_normal_proof}), and to this end, we focus on the case of $k=2$. This case has already been discussed, to some extent, in \S\ref{subsec:motive+aspects}. In \S\ref{subsec:motive+aspects}, along with $\NW_{2}$, $\NL_{2}$ and $\ND_{2}$, we defined the subsets $\mathcal{C}_{0,1}$, $\mathcal{C}_{0,2}$ and $\mathcal{C}_{1,2}$, described the recurrence relations that tie these subsets to one another, and derived Equations \eqref{eq_1}, \eqref{eq_2}, \eqref{eq_3} and \eqref{eq_4} that relate the probabilities $\nl_{2}$, $\nd_{2}$, $p_{0,1}$, $p_{0,2}$ and $p_{1,2}$ with each other (where $p_{i,j}$ is the probability that the root of $\mathcal{T}_{\chi}$ belongs to $\mathcal{C}_{i,j}$, for $0 \leqslant i < j \leqslant 2$). 

From the definition of $F_{0}$ and \eqref{F_{i}_defn}, we have $F_{1}(\nl_{2}) = G(1-\nl_{2})$ and $F_{2}(\nl_{2}) = G\big(G(1-\nl_{2})-\nl_{2}\big)$. Using the expression for $F_{1}$ and the definition of $g_{1}$, it becomes immediate from \eqref{eq_2} that $p_{0,1} = g_{1}\big(F_{0}(\nl_{2}), F_{1}(\nl_{2})\big)$. Using the expressions for $F_{1}$ and $F_{2}$ and Equations \eqref{eq_2} and \eqref{eq_4}, we have
\begin{align}
p_{0,2} &= G(1-\nl_{2}) - G(1-p_{0,1}-\nl_{2}) = F_{1}(\nl_{2}) - G(G(1-\nl_{2})-\nl_{2}) \nonumber\\&= F_{1}(\nl_{2}) - F_{2}(\nl_{2}) = g_{1}\big(F_{1}(\nl_{2}),F_{2}(\nl_{2})\big).\nonumber
\end{align}
From \eqref{eq_2}, \eqref{eq_3}, \eqref{eq_4} and \eqref{g_{i}_recursive_defn}, we see that 
\begin{align}
p_{1,2} &= G\big(1-F_{2}(\nl_{2})\big) - G\big(g_{1}\big(F_{1}(\nl_{2}),F_{2}(\nl_{2})\big)\big)\nonumber\\
&= G\big(g_{1}\big(F_{0}(\nl_{2}),F_{2}(\nl_{2})\big)\big) - G\big(g_{1}\big(F_{1}(\nl_{2}),F_{2}(\nl_{2})\big)\big) = g_{2}\big(F_{0}(\nl_{2}),F_{1}(\nl_{2}),F_{2}(\nl_{2})\big).\nonumber
\end{align}
\sloppy These ideas extend to the general case of arbitrary $k \in \mathbb{N}$, with the identity $p_{i,j,n} = g_{i+1}\big(F_{j-i-1}(\nl_{k}), F_{j-i}(\nl_{k}), F_{k-i+1}(\nl_{k}), \ldots, F_{k}(\nl_{k})\big)$ being true for all $0 \leqslant i < j \leqslant k$ (see Lemma~\ref{lem:main_thm_1_3} and its proof for a better understanding of this fact). We now use the expression for $p_{1,2}$ derived above, \eqref{eq_1} and \eqref{H_{k}_defn} to conclude that $\nl_{2}$ is indeed a fixed point of $H_{2}(x)$ in $[0,1]$. Although the proof of Theorem~\ref{thm:main_1_normal} is rather involved when arbitrary values of $k$ are considered, we hope that the above exposition, for $k=2$, helps to shed some light on how our argument proceeds, how the recursive definitions of the functions in \eqref{F_{i}_defn}, \eqref{g_{i}_recursive_defn} and \eqref{H_{k}_defn} arise etc.

In order to state, for the case of $k$-jump mis\`{e}re games, the result that is analogous to Theorem~\ref{thm:main_1_normal}, we introduce yet another sequence of functions $\{\gamma_{i}\}_{i \in \mathbb{N}}$ that bears significant resemblance to \eqref{g_{i}_recursive_defn}. Setting $\gamma_{1} \equiv g_{1}$, for each $i \geqslant 2$ we define $\gamma_{i}: \mathcal{D}'_{i} \rightarrow \mathbb{R}$ recursively as
\begin{align}\label{gamma_{i}_recursive_defn}
\gamma_{i}(x_{0},x_{1}, \ldots, x_{i}) = G\left(\chi(0) + \gamma_{i-1}(x_{0},x_{2}, \ldots, x_{i})\right) - G\left(\chi(0) + \gamma_{i-1}(x_{1},x_{2}, \ldots, x_{i})\right),
\end{align}
\sloppy where the sets $\mathcal{D}'_{i}$ are recursively defined as
%\begin{align}
$\mathcal{D}'_{i} = \{(x_{0},x_{1},\ldots,x_{i}): \chi(0) + \gamma_{i-1}(x_{0},x_{2}, \ldots, x_{i}) \in [0,1], \chi(0) + \gamma_{i-1}(x_{1},x_{2}, \ldots, x_{i}) \in [0,1]\}$.%\nonumber
%\end{align}
%The main result pertaining to $k$-jump mis\`{e}re games can now be stated as follows:
\begin{theorem}\label{thm:main_1_misere}
Consider the $k$-jump mis\`{e}re game, $k \in \mathbb{N}$. Define the function $J_{k}: [0,c_{k-1}] \rightarrow [0,1]$ as
\begin{equation}\label{mathcal{J}_{k}_defn}
J_{k}(x) = G\left(\chi(0) + \gamma_{k}(F_{0}(x), F_{1}(x), \ldots, F_{k}(x))\right) - \chi(0).
\end{equation}
Then $\ml_{k}$ is the minimum positive fixed point of $J_{k}$. Moreover, $\mw_{k} = 1 - F_{k}(\ml_{k}) + \chi(0)$.
\end{theorem}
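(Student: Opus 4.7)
The plan is to mirror the proof of Theorem~\ref{thm:main_1_normal}, with one structural modification: a leaf of $\mathcal{T}_\chi$ now lies in $\MW_k$ rather than $\ML_k$, and this is the single source of the additive $\chi(0)$ shifts appearing in \eqref{gamma_{i}_recursive_defn} and \eqref{mathcal{J}_{k}_defn}. First I would introduce mis\`ere analogues $\mathcal{C}'_{i,j}$ (for $0\leqslant i<j\leqslant k$) of the subsets $\mathcal{C}_{i,j}$ discussed in \S\ref{subsec:motive+aspects}: $\mathcal{C}'_{i,j}$ is the set of vertices $v$ for which $j$ is the smallest distance at which some descendant of $v$ lies in $\ML_k$, with the further stratification that every descendant of $v$ at distances $1,\ldots,i$ lies in $\MW_k$. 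The recurrences tying the $\mathcal{C}'_{i,j}$'s, together with $\ML_k$, $\MW_k$ and $\MD_k$, to one another have the same combinatorial skeleton as in the normal case; the only twist is that whenever a recurrence stipulates ``every descendant at a given generation lies in $\MW_k$'', any leaf among those descendants satisfies the stipulation for free, which is what produces the $\chi(0)$ inside the recursion defining $\gamma_i$.

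Second, I would extract from these recurrences the mis\`ere analogues of \eqref{eq_1}--\eqref{eq_4} and then telescope them, exactly as in the discussion immediately following Theorem~\ref{thm:main_1_normal}. Writing $q_{i,j}:=\Prob[\phi\in\mathcal{C}'_{i,j}]$, an induction on $j-i$ parallel to (the mis\`ere version of) Lemma~\ref{lem:main_thm_1_3} should yield
\[
q_{i,j}=\gamma_{i+1}\bigl(F_{j-i-1}(\ml_k),F_{j-i}(\ml_k),F_{k-i+1}(\ml_k),\ldots,F_k(\ml_k)\bigr);
\]
substituting the resulting expression for $q_{1,k}$ into $\ml_k=\sum_{m\geqslant 1}q_{1,k}^{m}\chi(m)$ (the $m=0$ term vanishes, since leaves are in $\MW_k$) and rewriting the right-hand side as $G\bigl(\chi(0)+\gamma_k(\cdots)\bigr)-\chi(0)$ produces $\ml_k=J_k(\ml_k)$.

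Third, to establish that $\ml_k$ is the \emph{minimum} positive fixed point, I would use the $n$-round truncations $\ml_k^{(n)}$: they satisfy a recurrence analogous to the one for $\nl_k^{(n)}$, form a monotone increasing sequence (the mis\`ere analogue of \eqref{increasing_sequence_nl_nw}), and converge to $\ml_k$. For any positive fixed point $p^*\in[0,c_{k-1}]$ of $J_k$, the monotonicity of $G$, of each $F_i$ on its domain, and of $\gamma_i$ in each of its arguments (with the appropriate signs, and after the $\chi(0)$ shift) should permit an induction on $n$ showing $\ml_k^{(n)}\leqslant p^*$; letting $n\to\infty$ gives $\ml_k\leqslant p^*$. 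I expect this to be the main obstacle in the proof: one must keep track of the monotonicity of each $\gamma_i$ after the $\chi(0)$ shift, verify that $F_i(\ml_k^{(n)})$ and its $p^*$-counterpart remain in the domains $[0,c_{i-1}]$ throughout (so that the recursive formulas are even well-defined), and justify the inductive step under these constraints.

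Finally, the formula $\mw_k=1-F_k(\ml_k)+\chi(0)$ follows by conditioning on the degree of $\phi$: with probability $\chi(0)$, $\phi$ is a leaf and hence in $\MW_k$, contributing the additive $\chi(0)$; with probability $\chi(m)$ for $m\geqslant 1$, the recurrences developed in the first step identify the conditional probability of $\phi\in\MW_k$ in terms of $F_k(\ml_k)$, exactly as in the derivation of $\nw_k=1-F_k(\nl_k)$ in Theorem~\ref{thm:main_1_normal}. Extending the resulting sum over $m\geqslant 1$ back down to $m=0$ produces $1-F_k(\ml_k)$ from $\sum_{m\geqslant 1}\chi(m)(\cdots)$ without further correction, and combining this with the leaf contribution yields the stated identity.
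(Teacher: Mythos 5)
Your plan is essentially the paper's own proof: your classes $\mathcal{C}'_{i,j}$ are the sets $\mathcal{D}_{i,j,n}$ of \eqref{D_{i,j,n}_defn} (up to the $n$-round truncation), your claimed identity for $q_{i,j}$ is exactly the mis\`ere analogue \eqref{q_{i,j,n}_closed_form_i>1} of Lemma~\ref{lem:main_thm_1_3}, your minimality argument via the truncations $\ml_{k}^{(n)}$ and the monotonicity of $J_{k}$ is the argument of \S\ref{subsec:thm:main_1_normal_proof_part_2} that the paper itself invokes without repeating, and your derivation of $\mw_{k}$ matches \eqref{mw^{(n+1)}_recursion_in_terms_of_q_{0,j,n}}. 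One bookkeeping remark: the paper derives all recurrences at the level of the truncated quantities $\ml^{(n)}$, $q_{i,j,n}$, obtaining $\ml^{(n+2)}=J_{k}(\ml^{(n)})$ in one stroke; that single relation yields the fixed-point property (let $n\to\infty$) and simultaneously feeds your step-three induction, so you need not derive the recurrences twice.

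There is, however, a concrete error in your top-level recursion that must be repaired before $\ml_{k}=J_{k}(\ml_{k})$ follows. First, the class entering this recursion is $\mathcal{C}'_{k-1,k}$, not $\mathcal{C}'_{1,k}$: if $u\in\ML_{k}$, then for every child $v$ of $u$ the whole of $\Gamma_{k-1}(v)$ (not merely the children of $v$) lies in $\Gamma_{k}(u)\subset\MW_{k}$, and the nearest $\ML_{k}$-descendant of a non-leaf child $v$ is then at distance exactly $k$; your index choice appears to come from generalizing the $k=2$ class $\mathcal{C}_{1,2}$, where $k-1=1$ hides the distinction. Second, and more importantly, each child of $u$ may alternatively be a leaf (leaves lie in $\MW_{k}$ but in no class $\mathcal{C}'_{i,j}$), so the per-child probability is $q_{k-1,k}+\chi(0)$ and the correct identity is $\ml_{k}=\sum_{m\geqslant 1}\left(q_{k-1,k}+\chi(0)\right)^{m}\chi(m)=G\left(q_{k-1,k}+\chi(0)\right)-\chi(0)$, as in \eqref{ml^{(n+1)}_recursion_in_terms_of_q_{i,j,n}}. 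Your displayed formula $\sum_{m\geqslant 1}q_{1,k}^{m}\chi(m)$ equals $G(q_{1,k})-\chi(0)$ and cannot simply be ``rewritten'' as $G\left(\chi(0)+\gamma_{k}(\cdots)\right)-\chi(0)$: the $\chi(0)$ inside $G$ at this top level comes precisely from the leaf alternative for the children of the root, i.e.\ the very mechanism you correctly identified as producing the $\chi(0)$ shifts inside the recursion for the $\gamma_{i}$'s. Since you already have that mechanism in hand, the fix is mechanical, but as written the central fixed-point equation does not follow from your step two.
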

We mention here, for the convenience of the reader, that much of the proof of Theorem~\ref{thm:main_1_misere} unfolds the same way as that of Theorem~\ref{thm:main_1_normal}, and the motivations behind the recursive defintions of the functions in \eqref{gamma_{i}_recursive_defn} and \eqref{mathcal{J}_{k}_defn} are very similar to those behind the recursive definitions of the functions in \eqref{g_{i}_recursive_defn} and \eqref{H_{k}_defn} respectively.

Theorem~\ref{main:bounds_on_nl_{k}_ml_{k}} provides bounds on $\nl_{k}$ and $\ml_{k}$, and necessary and sufficient conditions for the draw probabilities $\nd_{k}$ and $\md_{k}$ to be positive. Recall that $c_{k}$ is the (unique, by Lemma~\ref{lem:main_thm_1_1}) fixed point of $F_{k}$.
\begin{theorem}\label{main:bounds_on_nl_{k}_ml_{k}}
For every $k \in \mathbb{N}$, we have $\chi(0) < \nl_{k} \leqslant c_{k}$ and $\ml_{k} \leqslant \hat{c}_{k}$, where $\hat{c}_{k}$ is the unique point of intersection between $y = F_{k}(x)$ and $y = J_{k}(x)+\chi(0)$ in $(0,c_{k-1})$. Moreover, $\nd_{k} > 0$ if and only if $\nl_{k} < c_{k}$ and $\md_{k} > 0$ if and only if $\ml_{k} < \hat{c}_{k}$. 
\end{theorem}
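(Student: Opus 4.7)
The plan is to handle both halves of the theorem in parallel using the fixed-point descriptions of $\nl_{k}$ and $\ml_{k}$ from Theorems~\ref{thm:main_1_normal} and \ref{thm:main_1_misere}, together with the two algebraic identities
\[
\nd_{k} = F_{k}(\nl_{k}) - \nl_{k}, \qquad \md_{k} = F_{k}(\ml_{k}) - \chi(0) - \ml_{k},
\]
which follow from $\nl_{k} + \nw_{k} + \nd_{k} = 1$ and the formulas $\nw_{k} = 1 - F_{k}(\nl_{k})$, $\mw_{k} = 1 - F_{k}(\ml_{k}) + \chi(0)$ already in hand.

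For the normal game, the upper bound $\nl_{k} \leqslant c_{k}$ is immediate once one invokes Corollary~\ref{cor:c_{k}_fixed_point}, which tells us that $c_{k}$ is itself a fixed point of $H_{k}$; since Theorem~\ref{thm:main_1_normal} selects $\nl_{k}$ as the minimum positive fixed point, $\nl_{k} \leqslant c_{k}$ at once. For the strict lower bound $\nl_{k} > \chi(0)$, I would run a monotone fixed-point iteration of $H_{k}$ from $x_{0} = 0$. A short induction on $i$ gives $F_{i}(0) = 1$ and $g_{i}(1,\ldots,1) = 0$, so $H_{k}(0) = G(0) = \chi(0)$; since $H_{k}$ is strictly increasing on $[0,c_{k-1}]$ by Lemma~\ref{lem:main_thm_1_1}, the iterates $x_{n+1} = H_{k}(x_{n})$ are strictly increasing, remain below $\nl_{k}$, and converge to $\nl_{k}$, which yields $\nl_{k} > x_{1} = \chi(0)$. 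The equivalence $\nd_{k} > 0 \Leftrightarrow \nl_{k} < c_{k}$ then reduces, via the identity for $\nd_{k}$ above, to showing that $F_{k}(x) - x$ is strictly decreasing on $[0,c_{k-1}]$ with its unique zero at $c_{k}$; this follows by induction on $k$ from the recursion $F_{k}(x) = G(F_{k-1}(x) - x)$, strict monotonicity of $G$ (using $\chi(0) < 1$), and the inductive strict decrease of $F_{k-1}(x) - x$.

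The misère half follows the same blueprint, and the main obstacle is establishing that $\hat{c}_{k}$ is itself a fixed point of $J_{k}$, after which $\ml_{k} \leqslant \hat{c}_{k}$ drops out from minimality. For $k=1$ this is a short algebraic manipulation: the defining equation $F_{1}(\hat{c}_{1}) = J_{1}(\hat{c}_{1}) + \chi(0)$ simplifies, using $F_{1}(x) = G(1-x)$ and the explicit form of $J_{1}$, to $G(1-\hat{c}_{1}) = \hat{c}_{1} + \chi(0)$, which rearranges back into $J_{1}(\hat{c}_{1}) = \hat{c}_{1}$. For general $k$ I anticipate an inductive argument unwinding the recursions \eqref{g_{i}_recursive_defn} and \eqref{gamma_{i}_recursive_defn} while tracking the way $g_{i}$ and $\gamma_{i}$ differ by a shift of $\chi(0)$ inside the arguments of $G$; this step is where I expect the bulk of the technical work to lie. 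Once that fact is in hand, writing $M(x) := F_{k}(x) - J_{k}(x) - \chi(0)$ gives $\md_{k} = M(\ml_{k})$, because $J_{k}(\ml_{k}) = \ml_{k}$. The boundary values $M(0) = 1 - G(\chi(0)) > 0$ (using $F_{k}(0) = 1$ and $\gamma_{k}(1,\ldots,1) = 0$) and $M(\hat{c}_{k}) = 0$, combined with the uniqueness of the zero of $M$ in $(0, c_{k-1})$ built into the definition of $\hat{c}_{k}$, force $M > 0$ on $(0, \hat{c}_{k})$ by continuity, and hence $\md_{k} > 0 \Leftrightarrow \ml_{k} < \hat{c}_{k}$.
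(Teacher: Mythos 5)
Your normal-game half follows the paper's own route (Corollary~\ref{cor:c_{k}_fixed_point} plus minimality of $\nl_{k}$ from Theorem~\ref{thm:main_1_normal}, then $\nd_{k}=F_{k}(\nl_{k})-\nl_{k}$ together with the strict decrease of $F_{k}$ and uniqueness of $c_{k}$ from Lemma~\ref{lem:main_thm_1_1}), but the strict lower bound $\chi(0)<\nl_{k}$ is not yet justified as written. Your iteration only gives $\nl_{k}\geqslant H_{k}(0)=\chi(0)$: Lemma~\ref{lem:main_thm_1_1} says nothing about $H_{k}$, the monotonicity of $H_{k}$ established in \S\ref{subsec:thm:main_1_normal_proof_part_2} is only non-strict, and even strict monotonicity of $H_{k}$ would not make the orbit strictly increasing unless you first know $H_{k}(\chi(0))>\chi(0)$, i.e.\ unless you rule out that $\chi(0)$ is itself a fixed point. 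To close this you need either a strict form of \eqref{belongs_to_D_{i}} at $x=\chi(0)>0$ (showing $g_{k}(F_{0}(\chi(0)),\ldots,F_{k}(\chi(0)))>0$, whence $H_{k}(\chi(0))>G(0)=\chi(0)$), or the probabilistic observation the paper leans on: P1 also loses on positive-probability trees whose root has children (e.g.\ a full $m$-ary tree of depth exactly $k+1$ for any $m$ with $\chi(m)>0$), so $\nl_{k}$ strictly exceeds $\chi(0)$.

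Your mis\`{e}re half is a genuinely different route from the paper's, and it contains the two real gaps. The paper never shows $\hat{c}_{k}$ is a fixed point of $J_{k}$; it simply notes $\md_{k}=F_{k}(\ml_{k})-J_{k}(\ml_{k})-\chi(0)\geqslant 0$, and since $F_{k}$ is strictly decreasing while $J_{k}+\chi(0)$ is increasing with unique crossing $\hat{c}_{k}$, nonnegativity alone forces $\ml_{k}\leqslant\hat{c}_{k}$, with $\md_{k}>0$ iff the inequality is strict. Your plan instead goes through the claim that $\hat{c}_{k}$ is a fixed point of $J_{k}$ and then invokes minimality from Theorem~\ref{thm:main_1_misere}; you verify this only for $k=1$ and defer general $k$. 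The claim is in fact true and your anticipated induction does work: let $d$ solve $F_{k}(d)=d+\chi(0)$ and prove, mirroring Lemma~\ref{lem:g_{j}_c_{k}_patterns}, that $\gamma_{j}(F_{i}(d),F_{k-j+1}(d),\ldots,F_{k}(d))=F_{j+i-1}(d)-d-\chi(0)$, whence $J_{k}(d)=G(F_{k-1}(d)-d)-\chi(0)=F_{k}(d)-\chi(0)=d$; so your route yields the bonus facts that $\hat{c}_{k}$ is a mis\`{e}re analogue of Corollary~\ref{cor:c_{k}_fixed_point} and is characterized by $F_{k}(x)=x+\chi(0)$, at the cost of this extra lemma. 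The second gap is that you treat existence and uniqueness of $\hat{c}_{k}$ as ``built into the definition,'' but that is itself part of the theorem and is needed twice in your argument (to identify $d$ with $\hat{c}_{k}$, and to conclude $M>0$ on $(0,\hat{c}_{k})$); it requires the endpoint sign comparison ($F_{k}(0)=1>G(\chi(0))$ and $F_{k}(c_{k-1})=G(0)<J_{k}(c_{k-1})+\chi(0)$) together with the monotone increase of $J_{k}$, proved by the same argument as for $H_{k}$ in \S\ref{subsec:thm:main_1_normal_proof_part_2}, and the strict decrease of $F_{k}$ from Lemma~\ref{lem:main_thm_1_1}. With those two points supplied, your proposal is a valid, slightly longer alternative to the paper's proof.
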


It is worthwhile to note that when the offspring distribution $\chi$ of $\mathcal{T}_{\chi}$ has expectation bounded above by $1$, $\mathcal{T}_{\chi}$ is finite almost surely, which, for any fixed $k$, forces the $k$-jump normal game starting at the root $\phi$ of $\mathcal{T}_{\chi}$ to end in a finite number of rounds almost surely. Consequently, the probability of draw in such a situation is $0$. In particular, this tells us that when $\chi$ is Poisson$(\lambda)$ for $\lambda \leqslant 1$, the probability that the $k$-jump normal game results in a draw is $0$. Theorem~\ref{main:normal_draw_probab_limit_Poisson} is of an asymptotic nature, asserting that when $\chi$ is Poisson$(\lambda)$, the probability of the $k$-jump normal game ending in a draw eventually becomes strictly positive as we keep increasing $\lambda$. Evidently, this gives rise to a phase transition phenomenon in that, the probability $\nd_{k} = \nd_{k,\lambda}$ of the event that a $k$-jump normal game played on a rooted Galton-Watson tree with Poisson$(\lambda)$ offspring results in a draw goes from being equal to $0$ for $\lambda \leqslant 1$ to being strictly positive for all $\lambda$ large enough. 
\begin{theorem}\label{main:normal_draw_probab_limit_Poisson}
Fix any $k \in \mathbb{N}$. When the offspring distribution $\chi$ is Poisson$(\lambda)$, we have $\nd_{k} = \nd_{k,\lambda} > 0$ for all $\lambda$ sufficiently large. We also have $\lambda^{k-1}\nl_{k} = \lambda^{k-1}\nl_{k,\lambda} \rightarrow 0$ as $\lambda \rightarrow \infty$. 
\end{theorem}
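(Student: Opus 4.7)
My plan is to establish the two assertions in sequence, proving the decay rate first and then invoking it to obtain positivity of the draw probability.

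I will start with the second assertion. Since Theorem~\ref{main:bounds_on_nl_{k}_ml_{k}} already gives $\nl_{k,\lambda} \leqslant c_{k}$, it suffices to prove $\lambda^{k-1} c_{k} \to 0$ as $\lambda \to \infty$. Writing $y_{j}(x) := 1 - F_{j}(x)$, the recursion \eqref{F_{i}_defn} becomes $y_{j}(x) = 1 - \exp(-\lambda(y_{j-1}(x) + x))$ with $y_{0} \equiv 0$. The elementary inequalities $1 - e^{-t} \leqslant t$ for $t \geqslant 0$ and $1 - e^{-t} \geqslant t/2$ for $t \in [0,1]$ then yield, by induction on $j$, the two-sided estimate $2^{-j} \lambda^{j} x \leqslant y_{j}(x) \leqslant 2\lambda^{j} x$, valid whenever $\lambda(y_{j-1}(x) + x) \leqslant 1$ for each $j$ in the range of interest. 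Fixing $M_{k} > k \cdot 2^{k-1}$ and setting $x^{*} := M_{k} \log\lambda/\lambda^{k}$, the upper bound yields $\lambda(y_{j-1}(x^{*}) + x^{*}) \leqslant 3 M_{k} \log\lambda / \lambda$, which is below $1$ for large $\lambda$; the lower bound then produces $y_{k-1}(x^{*}) \geqslant 2^{-(k-1)} \lambda^{k-1} x^{*}$, and substituting this into $F_{k}(x^{*}) = \exp(-\lambda(y_{k-1}(x^{*}) + x^{*}))$ gives $F_{k}(x^{*}) \leqslant \lambda^{-M_{k}/2^{k-1}} < x^{*}$. Since $F_{k}$ is strictly decreasing on $[0, c_{k-1}]$ with unique fixed point $c_{k}$, this forces $c_{k} < x^{*}$ and hence $\lambda^{k-1} c_{k} \leqslant M_{k} \log\lambda/\lambda \to 0$. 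The same two-sided bound, applied at $c_{k}$ together with the fixed-point equation $c_{k} = \exp(-\lambda(y_{k-1}(c_{k}) + c_{k}))$, also delivers the complementary lower estimate $\lambda^{k} c_{k} \gtrsim \log\lambda$, which I will use below.

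For the first assertion, Theorem~\ref{main:bounds_on_nl_{k}_ml_{k}} reduces $\nd_{k,\lambda} > 0$ to establishing $\nl_{k,\lambda} < c_{k}$ for large $\lambda$. By induction from the recursive definitions \eqref{F_{i}_defn} and \eqref{g_{i}_recursive_defn}, $F_{i}(0) = 1$ for all $i$ and $g_{k}(1, \ldots, 1) = 0$, so $H_{k}(0) = G(0) = \chi(0) = e^{-\lambda} > 0$, while $H_{k}(c_{k}) = c_{k}$ by Corollary~\ref{cor:c_{k}_fixed_point}. The central claim is that $H_{k}'(c_{k}) > 1$ for all sufficiently large $\lambda$; once this is in hand, $H_{k}(x) - x$ is positive at $0$, vanishes at $c_{k}$, and has strictly positive slope there, hence is strictly negative on an interval $(c_{k} - \delta, c_{k})$, and the intermediate value theorem then produces a fixed point of $H_{k}$ in $(0, c_{k})$ strictly smaller than $c_{k}$, forcing $\nl_{k,\lambda} < c_{k}$. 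To prove $H_{k}'(c_{k}) > 1$, I differentiate \eqref{H_{k}_defn} via the chain rule using $G'(y) = \lambda G(y)$ for Poisson$(\lambda)$; since $G(g_{k}(\vec F(c_{k}))) = c_{k}$, this yields $H_{k}'(c_{k}) = \lambda c_{k} \cdot [g_{k} \circ \vec F]'(c_{k})$. An inductive computation based on \eqref{F_{i}_defn} and \eqref{g_{i}_recursive_defn}, exploiting the asymptotics $F_{i}(c_{k}) \to 1$ and $F_{i}'(c_{k}) \sim -\lambda^{i}$ for $i < k$ (both consequences of $\lambda^{k-1} c_{k} \to 0$), produces the leading-order estimate $H_{k}'(c_{k}) \sim (\lambda^{k} c_{k})^{2}$. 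Combined with $\lambda^{k} c_{k} \gtrsim \log\lambda$, this diverges as $\lambda \to \infty$, completing the proof.

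The main obstacle I anticipate is the chain-rule computation of $H_{k}'(c_{k})$: the function $g_{k}$ is built from $k$ nested differences of $G$'s applied to $g_{k-1}$, so its partial derivatives expand into a large number of terms, and identifying the dominant product in $\lambda$, together with proving the parallel asymptotics $F_{i}(c_{k}) \to 1$ and $F_{i}'(c_{k}) \sim -\lambda^{i}$ by a companion induction, is where the bulk of the technical effort will lie. The decay-rate argument is comparatively clean once the test point $x^{*} = M_{k} \log\lambda/\lambda^{k}$ and the elementary two-sided bounds on $y_{j}(x)$ are identified.
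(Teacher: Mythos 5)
Your plan is correct, and the two halves of it compare differently with the paper. For the asymptotics of $c_{k,\lambda}$, your route is genuinely different and more elementary: the paper's \S\ref{sec:proof_of_main_4} first proves differentiability and monotonicity of $c_{k,\lambda}$ in $\lambda$ via the implicit function theorem, rules out $\ln c_{k,\lambda}/\lambda \rightarrow -c < 0$ by a contradiction argument, and then runs an inductive Taylor expansion to obtain both limits in \eqref{lambda^{i}c_{k,lambda}_limit_various_i}, whereas your test-point argument with the two-sided bounds $2^{-j}\lambda^{j}x \leqslant 1-F_{j,\lambda}(x) \leqslant 2\lambda^{j}x$ evaluated at $x^{*} = M_{k}\log\lambda/\lambda^{k}$ reaches the same two conclusions directly, and even yields the quantitative refinement $\lambda^{k}c_{k,\lambda} \geqslant C\log\lambda$ rather than mere divergence. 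Two small repairs are needed there: the upper bound really comes from the geometric sum $\lambda^{j}+\cdots+\lambda$, so the constant $2$ requires $\lambda \geqslant 2$; and to deduce $c_{k,\lambda} < x^{*}$ from $F_{k,\lambda}(x^{*}) < x^{*}$ you should work with the extension of $F_{i,\lambda}$ to all of $[0,1]$ as in \eqref{F_{i,lambda}_extended}, since you cannot assume a priori that $x^{*} \leqslant c_{k-1,\lambda}$. For positivity of $\nd_{k,\lambda}$, your strategy coincides with the paper's: reduce to $\nl_{k,\lambda} < c_{k,\lambda}$ via Theorem~\ref{main:bounds_on_nl_{k}_ml_{k}} and prove $H'_{k,\lambda}(c_{k,\lambda}) > 1$ for large $\lambda$ (your intermediate-value argument producing a fixed point strictly below $c_{k,\lambda}$, combined with the minimality of $\nl_{k,\lambda}$ from Theorem~\ref{thm:main_1_normal}, is an equivalent substitute for the paper's observation that $H'_{k,\lambda}(\nl_{k,\lambda}) \leqslant 1$). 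The announced leading-order estimate $H'_{k,\lambda}(c_{k,\lambda}) \sim (\lambda^{k}c_{k,\lambda})^{2}$, together with the inputs $F_{i,\lambda}(c_{k,\lambda}) \rightarrow 1$ and $F'_{i,\lambda}(c_{k,\lambda}) \sim -\lambda^{i}$ for $i < k$ (plus $-F'_{k,\lambda}(c_{k,\lambda}) \sim \lambda^{k}c_{k,\lambda}$, which your final claim implicitly uses), is exactly what the paper establishes through Lemma~\ref{lem:g_{j}_c_{k}_patterns} and Lemmas~\ref{lem:draw_probab_limit_lemma_1} and~\ref{lem:F_{i}_derivatives}; as you yourself note, this nested-difference derivative expansion is the bulk of the remaining technical work, and your sketch does not shortcut it.
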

\noindent Theorem~\ref{main:Poisson_k=2_normal_phase_transition} provides a more nuanced insight into the phase transition phenomenon when $k=2$:
\begin{theorem}\label{main:Poisson_k=2_normal_phase_transition}
For $\lambda \geqslant 2$, the function $H_{2} = H_{2,\lambda}$ is strictly convex on the interval $[0,c_{2}] = [0,c_{2,\lambda}]$. The slope of $H_{2,\lambda}$ at $c_{2,\lambda}$ is strictly increasing as a function of $\lambda$, for all $\lambda \geqslant 1$. As a consequence of these two facts, there is a unique \emph{critical} $\lambda_{c} \approx 2.41$ such that for all $2 \leqslant \lambda < \lambda_{c}$ we have $\nd_{2} = \nd_{2,\lambda} = 0$, and for all $\lambda > \lambda_{c}$, we have $\nd_{2,\lambda} > 0$.
\end{theorem}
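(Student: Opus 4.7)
The plan is to establish the three assertions of Theorem~\ref{main:Poisson_k=2_normal_phase_transition} in order and then combine them. Throughout I exploit the Poisson-specific identity $G^{(n)}(x) = \lambda^{n} G(x)$, valid for every $n \in \mathbb{N}$ since $G(x) = e^{-\lambda(1-x)}$. Writing $A_{\lambda}(x) := G(1 - F_{2}(x))$ and $B_{\lambda}(x) := G(F_{1}(x) - F_{2}(x))$, we have $H_{2,\lambda}(x) = G(A_{\lambda}(x) - B_{\lambda}(x))$. A direct chain-rule computation, combined with the Poisson identity, gives $H_{2,\lambda}''(x) = \lambda\, H_{2,\lambda}(x) \cdot \bigl[\lambda\,(A_{\lambda}'(x) - B_{\lambda}'(x))^{2} + (A_{\lambda}''(x) - B_{\lambda}''(x))\bigr]$, so strict convexity on $[0, c_{2,\lambda}]$ reduces to positivity of the bracketed factor there. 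The main obstacle of the whole proof is handling the sign of $A_{\lambda}''(x) - B_{\lambda}''(x)$, which depends intricately on the ranges of $F_{1}(x)$ and $F_{2}(x)$ over the interval and on the magnitude of $\lambda$; I would use the monotonicity of $F_{1}, F_{2}$ together with their pointwise bounds on $[0, c_{2,\lambda}]$ to reduce the task to a polynomial inequality in $\lambda$ that fails for small $\lambda$ but holds uniformly in $x$ once $\lambda \geqslant 2$.

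For the slope monotonicity (assertion 2), I would set $\Phi(\lambda) := H_{2,\lambda}'(c_{2,\lambda})$ and differentiate in $\lambda$. The $\lambda$-dependence is both explicit (through the $\lambda$'s inside $G$ and its derivatives) and implicit (through $c_{2,\lambda}$), and implicit differentiation of $F_{2,\lambda}(c_{2,\lambda}) = c_{2,\lambda}$ yields a closed-form expression for $\frac{d c_{2,\lambda}}{d \lambda}$ in terms of $F_{1}, F_{2}$ and their derivatives at $c_{2,\lambda}$. Invoking the Poisson identity once more and factoring out the manifestly positive exponentials, the sign of $\Phi'(\lambda)$ reduces to that of an explicit scalar expression in $\lambda$ and $c_{2,\lambda}$; establishing its positivity for $\lambda \geqslant 1$ is cleaner than the convexity step, since one needs the inequality only at the single point $x = c_{2,\lambda}$ rather than uniformly on an interval.

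Given assertions 1 and 2, the phase transition follows by a short convex-analysis argument. By Theorem~\ref{main:bounds_on_nl_{k}_ml_{k}}, $\nd_{2,\lambda} > 0$ iff $\nl_{2,\lambda} < c_{2,\lambda}$, and by Corollary~\ref{cor:c_{k}_fixed_point} the point $c_{2,\lambda}$ is itself a fixed point of $H_{2,\lambda}$. Set $\phi(x) := H_{2,\lambda}(x) - x$, which by assertion 1 is strictly convex on $[0, c_{2,\lambda}]$, with $\phi(0) = \chi(0) > 0$ and $\phi(c_{2,\lambda}) = 0$. If $H_{2,\lambda}'(c_{2,\lambda}) \leqslant 1$, then $\phi'(c_{2,\lambda}) \leqslant 0$, and strict convexity forces $\phi'(x) < 0$ on $[0, c_{2,\lambda})$, whence $\phi > 0$ there and $\nl_{2,\lambda} = c_{2,\lambda}$. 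If instead $H_{2,\lambda}'(c_{2,\lambda}) > 1$, then $\phi < 0$ just to the left of $c_{2,\lambda}$, and an intermediate-value argument on $[0, c_{2,\lambda}]$ produces a zero of $\phi$ in $(0, c_{2,\lambda})$, giving $\nl_{2,\lambda} < c_{2,\lambda}$. Combined with assertion 2, the equation $H_{2,\lambda}'(c_{2,\lambda}) = 1$ has a unique solution $\lambda_{c}$; numerical root-finding on this single scalar equation yields $\lambda_{c} \approx 2.41$, which automatically exceeds $2$ and so validates the use of assertion 1.
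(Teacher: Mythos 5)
Your overall architecture is the same as the paper's, and your third step is essentially the paper's closing argument: with strict convexity in hand, setting $\phi(x) = H_{2,\lambda}(x) - x$ and splitting according to whether $H'_{2,\lambda}(c_{2,\lambda}) \leqslant 1$ or $> 1$ correctly yields $\nl_{2,\lambda} = c_{2,\lambda}$ (hence $\nd_{2,\lambda} = 0$) in the first case and $\nl_{2,\lambda} < c_{2,\lambda}$ (hence $\nd_{2,\lambda} > 0$) in the second, and the strict monotonicity of the slope then gives uniqueness of $\lambda_{c}$. Your structural identity $H''_{2,\lambda} = \lambda H_{2,\lambda}\bigl[\lambda (A'_{\lambda}-B'_{\lambda})^{2} + (A''_{\lambda}-B''_{\lambda})\bigr]$ is also correct and matches the paper's starting point.

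The genuine gap is assertion 1, which is the bulk of the theorem and which you dispose of in a single unsubstantiated sentence. The claim that positivity of the bracketed factor "reduces to a polynomial inequality in $\lambda$ that holds uniformly in $x$" is not backed by any computation and is very unlikely to be true as stated: in the paper no single grouping of the positive and negative terms in $H''_{2,\lambda}$ works on all of $[0,c_{2,\lambda}]$, and the interval must be partitioned at the points $\delta_{\lambda}$, $\gamma_{\lambda}$ where $F_{2,\lambda}$ equals $\tfrac{5}{4\lambda}$ and $\tfrac{1}{\lambda}$, with different term-groupings (five separate lemmas) on the different pieces; moreover the key inequalities there (e.g.\ $\lambda^{2}F_{2,\lambda}(x)\{\alpha(x)-\beta(x)\}+\lambda F_{2,\lambda}(x) > 1$ near $c_{2,\lambda}$) are transcendental in the quantity $\eta_{\lambda} = \lambda c_{2,\lambda}$ and hinge on the uniform bound $\eta_{\lambda} < 0.5284$, itself a nontrivial lemma proved via implicit differentiation of $F_{2,\lambda}(c_{2,\lambda}) = c_{2,\lambda}$, together with numerical evaluations at $\lambda = 2$ and $\lambda = 2.5$. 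Since $H_{2,\lambda}$ fails to be convex just beyond $c_{2,\lambda}$, any valid argument must exploit information specific to $[0,c_{2,\lambda}]$ (such as $F_{2,\lambda}(x) \geqslant c_{2,\lambda}$ and the bound on $\eta_{\lambda}$), which a bound "uniform in $x$" depending on $\lambda$ alone cannot capture. Assertion 2 is likewise only an outline: the actual proof needs the closed form $H'_{2,\lambda}(c_{2,\lambda}) = \lambda^{2}\eta_{\lambda}^{2}e^{-2\eta_{\lambda}} + 2\lambda\eta_{\lambda}^{2}e^{-\eta_{\lambda}} - \lambda\eta_{\lambda}^{3}e^{-\eta_{\lambda}} - \eta_{\lambda}^{3}$ and again the bound on $\eta_{\lambda}$ to control the sign of its $\lambda$-derivative; and for the existence (not merely uniqueness) of $\lambda_{c}$ you must exhibit values of $\lambda$ on both sides of slope $1$ (the paper computes $\approx 0.72$ at $\lambda = 2$ and $\approx 1.06$ at $\lambda = 2.5$), a step your appeal to "numerical root-finding" silently presupposes.
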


The next result is an especially strong one as it sheds light on the rate of decay of $\nl_{k,\lambda}$, when $k=2$ and the offspring distribution is Poisson$(\lambda)$, as $\lambda \rightarrow \infty$.
\begin{theorem}\label{lem:decay_rate_nl_{2,lambda}}
We have $\lim_{\lambda \rightarrow \infty} \lambda^{i}\nl_{2,\lambda} = 0$ for \emph{all} $i \in \mathbb{N}$.
\end{theorem}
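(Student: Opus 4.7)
The plan is to bound $\nl_{2,\lambda}$ above by an exponentially decaying quantity, exploiting that Theorem~\ref{thm:main_1_normal} characterizes $\nl_{2,\lambda}$ as the \emph{minimum} positive fixed point of $H_2 = H_{2,\lambda}$. Concretely, I will locate a small $x_\lambda > 0$ at which $H_2(x_\lambda) < x_\lambda$; combined with the facts that $H_2(0) = G(0) = \chi(0) = e^{-\lambda} > 0$ and that $H_2$ is continuous on $[0, c_{1,\lambda}]$, the intermediate value theorem will then produce a fixed point of $H_2$ inside $(0, x_\lambda)$, forcing $\nl_{2,\lambda} \leqslant x_\lambda$.

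I first record, using the Poisson specialization $G(y) = e^{-\lambda(1-y)}$, that $F_0 \equiv 1$, $F_1(x) = e^{-\lambda x}$, $F_2(x) = e^{-\lambda(1 + x - F_1(x))}$, and
\[
g_2\bigl(F_0(x), F_1(x), F_2(x)\bigr) = G\bigl(1 - F_2(x)\bigr) - G\bigl(F_1(x) - F_2(x)\bigr).
\]
Since $F_2(x) \leqslant F_1(x) \leqslant 1$ on $[0, c_{1,\lambda}]$, both arguments of $G$ lie in $[0,1]$, and the mean value theorem gives
\[
g_2\bigl(F_0(x), F_1(x), F_2(x)\bigr) = G'(\xi)\,\bigl(1 - F_1(x)\bigr)
\]
for some $\xi \in [F_1(x) - F_2(x),\, 1 - F_2(x)] \subset [0,1]$. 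Using the bounds $G'(\xi) \leqslant G'(1) = \lambda$ and $1 - F_1(x) = 1 - e^{-\lambda x} \leqslant \lambda x$, this yields the crude but sufficient estimate $g_2 \leqslant \lambda^2 x$, and therefore the key bound
\[
H_2(x) = G(g_2) \leqslant e^{-\lambda(1 - \lambda^2 x)} = e^{-\lambda}\, e^{\lambda^3 x}, \qquad x \in [0, c_{1,\lambda}].
\]

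With this bound I set $x_\lambda = 2 e^{-\lambda}$; one checks $x_\lambda < c_{1,\lambda}$ for large $\lambda$ since $c_{1,\lambda}$ (the fixed point of $F_1(x) = e^{-\lambda x}$) decays only like $(\log \lambda)/\lambda$. Because $\lambda^3 x_\lambda = 2 \lambda^3 e^{-\lambda} \to 0$,
\[
H_2(x_\lambda) \leqslant e^{-\lambda}\, e^{2 \lambda^3 e^{-\lambda}} = e^{-\lambda}\bigl(1 + o(1)\bigr) < 2 e^{-\lambda} = x_\lambda
\]
for all sufficiently large $\lambda$. The intermediate value theorem applied to $\varphi(x) = H_2(x) - x$ on $[0, x_\lambda]$, with $\varphi(0) = e^{-\lambda} > 0$ and $\varphi(x_\lambda) < 0$, then produces a fixed point of $H_2$ in $(0, x_\lambda)$. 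Since $\nl_{2,\lambda}$ is the minimum positive fixed point of $H_2$, we conclude $\nl_{2,\lambda} \leqslant 2 e^{-\lambda}$ for all $\lambda$ large, whence $\lambda^i \nl_{2,\lambda} \leqslant 2 \lambda^i e^{-\lambda} \to 0$ for every $i \in \mathbb{N}$.

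The main obstacle I anticipate is deriving a sufficiently sharp estimate on $g_2\bigl(F_0(x), F_1(x), F_2(x)\bigr)$ for small $x$: a careless bound that discards the cancellation between the two $G$-values (for instance, dominating $g_2$ by $G(1 - F_2(x)) \leqslant 1$) would produce only $H_2(x) \leqslant 1$, and the argument would collapse. The mean value theorem captures exactly the cancellation and produces the crucial factor $1 - F_1(x) = O(\lambda x)$, which, combined with $G'(\xi) \leqslant \lambda$, delivers the required factor of $\lambda^2 x$ in $g_2$. The remaining domain check that $x_\lambda \in [0, c_{1,\lambda}]$ is elementary, since $c_{1,\lambda}$ decays only polynomially in $\lambda^{-1}$ while $x_\lambda$ decays exponentially.
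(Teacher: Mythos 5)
Your proposal is correct, and it reaches the conclusion by a genuinely different and more direct route than the paper. The paper's proof leans on heavier machinery: it first establishes (Theorem~\ref{main:Poisson_k=2_normal_phase_transition}) that $H_{2,\lambda}$ is strictly convex on $[0,c_{2,\lambda}]$, so that $y=H_{2,\lambda}(x)$ crosses $y=x$ at most twice there and lies below $y=x$ precisely on $(\nl_{2,\lambda},c_{2,\lambda})$; it then combines this with the asymptotics $\lambda^{i}c_{2,\lambda}\rightarrow\infty$ from \eqref{lambda^{i}c_{k,lambda}_limit_various_i} to reduce the claim, for each fixed $i$ and $c>0$, to the inequality $H_{2,\lambda}(c\lambda^{-i})<c\lambda^{-i}$ (Lemma~\ref{lem:decay_rate_nl_{2,lambda}_precursor}), proved in the appendix by estimating $F_{1,\lambda}$ and $F_{2,\lambda}$ at $c\lambda^{-i}$. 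You instead exploit only Theorem~\ref{thm:main_1_normal} (minimality of $\nl_{2,\lambda}$ among positive fixed points), the continuity of $H_{2,\lambda}$, and a mean-value-theorem bound $g_{2}\big(F_{0}(x),F_{1}(x),F_{2}(x)\big)\leqslant G'(1)\,(1-F_{1,\lambda}(x))\leqslant\lambda^{2}x$, which captures exactly the cancellation between the two $G$-values and yields $H_{2,\lambda}(x)\leqslant e^{-\lambda}e^{\lambda^{3}x}$; evaluating at $x_{\lambda}=2e^{-\lambda}$ and applying the intermediate value theorem (together with $H_{2,\lambda}(0)=\chi(0)=e^{-\lambda}>0$ and the elementary check $2e^{-\lambda}<c_{1,\lambda}$) forces a fixed point, hence $\nl_{2,\lambda}$, into $(0,2e^{-\lambda}]$. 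What your route buys is twofold: it avoids the convexity analysis and the $c_{2,\lambda}$ asymptotics altogether, and it delivers a strictly stronger, exponential bound $\nl_{2,\lambda}\leqslant 2e^{-\lambda}$ (which, combined with the lower bound $\nl_{2,\lambda}>\chi(0)=e^{-\lambda}$ from Theorem~\ref{main:bounds_on_nl_{k}_ml_{k}}, pins $\nl_{2,\lambda}$ within a factor of $2$ of $e^{-\lambda}$), whereas the paper's argument only produces decay faster than every polynomial; the paper's approach, on the other hand, is embedded in the convexity framework it needs anyway for the phase-transition result. The only points worth stating explicitly in a polished write-up are that $F_{2,\lambda}(x)\leqslant F_{1,\lambda}(x)\leqslant 1$ on $[0,c_{1,\lambda}]$ (so both arguments of $G$ lie in $[0,1]$ and $H_{2,\lambda}$ is well defined there, as in \eqref{belongs_to_D_{i}}), and that $G'$ is increasing so the mean-value point $\xi\leqslant 1-F_{2,\lambda}(x)\leqslant 1$ indeed gives $G'(\xi)\leqslant\lambda$; both are already implicit in your sketch.
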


Theorem~\ref{main:misere_normal_comparison} compares the $2$-jump normal game with the $2$-jump mis\`{e}re game (see Proposition~\ref{prop:ml_2_nl_2_more_refined} for a more precise description of the values of $\lambda$ for which the first of the three inequalities is shown to hold analytically), while Theorem~\ref{main:compare_1_vs_2} compares the $1$-jump normal game with the $2$-jump normal game, when all of these games are played on $\mathcal{T}_{\chi}$ with $\chi$ being Poisson$(\lambda)$. \begin{theorem}\label{main:misere_normal_comparison}
When $k=2$ and $\chi$ is Poisson$(\lambda)$, we have $\ml_{2,\lambda} \leqslant \nl_{2,\lambda}$, $\nd_{2,\lambda} < \md_{2,\lambda}$ and $\ml_{2,\lambda} \leqslant \nl_{2,\lambda} < \nw_{2,\lambda}$ for all $\lambda$ sufficiently large.
\end{theorem}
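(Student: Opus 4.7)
The plan is to exploit the Poisson structure $G(x) = e^{\lambda(x-1)}$, which satisfies the shift identity $G(c+y) = e^{\lambda c} G(y)$ for every $c \geqslant 0$. Applied to the definition \eqref{gamma_{i}_recursive_defn} of $\gamma_2$, this yields $\gamma_2(F_0(x), F_1(x), F_2(x)) = e^{\lambda \chi(0)} g_2(F_0(x), F_1(x), F_2(x))$, so that, writing $\mathbf{g}(x)$ for $g_2(F_0(x), F_1(x), F_2(x))$,
\[
J_2(x) + \chi(0) = e^{\lambda \chi(0)} G\bigl(e^{\lambda \chi(0)} \mathbf{g}(x)\bigr),
\]
which tethers $J_2$ cleanly to $H_2(x) = G(\mathbf{g}(x))$. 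Throughout, I will use $\chi(0) = e^{-\lambda}$ and Theorem~\ref{lem:decay_rate_nl_{2,lambda}} (i.e., $\lambda^i \nl_{2,\lambda} \to 0$ for every $i$) to control error terms in Taylor expansions around $0$.

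For $\ml_{2,\lambda} \leqslant \nl_{2,\lambda}$, I substitute $x = \nl_{2,\lambda}$ in the identity above and use $H_2(\nl_{2,\lambda}) = \nl_{2,\lambda}$ to obtain
\[
J_2(\nl_{2,\lambda}) = \nl_{2,\lambda}\, e^{\alpha(\lambda)} - \chi(0), \qquad \alpha(\lambda) := \lambda \chi(0) + \lambda\bigl(e^{\lambda \chi(0)} - 1\bigr)\mathbf{g}(\nl_{2,\lambda}).
\]
Expanding $G$ to first order about $0$ yields $\mathbf{g}(\nl_{2,\lambda}) = O(\lambda^2 \chi(0) \nl_{2,\lambda})$, hence $\alpha(\lambda) = O(\lambda \chi(0))$ and $\nl_{2,\lambda}(e^{\alpha(\lambda)} - 1) = O(\lambda \chi(0) \nl_{2,\lambda})$. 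Since $\lambda \nl_{2,\lambda} \to 0$, this is eventually strictly less than $\chi(0)$, giving $J_2(\nl_{2,\lambda}) < \nl_{2,\lambda}$. Because $J_2(0) = G(\chi(0)) - \chi(0) > 0$ (by strict monotonicity of $G$), the intermediate value theorem supplies a fixed point of $J_2$ in $(0, \nl_{2,\lambda}]$, and since $\ml_{2,\lambda}$ is the minimum positive fixed point by Theorem~\ref{thm:main_1_misere}, the inequality follows.

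For the remaining two inequalities I rely on a mean-value argument on $F_2$. Combining $\nd_{2,\lambda} = F_2(\nl_{2,\lambda}) - \nl_{2,\lambda}$ with $\md_{2,\lambda} = F_2(\ml_{2,\lambda}) - \ml_{2,\lambda} - \chi(0)$ gives
\[
\md_{2,\lambda} - \nd_{2,\lambda} = \bigl[F_2(\ml_{2,\lambda}) - F_2(\nl_{2,\lambda})\bigr] + (\nl_{2,\lambda} - \ml_{2,\lambda}) - \chi(0).
\]
By MVT, $F_2(\ml_{2,\lambda}) - F_2(\nl_{2,\lambda}) = -F_2'(\xi)(\nl_{2,\lambda} - \ml_{2,\lambda})$ for some $\xi \in (\ml_{2,\lambda}, \nl_{2,\lambda})$; a direct computation yields $F_2'(0) = -\lambda(\lambda+1)$, and continuity propagates this to $\xi$ since $\nl_{2,\lambda} \to 0$. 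Extracting leading-order asymptotics from the fixed-point equations ($\nl_{2,\lambda} \sim \chi(0)$ from $\nl_{2,\lambda} = G(\mathbf{g}(\nl_{2,\lambda}))$, and $\ml_{2,\lambda} \sim \lambda \chi(0)^2$ from the perturbed equation for $\ml_{2,\lambda}$) gives $\nl_{2,\lambda} - \ml_{2,\lambda} \sim \chi(0)$, whence $\md_{2,\lambda} - \nd_{2,\lambda} \sim \lambda^2 \chi(0) > 0$ for large $\lambda$. An analogous MVT argument on $[0, \nl_{2,\lambda}]$ gives $\nw_{2,\lambda} \sim \lambda(\lambda+1)\nl_{2,\lambda}$, which dominates $\nl_{2,\lambda}$ once $\lambda(\lambda+1) > 1$, establishing $\nl_{2,\lambda} < \nw_{2,\lambda}$ for large $\lambda$.

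The hardest step is verifying $J_2(\nl_{2,\lambda}) < \nl_{2,\lambda}$: three quantities decay to zero at entangled rates (namely $\chi(0)$, $\nl_{2,\lambda}$, and $\mathbf{g}(\nl_{2,\lambda})$), and the clean bookkeeping relies on the Poisson shift identity. The leading-order asymptotic $\ml_{2,\lambda} \sim \lambda \chi(0)^2$ needed for the second inequality is also delicate, as it requires solving the perturbed fixed-point equation for $J_2$ to second order in $\chi(0)$ while simultaneously estimating $\mathbf{g}(\ml_{2,\lambda})$.
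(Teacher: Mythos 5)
Your proposal is correct, but it takes a genuinely different route from the paper's. For $\ml_{2,\lambda} \leqslant \nl_{2,\lambda}$, the paper argues by induction on the truncated games, proving $\ml_{2,\lambda}^{(n)} < \nl_{2,\lambda}^{(n)}$ for every $n$ via the gradient inequality for the convex $G_{\lambda}$ together with bounds that are uniform in $n$, and then passes to the limit; you instead work directly at the level of the limiting fixed points, using the Poisson shift identity $G_{\lambda}(\chi(0)+y)=e^{\lambda \chi(0)}G_{\lambda}(y)$ to get $J_{2,\lambda}(\nl_{2,\lambda})=\nl_{2,\lambda}e^{\alpha(\lambda)}-\chi(0)<\nl_{2,\lambda}$ and then invoke the intermediate value theorem together with the minimality of $\ml_{2,\lambda}$ from Theorem~\ref{thm:main_1_misere}; this is shorter and avoids the uniform-in-$n$ bookkeeping. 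For $\nd_{2,\lambda}<\md_{2,\lambda}$ the divergence is larger: the paper combines alternating-series estimates with a bound on $(\lambda^{2}+\lambda+1)\ml_{2,\lambda}+e^{-\lambda}$ through $J_{2,\lambda}(\nl_{2,\lambda})$, and crucially leans on $\nl_{2,\lambda}\leqslant\nl_{1,\lambda}$ (a forward reference to Theorem~\ref{main:compare_1_vs_2}) plus strict convexity of $H_{1,\lambda}$ to dominate $\nl_{1,\lambda}$ by a multiple of $e^{-\lambda}$, whereas you extract the sharp asymptotics $\nl_{2,\lambda}\sim e^{-\lambda}$ and $\ml_{2,\lambda}\sim\lambda e^{-2\lambda}$ directly from the fixed-point equations (legitimate, since Theorem~\ref{lem:decay_rate_nl_{2,lambda}} is proved independently of the present statement, and your first inequality supplies $\lambda^{i}\ml_{2,\lambda}\to 0$ when you estimate $g_{2,\lambda}$ at $\ml_{2,\lambda}$ --- so the order in which you prove the three inequalities matters, and your ordering is the right one) and then conclude by the mean value theorem applied to $F_{2,\lambda}$; your route is self-contained, dispenses with the forward reference and the $H_{1,\lambda}$ convexity digression, and in fact yields more precise information, namely $\md_{2,\lambda}-\nd_{2,\lambda}\sim\lambda^{2}e^{-\lambda}$ and $\nw_{2,\lambda}\sim\lambda(\lambda+1)\nl_{2,\lambda}$, while the paper's series-estimate approach for the third inequality is essentially equivalent in content to your MVT step. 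One small point of rigor: since $\lambda\to\infty$ and the intermediate point $\xi$ vary together, the phrase ``continuity propagates this to $\xi$'' should be replaced by the uniform estimate $-F'_{2,\lambda}(\xi)=\lambda\left(\lambda F_{1,\lambda}(\xi)+1\right)F_{2,\lambda}(\xi)=(\lambda^{2}+\lambda)(1+o(1))$ for all $\xi\in[0,\nl_{2,\lambda}]$, which follows from $\lambda\xi\leqslant\lambda\nl_{2,\lambda}\to0$ and $1-F_{2,\lambda}(\xi)=O(\lambda^{2}\nl_{2,\lambda})$; with that rewording your argument is complete.
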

\begin{remark}\label{rem:ml_{2}_decay_rate}
From Theorems~\ref{lem:decay_rate_nl_{2,lambda}} and \ref{main:misere_normal_comparison}, we conclude that $\lim_{\lambda \rightarrow \infty}\lambda^{i}\ml_{2,\lambda} = 0$ for all $i \in \mathbb{N}$.
\end{remark}
\begin{theorem}\label{main:compare_1_vs_2}
When $k=2$ and $\chi$ is Poisson$(\lambda)$, we have $\nl_{2,\lambda} \leqslant \nl_{1,\lambda}$, $\nd_{2,\lambda} < \nd_{1,\lambda}$ and $\nw_{1,\lambda} < \nw_{2,\lambda}$ for all $\lambda$ sufficiently large.
\end{theorem}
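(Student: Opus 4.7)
The plan is to reduce the three inequalities to two key estimates and then establish each via direct asymptotic analysis using the explicit Poisson probability generating function $G(x) = e^{\lambda(x-1)}$. Throughout, I abbreviate $\nl_{k} = \nl_{k,\lambda}$, $\nw_{k} = \nw_{k,\lambda}$, $\nd_{k} = \nd_{k,\lambda}$. From Theorem~\ref{thm:main_1_normal} we have $\nw_{k} = 1 - F_{k}(\nl_{k})$, hence $\nd_{k} = F_{k}(\nl_{k}) - \nl_{k}$, so
\[
\nd_{1} - \nd_{2} = \bigl(F_{1}(\nl_{1}) - F_{2}(\nl_{2})\bigr) - (\nl_{1} - \nl_{2}) = (\nw_{2} - \nw_{1}) - (\nl_{1} - \nl_{2}).
\]
It therefore suffices to establish, for $\lambda$ sufficiently large, (I) $\nl_{2} \leqslant \nl_{1}$ and (II) $F_{1}(\nl_{1}) - F_{2}(\nl_{2}) > \nl_{1} - \nl_{2}$; then (II) together with (I) yields both $\nw_{2} > \nw_{1}$ and $\nd_{1} > \nd_{2}$ simultaneously.

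For (I), I would show $H_{2}(\nl_{1}) < \nl_{1}$, so that the continuous function $H_{2}(x) - x$ is strictly positive at $x = 0$ (since $H_{2}(0) = \chi(0) = e^{-\lambda} > 0$) and strictly negative at $x = \nl_{1}$, producing via the intermediate value theorem a fixed point of $H_{2}$ in $(0, \nl_{1})$; Theorem~\ref{thm:main_1_normal} then gives $\nl_{2} \leqslant \nl_{1}$. Since $H_{1}(\nl_{1}) = \nl_{1}$ and $G$ is strictly increasing, the inequality $H_{2}(\nl_{1}) < H_{1}(\nl_{1})$ reduces to $g_{2}(F_{0}(\nl_{1}), F_{1}(\nl_{1}), F_{2}(\nl_{1})) < 1 - F_{1}(\nl_{1})$. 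Writing $a = F_{1}(\nl_{1}) - F_{2}(\nl_{1})$ and $b = 1 - F_{1}(\nl_{1})$, so $1 - F_{2}(\nl_{1}) = a + b$, this becomes $G(a+b) - G(a) < b$, which by the mean value theorem is equivalent to $G'(\theta) < 1$ for some $\theta \in (a, a+b)$. For Poisson, $G'(\theta) = \lambda e^{\lambda(\theta - 1)}$, so it suffices to show $1 - F_{2}(\nl_{1}) \leqslant 1 - (\log\lambda)/\lambda$. Using the standard asymptotic $\nl_{1} \sim e^{-\lambda}$ (derivable from the fixed-point equation $\nl_{1} = e^{-\lambda e^{-\lambda \nl_{1}}}$ together with $\nl_{1} \to 0$ and $\lambda \nl_{1} \to 0$), one computes $1 - F_{2}(\nl_{1}) = O(\lambda^{2}e^{-\lambda})$, which is negligible compared to $(\log\lambda)/\lambda$, so $G'(\theta) = O(\lambda e^{-\lambda}) \to 0$.

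For (II), I decompose $F_{1}(\nl_{1}) - F_{2}(\nl_{2}) = [F_{1}(\nl_{1}) - F_{2}(\nl_{1})] - [F_{2}(\nl_{2}) - F_{2}(\nl_{1})]$ and estimate each bracket. Direct Taylor expansion of the Poisson expressions $F_{1}(\nl_{1}) = e^{-\lambda \nl_{1}}$ and $F_{2}(\nl_{1}) = e^{\lambda(e^{-\lambda \nl_{1}} - \nl_{1} - 1)}$ around $\nl_{1} \to 0$ (using $\lambda \nl_{1} \to 0$) yields the first bracket as $\lambda^{2}\nl_{1}(1 + o(1))$, of order at least $\lambda^{2}e^{-\lambda}$. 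To bound the second bracket, I first control $\nl_{1} - \nl_{2}$ via
\[
\nl_{1} - \nl_{2} = H_{1}(\nl_{1}) - H_{2}(\nl_{2}) = H_{1}'(\xi)(\nl_{1} - \nl_{2}) + \bigl[H_{1}(\nl_{2}) - H_{2}(\nl_{2})\bigr]
\]
for some $\xi \in (\nl_{2}, \nl_{1})$. A computation analogous to that of (I) but at $x = \nl_{2}$ yields $H_{1}(\nl_{2}) - H_{2}(\nl_{2}) = O(\lambda^{2}e^{-2\lambda})$, while $H_{1}'(\xi) = G'(1 - F_{1}(\xi)) \cdot \lambda e^{-\lambda \xi} = O(\lambda^{2}e^{-\lambda}) = o(1)$ uniformly on $[\nl_{2}, \nl_{1}]$; solving gives $\nl_{1} - \nl_{2} = O(\lambda^{2}e^{-2\lambda})$. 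Combined with the uniform bound $|F_{2}'(x)| \leqslant \lambda(\lambda + 1)$ on $[0, \nl_{1}]$ (from $F_{2}'(x) = -G'(F_{1}(x) - x)(\lambda e^{-\lambda x} + 1)$ and $G'(y) \leqslant \lambda$ on $[0,1]$), this yields $|F_{2}(\nl_{2}) - F_{2}(\nl_{1})| = O(\lambda^{4}e^{-2\lambda})$. Since $\lambda^{4}e^{-2\lambda} = o(\lambda^{2}e^{-\lambda})$, the first bracket dominates the second, and we obtain $F_{1}(\nl_{1}) - F_{2}(\nl_{2})$ of order at least $\lambda^{2}e^{-\lambda}$, which for $\lambda$ large comfortably exceeds $\nl_{1} - \nl_{2} = O(\lambda^{2}e^{-2\lambda})$.

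The main obstacle is the asymptotic bookkeeping in step (II). One must simultaneously control $\nl_{1}$, $\nl_{2}$, their difference, and the values and derivatives of $F_{1}, F_{2}, H_{1}, H_{2}$ at these points, all tracked to the appropriate order in $\lambda$. The chain of estimates---bounding $\nl_{1} - \nl_{2}$ via a contraction-style argument, using this to control the change in $F_{2}$ between the two fixed points, and finally verifying that the resulting error $O(\lambda^{4}e^{-2\lambda})$ remains subdominant to the leading gap $\lambda^{2}e^{-\lambda}$---must be assembled carefully so that all three desired inequalities survive.
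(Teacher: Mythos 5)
Your proposal is correct in substance but follows a genuinely different route from the paper. For $\nl_{2,\lambda}\leqslant\nl_{1,\lambda}$ the paper runs an induction over the truncations $\nl_{k,\lambda}^{(n)}$ via the recursion $\nl^{(n+2)}=H_{k,\lambda}(\nl^{(n)})$ and the monotonicity of $H_{1,\lambda}$, whereas you compare the games in one stroke: $H_{2,\lambda}(\nl_{1,\lambda})<\nl_{1,\lambda}$ plus the intermediate value theorem and the minimality of $\nl_{2,\lambda}$ among positive fixed points of $H_{2,\lambda}$ (legitimate, since $\nl_{1,\lambda}\leqslant c_{1,\lambda}$ by Theorem~\ref{main:bounds_on_nl_{k}_ml_{k}}, so $H_{2,\lambda}$ is defined there). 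For the draws, the paper bounds $\nd_{1,\lambda}$ below and $\nd_{2,\lambda}$ above and reduces everything to $\nl_{1,\lambda}<\lambda\nl_{2,\lambda}$, which it proves using the strict convexity of $H_{1,\lambda}$ and the strong decay of Theorem~\ref{lem:decay_rate_nl_{2,lambda}}; you instead prove the single inequality $F_{1,\lambda}(\nl_{1,\lambda})-F_{2,\lambda}(\nl_{2,\lambda})>\nl_{1,\lambda}-\nl_{2,\lambda}$, which delivers $\nd_{1,\lambda}>\nd_{2,\lambda}$ and $\nw_{2,\lambda}>\nw_{1,\lambda}$ simultaneously, via the contraction bound $\nl_{1,\lambda}-\nl_{2,\lambda}=O(\lambda^{2}e^{-2\lambda})$ and the crude derivative bound $|F'_{2,\lambda}|\leqslant\lambda(\lambda+1)$. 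I checked your individual estimates ($F_{1,\lambda}(\nl_{1,\lambda})-F_{2,\lambda}(\nl_{1,\lambda})=\lambda^{2}\nl_{1,\lambda}(1+o(1))$, $H_{1,\lambda}(\nl_{2,\lambda})-H_{2,\lambda}(\nl_{2,\lambda})=O(\lambda^{2}e^{-2\lambda})$, $H'_{1,\lambda}=o(1)$ on $[\nl_{2,\lambda},\nl_{1,\lambda}]$) and they go through; your route avoids the convexity of $H_{1,\lambda}$ and Theorem~\ref{lem:decay_rate_nl_{2,lambda}} altogether, at the cost of the asymptotic bookkeeping you describe.

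The one step you should not wave at is the input $\nl_{1,\lambda}\sim e^{-\lambda}$, on which every estimate in both of your steps rests. You say it is derivable from the fixed-point equation together with $\nl_{1,\lambda}\rightarrow0$ and $\lambda\nl_{1,\lambda}\rightarrow0$; but $\lambda\nl_{1,\lambda}\rightarrow0$ does not follow from the fixed-point equation and $\nl_{1,\lambda}\rightarrow0$ alone: $c_{1,\lambda}$ is also a fixed point of $H_{1,\lambda}$ (Corollary~\ref{cor:c_{k}_fixed_point}), it also tends to $0$, and yet $\lambda c_{1,\lambda}\rightarrow\infty$ by \eqref{lambda^{i}c_{k,lambda}_limit_various_i}, so minimality of $\nl_{1,\lambda}$ must enter. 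The fix is exactly the device you already use in step (I): for any fixed $C>1$, $H_{1,\lambda}(Ce^{-\lambda})=\exp\{-\lambda e^{-C\lambda e^{-\lambda}}\}\leqslant e^{-\lambda}e^{C\lambda^{2}e^{-\lambda}}<Ce^{-\lambda}$ for $\lambda$ large, while $H_{1,\lambda}(0)=e^{-\lambda}>0$, so the intermediate value theorem and Theorem~\ref{thm:main_1_normal} give $\nl_{1,\lambda}<Ce^{-\lambda}$; combined with $\nl_{1,\lambda}>\chi(0)=e^{-\lambda}$ from Theorem~\ref{main:bounds_on_nl_{k}_ml_{k}}, this yields $\nl_{1,\lambda}\sim e^{-\lambda}$ and in particular $\lambda^{j}\nl_{1,\lambda}\rightarrow0$ for every $j$ (the paper obtains an equivalent bound in \S\ref{subsubsec:misere_normal_comparison_part_2_subpart_3} through convexity of $H_{1,\lambda}$). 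With that patch your argument is complete.
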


Our final result goes back to general offspring distributions $\chi$, and concerns itself with average durations of $k$-jump normal games. We conjecture, from the patterns noticed in its proof, that the second assertion of Theorem~\ref{main:avg_dur} can be extended to \emph{any} $k \in \mathbb{N}$, though we cannot seem to provide an intuitive argument as to why such a relation should be true.
\begin{theorem}\label{main:avg_dur}
For any fixed $k$, when $\nl_{k} = c_{k}$ and $\max\left\{H'_{k}(c_{k}), \left|F'_{k}(c_{k})\right|\right\} < 1$, the expected duration of the $k$-jump normal game is finite. Moreover, for $k = 2, 3$, if $\nl_{k} = c_{k}$ and $\left|F'_{k}(c_{k})\right| < 1$, then once again, the expected duration is finite.
\end{theorem}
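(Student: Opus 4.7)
The plan is to bound the expected duration $\E[D_k]$ of the $k$-jump normal game starting at $\phi$ via the tail-sum identity
\[
\E[D_k] = \sum_{n \geqslant 0} \Prob[D_k > n] = \sum_{n \geqslant 0} \nd_k^{(n)},
\]
and to show that $\nd_k^{(n)}$ decays geometrically in $n$. Under the hypothesis $\nl_k = c_k$, Theorem~\ref{thm:main_1_normal} gives $\nw_k = 1 - F_k(c_k) = 1 - c_k$, so $\nd_k = 0$; the monotone convergence $\nl_k^{(n)} \uparrow c_k$ and $\nw_k^{(n)} \uparrow 1 - c_k$ (cf.~\eqref{increasing_sequence_nl_nw}) then forces $\nd_k^{(n)} \downarrow 0$, and the entire task reduces to quantifying the speed of this convergence.

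First, I would derive round-truncated analogues of the recursions behind Theorem~\ref{thm:main_1_normal}: tracking $\nl_k^{(n)}$, $\nw_k^{(n)}$ alongside $n$-round versions of the auxiliary probabilities $p_{i,j}^{(n)}$ (the finite-round counterparts of the quantities appearing in \eqref{eq_1}--\eqref{eq_4} for $k=2$, and of the general $p_{i,j,n}$ mentioned after those equations). The tree-automaton structure together with the identities \eqref{F_{i}_defn}, \eqref{g_{i}_recursive_defn}, \eqref{H_{k}_defn} then collapses the system to the clean recursion $\nl_k^{(n+1)} = H_k\big(\nl_k^{(n)}\big)$, coupled with a companion identity that expresses $1 - \nw_k^{(n+1)}$ as $F_k\big(\nl_k^{(n)}\big)$ plus error terms that themselves depend linearly on $(1 - \nw_k^{(n)}) - F_k\big(\nl_k^{(n-1)}\big)$.

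Introducing the non-negative error sequences $\epsilon_n = c_k - \nl_k^{(n)}$ and $\delta_n = (1-c_k) - \nw_k^{(n)}$, so that $\nd_k^{(n)} = \epsilon_n + \delta_n$, a Taylor expansion of $H_k$ at its fixed point $c_k$ and of $F_k$ at $c_k$ yields, for all sufficiently large $n$,
\[
\epsilon_{n+1} = \big(H_k'(c_k) + o(1)\big)\epsilon_n, \qquad \delta_{n+1} \leqslant \big(|F_k'(c_k)| + o(1)\big)\delta_n + O(\epsilon_n).
\]
When $\max\{H_k'(c_k), |F_k'(c_k)|\} < 1$, iterating these bounds produces $\epsilon_n, \delta_n = O(\rho^n)$ for some $\rho \in (0,1)$, hence $\sum_n \nd_k^{(n)} < \infty$, proving the first assertion.

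For the stronger statement when $k \in \{2, 3\}$, the task is to drop the hypothesis on $H_k'(c_k)$. The plan is to compute $H_k'(c_k)$ explicitly via the chain rule applied to \eqref{H_{k}_defn}, using the recursions \eqref{F_{i}_defn} and \eqref{g_{i}_recursive_defn} together with the values of $F_i(c_k)$ and the identity $F_k(c_k)=c_k$ from Lemma~\ref{lem:main_thm_1_1}; many factors collapse at the common fixed point, and $H_k'(c_k)$ can be written as an explicit polynomial in $G'(c_k)$, $G'(1-c_k)$ and $F_k'(c_k)$. For $k = 2$ and $k = 3$, one can verify directly from this expression that $|F_k'(c_k)| < 1$ already implies $H_k'(c_k) < 1$, so the condition on $H_k'(c_k)$ becomes redundant and the previous argument applies. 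The main obstacle is the combinatorial bookkeeping required to make the derivative computation tractable for $k = 3$; the fact that no conceptual reason seems to enforce the implication $|F_k'(c_k)| < 1 \Rightarrow H_k'(c_k) < 1$ uniformly in $k$ is precisely what leaves the extension to arbitrary $k$ as a conjecture.
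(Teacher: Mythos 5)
Your proposal follows essentially the same route as the paper: the tail-sum identity $\E[T]=\sum_n \nd_k^{(n)}$ with $\nd_k=0$, geometric decay of $c_k-\nl_k^{(n)}$ and $(1-c_k)-\nw_k^{(n)}$ obtained from mean-value/Taylor bounds at the fixed point via the recursion $\nl_k^{(n+2)}=H_k(\nl_k^{(n)})$ and the identity $\nw_k^{(n+1)}=1-F_k(\nl_k^{(n)})$, and, for $k=2,3$, an explicit evaluation of $H'_k(c_k)$ showing that $|F'_k(c_k)|<1$ already forces $H'_k(c_k)<1$ (the paper's computation gives $H'_k(c_k)<|F'_k(c_k)|^2$). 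Aside from minor details — the collapsed recursion is two-step rather than one-step, the $\nw$ relation is exact so no error term is needed, and the relevant derivative values are $G'(F_i(c_k)-c_k)$ rather than $G'(c_k)$ — your plan is the paper's proof.
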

It is worthwhile to note here that while the condition $\nl_{k} = c_{k}$ alone does guarantee that $\nd_{k} = 0$ (by Theorem~\ref{main:bounds_on_nl_{k}_ml_{k}}) and hence the $k$-jump normal game ends almost surely in a finite number of rounds, it does not automatically imply that the expected number of rounds is going to be finite as well.

\subsection{A brief discussion of the literature on combinatorial games}\label{subsec:lit_surv} Before we plunge into our exploration of the $k$-jump normal and $k$-jump mis\`{e}re games, the rich and variegated literature on combinatorial games that has developed over the past several decades deserves some delineation. This extremely broad class of games (see, for example, \cite{survey_games} and \cite{complexity_appeal} for a general introduction) constitutes primarily two-player games with perfect information, no chance moves, and the possible outcomes being victory for one player (and loss for the other) and draw for both players. Aside from being utilized in studying mathematical problems that belong to complexity classes harder than NP, these games have intimate connections with disciplines such as mathematical logic, automata theory, complexity theory, graph and matroid theory, networks, error-correcting codes, online algorithms. Outside of mathematics, these games find applications in biology, psychology, economics, insurance, actuarial studies and political sciences. 

\cite{percolation_games} studies \emph{percolation games} on oriented Euclidean lattices. Each site of $\mathbb{Z}^{2}$, independent of all other sites, is marked a ``trap" or a ``target" or ``open" with probabilities $p$, $q$ and $1-p-q$ respectively, and the two players take turns to move a token from its current position $(x,y)$ to either $(x+1,y)$ or $(x,y+1)$. If a player moves to a target, she wins immediately, and if she moves to a trap, she loses immediately. The game's outcome can be interpreted in terms of the evolution of a one-dimensional discrete-time probablistic cellular automaton (PCA) -- specifically, the game having no chance of ending in a draw is shown to be equivalent to the ergodicity of this PCA. \cite{percolation_games} also establishes a connection between the \emph{trapping game} (i.e.\ where $q = 0$) on directed graphs in higher dimensions and the hard-core model on related undirected graphs with reduced dimensions. \cite{trapping_games} studies the trapping game on undirected graphs, where the players take turns to move the token from the vertex at which it is currently located to an adjacent vertex that has never been visited before, and the player unable to make a move loses (note the evident connection between this game and the normal game described above). The outcome of this game is shown to have close ties with maximum-cardinality matchings, and a draw in this game relates to the sensitivity of such matchings to boundary conditions. \cite{wastlund} studies a related, two-person zero-sum game called \emph{exploration} on a \emph{rooted distance model}, to analyze minimum-weight matchings in edge-weighted graphs. In a related game called \emph{slither} (\cite{slither}), the players take turns to claim yet-unclaimed edges of a simple, undirected graph, such that the chosen edges, at all times, form a path, and whoever fails to move, loses. This too serves as a tool for understanding maximum matchings in graphs. 

Bearing some resemblance to slither are the \emph{maker-breaker positional games} (\cite{positional_games_book}), involving a set $X$, a collection $\mathcal{F}$ of subsets of $X$, and positive integers $a$ and $b$. The players named \emph{Maker} and \emph{Breaker} take turns to claim yet-unclaimed elements of $X$, with Maker choosing $a$ elements at a time and Breaker $b$ elements at a time, until all elements of $X$ are exhausted. Maker wins if she has claimed all elements of a subset in $\mathcal{F}$. When this game is played on a graph, the players take turns to claim yet-unclaimed edges, and Maker wins if the subgraph induced by her claimed edges satisfies a desired property (e.g.\ it is connected, or it forms a clique of a given size, a Hamiltonian cycle, a perfect matching or a spanning tree). The game is \emph{unbiased} when $a = b$, and \emph{biased} otherwise. This game has intimate connections with existential fragments of first order and monadic second order logic on graphs. \cite{milos_thesis} and \cite{milos_tibor} study the threshold probability $p = p_{c}$ beyond which Maker has a winning strategy when this game is played on Erd\H{o}s-R\'{e}nyi random graphs $G(n,p)$; \cite{hamiltonian_maker_breaker} studies the game for Hamiltonian cycles on the complete graph $K_{n}$; \cite{maker_breaker_geometric} studies the game on random geometric graphs; \cite{biased_random_boards} studies the \emph{critical bias} $b^{*}$ of the $(1 : b)$ biased game on $G(n, p(n))$ for $p(n) = \Theta\left(\ln n/n\right)$. In addition, \cite{milos_thesis} studies  the game where Maker wins if she can claim a non-planar graph or a non-$k$-colourable graph. \cite{biased_positional} indicates a deep connection between positional games on complete graphs and the corresponding properties being satisfied by a random graph, and this is consistent with \emph{Erd\H{o}s' probabilistic intuition}, which states that the course of a combinatorial game between two players playing optimally often resembles the evolution of a purely random process. Finally, the \emph{Ehrenfeucht-Fra\"{i}ss\'{e} games} comprise yet another extensive subclass of combinatorial games that play a pivotal role in our understanding of first and monadic second order logic on random rooted trees and random graphs (see, for example, \cite{spencer_threshold, spencer_thoma, spencer_stjohn, kim, bohman, pikhurko, verbitsky, maksim_1, maksim_2, maksim_3, maksim_4, maksim_5, maksim_6, podder_1, podder_2, podder_3, podder_5}).

\subsection{Organization of the paper}\label{subsec:org} Theorem~\ref{thm:main_1_normal} is proved in \S\ref{sec:thm_1_normal_proof}, with the two main parts addressed in \S\ref{subsec:thm:main_1_normal_proof_part_1} and \S\ref{subsec:thm:main_1_normal_proof_part_2} (Lemma~\ref{lem:main_thm_1_1}, Lemma~\ref{lem:main_thm_1_3} and Equation~\eqref{belongs_to_D_{i}} are proved in \S\ref{appsec:thm_1_normal_proof} of the Appendix). The (very similar) proof of Theorem~\ref{thm:main_1_misere} is briefly discussed in \S\ref{sec:thm_1_misere_proof}. Theorem~\ref{main:bounds_on_nl_{k}_ml_{k}} is proved in \S\ref{sec:proof_of_main_3}, Theorem~\ref{main:normal_draw_probab_limit_Poisson} in \S\ref{sec:proof_of_main_4} (Lemmas~\ref{lem:draw_probab_limit_lemma_1} and \ref{lem:F_{i}_derivatives} proved in \S\ref{appsec:proof_of_main_4} of the Appendix), and Theorem~\ref{main:Poisson_k=2_normal_phase_transition} is proved in \S\ref{sec:Poisson_k=2} (Lemmas~\ref{lem:lambda c_{2,lambda} behaviour} through \ref{convexity_lem_5} proved in \S\ref{appsec:Poisson_k=2} of the Appendix). Theorems~\ref{lem:decay_rate_nl_{2,lambda}} and ~\ref{main:misere_normal_comparison} are proved in \S\ref{sec:misere_normal_comparison} (Lemma~\ref{lem:decay_rate_nl_{2,lambda}_precursor} proved in \S\ref{appsec:misere_normal_comparison} of the Appendix), Theorem~\ref{main:compare_1_vs_2} is proved in \S\ref{sec:compare_1_vs_2}, and the proof of Theorem~\ref{main:avg_dur} is covered in \S\ref{sec:avg_dur}.

\section{Proof of Theorem~\ref{thm:main_1_normal}}\label{sec:thm_1_normal_proof}
Fix $k \in \mathbb{N}$ throughout \S\ref{sec:thm_1_normal_proof}, and hence the subscript $k$ is dropped from notations used in this section (for instance, $\NW_{k}$ is replaced by $\NW$, $\NW_{k}^{(n)}$ is replaced by $\NW^{(n)}$ etc.). We begin by deducing the two most fundamental recurrence relations pertaining to the $k$-jump normal game. For a vertex $u$ to be in $\NW$, P1 must be able to move the token, in the first round, to some descendant $v$ of $u$ in $\Gamma_{k}(u)$ such that, if we now consider the game that starts at $v$ and P2 plays the first round, P2 loses. In other words, such a $v$ must be in $\NL$ (note here that the symmetric roles of the two players is crucial). Thus
\begin{align}\label{normal_main_recursion_1}
u \in \NW \Leftrightarrow \exists\ v \in \Gamma_{k}(u) \text{ such that } v \in \NL \Leftrightarrow \Gamma_{k}(u) \cap \NL \neq \emptyset.
\end{align}
For a vertex $u$ to be in $\NL$, either $u$ is childless, in which case P1 is unable to make her very first move, or else \emph{every} vertex $v$ in $\Gamma_{k}(u)$ is such that, if P1 moves the token there, then the game that begins at $v$ with P2 playing the first round is won by P2. In other words, every $v$ in $\Gamma_{k}(u)$ must belong to $\NW$. Thus we have
\begin{align}\label{normal_main_recursion_2}
u \in \NL \Leftrightarrow \Gamma_{1}(u) = \emptyset \text{ or } v \in \NW \text{ for every } v \in \Gamma_{k}(u) \Leftrightarrow \Gamma_{k}(u) \subset \NW.
\end{align}

Next, we establish a compactness result which shows that, if a player is able to win the $k$-jump normal game on a rooted tree which is \emph{locally finite} (i.e.\ every vertex of the tree has finite degree), then she can guarantee to do so within a finite number of rounds which can be specified in advance. Mathematically, this result can be stated as follows:
\begin{lemma}\label{lem_normal_compactness}
In any $k$-jump normal game, we have $\NW = \bigcup_{n=1}^{\infty}\NW^{(n)}$ and $\NL = \bigcup_{n=1}^{\infty}\NL^{(n)}$.
\end{lemma}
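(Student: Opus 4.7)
The plan is to prove both equalities using a compactness argument on the strategy tree induced by an optimal play. The inclusions $\bigcup_{n}\NW^{(n)}\subseteq\NW$ and $\bigcup_{n}\NL^{(n)}\subseteq\NL$ are immediate from the definitions (winning in fewer than $n$ rounds is, in particular, winning), so only the reverse containments require argument. The key tool will be K\"onig's lemma, exploiting that the underlying tree $\mathcal{T}_{\chi}$ is locally finite and hence $\Gamma_{k}(v)$ is finite for every vertex $v$.

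Fix $u\in\NW$. By the meaning of $\NW$, P1 has a winning strategy $\sigma$ starting at $u$: for every possible sequence of responses by P2, the play eventually reaches a vertex from which P2 has no legal move. I would encode this as a \emph{strategy tree} $\mathscr{T}_{\sigma}$ whose nodes are game positions $(v,p)$ with $v\in V(\mathcal{T}_{\chi})$ and $p\in\{1,2\}$ recording the player to move next, rooted at $(u,1)$. At each internal node of the form $(v,1)$ I attach the single child $(\sigma(v),2)$ dictated by $\sigma$, while at each internal node $(w,2)$ I attach one child $(z,1)$ for every legal move $z\in\Gamma_{k}(w)$; a node with no legal move is declared a leaf, and because $\sigma$ is a winning strategy every leaf must have $p=2$. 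Two features of $\mathscr{T}_{\sigma}$ now combine to close the argument: it is finitely branching, since each $\Gamma_{k}(w)$ is finite, and every maximal branch is finite, since each play consistent with $\sigma$ ends with P2 stuck. By K\"onig's lemma, $\mathscr{T}_{\sigma}$ itself is finite; if $N$ denotes its depth then every play consistent with $\sigma$ terminates within at most $N$ rounds, which gives $u\in\NW^{(N+1)}\subseteq\bigcup_{n}\NW^{(n)}$.

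The argument for the second equality is completely dual. If $u\in\NL$, then P2, who moves second, possesses a winning strategy $\tau$; the analogous strategy tree, now branching at P1's nodes and having single children at P2's nodes, is again finitely branching with every branch terminating at a P1-leaf, hence finite by K\"onig's lemma. Bounding its depth by some $M$ yields $u\in\NL^{(M+1)}$. The only mildly delicate point, and the likely main obstacle, is to make precise the informal statement ``P1 wins starting at $u$'' as ``P1 possesses a strategy every play of which ends with P2 unable to move'' (as opposed to merely a strategy that avoids a P1-loss in finite time), so that the leaves of the strategy tree are guaranteed to be P2-losses; once this is in place, the remainder is a routine application of K\"onig's lemma.
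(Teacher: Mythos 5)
Your proof is correct, but it takes a genuinely different route from the paper's. You argue directly from a winning strategy: encode all plays consistent with an optimal strategy $\sigma$ for the winning player into a strategy tree, note that it is finitely branching (because $\Gamma_{k}(w)$ is finite at every vertex of a locally finite tree) and that every branch is finite (because $\sigma$ is winning), and invoke K\"onig's lemma to get a uniform bound $N$ on the length of all consistent plays, whence $u \in \NW^{(N+1)}$ (dually for $\NL$). The paper instead never constructs a strategy tree: it considers the residual sets $\widetilde{\NW} = \NW \setminus \bigcup_{n}\NW^{(n)}$ and $\widetilde{\NL} = \NL \setminus \bigcup_{n}\NL^{(n)}$ and shows they are empty, by proving that a vertex of $\widetilde{\NW}$ must have a $\Gamma_{k}$-successor in $\widetilde{\NL}$ and vice versa (here the finiteness of $\Gamma_{k}(v)$ together with the monotonicity of the sets $\NW^{(n)}$ plays exactly the role your appeal to K\"onig's lemma plays), and that under optimal play this produces an infinite play, contradicting the fact that a game starting in $\NW \cup \NL$ ends in a decision. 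The two arguments are compactness arguments of the same strength; yours is more self-contained from the game-theoretic side but requires making explicit that membership in $\NW$ (resp.\ $\NL$) is witnessed by a strategy all of whose plays terminate with the opponent stuck --- the delicate point you correctly flag, and which the paper also takes for granted through its standing assumption of optimal play --- while the paper's version sidesteps explicit strategies at the cost of checking that the prolonging player's moves into the residual classes are consistent with optimal play.
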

This result is proven essentially the same way as Proposition 7 of \cite{holroyd_martin}, but we include a proof nonetheless (see \S\ref{appsec:thm_1_normal_proof} of the Appendix) for the sake of completeness of this work. We note here that the offspring distribution $\chi$ that we consider for $\mathcal{T}_{\chi}$ is supported on $\mathbb{N}_{0}$, hence the (random) number of children of any vertex of $\mathcal{T}_{\chi}$ is almost surely finite.

As a consequence of Lemma~\ref{lem_normal_compactness}, we have $\NL^{(n)} \uparrow \NL$ and $\NW^{(n)} \uparrow \NW$ as $n \uparrow \infty$, which in turn yields
\begin{align}\label{nl_nw_limit}
\lim_{n \rightarrow \infty}\nl^{(n)} = \nl \text{ and } \lim_{n \rightarrow \infty} \nw^{(n)} = \nw.
\end{align}
The two main parts of the proof of Theorem~\ref{thm:main_1_normal} are outlined as follows: in \S\ref{subsec:thm:main_1_normal_proof_part_1}, we show that $\nl$ is a fixed point of the function $H_{k}$, and in \S\ref{subsec:thm:main_1_normal_proof_part_2}, we prove that $\nl$ is, in fact, the minimum positive fixed point of $H_{k}$. 

\subsection{Showing that $\nl_{k}$ is a fixed point of $H_{k}$}\label{subsec:thm:main_1_normal_proof_part_1} For a vertex $u$ to be in $\NW^{(n+1)}$ for any $n \in \mathbb{N}$, there must exist some $v \in \Gamma_{k}(u)$ such that, in the $k$-jump normal game that begins at $v$, the player who plays the first round loses in less than $n$ rounds. In other words,
\begin{align}\label{nw^{(n+2)}_recursion}
u \in \NW^{(n+1)} \Leftrightarrow \exists\ v \in \Gamma_{k}(u) \text{ with } v \in \NL^{(n)}.
\end{align}
For $u$ to be in $\NL^{(n+1)}$, either $u$ is childless, or every descendant $v \in \Gamma_{k}(u)$ must be such that the $k$-jump normal game that begins at $v$ is won in less than $n$ rounds by the player who plays the first round. Thus,
\begin{align}\label{nl^{(n+1)}_recursion}
u \in \NL^{(n+1)} \Leftrightarrow \Gamma_{1}(u) = \emptyset \text{ or } v \in \NW^{(n)} \text{ for every } v \in \Gamma_{k}(u).
\end{align}
We emphasize here that for any $m, n \in \mathbb{N}$, the subsets $\NW^{(m)}$ and $\NL^{(n)}$ are mutually exclusive. 

Figure~\ref{fig_1} (for $k = 2$) is included here to help the reader visualize the more refined subsets or \emph{classes} of vertices that we now introduce to carry out the full analysis. It may also be helpful for the reader to refer back to the discussion included right after the statement of Theorem~\ref{thm:main_1_normal} for the case of $k=2$, to keep in mind an outline of how we aim to proceed in the rest of \S\ref{subsec:thm:main_1_normal_proof_part_1}.
\begin{figure}[h!]
  \centering
    \includegraphics[width=0.6\textwidth]{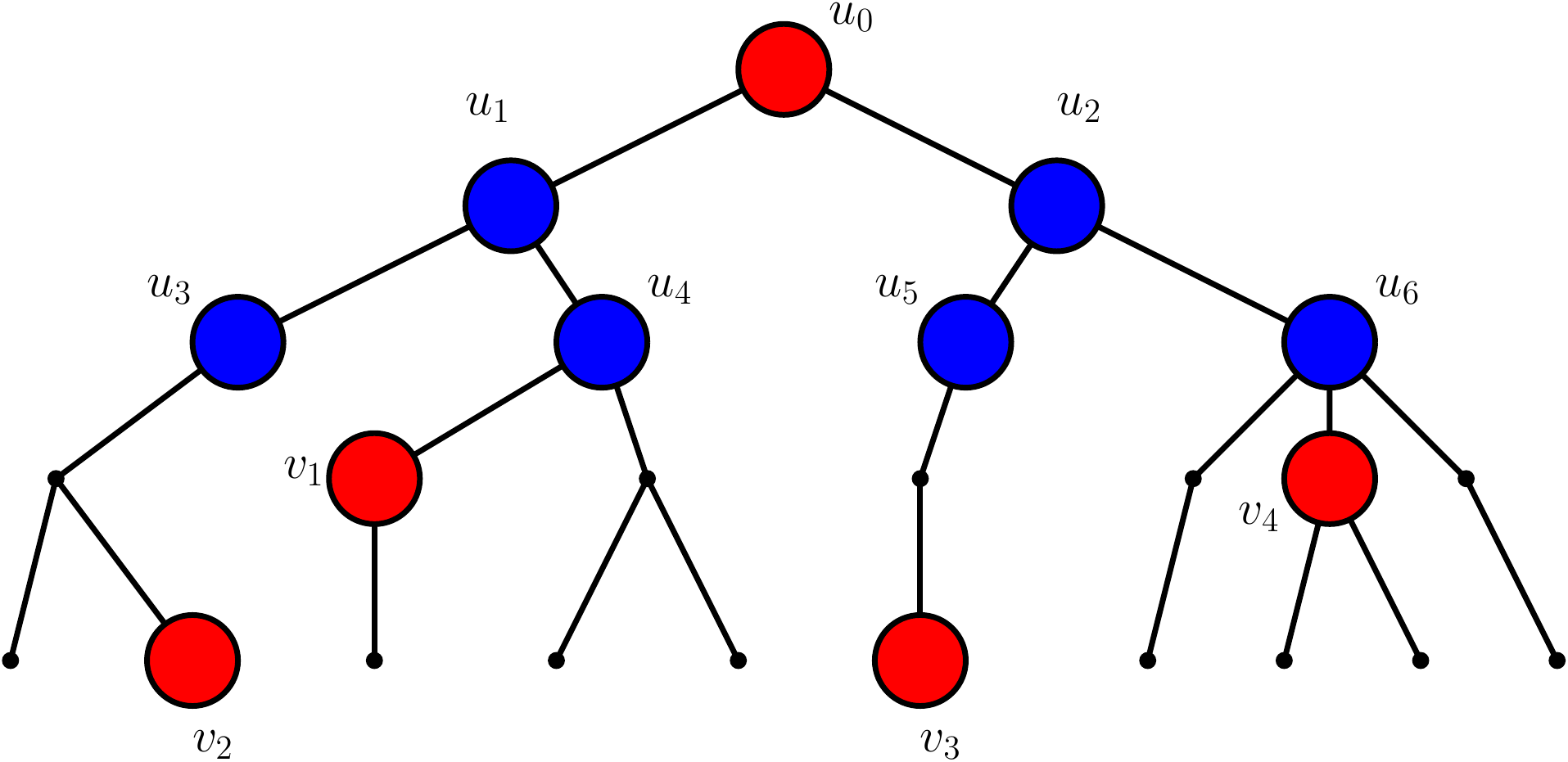}
\caption{Here, $k = 2$ and $u_{0} \in \NL^{(n+2)}$ (indicated in red), so that $u_{i} \in \NW^{(n+1)}$ for all $1 \leqslant i \leqslant 6$ (indicated in blue). Since $u_{1} \in \NW^{(n+1)}$ and its children $u_{3}$ and $u_{4}$ are in $\NW^{(n+1)}$, $u_{1}$ must have at least one grandchild, say $v_{1}$ (indicated in red) in $\NL^{(n)}$. Note that $v_{1} \in \NL^{(n)}$ ensures $u_{4} \in \NW^{(n+1)}$. Likewise, $u_{2}$ and its children $u_{5}$ and $u_{6}$ are in $\NW^{(n+1)}$, hence $u_{2}$ must have at least one grandchild, say $v_{4}$, in $\NL^{(n)}$. That $v_{4} \in \NL^{(n)}$ ensures $u_{6} \in \NW^{(n+1)}$. Let $u_{3}$ and $u_{5}$ have grandchildren $v_{2}$ and $v_{3}$ (respectively) in $\NL^{(n)}$, but no child in $\NL^{(n)}$. Then $u_{4}$ and $u_{6}$ are in $\mathcal{C}_{0,1,n}$, $u_{3}$ and $u_{5}$ are in $\mathcal{C}_{0,2,n}$, and $u_{1}$ and $u_{2}$ are in $\mathcal{C}_{1,2,n}$.}
  \label{fig_1}
\end{figure}

For $0 \leqslant i < j \leqslant k$, we define the subsets of vertices 
\begin{align}\label{C_{i,j,n}_defn}
\mathcal{C}_{i,j,n} = \{u: \Gamma_{i}(u) \subset \NW^{(n+1)}, \Gamma_{j-1}(u) \cap \NL^{(n)} = \emptyset, \Gamma_{j}(u) \cap \NL^{(n)} \neq \emptyset\}.
\end{align}
In other words, any vertex $u$ in $\mathcal{C}_{i,j,n}$ satisfies the following conditions:
\begin{itemize}
\item \emph{all} its descendants that are at distance at most $i$ away from it are in $\NW^{(n+1)}$,
\item \emph{none} of its descendants at distance at most $j-1$ away from it is in $\NL^{(n)}$,
\item and \emph{at least one} of its descendants at distance precisely $j$ away from it is in $\NL^{(n)}$. 
\end{itemize}
Since $j \leqslant k$, the third condition above ensures, via \eqref{nw^{(n+2)}_recursion}, that $\mathcal{C}_{i,j,n} \subset \NW^{(n+1)}$, so that $\mathcal{C}_{i,j,n} \cap \NL^{(n)} = \emptyset$. The second condition above implies $\mathcal{C}_{i,j,n} \cap \mathcal{C}_{i',j',n} = \emptyset$ for all $0 \leqslant i < j \leqslant k$, $0 \leqslant i' < j' \leqslant k$ and $j \neq j'$. We let $p_{i,j,n}$ denote the probability of the event that the root $\phi$ of $\mathcal{T}_{\chi}$ belongs to $\mathcal{C}_{i,j,n}$.

From \eqref{nw^{(n+2)}_recursion}, we see that $u \in \NW^{(n+1)}$ iff $u \in \bigcup_{j=1}^{k}\mathcal{C}_{0,j,n}$, i.e.\ either $u$ has a child $v \in \NL^{(n)}$, or $u$ has a child $v$ with at least one descendant $w$ such that $\rho(v,w) \leqslant k-1$ and $w \in \NL^{(n)}$, which means that $v \in \bigcup_{j=1}^{k-1}\mathcal{C}_{0,j,n}$. Henceforth, given that the vertex $u$ has $m$ children, $m \in \mathbb{N}$, we name them $u_{1}, u_{2}, \ldots, u_{m}$. Thus
\begin{align}\label{nw^{(n+1)}_in_terms_of_c_{0,j,n}}
\nw^{(n+1)} &= \sum_{m=1}^{\infty} \Prob\left[\text{at least one } u_{t} \in \bigcup_{j=1}^{k-1}\mathcal{C}_{0,j,n} \cup \NL^{(n)} \text{ for } 1 \leqslant t \leqslant m\right]\chi(m)\nonumber\\
&= \sum_{m=1}^{\infty} \left[1 - \left(1 - \nl^{(n)} - \sum_{j=1}^{k-1}p_{0,j,n}\right)^{m}\right]\chi(m) = 1 - G\left(1 - \nl^{(n)} - \sum_{j=1}^{k-1}p_{0,j,n}\right).
\end{align} 

From \eqref{nl^{(n+1)}_recursion}, we see that $u \in \NL^{(n+2)}$ if and only if either $u$ is childless, or every child $v$ of $u$ as well as every vertex in $\Gamma_{k-1}(v)$ is in $\NW^{(n+1)}$. However, $v \in \NW^{(n+1)}$ iff some vertex in $\Gamma_{k}(v)$ is in $\NL^{(n)}$. Thus $v$ must have a descendant $w$ such that $\rho(v,w) = k$ and $w \in \NL^{(n)}$, i.e.\ $v \in \mathcal{C}_{k-1,k,n}$. Thus
\begin{align}\label{nl^{(n+1)}_in_terms_of_c_{k-1,k,n}}
\nl^{(n+2)} &= \sum_{m=0}^{\infty}\Prob\left[u_{t} \in \mathcal{C}_{k-1,k,n} \text{ for all } 1 \leqslant t \leqslant m\right]\chi(m) = \sum_{m=0}^{\infty}p_{k-1,k,n}^{m} \chi(m) = G(p_{k-1,k,n}).
\end{align}

We now establish recurrence relations for the probabilities $p_{i,j,n}$. For a vertex $u$ to be in $\mathcal{C}_{0,1,n}$, it must have at least one child in $\NL^{(n)}$, i.e.\
\begin{align}\label{c_{0,1,n}_recursion}
p_{0,1,n} 
%&= \sum_{m=1}^{\infty}\Prob\left[\text{at least one } u_{t} \in \NL^{(n)}, 1 \leqslant t \leqslant m\right]\chi(m) 
= \sum_{m=1}^{\infty}\left[1 - \left(1 - \nl^{(n)}\right)^{m}\right]\chi(m) = 1 - G\left(1 - \nl^{(n)}\right).
\end{align}
For $2 \leqslant j \leqslant k$, $u \in \mathcal{C}_{0,j,n}$ iff at least one child of $u$ is in $\mathcal{C}_{0,j-1,n}$ and no child is in $\bigcup_{\ell=1}^{j-2}\mathcal{C}_{0,\ell,n} \cup \NL^{(n)}$:
\begin{align}\label{c_{0,j,n}_recursion}
p_{0,j,n} &= %\sum_{m=1}^{\infty}\Prob[u_{t} \notin \bigcup_{\ell=1}^{j-2}\mathcal{C}_{0,\ell,n} \cup \NL^{(n)} \text{ for all } 1 \leqslant t \leqslant m, \text{ at least one } u_{t} \in \mathcal{C}_{0,j-1,n}]\chi(m)\nonumber\\
\sum_{m=1}^{\infty}\Prob\left[u_{t} \notin \bigcup_{\ell=1}^{j-2}\mathcal{C}_{0,\ell,n} \cup \NL^{(n)}, 1 \leqslant t \leqslant m\right]\chi(m) - \sum_{m=1}^{\infty}\Prob\left[u_{t} \notin \bigcup_{\ell=1}^{j-1}\mathcal{C}_{0,\ell,n} \cup \NL^{(n)}, 1 \leqslant t \leqslant m\right]\chi(m)\nonumber\\
&= \sum_{m=1}^{\infty}\left(1 - \nl^{(n)} - \sum_{\ell=1}^{j-2}p_{0,\ell,n}\right)^{m}\chi(m) - \sum_{m=1}^{\infty}\left(1 - \nl^{(n)} - \sum_{\ell=1}^{j-1}p_{0,\ell,n}\right)^{m}\chi(m)\nonumber\\
&= G\left(1 - \nl^{(n)} - \sum_{\ell=1}^{j-2}p_{0,\ell,n}\right) - G\left(1 - \nl^{(n)} - \sum_{\ell=1}^{j-1}p_{0,\ell,n}\right).
\end{align} 
Finally, for a vertex $u$ to be in $\mathcal{C}_{i,j,n}$ for any $1 \leqslant i < j \leqslant k$, the following are necessary:
\begin{itemize}
\item \emph{every} child $v$ of $u$ must be in $\NW^{(n+1)}$,
\item \emph{every} vertex in $\Gamma_{i-1}(v)$ must be in $\NW^{(n+1)}$, for \emph{every} child $v$ of $u$,
\item \emph{no} vertex in $\Gamma_{j-2}(v)$ is in $\NL^{(n)}$ for \emph{any} child $v$ of $u$,
\item there exists \emph{at least one} child $v$ of $u$ with \emph{at least one} descendant $w$ such that $\rho(v,w) = j-1$ and $w \in \NL^{(n)}$. 
\end{itemize}
The first condition, along with \eqref{nw^{(n+2)}_recursion}, implies that every child $v$ of $u$ has a descendant in $\Gamma_{k}(v)$ that is in $\NL^{(n)}$. This, along with the second and third conditions, implies that each child of $u$ must be in $\bigcup_{\ell=j-1}^{k}\mathcal{C}_{i-1,\ell,n}$. The fourth condition implies that at least one child of $u$ must be in $\mathcal{C}_{i-1,j-1,n}$. Thus we have
\begin{align}\label{c_{i,j,n}_recursion}
&p_{i,j,n} %= \sum_{m=1}^{\infty}\Prob[u_{t} \in \bigcup_{\ell=j-1}^{k}\mathcal{C}_{i-1,\ell,n} \text{ for each } 1 \leqslant t \leqslant m, \text{ at least one } u_{t} \in \mathcal{C}_{i-1,j-1,n}]\chi(m)\nonumber\\
= \sum_{m=1}^{\infty}\Prob\left[u_{t} \in \bigcup_{\ell=j-1}^{k}\mathcal{C}_{i-1,\ell,n}, 1 \leqslant t \leqslant m\right]\chi(m) - \sum_{m=1}^{\infty}\Prob\left[u_{t} \in \bigcup_{\ell=j}^{k}\mathcal{C}_{i-1,\ell,n}, 1 \leqslant t \leqslant m\right]\chi(m)\nonumber\\
&= \sum_{m=1}^{\infty}\left(\sum_{\ell=j-1}^{k}p_{i-1,\ell,n}\right)^{m}\chi(m) - \sum_{m=1}^{\infty}\left(\sum_{\ell=j}^{k}p_{i-1,\ell,n}\right)^{m}\chi(m) = G\left(\sum_{\ell=j-1}^{k}p_{i-1,\ell,n}\right) - G\left(\sum_{\ell=j}^{k}p_{i-1,\ell,n}\right).
\end{align}

We now state a couple of lemmas, with their proofs deferred to \S\ref{appsec:thm_1_normal_proof} of the Appendix, the first of which is concerned with important properties of the function sequence $\{F_{i}\}_{i \in \mathbb{N}_{0}}$, whereas the second provides expressions for the probabilities $p_{i,j,n}$ using \eqref{c_{0,1,n}_recursion}, \eqref{c_{0,j,n}_recursion} and \eqref{c_{i,j,n}_recursion} (once again, it helps if the reader recalls the discussion for $k=2$ presented after the statement of Theorem~\ref{thm:main_1_normal}). We then combine and consolidate the recurrence relations in \eqref{nw^{(n+1)}_in_terms_of_c_{0,j,n}} and \eqref{nl^{(n+1)}_in_terms_of_c_{k-1,k,n}} with the conclusion of Lemma~\ref{lem:main_thm_1_3} in order to obtain the final result.
%We have gathered all the relevant recurrence relations. 
\begin{lemma}\label{lem:main_thm_1_1}
Recall the functions $F_{i}$ defined in \eqref{F_{i}_defn}. For each $i \in \mathbb{N}$, $F_{i}$ is a strictly decreasing function on $[0,c_{i-1}]$, and consequently, $c_{i}$ is uniquely defined. Moreover, $\chi(0) < c_{i} < c_{i-1}$ for each $i \in \mathbb{N}$. 
\end{lemma}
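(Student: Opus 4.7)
The plan is to prove the lemma by induction on $i$, interpreting $c_{0} = 1$ so that $F_{1}$ is defined on $[0,c_{0}] = [0,1]$. For the base case $i=1$, I would observe that $F_{1}(x) = G(F_{0}(x) - x) = G(1-x)$, which is strictly decreasing on $[0,1]$ because the standing assumption $0 < \chi(0) < 1$ forces $G$ to be strictly increasing on $[0,1]$. The function $F_{1}(x) - x$ is then continuous and strictly decreasing, with $F_{1}(0) - 0 = G(1) = 1 > 0$ and $F_{1}(1) - 1 = \chi(0) - 1 < 0$, so a unique fixed point $c_{1} \in (0,1)$ exists by the intermediate value theorem. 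For $c_{1} > \chi(0)$, I would evaluate $F_{1}(\chi(0)) = G(1-\chi(0)) = \chi(0) + \sum_{k \geqslant 1}(1-\chi(0))^{k}\chi(k) > \chi(0)$, since $\chi(0) < 1$ guarantees at least one $k \geqslant 1$ with $\chi(k) > 0$.

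For the inductive step, assuming the claim for $i-1 \geqslant 1$, I would first verify that the argument $F_{i-1}(x) - x$ of $G$ in \eqref{F_{i}_defn} lies in $[0,1]$ for all $x \in [0, c_{i-1}]$: the induction hypothesis that $F_{i-1}$ is strictly decreasing with fixed point $c_{i-1}$ gives $F_{i-1}(x) \geqslant F_{i-1}(c_{i-1}) = c_{i-1} \geqslant x$, while $F_{i-1}(x) - x \leqslant F_{i-1}(x) \leqslant 1$. Strict monotonicity of $F_{i}$ then follows immediately: for $x < y$ in $[0, c_{i-1}]$, we have $F_{i-1}(x) - x > F_{i-1}(y) - y \geqslant 0$ by the induction hypothesis, and applying the strictly increasing function $G$ preserves this strict inequality.

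For the fixed point, a trivial induction using $F_{i}(0) = G(F_{i-1}(0) - 0)$ shows that $F_{i}(0) = G(1) = 1$ for every $i \geqslant 0$, while $F_{i}(c_{i-1}) = G(F_{i-1}(c_{i-1}) - c_{i-1}) = G(0) = \chi(0) < c_{i-1}$ by the induction hypothesis. Since $F_{i}(x) - x$ is continuous and strictly decreasing on $[0, c_{i-1}]$, positive at $0$ and negative at $c_{i-1}$, the IVT yields a unique fixed point $c_{i} \in (0, c_{i-1})$, giving $c_{i} < c_{i-1}$ directly. To obtain $c_{i} > \chi(0)$, I would plug in $x = \chi(0)$ and use the induction hypothesis $\chi(0) < c_{i-1}$ together with the strict decrease of $F_{i-1}$ to get $F_{i-1}(\chi(0)) > F_{i-1}(c_{i-1}) = c_{i-1}$, hence $F_{i-1}(\chi(0)) - \chi(0) > c_{i-1} - \chi(0) > 0$ and consequently $F_{i}(\chi(0)) = G(F_{i-1}(\chi(0)) - \chi(0)) > G(0) = \chi(0)$; combining this with the strict monotonicity of $F_{i}$ forces the unique fixed point to satisfy $c_{i} > \chi(0)$.

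The only mildly delicate point is verifying at each stage that $F_{i}$ genuinely maps $[0, c_{i-1}]$ into $[0,1]$, which is handled by the observation $F_{i-1}(x) \geqslant x$ on $[0, c_{i-1}]$, itself a direct consequence of $F_{i-1}$ being decreasing and fixing $c_{i-1}$. Once this domain issue is settled, every other step is a routine application of monotonicity and the intermediate value theorem, so I do not anticipate any real obstacle.
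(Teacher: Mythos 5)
Your proposal is correct and follows essentially the same route as the paper: induction on $i$, with the same three evaluations (at $0$, at $c_{i-1}$, and at $\chi(0)$) combined with the intermediate value theorem to get existence, uniqueness, and the bounds $\chi(0)<c_{i}<c_{i-1}$. The only difference is cosmetic: the paper establishes strict decrease of $F_{i}$ by differentiating ($F'_{i+1}(x)=G'(F_{i}(x)-x)(F'_{i}(x)-1)<0$), whereas you argue it directly from $F_{i-1}(x)-x$ being strictly decreasing and $G$ strictly increasing, which is a slightly more elementary version of the same step and also handles the domain verification $F_{i-1}(x)-x\in[0,1]$ explicitly.
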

%The next lemma provides a closed-form expression for the probabilities $p_{i,j,n}$ for all $0 \leqslant i < j \leqslant k$, in terms of $\nl^{(n)}$, the functions $F_{j}$ defined in \eqref{F_{i}_defn}, and the functions $g_{i}$ defined in \eqref{g_{i}_recursive_defn}.
%\begin{lemma}\label{lem:main_thm_1_2}
%We have $p_{0,j,n} = F_{j-1}\left(\nl^{(n)}\right) - F_{j}\left(\nl^{(n)}\right)$ for all $1 \leqslant j \leqslant k$.
%\end{lemma}
\begin{lemma}\label{lem:main_thm_1_3}
\sloppy We have
%\begin{equation}\label{p_{i,j,n}_closed_form_i>1}
$p_{i,j,n} = g_{i+1}(F_{j-i-1}(\nl^{(n)}), F_{j-i}(\nl^{(n)}), F_{k-i+1}(\nl^{(n)}), \ldots, F_{k}(\nl^{(n)}))$ for $0 \leqslant i < j \leqslant k$, where $F_{i}$s and $g_{i}$s are as defined in \eqref{F_{i}_defn} and \eqref{g_{i}_recursive_defn} respectively.
%\end{equation}
\end{lemma}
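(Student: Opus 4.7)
The plan is to proceed by double induction, with the outer induction on $i \in \{0, 1, \ldots, k-1\}$ and an inner induction on $j$ only for the base case $i = 0$. The key analytic engine is that both recurrences \eqref{c_{0,j,n}_recursion} and \eqref{c_{i,j,n}_recursion} are already written as differences of $G$ applied to partial sums, so once the $p_{i-1,\ell,n}$'s are substituted in the form supplied by the inductive hypothesis, the sums telescope and exactly reproduce the recursive definition \eqref{g_{i}_recursive_defn} of $g_{i+1}$.

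For the base case $i = 0$, the target claim reduces to $p_{0,j,n} = F_{j-1}(\nl^{(n)}) - F_j(\nl^{(n)})$, which I would prove by an inner induction on $j$. The case $j = 1$ is immediate from \eqref{c_{0,1,n}_recursion} together with $F_0 \equiv 1$ and $F_1(x) = G(1-x)$. For $j \geq 2$, the inner inductive hypothesis makes $\sum_{\ell=1}^{j-2} p_{0,\ell,n}$ telescope to $1 - F_{j-2}(\nl^{(n)})$, so that $G(1 - \nl^{(n)} - \sum_{\ell=1}^{j-2} p_{0,\ell,n}) = G(F_{j-2}(\nl^{(n)}) - \nl^{(n)}) = F_{j-1}(\nl^{(n)})$ by \eqref{F_{i}_defn}; the analogous identity with the sum extended to $j-1$ gives $F_j(\nl^{(n)})$, and substituting both into \eqref{c_{0,j,n}_recursion} closes the inner induction.

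For the inductive step $i \geq 1$, I would combine the outer inductive hypothesis for $i-1$ with the recursive formula \eqref{g_{i}_recursive_defn} to write $p_{i-1,\ell,n} = \phi_i(\ell - i) - \phi_i(\ell - i + 1)$, where $\phi_i(m) := G(g_{i-1}(F_m(\nl^{(n)}), F_{k-i+2}(\nl^{(n)}), \ldots, F_k(\nl^{(n)})))$ for $i \geq 2$, and $\phi_1(m) := F_m(\nl^{(n)})$ for $i = 1$. Telescoping then yields
\[
\sum_{\ell=a}^{k} p_{i-1,\ell,n} \;=\; \phi_i(a - i) - \phi_i(k - i + 1) \;=\; g_i(F_{a-i}(\nl^{(n)}), F_{k-i+1}(\nl^{(n)}), F_{k-i+2}(\nl^{(n)}), \ldots, F_k(\nl^{(n)})),
\]
the last equality being another instance of \eqref{g_{i}_recursive_defn}. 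Applying this with $a = j-1$ and $a = j$ in \eqref{c_{i,j,n}_recursion}, and recognising the resulting difference of $G$'s as the recursive formula for $g_{i+1}$, produces $p_{i,j,n} = g_{i+1}(F_{j-i-1}(\nl^{(n)}), F_{j-i}(\nl^{(n)}), F_{k-i+1}(\nl^{(n)}), \ldots, F_k(\nl^{(n)}))$, completing the induction.

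The main obstacle is not the telescoping manipulation itself, which is essentially careful index bookkeeping, but rather the parallel task of verifying at every stage that the tuples being fed into $g_i$ and $g_{i+1}$ actually belong to the corresponding domains $\mathcal{D}_i$ and $\mathcal{D}_{i+1}$, so that each invocation of $G$ takes place on $[0,1]$. I anticipate that this is precisely the content of the auxiliary statement \eqref{belongs_to_D_{i}} flagged as appearing in the appendix, which I would establish alongside the main induction by exploiting the monotonicity of each $F_i$ on $[0, c_{i-1}]$ supplied by Lemma~\ref{lem:main_thm_1_1}; once that is in place, the telescoping argument above goes through cleanly.
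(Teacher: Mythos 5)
Your proposal is correct and follows essentially the same route as the paper's proof: induction on $i$ with the base case $i=0$ handled by an inner induction on $j$, and the inductive step carried out by substituting the hypothesis into \eqref{c_{i,j,n}_recursion}, telescoping the sums via the recursive definition \eqref{g_{i}_recursive_defn}, and recognising the resulting difference of $G$'s as $g_{i+1}$. Your $\phi_i$ notation merely packages the telescoping that the paper writes out explicitly, and your side remark about domain membership is handled in the paper by the separately proved Equation~\eqref{belongs_to_D_{i}} (and, within this lemma, by the fact that the arguments of $G$ are probabilities).
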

From \eqref{nl^{(n+1)}_in_terms_of_c_{k-1,k,n}}, Lemma~\ref{lem:main_thm_1_3} and \eqref{H_{k}_defn}, we get
\begin{align}\label{fixed_point_recursion_eq}
\nl^{(n+2)} = G(g_{k}(F_{0}(\nl^{(n)}), F_{1}(\nl^{(n)}), F_{2}(\nl^{(n)}), \ldots, F_{k}(\nl^{(n)}))) = H_{k}(\nl^{(n)}).
\end{align}
Taking the limit as $n \rightarrow \infty$ and using \eqref{nl_nw_limit}, we conclude that $\nl$ is a fixed point of $H_{k}$ (the continuity of the pgf $G$ guarantees the continuity of $H_{k}$). By \eqref{nw^{(n+1)}_in_terms_of_c_{0,j,n}}, Lemma~\ref{lem:main_thm_1_3} and \eqref{F_{i}_defn}, we have
\begin{equation}\label{nw^{(n+1)}_closed_form_in_terms_of_nl^{(n)}}
\nw^{(n+1)} %= 1 - G(1 - \nl^{(n)} - \sum_{j=1}^{k-1}p_{0,j,n}) 
= 1 - G(1 - \nl^{(n)} - \sum_{j=1}^{k-1}\{F_{j-1}(\nl^{(n)}) - F_{j}(\nl^{(n)})\}) %\\= 1 - G(1 - \nl^{(n)} - F_{0}(\nl^{(n)}) + F_{k-1}(\nl^{(n)})) 
= 1 - F_{k}(\nl^{(n)}),
\end{equation}
so that taking the limit as $n \rightarrow \infty$ and using \eqref{nl_nw_limit}, we have $\nw = 1 - F_{k}(\nl)$, as desired.

\subsection{Showing that $\nl_{k}$ is the minimum positive fixed point of $H_{k}$}\label{subsec:thm:main_1_normal_proof_part_2} 
We begin with an outline for the contents of \S\ref{subsec:thm:main_1_normal_proof_part_2}. The primary intention of \S\ref{subsec:thm:main_1_normal_proof_part_2} is to establish that $g_{k}(F_{0}(x),F_{1}(x),F_{2}(x), \ldots, F_{k}(x))$, and consequently $H_{k}$, are both increasing on $[0,c_{k-1}]$, for every $k \in \mathbb{N}$. Once established, this fact aids, as follows, in concluding that $\nl = \nl_{k}$ is the smallest positive fixed point of $H_{k}$. 

Let $0 < \eta \leqslant c_{k}$ be a fixed point of $H_{k}$. From \eqref{fixed_point_recursion_eq}, we have $\nl^{(2n)} = H_{k}^{(n)}(\nl^{(0)}) = H_{k}^{(n)}(0)$, where $H_{k}^{(n)}$ indicates the $n$-fold composition of $H_{k}$ with itself. Note that,
\begin{enumerate}
\item since $c_{k}$ is a fixed point of $H_{k}$ by Corollary~\ref{cor:c_{k}_fixed_point}, 
\item since the proof of Lemma~\ref{lem:main_thm_1_1} yields $0 < c_{k} < c_{k-1}$ and $H_{k}(0) = \chi(0)$ (which follows from the fact that $F_{i}(0) = 1$ for all $i$), 
\item and since $H_{k}$ is increasing on $[0,c_{k-1}]$, 
\end{enumerate}
we have $0 < \chi(0) \leqslant H_{k}^{(n)}(0) = \nl^{(2n)} \leqslant H_{k}^{(n)}(\eta) = \eta \leqslant H_{k}^{(n)}(c_{k}) = c_{k} < c_{k-1}$ for each $n \in \mathbb{N}$. Upon taking the limit as $n \rightarrow \infty$ and using \eqref{nl_nw_limit}, this yields $\nl = \lim_{n \rightarrow \infty}\nl^{(2n)} \leqslant \eta$, thus allowing us to conclude that $\nl$ is, indeed, the smallest positive fixed point of $H_{k}$.

We prove that $g_{k}(F_{0}(x),F_{1}(x),F_{2}(x), \ldots, F_{k}(x))$ is increasing on $[0,c_{k-1}]$ by showing that the derivative %$\frac{d}{dx}g_{k}(1,F_{1}(x),F_{2}(x), \ldots, F_{k}(x))$ Note that %$\frac{d}{dx}g_{k}(1,F_{1}(x),F_{2}(x), \ldots, F_{k}(x))$ equals
\begin{align}
&\frac{d}{dx}g_{k}\big(F_{0}(x),F_{1}(x),F_{2}(x), \ldots, F_{k}(x)\big) = G'\big(g_{k-1}\big(F_{0}(x),F_{2}(x),F_{3}(x),\ldots,F_{k}(x)\big)\big) \frac{d}{dx}g_{k-1}\big(F_{0}(x),F_{2}(x),F_{3}(x),\nonumber\\&\ldots,F_{k}(x)\big) - G'\big(g_{k-1}\big(F_{1}(x),F_{2}(x),F_{3}(x),\ldots,F_{k}(x)\big)\big) \frac{d}{dx}g_{k-1}\big(F_{1}(x),F_{2}(x),F_{3}(x),\ldots,F_{k}(x)\big),\nonumber
\end{align}
is non-negative for $x \in [0,c_{k-1}]$. We accomplish this by showing that, for each $x \in [0, c_{k-1}]$,  
\begin{enumerate}
\item \label{item_1} that $G'\big(g_{k-1}\big(F_{0}(x),F_{2}(x),F_{3}(x),\ldots,F_{k}(x)\big)\big) \geqslant G'\big(g_{k-1}\big(F_{1}(x),F_{2}(x),F_{3}(x),\ldots,F_{k}(x)\big)\big)$,
\item \label{item_2} that $\frac{d}{dx}g_{k-1}\big(F_{0}(x),F_{2}(x),F_{3}(x),\ldots,F_{k}(x)\big) \geqslant \frac{d}{dx}g_{k-1}\big(F_{1}(x),F_{2}(x),F_{3}(x),\ldots,F_{k}(x)\big)$,
\item \label{item_3} and the product $G'\big(g_{k-1}\big(F_{0}(x),F_{2}(x),F_{3}(x),\ldots,F_{k}(x)\big)\big) \frac{d}{dx}g_{k-1}\big(F_{0}(x),F_{2}(x),F_{3}(x),\ldots,F_{k}(x)\big)$ is non-negative.
\end{enumerate}
We note that $G'(x) = \sum_{i=1}^{\infty}i \chi(i) x^{i-1} \geqslant 0$ for all $x \in [0,1]$, so that $G'\big(g_{k-1}\big(1,F_{2}(x),F_{3}(x),\ldots,F_{k}(x)\big)\big) \geqslant 0$ for each $x \in [0, c_{k-1}]$. So, to show \eqref{item_3}, it suffices to prove that $\frac{d}{dx}g_{k-1}\big(1,F_{2}(x),F_{3}(x),\ldots,F_{k}(x)\big)$ is non-negative. Furthermore, $G''(x) = \sum_{i=2}^{\infty}i (i-1) \chi(i) x^{i-2} \geqslant 0$ for all $x \in [0,1]$, showing that $G'$ is increasing on $[0,1]$. Consequently, to show \eqref{item_1}, it suffices to show that $g_{k-1}\big(1,F_{2}(x),F_{3}(x),\ldots,F_{k}(x)\big) \geqslant g_{k-1}\big(F_{1}(x),F_{2}(x),F_{3}(x),\ldots,F_{k}(x)\big)$.

Each of \eqref{item_1}, \eqref{item_2} and \eqref{item_3} will be established if we show that %i.e.\ $(1, F_{1}(x), F_{2}(x), \ldots, F_{k}(x)) \in \mathcal{D}_{k}$ for $x \in [0,c_{k-1}]$ and $k \geqslant 2$ (see \eqref{D_{i}_domain_defn}). We establish a stronger claim: 
for any $0 \leqslant i_{1} < i_{2} \leqslant k-j$ and all $x \in [0,c_{k-1}]$, 
\begin{align}\label{belongs_to_D_{i}}
1 \geqslant g_{j}\big(F_{i_{1}}(x), F_{k-j+1}(x), F_{k-j+2}(x), \ldots, F_{k}(x)\big) \geqslant g_{j}\big(F_{i_{2}}(x), F_{k-j+1}(x), F_{k-j+2}(x), \ldots, F_{k}(x)\big) \geqslant 0,
\end{align}
and for $k \geqslant 2$, $1 \leqslant i \leqslant k-j$ and all $x \in [0,c_{k-1}]$, we have
\begin{align}\label{g_{j}(1,F_{k-j+1},...,F_{k})_derivative_g_{j}(F_{1},F_{k-j+1},...,F_{k})}
\frac{d}{dx}g_{j}\big(F_{0}(x),F_{k-j+1}(x),F_{k-j+2}(x),\ldots,F_{k}(x)\big) \geqslant \max\big\{0, \frac{d}{dx}g_{j}\big(F_{i}(x),F_{k-j+1}(x),F_{k-j+2}(x),\ldots,F_{k}(x)\big)\big\}.
\end{align}
The proof of \eqref{belongs_to_D_{i}} is deferred to \S\ref{appsec:thm_1_normal_proof} of the Appendix, whereas \eqref{g_{j}(1,F_{k-j+1},...,F_{k})_derivative_g_{j}(F_{1},F_{k-j+1},...,F_{k})} is proved below via induction on $j$. Setting $j = k-1$, $i_{1}=0$ and $i_{2}=1$ in \eqref{belongs_to_D_{i}}, we obtain $g_{k-1}\big(F_{0}(x), F_{2}(x), F_{3}(x), \ldots, F_{k}(x)\big) \in [0,1]$ and $g_{k-1}\big(F_{1}(x), F_{2}(x), F_{3}(x), \ldots, F_{k}(x)\big) \in [0,1]$, so that $\big(F_{0}(x), F_{1}(x), F_{2}(x), \ldots, F_{k}(x)\big) \in \mathcal{D}_{k}$ due to \eqref{D_{i}_domain_defn}. Consequently, by \eqref{H_{k}_defn}, we conclude that $H_{k}$ is well-defined on $[0,c_{k-1}]$.
 
We now come to the inductive argument for proving \eqref{g_{j}(1,F_{k-j+1},...,F_{k})_derivative_g_{j}(F_{1},F_{k-j+1},...,F_{k})}. For $1 \leqslant i < k$, since $F'_{i}(x)$ and $F'_{k}(x)$ are both negative for $x \in [0,c_{k-1}]$ (due to Lemma~\ref{lem:main_thm_1_1}), we have 
\begin{equation}
\frac{d}{dx} g_{1}\big(F_{0}(x),F_{k}(x)\big) = -F'_{k}(x) \geqslant \max\big\{0,F'_{i}(x) - F'_{k}(x)\big\} = \max\big\{0, \frac{d}{dx} g_{1}\big(F_{i}(x),F_{k}(x)\big)\big\},\nonumber
\end{equation}
proving \eqref{g_{j}(1,F_{k-j+1},...,F_{k})_derivative_g_{j}(F_{1},F_{k-j+1},...,F_{k})} for $j=1$. Suppose \eqref{g_{j}(1,F_{k-j+1},...,F_{k})_derivative_g_{j}(F_{1},F_{k-j+1},...,F_{k})} holds for some $j < k-1$. From \eqref{g_{i}_recursive_defn}, for $x \in [0,c_{k-1}]$, we have
\begin{align}
&\frac{d}{dx}g_{j+1}\big(F_{0}(x),F_{k-j}(x),F_{k-j+1}(x),\ldots,F_{k}(x)\big)%\nonumber\\
%&= \frac{d}{dx}\left[G(g_{j}(1,F_{k-j+1}(x),\ldots,F_{k}(x))) - G(g_{j}(F_{k-j}(x),F_{k-j+1}(x),\ldots,F_{k}(x)))\right]\nonumber\\
= G'\big(g_{j}\big(F_{0}(x),F_{k-j+1}(x),\ldots,F_{k}(x)\big)\big) \frac{d}{dx}g_{j}\big(F_{0}(x),F_{k-j+1}(x),\nonumber\\&\ldots,F_{k}(x)\big) - G'\big(g_{j}\big(F_{k-j}(x),F_{k-j+1}(x),\ldots,F_{k}(x)\big)\big) \frac{d}{dx}g_{j}\big(F_{k-j}(x),F_{k-j+1}(x),\ldots,F_{k}(x)\big) \geqslant 0,\label{2.15_part_1}
\end{align}
\sloppy  
\begin{enumerate}
\item since $G'\big(g_{j}\big(F_{0}(x),F_{k-j+1}(x),\ldots,F_{k}(x)\big)\big) \geqslant G'\big(g_{j}\big(F_{k-j}(x),F_{k-j+1}(x),\ldots,F_{k}(x)\big)\big) \geqslant 0$ due to \eqref{belongs_to_D_{i}} (setting $i_{0}=0$ and $i_{2}=k-j$) and the fact (already shown above) that $G'$ is increasing on $[0,1]$,
\item and since, by the induction hypothesis \eqref{g_{j}(1,F_{k-j+1},...,F_{k})_derivative_g_{j}(F_{1},F_{k-j+1},...,F_{k})}, we have $\frac{d}{dx}g_{j}\big(F_{0}(x),F_{k-j+1}(x),\ldots,F_{k}(x)\big) \geqslant \max\big\{0,\frac{d}{dx}g_{j}\big(F_{k-j}(x),F_{k-j+1}(x),\ldots,F_{k}(x)\big)\big\}$.
\end{enumerate}
Next, for $1 \leqslant i < k-j$, using \eqref{g_{i}_recursive_defn} once again, we have
\begin{align}
& \frac{d}{dx}g_{j+1}\big(F_{0}(x),F_{k-j}(x),F_{k-j+1}(x),\ldots,F_{k}(x)\big) - \frac{d}{dx}g_{j+1}\big(F_{i}(x),F_{k-j}(x),F_{k-j+1}(x),\ldots,F_{k}(x)\big) \nonumber\\
&= \Big[G'\big(g_{j}\big(F_{0}(x),F_{k-j+1}(x),\ldots,F_{k}(x)\big)\big) \frac{d}{dx}g_{j}\big(F_{0}(x),F_{k-j+1}(x),\ldots,F_{k}(x)\big) \nonumber\\&- G'\big(g_{j}\big(F_{k-j}(x),F_{k-j+1}(x),\ldots,F_{k}(x)\big)\big) \frac{d}{dx}g_{j}\big(F_{k-j}(x),F_{k-j+1}(x),\ldots,F_{k}(x)\big)\Big] \nonumber\\& - \Big[G'\big(g_{j}\big(F_{i}(x),F_{k-j+1}(x),\ldots,F_{k}(x)\big)\big) \frac{d}{dx}g_{j}\big(F_{i}(x),F_{k-j+1}(x),\ldots,F_{k}(x)\big) \nonumber\\&- G'\big(g_{j}\big(F_{k-j}(x),F_{k-j+1}(x),\ldots,F_{k}(x)\big)\big) \frac{d}{dx}g_{j}\big(F_{k-j}(x),F_{k-j+1}(x),\ldots,F_{k}(x)\big)\Big]\nonumber\\
&= G'\big(g_{j}\big(1,F_{k-j+1}(x),\ldots,F_{k}(x)\big)\big) \frac{d}{dx}g_{j}\big(1,F_{k-j+1}(x),\ldots,F_{k}(x)\big) \nonumber\\&- G'\big(g_{j}\big(F_{i}(x),F_{k-j+1}(x),\ldots,F_{k}(x)\big)\big) \frac{d}{dx}g_{j}\big(F_{i}(x),F_{k-j+1}(x),\ldots,F_{k}(x)\big) \geqslant 0,\nonumber
\end{align}
\sloppy \begin{enumerate}
\item since $G'\big(g_{j}\big(1,F_{k-j+1}(x),\ldots,F_{k}(x)\big)\big) \geqslant G'\big(g_{j}\big(F_{i}(x),F_{k-j+1}(x),\ldots,F_{k}(x)\big)\big) \geqslant 0$ due to \eqref{belongs_to_D_{i}} and the fact (already justified above) that $G'$ is increasing on $[0,1]$, 
\item and since, by the induction hypothesis \eqref{g_{j}(1,F_{k-j+1},...,F_{k})_derivative_g_{j}(F_{1},F_{k-j+1},...,F_{k})}, we have $\frac{d}{dx}g_{j}\big(1,F_{k-j+1}(x),\ldots,F_{k}(x)\big) \geqslant \max\big\{0,\frac{d}{dx}g_{j}\big(F_{i}(x),F_{k-j+1}(x),\ldots,F_{k}(x)\big)\big\}$. 
\end{enumerate}
This completes the proof of \eqref{g_{j}(1,F_{k-j+1},...,F_{k})_derivative_g_{j}(F_{1},F_{k-j+1},...,F_{k})} by induction on $j$, as desired, and it brings us to the end of our proof that $\nl$ is the minimum positive fixed point of $H_{k}$, accomplishing the goal of \S\ref{subsec:thm:main_1_normal_proof_part_2}.

%by applying \eqref{g_{j}(1,F_{k-j+1},...,F_{k})_derivative_g_{j}(F_{1},F_{k-j+1},...,F_{k})}, \eqref{belongs_to_D_{i}}, and the increasing nature of $G'$. 

\section{Proof of Theorem~\ref{thm:main_1_misere}}\label{sec:thm_1_misere_proof}
As the proof of Theorem~\ref{thm:main_1_misere} closely resembles that of Theorem~\ref{thm:main_1_normal}, we only point out the major modifications. Instead of \eqref{normal_main_recursion_1} and \eqref{normal_main_recursion_2}, we now have (omitting the subscript $k$, as $k$ is fixed throughout \S\ref{sec:thm_1_misere_proof}):
\begin{align}
& u \in \MW \Leftrightarrow \Gamma_{1}(u) = \emptyset \text{ or } \exists\ v \in \Gamma_{k}(u) \text{ with } v \in \ML, \label{misere_main_recursion_1}\\
& u \in \ML \Leftrightarrow \Gamma_{1}(u) \neq \emptyset \text{ and } v \in \MW \text{ for every } v \in \Gamma_{k}(u). \label{misere_main_recursion_2}
\end{align}
The recursions \eqref{nw^{(n+2)}_recursion} and \eqref{nl^{(n+1)}_recursion} are also accordingly replaced by
\begin{align}
& u \in \MW^{(n+1)} \Leftrightarrow \Gamma_{1}(u) = \emptyset \text{ or } \exists\ v \in \Gamma_{k}(u) \text{ with } v \in \ML^{(n)}, \label{mw^{(n+2)}_recursion} \\
& u \in \ML^{(n+1)} \Leftrightarrow \Gamma_{1}(u) \neq \emptyset \text{ and } v \in \MW^{(n)} \text{ for every } v \in \Gamma_{k}(u). \label{ml^{(n+1)}_recursion}
\end{align}
%It is crucial to note that \eqref{mw^{(n+2)}_recursion} and \eqref{ml^{(n+1)}_recursion} together imply the following: when $u \in \ML^{(n+1)}$, any child $v$ of $u$ must either be childless or be such that it is itself in $\MW^{(n)}$ and all vertices in $\Gamma_{k-1}(v)$ are also in $\MW^{(n)}$, and there has to be at least one vertex in $\Gamma_{k}(v) \setminus \Gamma_{k-1}(v)$ that is in $\ML^{(n-1)}$. 
Analogous to the classes of vertices $\mathcal{C}_{i,j,n}$ defined in \eqref{C_{i,j,n}_defn}, we now define, for $0 \leqslant i < j \leqslant k$,
\begin{align}\label{D_{i,j,n}_defn}
\mathcal{D}_{i,j,n} = \{u: \Gamma_{i}(u) \subset \MW^{(n+1)}, \Gamma_{j-1}(u) \cap \ML^{(n)} = \emptyset, \Gamma_{j}(u) \cap \ML^{(n)} \neq \emptyset\}.
\end{align}
Then $\mathcal{D}_{i,j,n} \cap \mathcal{D}_{i',j',n} = \emptyset$ when $j \neq j'$, and \eqref{mw^{(n+2)}_recursion} implies $\mathcal{D}_{i,j,n} \subset \MW^{(n+1)}$, so that $\mathcal{D}_{i,j,n} \cap \ML^{(n)} = \emptyset$. Let $q_{i,j,n}$ be the probability of the event that the root $\phi$ of $\mathcal{T}_{\chi}$ belongs to $\mathcal{D}_{i,j,n}$. From \eqref{mw^{(n+2)}_recursion} and \eqref{D_{i,j,n}_defn}:
\begin{align}\label{mw^{(n+1)}_recursion_in_terms_of_q_{0,j,n}}
& \mw^{(n+1)} = \chi(0) + \sum_{m=1}^{\infty}\Prob[\text{at least one } u_{t} \in \bigcup_{j=1}^{k-1}\mathcal{D}_{0,j,n} \cup \ML^{(n)} \text{ for } 1 \leqslant t \leqslant m]\chi(m)\nonumber\\
&= \chi(0) + \sum_{m=1}^{\infty}\{1 - (1 - \ml^{(n)} - \sum_{j=1}^{k-1}q_{0,j,n})^{m}\}\chi(m) = \chi(0) + 1 - G(1 - \ml^{(n)} - \sum_{j=1}^{k-1}q_{0,j,n}),
\end{align}
where, as in \S\ref{sec:thm_1_normal_proof}, we denote the $m$ children of $\phi$ by $u_{1}$, $u_{2}$, $\ldots$, $u_{m}$. From \eqref{ml^{(n+1)}_recursion} and \eqref{D_{i,j,n}_defn}, we have
\begin{multline}\label{ml^{(n+1)}_recursion_in_terms_of_q_{i,j,n}}
\ml^{(n+2)} = \sum_{m=1}^{\infty}\Prob[u_{t} \in \mathcal{D}_{k-1,k,n} \text{ or } \Gamma_{1}(u_{t}) = \emptyset, \text{ for each } 1 \leqslant t \leqslant m]\chi(m) \\= \sum_{m=1}^{\infty}(q_{k-1,k,n} + \chi(0))^{m}\chi(m) = G(q_{k-1,k,n}+\chi(0)) - \chi(0).
\end{multline}
%Using arguments similar to those used to derive \eqref{c_{0,1,n}_recursion}, \eqref{c_{0,j,n}_recursion} and Lemma~\ref{lem:main_thm_1_3}, we have $q_{0,j,n} = F_{j-1}\left(\ml^{(n)}\right) - F_{j}\left(\ml^{(n)}\right)$ for all $1 \leqslant j \leqslant k$. 
The recurrence relations for $q_{i,j,n}$, $1 \leqslant i < j \leqslant k$, differs from \eqref{c_{i,j,n}_recursion} in that, for $u$ to be in $\mathcal{D}_{i,j,n}$, each child $v$ of $u$ is either childless or in $\bigcup_{\ell=j-1}^{k}\mathcal{D}_{i-1,\ell,n}$, and at least one child of $u$ must be in $\mathcal{D}_{i-1,j-1,n}$. Thus
\begin{align}\label{d_{i,j,n}_recursion}
& q_{i,j,n} %&= \sum_{m=1}^{\infty}\Prob\left[u_{t} \in \bigcup_{\ell=j-1}^{k}\mathcal{D}_{i-1,\ell,n} \text{ or } \Gamma_{1}(u_{t}) = \emptyset, 1 \leqslant t \leqslant m, \text{ at least one } u_{t} \in \mathcal{D}_{i-1,j-1,n}\right]\chi(m)\nonumber\\
= \sum_{m=1}^{\infty}\Prob[u_{t} \in \bigcup_{\ell=j-1}^{k}\mathcal{D}_{i-1,\ell,n} \text{ or } \Gamma_{1}(u_{t}) = \emptyset \text{ for each } 1 \leqslant t \leqslant m]\chi(m) - \sum_{m=1}^{\infty}\Prob[u_{t} \in \bigcup_{\ell=j}^{k}\mathcal{D}_{i-1,\ell,n} \text{ or } \nonumber\\&\Gamma_{1}(u_{t}) = \emptyset \text{ for each } 1 \leqslant t \leqslant m]\chi(m) %\\= \sum_{m=1}^{\infty}(\chi(0) + \sum_{\ell=j-1}^{k}q_{i-1,\ell,n})^{m}\chi(m) - \sum_{m=1}^{\infty}(\chi(0) + \sum_{\ell=j}^{k}q_{i-1,\ell,n})^{m}\chi(m) 
= G(\chi(0) + \sum_{\ell=j-1}^{k}q_{i-1,\ell,n}) - G(\chi(0) + \sum_{\ell=j}^{k}q_{i-1,\ell,n}).
\end{align}

Analogous to Lemma~\ref{lem:main_thm_1_3}, we have the following relation: for every $0 \leqslant i < j \leqslant k$, 
\begin{equation}\label{q_{i,j,n}_closed_form_i>1}
q_{i,j,n} = \gamma_{i+1}(F_{j-i-1}(\ml^{(n)}), F_{j-i}(\ml^{(n)}), F_{k-i+1}(\ml^{(n)}), \ldots, F_{k}(\ml^{(n)})),
\end{equation}
where the functions $\gamma_{i}$ are as defined in \eqref{gamma_{i}_recursive_defn}. 
%This is proved using induction on $i$ (where the base case of $i=0$ is given by \eqref{q_{0,j,n}_closed_form}) and an argument very similar to that used in deducing \eqref{p_{i,j,n}_closed_form_i>1} (except that here, we apply \eqref{d_{i,j,n}_recursion} instead of \eqref{c_{i,j,n}_recursion}).
From \eqref{ml^{(n+1)}_recursion_in_terms_of_q_{i,j,n}} and \eqref{q_{i,j,n}_closed_form_i>1}, we have $\ml^{(n+2)} = J_{k}\left(\ml^{(n)}\right)$, where $J_{k}$ is as defined in \eqref{mathcal{J}_{k}_defn}. Taking the limit as $n \rightarrow \infty$, we see that $\ml$ is a fixed point of $J_{k}$. From \eqref{mw^{(n+1)}_recursion_in_terms_of_q_{0,j,n}} and \eqref{q_{i,j,n}_closed_form_i>1}, we have $\mw^{(n+1)} =\chi(0) + 1 - F_{k}\left(\ml^{(n)}\right)$, and taking the limit as $n \rightarrow \infty$, we get $\mw = \chi(0) + 1 - F_{k}(\ml)$. The approach to showing that $\ml$ is the minimum positive fixed point of $J_{k}$ is nearly identical to that adopted in \S\ref{subsec:thm:main_1_normal_proof_part_2}, and is therefore omitted. 
%Instead of \eqref{belongs_to_D_{i}}, for all $0 \leqslant i_{1} < i_{2} \leqslant k-j$ and $x \in [0,c_{k-1}]$, we have
%\begin{multline}%\label{belongs_to_D'_{i}}
%$1 - \chi(0) \geqslant \gamma_{j}(F_{i_{1}}(x), F_{k-j+1}(x), \ldots, F_{k}(x)) \geqslant \gamma_{j}(F_{i_{2}}(x), F_{k-j+1}(x), \ldots, F_{k}(x)) \geqslant 0$, %,\nonumber
%\end{multline}
%To establish the uppermost bound of \eqref{belongs_to_D'_{i}} by induction, we note that 
%\begin{align}
%& \gamma_{j+1}(F_{i_{1}}(x), F_{k-j}(x), F_{k-j+1}(x), \ldots, F_{k}(x)) = G(\chi(0) + \gamma_{j}(F_{i_{1}}(x), F_{k-j+1}(x), \ldots, F_{k}(x))) \nonumber\\&- G(\chi(0) + \gamma_{j}(F_{k-j}(x), F_{k-j+1}(x), \ldots, F_{k}(x))) \leqslant 1-\chi(0),\nonumber
%\end{align}
%since $G(\chi(0) + \gamma_{j}(F_{k-j}(x), F_{k-j+1}(x), \ldots, F_{k}(x))) \geqslant G(0) = \chi(0)$. 
%and analogous to \eqref{g_{j}(1,F_{k-j+1},...,F_{k})_derivative_g_{j}(F_{1},F_{k-j+1},...,F_{k})}, we have $\frac{d}{dx}\gamma_{j}(1,F_{k-j+1}(x),F_{k-j+2}(x),\ldots,F_{k}(x)) \geqslant \max\Big\{0, \frac{d}{dx}\gamma_{j}(F_{i}(x),F_{k-j+1}(x),F_{k-j+2}(x),\ldots,F_{k}(x))\Big\}$ for all $0 < i \leqslant k-j$ and $x \in [0,c_{k-1}]$. From here onward, the argument is exactly the same as that in \S\ref{subsec:thm:main_1_normal_proof_part_2}, culminating in the desired conclusion.

\section{Proof of Theorem~\ref{main:bounds_on_nl_{k}_ml_{k}}}\label{sec:proof_of_main_3} 
Fix any $k \in \mathbb{N}$. Recall that the objective of \S\ref{sec:proof_of_main_3} is to show that $\chi(0) < \nl_{k} \leqslant c_{k}$, and that the probability of draw $\nd_{k}$ is strictly positive if and only if $\nl_{k} < c_{k}$, in case of $k$-jump normal games, where $c_{k}$ is the unique fixed point of $F_{k}$ in $(0, c_{k-1})$; on the other hand, $\ml_{k} \leqslant \hat{c}_{k}$, and the probability of draw $\md_{k}$ is strictly positive if and only if $\ml_{k} < \hat{c}_{k}$, in case of $k$-jump mis\`{e}re games, where $\hat{c}_{k}$ is the unique point of intersection between $y=F_{k}(x)$ and $y = J_{k}(x)+\chi(0)$ in $(0,c_{k-1})$ (recall the definitions of the functions $F_{k}(x)$ and $J_{k}(x)$ from \eqref{F_{i}_defn} and \eqref{mathcal{J}_{k}_defn} respectively).
\begin{lemma}\label{lem:g_{j}_c_{k}_patterns}
For all $k \geqslant 2$, $j \leqslant k$ and $0 \leqslant i \leqslant k-j$, we have
\begin{align}\label{g_{j}_c_{k}_patterns}
g_{j}\left(F_{i}(c_{k}), F_{k-j+1}(c_{k}), F_{k-j+2}(c_{k}), \ldots, F_{k}(c_{k})\right) = F_{j+i-1}(c_{k}) - c_{k}.
\end{align}
\end{lemma}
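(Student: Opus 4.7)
The plan is to prove the identity by induction on $j$, exploiting two essential facts: the recursive definitions \eqref{g_{i}_recursive_defn} and \eqref{F_{i}_defn}, and the defining fixed-point property $F_{k}(c_{k}) = c_{k}$ furnished by Lemma~\ref{lem:main_thm_1_1}. The formula has a pleasingly self-similar shape: the claim says that once we plug the collection $(F_{k-j+1}(c_{k}), \ldots, F_{k}(c_{k}))$ into the last $j$ slots of $g_{j}$, the first slot $F_{i}(c_{k})$ simply propagates to an $F$-value with a shifted index, corrected by $-c_{k}$.

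For the base case $j=1$, the definition $g_{1}(x,y) = x-y$ and the fixed-point property give
\begin{equation}
g_{1}\!\left(F_{i}(c_{k}), F_{k}(c_{k})\right) = F_{i}(c_{k}) - F_{k}(c_{k}) = F_{i}(c_{k}) - c_{k} = F_{j+i-1}(c_{k}) - c_{k},\nonumber
\end{equation}
which is exactly \eqref{g_{j}_c_{k}_patterns} for $j=1$. Here we need $0 \leqslant i \leqslant k-1$, which is precisely the allowed range.

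For the inductive step, assume \eqref{g_{j}_c_{k}_patterns} holds for $j-1$ and every $0 \leqslant i' \leqslant k-(j-1)$. Fix $j$ with $2 \leqslant j \leqslant k$ and $0 \leqslant i \leqslant k-j$. Applying \eqref{g_{i}_recursive_defn} with the specific arguments from the statement,
\begin{align}
& g_{j}\!\left(F_{i}(c_{k}), F_{k-j+1}(c_{k}), F_{k-j+2}(c_{k}), \ldots, F_{k}(c_{k})\right) \nonumber\\
&\quad = G\!\left(g_{j-1}\!\left(F_{i}(c_{k}), F_{k-j+2}(c_{k}), \ldots, F_{k}(c_{k})\right)\right) - G\!\left(g_{j-1}\!\left(F_{k-j+1}(c_{k}), F_{k-j+2}(c_{k}), \ldots, F_{k}(c_{k})\right)\right).\nonumber
\end{align}
The induction hypothesis, applied first with first-slot index $i$ (valid since $i \leqslant k-j \leqslant k-j+1$) and then with first-slot index $k-j+1$ (valid since $k-j+1 \leqslant k-j+1$), gives
\begin{align}
g_{j-1}\!\left(F_{i}(c_{k}), F_{k-j+2}(c_{k}), \ldots, F_{k}(c_{k})\right) &= F_{(j-1)+i-1}(c_{k}) - c_{k} = F_{j+i-2}(c_{k}) - c_{k},\nonumber\\
g_{j-1}\!\left(F_{k-j+1}(c_{k}), F_{k-j+2}(c_{k}), \ldots, F_{k}(c_{k})\right) &= F_{(j-1)+(k-j+1)-1}(c_{k}) - c_{k} = F_{k-1}(c_{k}) - c_{k}.\nonumber
\end{align}
Feeding these into the display above and invoking \eqref{F_{i}_defn} (which reads $F_{m}(x) = G(F_{m-1}(x) - x)$) along with the fixed-point property $F_{k}(c_{k}) = c_{k}$, we obtain
\begin{equation}
g_{j}\!\left(F_{i}(c_{k}), F_{k-j+1}(c_{k}), \ldots, F_{k}(c_{k})\right) = G\!\left(F_{j+i-2}(c_{k}) - c_{k}\right) - G\!\left(F_{k-1}(c_{k}) - c_{k}\right) = F_{j+i-1}(c_{k}) - F_{k}(c_{k}) = F_{j+i-1}(c_{k}) - c_{k},\nonumber
\end{equation}
completing the inductive step.

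There is no real obstacle to this argument; the only care required is in verifying that at each application of the induction hypothesis, the first-slot index lies in the admissible range $[0, k-(j-1)]$, which is immediate from $i \leqslant k-j$ and from the choice $i' = k-j+1$. One may also wish to note in passing that \eqref{belongs_to_D_{i}} (evaluated at $x = c_{k}$) ensures that all intermediate arguments $F_{j+i-2}(c_{k}) - c_{k}$ and $F_{k-1}(c_{k}) - c_{k}$ lie in $[0,1]$, so that the applications of $G$ above take place within its domain.
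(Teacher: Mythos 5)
Your proposal is correct and follows essentially the same argument as the paper: induction on $j$ with base case $j=1$ using $F_{k}(c_{k}) = c_{k}$, then unfolding \eqref{g_{i}_recursive_defn}, applying the induction hypothesis to both terms, and collapsing via $F_{m}(x) = G(F_{m-1}(x)-x)$ together with the fixed-point property; the only difference is the cosmetic reindexing (stepping from $j-1$ to $j$ rather than $j$ to $j+1$). Your additional checks on the admissible first-slot index range and on the arguments of $G$ lying in $[0,1]$ are harmless refinements of the same proof.
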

\begin{proof}
When $j=1$, the left side of \eqref{g_{j}_c_{k}_patterns} equals $g_{1}\left(F_{i}(c_{k}), F_{k}(c_{k})\right) = F_{i}(c_{k}) - c_{k}$, since $c_{k}$ is the fixed point of $F_{k}$. Suppose \eqref{g_{j}_c_{k}_patterns} holds for some $j < k$ and all $0 \leqslant i \leqslant k-j$. For $0 \leqslant i \leqslant k-j-1$, we have 
\begin{align}
& g_{j+1}(F_{i}(c_{k}), F_{k-j}(c_{k}), F_{k-j+1}(c_{k}), \ldots, F_{k}(c_{k})) = G(g_{j}(F_{i}(c_{k}), F_{k-j+1}(c_{k}), \ldots, F_{k}(c_{k}))) - G(g_{j}(F_{k-j}(c_{k}), \nonumber\\& F_{k-j+1}(c_{k}), \ldots, F_{k}(c_{k}))) = G\left(F_{j+i-1}(c_{k}) - c_{k}\right) - G\left(F_{k-1}(c_{k}) - c_{k}\right) = F_{j+i}(c_{k}) - F_{k}(c_{k}) = F_{j+i}(c_{k}) -  c_{k}. \nonumber \qedhere
\end{align} 
\end{proof}
As an immediate consequence of Lemma~\ref{lem:g_{j}_c_{k}_patterns}, we get the following corollary:
\begin{corollary}\label{cor:c_{k}_fixed_point}
For any $k \in \mathbb{N}$, $c_{k}$ is a fixed point of $H_{k}$.
\end{corollary}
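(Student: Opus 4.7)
The proof plan is to apply Lemma~\ref{lem:g_{j}_c_{k}_patterns} in the ``maximal'' case $j = k$, $i = 0$ and then unwind the definitions of $H_k$ and $F_k$. This is essentially a one-line deduction from the lemma, so the main task is simply to verify that the indices line up and to separately address the small case $k=1$, which lies outside the hypothesis $k \geq 2$ of the lemma.

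For $k \geq 2$, the plan is as follows. Setting $j = k$ and $i = 0$ in \eqref{g_{j}_c_{k}_patterns}, we obtain
\begin{equation}
g_k\bigl(F_0(c_k), F_1(c_k), F_2(c_k), \ldots, F_k(c_k)\bigr) = F_{k-1}(c_k) - c_k.\nonumber
\end{equation}
Feeding this into the definition \eqref{H_{k}_defn} of $H_k$ and then invoking the recursion \eqref{F_{i}_defn} together with the fact that $c_k$ is, by Lemma~\ref{lem:main_thm_1_1}, a fixed point of $F_k$, we get
\begin{equation}
H_k(c_k) = G\bigl(F_{k-1}(c_k) - c_k\bigr) = F_k(c_k) = c_k,\nonumber
\end{equation}
which is precisely the claim.

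For the remaining case $k = 1$, the argument is essentially the same but bypasses Lemma~\ref{lem:g_{j}_c_{k}_patterns}: since $g_1(x,y) = x - y$ and $F_0 \equiv 1$, we have $g_1\bigl(F_0(c_1), F_1(c_1)\bigr) = 1 - c_1$, so that
\begin{equation}
H_1(c_1) = G(1 - c_1) = F_1(c_1) = c_1,\nonumber
\end{equation}
again using the definition of $F_1$ and the fact that $c_1$ is a fixed point of $F_1$.

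There is no real obstacle here; once Lemma~\ref{lem:g_{j}_c_{k}_patterns} is in hand, the corollary is just the $(j,i) = (k,0)$ specialization, followed by one application of the recursion $F_k(x) = G(F_{k-1}(x) - x)$ evaluated at the fixed point. The only point meriting care is that the lemma is stated for $k \geq 2$, so the case $k=1$ must be treated by hand, which as shown above is immediate from the definition of $g_1$ and the defining property of $c_1$.
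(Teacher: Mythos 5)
Your proposal is correct and takes essentially the same route as the paper: specialize Lemma~\ref{lem:g_{j}_c_{k}_patterns} at $(j,i)=(k,0)$ and compute $H_{k}(c_{k}) = G\big(F_{k-1}(c_{k}) - c_{k}\big) = F_{k}(c_{k}) = c_{k}$ using \eqref{H_{k}_defn}, \eqref{F_{i}_defn} and the fixed-point property of $c_{k}$. Your separate treatment of $k=1$ (where the lemma's stated hypothesis $k \geqslant 2$ does not formally apply) is a harmless extra precaution that the paper glosses over, and your direct verification via $g_{1}(F_{0}(c_{1}),F_{1}(c_{1})) = 1 - c_{1}$ is fine.
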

\begin{proof}
We set $j = k$ and $i=0$ in Lemma~\ref{lem:g_{j}_c_{k}_patterns} to get $g_{k}(F_{0}(c_{k}), F_{1}(c_{k}), \ldots, F_{k}(c_{k})) = F_{k-1}(c_{k}) - c_{k}$, which yields $H_{k}(c_{k}) = G(g_{k}(F_{0}(c_{k}), F_{1}(c_{k}), \ldots, F_{k}(c_{k}))) = G(F_{k-1}(c_{k}) - c_{k}) = F_{k}(c_{k}) = c_{k}$.
\end{proof}

By Theorem~\ref{thm:main_1_normal}, $\nl_{k}$ is the minimum positive fixed point of $H_{k}$, and by Corollary~\ref{cor:c_{k}_fixed_point} and Lemma~\ref{lem:main_thm_1_1}, $c_{k}$ is a positive fixed point of $H_{k}$. Hence $\nl_{k} \leqslant c_{k}$. The lower bound on $\nl_{k}$ in Theorem~\ref{main:bounds_on_nl_{k}_ml_{k}} follows simply from observing that if the root has no child, which happens with probability $\chi(0)$, then P1 loses the game starting at the root. From Theorem~\ref{thm:main_1_normal}, we have $\nd_{k} = 1 - \nw_{k} - \nl_{k} = F_{k}(\nl_{k}) - \nl_{k}$. We already know from Lemma~\ref{lem:main_thm_1_1} that $F_{k}$ is strictly decreasing on $[0,c_{k-1}]$ and $c_{k} \in (0,c_{k-1})$ is its unique fixed point, which is equivalent to saying that for $x \in [0,c_{k-1}]$, we have $F_{k}(x) - x$ strictly positive if and only if $x < c_{k}$. Therefore, $\nd_{k}$ is strictly positive if and only if $\nl_{k} < c_{k}$.

Note that $F_{k}(0) = 1$ (since the proof of Lemma~\ref{lem:main_thm_1_1} yields $F_{i}(0) = 1$ for all $i \in \mathbb{N}$) and $J_{k}(0) + \chi(0) = G(\chi(0)) < 1$ as $\chi(0) < 1$ (which also ensures that $G$ is strictly increasing on $[0,1]$), implying that the curve $y = F_{k}(x)$ lies \emph{above} the curve $y = J_{k}(x) + \chi(0)$ at $x = 0$. On the other hand, $F_{k}(c_{k-1}) = G(F_{k-1}(c_{k-1}) - c_{k-1}) = G(0)$ as $c_{k-1}$ is the fixed point of $F_{k-1}$, whereas $J_{k}(c_{k-1}) + \chi(0) = G\big(\chi(0) + \gamma_{k}\big(1,F_{1}(c_{k-1}),\ldots,F_{k}(c_{k-1})\big)\big) > G(0)$ since $\chi(0) + \gamma_{k}\big(1,F_{1}(c_{k-1}),\ldots,F_{k}(c_{k-1})\big) > 0$, thus implying that the curve $y = F_{k}(x)$ lies \emph{below} the curve $y = J_{k}(x)+\chi(0)$ at $x=c_{k-1}$. Lemma~\ref{lem:main_thm_1_1} shows that $F_{k-1}(x)$ is strictly decreasing on $[0,c_{k-1}]$, whereas an argument analogous to that used for showing that $H_{k}$ is increasing on $[0,c_{k-1}]$ (as outlined in \S\ref{subsec:thm:main_1_normal_proof_part_2}) can be employed to show that $J_{k}$, and hence $J_{k}+\chi(0)$, is increasing on $[0,c_{k-1}]$. Thus $y = F_{k}(x)$ and $y = J_{k}(x)+\chi(0)$ intersect at a \emph{unique} point inside $(0,c_{k-1})$, which we call $\hat{c}_{k}$. From Theorem~\ref{thm:main_1_misere}, we have $\md_{k} = 1 - \mw_{k} - \ml_{k} = F_{k}(\ml_{k}) - \{J_{k}(\ml_{k}) + \chi(0)\}$. Since $F_{k}(x)$ is strictly decreasing on $[0,c_{k-1}]$ and $J_{k}(x) + \chi(0)$ is increasing on $[0,c_{k-1}]$ and they intersect at $\hat{c}_{k}$, we must have $\ml_{k} \leqslant \hat{c}_{k}$ to ensure that $\md_{k} \geqslant 0$, and $\md_{k} > 0$ iff $\ml_{k} < \hat{c}_{k}$. This brings us to the end of the proof of Theorem~\ref{main:bounds_on_nl_{k}_ml_{k}}. %Note here that $\hat{c}_{k}$ is also the unique fixed point of $F_{k} - \chi(0)$.

\section{Proof of Theorem~\ref{main:normal_draw_probab_limit_Poisson}}\label{sec:proof_of_main_4}
Throughout \S\ref{sec:proof_of_main_4}, we fix any $k \in \mathbb{N}$ and let the offspring distribution $\chi$ of the GW tree $\mathcal{T}_{\chi}$ be Poisson$(\lambda)$. In order to emphasize the dependence of all functions and quantities involved on $\lambda$, we replace, from the third paragraph of \S\ref{sec:proof_of_main_4} onward, all of $G$, $F_{i}$, $H_{k}$, $g_{i}$, $c_{i}$, $\nl_{k}$, $\nw_{k}$ and $\nd_{k}$ by $G_{\lambda}$, $F_{i,\lambda}$, $H_{k,\lambda}$, $g_{i,\lambda}$, $c_{i,\lambda}$, $\nl_{k,\lambda}$, $\nw_{k,\lambda}$ and $\nd_{k,\lambda}$ respectively (for all $1 \leqslant i \leqslant k$).

The proof of Lemma~\ref{lem:main_thm_1_1} shows that $F_{i}(0) = 1$ for all $i \in \mathbb{N}$, so that $H_{k}(0) = \chi(0) > 0$. The curve $y = H_{k}(x)$ thus lies \emph{above} the curve $y=x$ at $x = 0$. By Theorem~\ref{thm:main_1_normal}, we know that $x = \nl_{k}$ is the smallest positive value of $x$ at which the curve $y=H_{k}(x)$ either touches or starts going \emph{beneath} the curve $y=x$, so that the slope of $y=H_{k}(x)$ at $x = \nl_{k}$ has to be less than or equal to the slope of $y=x$. Therefore, we must have $H'_{k}(\nl_{k}) \leqslant 1$. 

The goal of \S\ref{sec:proof_of_main_4} is to establish that $H'_{k,\lambda}(c_{k,\lambda}) > 1$ for all $\lambda$ sufficiently large. This ensures, via the conclusion drawn in the previous paragraph, that $\nl_{k,\lambda} \neq c_{k,\lambda}$. By Theorem~\ref{main:bounds_on_nl_{k}_ml_{k}}, we conclude that $\nl_{k,\lambda} < c_{k,\lambda}$ and hence $\nd_{k,\lambda} > 0$ for all such values of $\lambda$. This would then conclude the proof of the first part of the statement of Theorem~\ref{main:normal_draw_probab_limit_Poisson}.  

We outline here the salient steps of the argument employed to prove that $H'_{k,\lambda}(c_{k,\lambda}) > 1$ for all $\lambda$ sufficiently large. In Lemma~\ref{lem:draw_probab_limit_lemma_1}, we obtain an expression for the derivative of the function $g_{k,\lambda}\big(r_{0}(x),r_{1}(x),\ldots,r_{k}(x)\big)$ with respect to $x$, where $r_{i}(x)$, for $0 \leqslant i \leqslant k$, are differentiable, and $\big(r_{0}(x),r_{1}(x),\ldots,r_{k}(x)\big) \in \mathcal{D}_{k}$ (which is necessary because of how we define the function $g_{k}$ in \eqref{g_{i}_recursive_defn}). Letting $r_{i}(x)$ be the function $F_{i,\lambda}(x)$ for all $0 \leqslant i \leqslant k$, and using Lemma~\ref{lem:F_{i}_derivatives} that reveals a pattern in the derivatives of the functions $F_{i,\lambda}(x)$, we show (via \eqref{H_{k}_defn}) that the leading term in the expansion of $H'_{k,\lambda}(c_{k,\lambda})$ is of the same order of magnitude as $\lambda^{2k} c^{2}_{k,\lambda}$, while the remaining terms are $O(\lambda^{2k-1}c^{2}_{k,\lambda})$. Our final task is to show that 
\begin{equation}\label{lambda^{i}c_{k,lambda}_limit_various_i}
\lim_{\lambda \rightarrow \infty}\lambda^{k-1}c_{k,\lambda} = 0 \text{ and } \lim_{\lambda \rightarrow \infty}\lambda^{k}c_{k,\lambda} = \infty,
\end{equation}
which allows us to conclude, in fact, that $\lim_{\lambda \rightarrow \infty}H'_{k,\lambda}(c_{k,\lambda}) = \infty$. Note that the second part of the statement of Theorem~\ref{main:normal_draw_probab_limit_Poisson}, asserting $\lambda^{k-1}\nl_{k,\lambda} \rightarrow 0$ as $\lambda \rightarrow \infty$, follows immediately from the first part of \eqref{lambda^{i}c_{k,lambda}_limit_various_i} and the fact that $\nl_{k,\lambda} \leqslant c_{k,\lambda}$ that we obtain from Theorem~\ref{main:bounds_on_nl_{k}_ml_{k}}.

\subsection{Understanding the behaviour of $c_{k,\lambda}$ as a function of $\lambda$}\label{subsec:proof_of_main_4_part_1} We begin the proof of \eqref{lambda^{i}c_{k,lambda}_limit_various_i} by attempting to understand how $c_{k,\lambda}$ behaves as a function of $\lambda$. The first task we accomplish in \S\ref{subsec:proof_of_main_4_part_1} is showing that $c_{k,\lambda}$ is, in fact, differentiable with respect to $\lambda$, for which we implement the well-known implicit function theorem.

To this end, we \emph{redefine} the functions $F_{i,\lambda}(x)$ on the \emph{extended} interval $[0,1]$ (instead of only on the sub-interval $[0,c_{i-1,\lambda}]$, as done in \eqref{F_{i}_defn}) as follows: 
\begin{equation}
F_{1,\lambda}(x) = e^{-\lambda x} \text{ and } F_{i+1,\lambda}(x) = \exp\{\lambda F_{i,\lambda}(x) - \lambda x - \lambda\} \text{ for all } x \in [0,1], \text{ for } i \in \mathbb{N}.\label{F_{i,lambda}_extended}
\end{equation}
Note that these functions are well-defined. It is immediate that $0 < F_{1,\lambda}(x) < 1$ for all $x \in (0,1)$. We now show, via induction on $i$, that the inequalities $0 < F_{i,\lambda}(x) < 1$ hold for all $x \in (0,1)$, for each $i \in \mathbb{N}$. Suppose we have already shown that $0 < F_{i,\lambda}(x) < 1$ holds for every $x \in (0,1)$, for some $i \in \mathbb{N}$. This yields 
\begin{equation}
\lambda F_{i,\lambda}(x) - \lambda x - \lambda < -\lambda x < 0 \implies 0 < F_{i+1,\lambda}(x) < 1 \text{ for all } x \in (0,1),\label{claim_2}
\end{equation}
completing the inductive argument. Next, we note that $F'_{1,\lambda}(x) = -\lambda e^{-\lambda x} < 0$. We show, by induction on $i$, that $F'_{i,\lambda}(x) < 0$ for all $x \in (0,1)$, for each $i \in \mathbb{N}$. Suppose we have already shown that $F'_{i,\lambda}(x) < 0$ for all $x \in (0,1)$, for some $i \in \mathbb{N}$. We then have 
\begin{equation}
F'_{i+1,\lambda}(x) = (\lambda F'_{i,\lambda}(x) - \lambda) F_{i+1,\lambda}(x) < 0 \text{ for all } x \in (0,1).\nonumber
\end{equation}
This completes the inductive argument. Let us define $f_{i}(\lambda, x) = F_{i,\lambda}(x) - x$ on $\Omega = (0,\infty) \times (0,1)$, so that $(\lambda, c_{i,\lambda})$ is a point on the curve $f_{i}(\lambda, x) = 0$, and $\frac{\partial}{\partial x}f_{i}(\lambda, x) = F'_{i,\lambda}(x) - 1 < 0$ for all $x \in (0,1)$. By the implicit function theorem, for every $\lambda > 0$, there exists an open $U \times V \subset \Omega$, containing $(\lambda, c_{i,\lambda})$, and a function $h: U \mapsto V$, differentiable on $U$, such that $c_{i,\lambda} = h(\lambda)$. This concludes our first task, i.e.\ showing that $c_{k,\lambda}$ is differentiable with respect to $\lambda$.

The second task we accomplish in \S\ref{subsec:proof_of_main_4_part_1} is showing that $c_{k,\lambda}$ is a strictly decreasing function of $\lambda$, by showing that $\frac{d}{d\lambda}c_{k,\lambda}$ is strictly negative for all $\lambda > 0$. For any function $f(x)$ that is defined and differentiable for all $x > 0$, and $0 < f(x) < 1$, we show, by induction on $i$, that
\begin{equation}
\frac{d}{d\lambda}F_{i,\lambda}(f(\lambda)) = A_{i}(f(\lambda)) + B_{i}(f(\lambda)) f'(\lambda) \text{ where } A_{i}(f(\lambda)) < 0 \text{ and } B_{i}(f(\lambda)) < 0,\label{claim_1}
\end{equation}
for all $\lambda > 0$ and $i \in \mathbb{N}$. We note that $A_{1}(f(\lambda)) = -f(\lambda) e^{-\lambda f(\lambda)}$ and $B_{1}(f(\lambda)) = -\lambda e^{-\lambda f(\lambda)}$, so that the base case is verified. Assuming that \eqref{claim_1} holds for some $i \in \mathbb{N}$, we have
\begin{align}
&\frac{d}{d\lambda}F_{i+1,\lambda}\big(f(\lambda)\big) = \left[F_{i,\lambda}\big(f(\lambda)\big) - f(\lambda) - 1 + \lambda\left\{\frac{d}{d\lambda}F_{i,\lambda}\big(f(\lambda)\big) - f'(\lambda)\right\}\right] F_{i+1,\lambda}\big(f(\lambda)\big)\nonumber\\
&= \left[F_{i,\lambda}\big(f(\lambda)\big) - f(\lambda) - 1 + \lambda\left\{A_{i}\big(f(\lambda)\big) + B_{i}\big(f(\lambda)\big)f'(\lambda) - f'(\lambda)\right\}\right]F_{i+1,\lambda}\big(f(\lambda)\big)\nonumber\\
&= \left[F_{i,\lambda}\big(f(\lambda)\big) - 1 - f(\lambda) + \lambda A_{i}\big(f(\lambda)\big)\right]F_{i+1,\lambda}\big(f(\lambda)\big) + \lambda\left[B_{i}\big(f(\lambda)\big) - 1\right]F_{i+1,\lambda}\big(f(\lambda)\big)f'(\lambda),\nonumber
\end{align}
so that 
\begin{equation}
A_{i+1}(f(\lambda)) = \left[F_{i,\lambda}\big(f(\lambda)\big) - 1 - f(\lambda) + \lambda A_{i}\big(f(\lambda)\big)\right]F_{i+1,\lambda}(f(\lambda))\nonumber
\end{equation}
and 
\begin{equation}
B_{i+1}\big(f(\lambda)\big) = \lambda\left[B_{i}\big(f(\lambda)\big) - 1\right]F_{i+1,\lambda}\big(f(\lambda)\big).\nonumber
\end{equation}
These are both negative due to the induction hypothesis and because $0 < F_{i,\lambda}\big(f(\lambda)\big) < 1$ for all $\lambda > 0$ (due to \eqref{claim_2}). This completes the proof by induction.

When $f(\lambda) = c_{k,\lambda}$, differentiating both sides of the identity $F_{k,\lambda}(c_{k,\lambda}) = c_{k,\lambda}$ (since $c_{k,\lambda}$, recall, is the unique fixed point of $F_{k,\lambda}$), we have 
\begin{equation}
c'_{k,\lambda} = \frac{d}{d\lambda}c_{k,\lambda} = \frac{A_{k}(c_{k,\lambda})}{1-B_{k}(c_{k,\lambda})}.\nonumber
\end{equation}
By \eqref{claim_1}, we see that the numerator is strictly negative whereas the denominator is strictly positive, thus ensuring that $c'_{k,\lambda} < 0$ for all $\lambda > 0$. This concludes our proof of the fact that $c_{k,\lambda}$ is strictly decreasing in $\lambda$ for all $\lambda > 0$. 

As an immediate consequence of this observation, we note that the limit $\lim_{\lambda \rightarrow \infty} c_{k,\lambda}$ exists as $c_{k,\lambda}$ is bounded below by $0$ for all $\lambda > 0$. From \eqref{F_{i,lambda}_extended} and the fact that $c_{k,\lambda}$ is the fixed point of $F_{k,\lambda}$, we obtain 
\begin{equation}
F_{k,\lambda}(c_{k,\lambda}) = \exp\left\{\lambda F_{k-1,\lambda}(c_{k,\lambda}) - \lambda c_{k,\lambda} - \lambda\right\} = c_{k,\lambda} \Longleftrightarrow F_{k-1,\lambda}(c_{k,\lambda}) - c_{k,\lambda} - 1 = \frac{\ln c_{k,\lambda}}{\lambda}.\label{claim_3}
\end{equation}
If $\lim_{\lambda \rightarrow \infty}c_{k,\lambda} = c$ for some $c > 0$, then the right side will go to $0$, while the left side remains bounded above by $-c$ since $F_{k-1,\lambda}(c_{k,\lambda}) < 1$ (due to \eqref{claim_2}), yielding a contradiction. Therefore, we must have $\lim_{\lambda \rightarrow \infty} c_{k,\lambda} = 0$. %Note that this takes care of our claim in \eqref{lambda^{i}c_{k,lambda}_limit_various_i} for $i=0$.

\subsection{Understanding the behaviour of $F_{j,\lambda}(c_{k,\lambda})$ for all $1 \leqslant j \leqslant k-1$}\label{subsec:proof_of_main_4_part_2} Before we can establish the claims made in \eqref{lambda^{i}c_{k,lambda}_limit_various_i}, we need to understand the behaviour of $F_{j,\lambda}(c_{k,\lambda})$ as a function of $\lambda$, as $\lambda \rightarrow \infty$, for each $1 \leqslant j \leqslant k-1$. To this end, note that, given \emph{any} infinite sequence $\{\lambda_{n}\}_{n}$ of positive reals with $\lambda_{n} \rightarrow \infty$, since $0 < F_{k-1,\lambda_{n}}\left(c_{k,\lambda_{n}}\right) < 1$ for every $n \in \mathbb{N}$ due to \eqref{claim_2}, the Bolzano-Weierstrass Theorem guarantees the existence of an infinite subsequence $\left\{\lambda_{n_{i}}\right\}_{i}$ such that 
\begin{equation}
\lim_{i \rightarrow \infty}F_{k-1,\lambda_{n_{i}}}(c_{k,\lambda_{n_{i}}}) \text{ exists and is in } [0,1] \implies \lim_{i \rightarrow \infty} \frac{\ln c_{k,\lambda_{n_{i}}}}{\lambda_{n_{i}}} \text{ exists and is in } [-1,0], \text{ due to \eqref{claim_3}}.\nonumber
\end{equation}
Suppose we assume that
\begin{equation}
\lim_{i \rightarrow \infty} \frac{\ln c_{k,\lambda_{n_{i}}}}{\lambda_{n_{i}}} = -c \text{ for some } 0 < c \leqslant 1.\label{contradiction_1}
\end{equation}
In what follows, our aim is to show that \eqref{contradiction_1} leads to a contradiction.

\subsubsection{Proving that \eqref{contradiction_1} leads to a contradiction}\label{subsec:proof_of_main_4_part_2_subpart_1} Given any $0 < \epsilon < c$, \eqref{contradiction_1} implies that there exists $i_{\epsilon}$ such that $c_{k,\lambda_{n_{i}}} < e^{(-c+\epsilon)\lambda_{n_{i}}}$ for all $i \geqslant i_{\epsilon}$. We then show, using an inductive argument with respect to the index $j$, that
\begin{equation}\label{F_{i,lambda}(c_{k,lambda})_lower_bound}
F_{j,\lambda_{n_{i}}}\left(c_{k,\lambda_{n_{i}}}\right) > \exp\left\{-\sum_{t=1}^{j}\lambda_{n_{i}}^{t}e^{(-c+\epsilon)\lambda_{n_{i}}}\right\} \text{ for all } i \geqslant i_{\epsilon}, \text{ for } 1 \leqslant j \leqslant k.
\end{equation}
For $i \geqslant i_{\epsilon}$, we have 
\begin{equation}
F_{1,\lambda_{n_{i}}}\left(c_{k,\lambda_{n_{i}}}\right) = e^{-\lambda_{n_{i}}c_{k,\lambda_{n_{i}}}} > \exp\left\{-\lambda_{n_{i}}e^{(-c+\epsilon)\lambda_{n_{i}}}\right\},\nonumber
\end{equation}
so that \eqref{F_{i,lambda}(c_{k,lambda})_lower_bound} holds for $j=1$, and the base case for the induction is thus verified. Assuming that \eqref{F_{i,lambda}(c_{k,lambda})_lower_bound} holds for some $j < k$ and using $e^{-x} - 1 > -x$ for all $x > 0$, we have, for all $i \geqslant i_{\epsilon}$,
\begin{align}
F_{j+1,\lambda_{n_{i}}}\left(c_{k,\lambda_{n_{i}}}\right) &= \exp\left\{\lambda_{n_{i}}F_{j,\lambda_{n_{i}}}\left(c_{k,\lambda_{n_{i}}}\right) - \lambda_{n_{i}}c_{k,\lambda_{n_{i}}} - \lambda_{n_{i}}\right\} \nonumber\\
&> \exp\left\{\lambda_{n_{i}}\exp\left\{-\sum_{t=1}^{j}\lambda_{n_{i}}^{t}e^{(-c+\epsilon)\lambda_{n_{i}}}\right\} - \lambda_{n_{i}}e^{(-c+\epsilon)\lambda_{n_{i}}} - \lambda_{n_{i}}\right\} \nonumber\\
&\geqslant \exp\left\{-\lambda_{n_{i}}\sum_{t=1}^{j}\lambda_{n_{i}}^{t}e^{(-c+\epsilon)\lambda_{n_{i}}} - \lambda_{n_{i}}e^{(-c+\epsilon)\lambda_{n_{i}}}\right\} = \exp\left\{-\sum_{t=1}^{j+1}\lambda_{n_{i}}^{t}e^{(-c+\epsilon)\lambda_{n_{i}}}\right\},\nonumber
\end{align}
thus proving \eqref{F_{i,lambda}(c_{k,lambda})_lower_bound} by induction. Setting $j=k$, since $c_{k,\lambda_{n_{i}}}$ is the fixed point of $F_{k,\lambda_{n_{i}}}$, we have, for $i \geqslant i_{\epsilon}$,
\begin{equation}
c_{k,\lambda_{n_{i}}} = F_{k,\lambda_{n_{i}}}\left(c_{k,\lambda_{n_{i}}}\right) > \exp\left\{-\sum_{t=1}^{k}\lambda_{n_{i}}^{t}e^{(-c+\epsilon)\lambda_{n_{i}}}\right\}.\nonumber
\end{equation}
We know from \S\ref{subsec:proof_of_main_4_part_1} that the left side of the above inequality goes to $0$ as $i \rightarrow \infty$, whereas the right side approaches $1$, since $\lambda_{n_{i}}^{t}e^{(-c+\epsilon)\lambda_{n_{i}}} \rightarrow 0$ as $\lambda_{n_{i}} \rightarrow \infty$ for every $1 \leqslant t \leqslant k$ (since $-c+\epsilon < 0$). This yields the desired contradiction.

\subsubsection{Concluding about the limit of $F_{j,\lambda}(c_{k,\lambda})$, for all $1 \leqslant j \leqslant k-1$}\label{subsec:proof_of_main_4_part_2_subpart_2} The contradiction obtained in \S\ref{subsec:proof_of_main_4_part_2_subpart_1} tells us that our assumption in \eqref{contradiction_1} is wrong, which in turn implies that
\begin{equation}\label{contradiction_1_corrected}
\lim_{\lambda \rightarrow \infty}\frac{\ln c_{k,\lambda}}{\lambda} = 0.
\end{equation}
By \eqref{claim_3} and \eqref{contradiction_1_corrected}, we conclude that 
\begin{equation}\label{k-1_limit}
\lim_{\lambda \rightarrow \infty}F_{k-1,\lambda}(c_{k,\lambda}) = 1.
\end{equation}
Recall that in the proof of \eqref{belongs_to_D_{i}}, we have shown that $F_{i,\lambda}(x) \geqslant F_{i+1,\lambda}(x)$ for all $0 \leqslant i \leqslant k-1$ and $x \in [0,c_{k-1,\lambda}]$. Using this fact and \eqref{k-1_limit}, we obtain 
\begin{equation}\label{F_{i}(c_{k,lambda})_limit}
1 > F_{1,\lambda}(c_{k,\lambda}) \geqslant F_{2,\lambda}(c_{k,\lambda}) \geqslant \cdots \geqslant F_{k-1,\lambda}(c_{k,\lambda}) \text{ for } \lambda > 0 \implies \lim_{\lambda \rightarrow \infty}F_{i,\lambda}(c_{k,\lambda}) = 1 \text{ for } 1 \leqslant i \leqslant k-1.
\end{equation}

\subsection{The behaviour of $\lambda^{i}c_{k,\lambda}$, for $1 \leqslant i \leqslant k$, as functions of $\lambda$}\label{subsec:proof_of_main_4_part_3} We lay down the final steps of the proof of \eqref{lambda^{i}c_{k,lambda}_limit_various_i}. Setting $i=1$ in \eqref{F_{i}(c_{k,lambda})_limit} and using \eqref{F_{i,lambda}_extended}, we have $\lim_{\lambda \rightarrow \infty}\lambda c_{k,\lambda} = 0$. We show, via induction on $i$, that 
\begin{equation}\label{F_{i}(c_{k,lambda})_order_of_magnitude}
F_{i,\lambda}(c_{k,\lambda}) = \exp\{-\lambda^{i}c_{k,\lambda}(1 + O(\lambda^{-1}))\} \text{ for } 1 \leqslant i \leqslant k-1.
\end{equation}
Assuming that \eqref{F_{i}(c_{k,lambda})_order_of_magnitude} holds for some $i \leqslant k-2$, we have 
\begin{equation}\label{ind_hyp_1}
\lim_{\lambda \rightarrow \infty}\lambda^{i}c_{k,\lambda} = 0
\end{equation}
due to \eqref{F_{i}(c_{k,lambda})_limit}, which further ensures that $c_{k,\lambda} = O(\lambda^{-i})$ for all $\lambda$ large enough. Using this fact, \eqref{F_{i,lambda}_extended} and a Taylor expansion, we have
\begin{align}\label{F_{i+1}(c_{k,lambda})_order_of_magnitude_inductive_step}
F_{i+1,\lambda}(c_{k,\lambda}) &= \exp\left\{\lambda \left[e^{-\lambda^{i}c_{k,\lambda}(1 + O(\lambda^{-1}))} - c_{k,\lambda} - 1\right]\right\} \nonumber\\
&=\exp\left\{\lambda\left[-\lambda^{i} c_{k,\lambda}(1 + O(\lambda^{-1})) + O\left(\left\{\lambda^{i} c_{k,\lambda}(1 + O(\lambda^{-1}))\right\}^{2}\right) - c_{k,\lambda}\right]\right\} \nonumber\\
&= \exp\left\{-\lambda^{i+1}c_{k,\lambda}(1 + O(\lambda^{-1}))\left[1 + O\left(\lambda^{i}c_{k,\lambda}(1 + O(\lambda^{-1}))\right) + O(\lambda^{-i})\right]\right\} \nonumber\\
&= \exp\left\{-\lambda^{i+1}c_{k,\lambda}\left[1 + O(\lambda^{-1}) + O(\lambda^{i}c_{k,\lambda}) + O(\lambda^{-i})\right]\right\}
\end{align}
for all $\lambda$ sufficiently large. Note that each of the terms $O(\lambda^{-1})$, $O(\lambda^{i}c_{k,\lambda})$ (by \eqref{ind_hyp_1}) and $O(\lambda^{-i})$ is $o(1)$ as $\lambda \rightarrow \infty$, so that the dominant term in the exponent of \eqref{F_{i+1}(c_{k,lambda})_order_of_magnitude_inductive_step} is $-\lambda^{i+1}c_{k,\lambda}$. This fact, along with \eqref{F_{i}(c_{k,lambda})_limit}, yields $\lim_{\lambda \rightarrow \infty}\lambda^{i+1}c_{k,\lambda} = 0$. This in turn yields $\lambda^{i}c_{k,\lambda} = o(\lambda^{-1})$, so that the dominant term out of $O(\lambda^{-1})$, $O(\lambda^{i}c_{k,\lambda})$ and $O(\lambda^{-i})$ in \eqref{F_{i+1}(c_{k,lambda})_order_of_magnitude_inductive_step} is $O(\lambda^{-1})$. This completes the proof of \eqref{F_{i}(c_{k,lambda})_order_of_magnitude} by induction. 

Combining \eqref{F_{i}(c_{k,lambda})_limit} and \eqref{F_{i}(c_{k,lambda})_order_of_magnitude} for $i=k-1$, we conclude that $\lim_{\lambda \rightarrow \infty}\lambda^{k-1}c_{k,\lambda} = 0$. Using this fact and setting $i=k-1$ in \eqref{F_{i+1}(c_{k,lambda})_order_of_magnitude_inductive_step}, we have $c_{k,\lambda} = F_{k,\lambda}(c_{k,\lambda}) = \exp\{-\lambda^{k}c_{k,\lambda}[1 + O(\lambda^{-1}) + O(\lambda^{k-1}c_{k,\lambda})]\}$, which leads to $\lambda^{k}c_{k,\lambda} \rightarrow \infty$ because of the final conclusion of \S\ref{subsec:proof_of_main_4_part_1}. The two conclusions drawn in this paragraph bring us to the end of the proof of \eqref{lambda^{i}c_{k,lambda}_limit_various_i}.

\subsection{Stating the lemmas and connecting the dots}\label{subsec:proof_of_main_4_part_4} As promised in the outline of our argument chalked out right before \eqref{lambda^{i}c_{k,lambda}_limit_various_i}, we now state two important lemmas (whose proofs are deferred to \S\ref{appsec:proof_of_main_4} of the Appendix).
\begin{lemma}\label{lem:draw_probab_limit_lemma_1}
Let $\{r_{i}\}_{0 \leqslant i \leqslant k}$ be a sequence of functions defined and differentiable on an interval $I$, with $(r_{i}(x), r_{k-j+1}(x), r_{k-j+2}(x), \ldots, r_{k}(x)) \in \mathcal{D}_{j}$ (see \eqref{D_{i}_domain_defn}) for all $x \in I$ and all $0 \leqslant i < i+j \leqslant k$. Then
\begin{align}
&\frac{d}{dx}g_{k,\lambda}(r_{0}(x),r_{1}(x),\ldots,r_{k}(x)) = \lambda^{k-1}\sum_{i=0}^{k-1}f_{k,i,\lambda}(r_{0}(x),r_{1}(x),\ldots,r_{k}(x))(r'_{i}(x) - r'_{k}(x)), \label{g_{k}_derivative_general_Poisson}
\end{align}
in which
\begin{align}
& f_{k,0,\lambda}(r_{0}(x),r_{1}(x),\ldots,r_{k}(x)) = \prod_{t=1}^{k-1}G_{\lambda}\left(g_{t,\lambda}\left(r_{0}(x),r_{k-t+1}(x),r_{k-t+2}(x),\ldots,r_{k}(x)\right)\right) \label{f_{k,0,lambda}_detailed}
\end{align}
and
\begin{align}
& f_{k,i,\lambda}(r_{0}(x),r_{1}(x),\ldots,r_{k}(x)) = \prod_{t=1}^{k-i}G_{\lambda}\left(g_{t,\lambda}(r_{i}(x), r_{k-t+1}(x), r_{k-t+2}(x), \ldots, r_{k}(x))\right) \alpha_{k,i,\lambda}\left(r_{0}(x),r_{1}(x),\ldots,r_{k}(x)\right) \label{f_{k,i,lambda}_detailed}
\end{align}
for $1 \leqslant i \leqslant k-1$, where $\alpha_{k,1,\lambda}(r_{0}(x),r_{1}(x),\ldots,r_{k}(x)) = -1$ and $\left|\alpha_{k,i,\lambda}(r_{0}(x),r_{1}(x),\ldots,r_{k}(x))\right|$ is bounded above by a constant $a_{k,i}$ that depends on $k$ and $i$ but not on $\lambda$ nor on the functions $r_{0}, \ldots, r_{k}$, for $2 \leqslant i \leqslant k$. 
\end{lemma}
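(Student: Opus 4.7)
The plan is to prove Lemma~\ref{lem:draw_probab_limit_lemma_1} by induction on $k$, exploiting the identity $G'_{\lambda}(x) = \lambda G_{\lambda}(x)$ satisfied by the Poisson pgf $G_{\lambda}(x) = e^{\lambda(x-1)}$; this identity is precisely what accumulates the prefactor $\lambda^{k-1}$ in \eqref{g_{k}_derivative_general_Poisson}. The base case $k=1$ is immediate: $g_{1,\lambda}(r_{0}, r_{1}) = r_{0} - r_{1}$, so $\frac{d}{dx} g_{1,\lambda} = r'_{0} - r'_{1}$, matching the formula with $f_{1,0,\lambda} = 1$ (the empty product in \eqref{f_{k,0,lambda}_detailed} when $k=1$).

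For the inductive step, assuming the formula holds for some $k \geq 1$, I would apply \eqref{g_{i}_recursive_defn} to write $g_{k+1,\lambda}(r_{0}, \ldots, r_{k+1}) = G_{\lambda}\big(g_{k,\lambda}(\mathbf{s})\big) - G_{\lambda}\big(g_{k,\lambda}(\mathbf{w})\big)$, where $\mathbf{s} = (r_{0}, r_{2}, \ldots, r_{k+1})$ and $\mathbf{w} = (r_{1}, r_{2}, \ldots, r_{k+1})$. Differentiating and invoking $G'_{\lambda} = \lambda G_{\lambda}$ yields
\begin{equation*}
\frac{d}{dx} g_{k+1,\lambda}(r_{0}, \ldots, r_{k+1}) = \lambda G_{\lambda}\big(g_{k,\lambda}(\mathbf{s})\big) \frac{d}{dx} g_{k,\lambda}(\mathbf{s}) - \lambda G_{\lambda}\big(g_{k,\lambda}(\mathbf{w})\big) \frac{d}{dx} g_{k,\lambda}(\mathbf{w}),
\end{equation*}
and the inductive hypothesis applied to each inner derivative introduces an additional $\lambda^{k-1}$, producing the target $\lambda^{k}$. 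Collecting terms by the differences $r'_{i} - r'_{k+1}$: the only contribution to $r'_{0} - r'_{k+1}$ comes from the index-$0$ term of the $\mathbf{s}$-expansion, the only contribution to $r'_{1} - r'_{k+1}$ comes from the index-$0$ term of the $\mathbf{w}$-expansion, and for $2 \leq i \leq k$ both expansions contribute via their index-$(i-1)$ terms.

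The key structural observation that closes the induction is that although $\mathbf{s}$ and $\mathbf{w}$ differ in their zeroth coordinate, they agree in all later coordinates (both equal $r_{j+1}$ for $j \geq 1$). Consequently, for each $i \geq 2$ the product factors in $f_{k,i-1,\lambda}(\mathbf{s})$ and $f_{k,i-1,\lambda}(\mathbf{w})$ from \eqref{f_{k,i,lambda}_detailed} are \emph{identical} (they involve only $s_{i-1} = w_{i-1} = r_{i}$ and higher-indexed coordinates), so only the bounded $\alpha$-factors can differ. This lets me factor out the common product and combine the remaining Poisson factors into a recursively defined
\begin{equation*}
\alpha_{k+1,i,\lambda} = G_{\lambda}\big(g_{k,\lambda}(\mathbf{s})\big)\, \alpha_{k,i-1,\lambda}(\mathbf{s}) - G_{\lambda}\big(g_{k,\lambda}(\mathbf{w})\big)\, \alpha_{k,i-1,\lambda}(\mathbf{w}),
\end{equation*}
from which the uniform bound $|\alpha_{k+1,i,\lambda}| \leq 2 a_{k,i-1}$ follows since $G_{\lambda}(y) \in [0,1]$ for $y \in [0,1]$ (the arguments lie in $[0,1]$ by the $\mathcal{D}_{k}$ domain hypothesis). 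The explicit formulas for $f_{k+1,0,\lambda}$ and $f_{k+1,1,\lambda}$ are verified by the same factorisation: prepending $G_{\lambda}\big(g_{k,\lambda}(\mathbf{s})\big)$ (respectively $G_{\lambda}\big(g_{k,\lambda}(\mathbf{w})\big)$) extends the product in \eqref{f_{k,0,lambda}_detailed} from $t = k-1$ to $t = k$, with the overall minus sign absorbed into $\alpha_{k+1,1,\lambda} = -1$. The only genuine nuisance in the argument will be the patient index-shift bookkeeping, whereby a factor $G_{\lambda}\big(g_{t,\lambda}(s_{0}, s_{k-t+1}, \ldots, s_{k})\big)$ produced by the hypothesis must be rewritten in $r$-coordinates as $G_{\lambda}\big(g_{t,\lambda}(r_{0}, r_{k-t+2}, \ldots, r_{k+1})\big)$ and aligned with the target formula for $k+1$; this is tedious but presents no conceptual obstacle.
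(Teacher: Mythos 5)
Your proposal is correct and follows essentially the same route as the paper's proof: induction on $k$ using $G'_{\lambda} = \lambda G_{\lambda}$, expanding the derivative through the recursion \eqref{g_{i}_recursive_defn}, observing that the two inner tuples agree beyond their zeroth coordinate so the common product factors out, and defining $\alpha_{k+1,i,\lambda}$ by the same two-term recursion with the bound $a_{k+1,i} = 2a_{k,i-1}$. The only cosmetic difference is that you anchor the induction at $k=1$ rather than the paper's explicit base case $k=2$, which changes nothing of substance.
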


\begin{lemma}\label{lem:F_{i}_derivatives}
For $i \geqslant 1$ and $x \in (0,c_{i-1})$, we have 
\begin{equation}
F'_{i,\lambda}(x) = -\lambda\left[\sum_{t=1}^{i-1}\lambda^{i-t}\prod_{j=t}^{i-1}F_{j,\lambda}(x)+1\right]F_{i,\lambda}(x).
\end{equation}
\end{lemma}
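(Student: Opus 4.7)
The plan is to prove the lemma by induction on $i$, using the recursive definition \eqref{F_{i,lambda}_extended} of $F_{i,\lambda}$ under the Poisson$(\lambda)$ assumption, namely $F_{1,\lambda}(x) = e^{-\lambda x}$ and $F_{i+1,\lambda}(x) = \exp\{\lambda F_{i,\lambda}(x) - \lambda x - \lambda\}$ for $i \geqslant 1$.

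For the base case $i=1$, direct differentiation of $F_{1,\lambda}(x) = e^{-\lambda x}$ gives $F'_{1,\lambda}(x) = -\lambda F_{1,\lambda}(x)$, which matches the asserted formula once one interprets the empty sum $\sum_{t=1}^{0}$ as $0$.

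For the inductive step, assume the formula for some fixed $i \geqslant 1$. Differentiating $F_{i+1,\lambda}(x) = \exp\{\lambda F_{i,\lambda}(x) - \lambda x - \lambda\}$ produces
\begin{equation*}
F'_{i+1,\lambda}(x) = \bigl(\lambda F'_{i,\lambda}(x) - \lambda\bigr)\,F_{i+1,\lambda}(x).
\end{equation*}
Plugging the induction hypothesis for $F'_{i,\lambda}(x)$ into this identity yields
\begin{equation*}
F'_{i+1,\lambda}(x) = -\lambda F_{i+1,\lambda}(x) \left\{\sum_{t=1}^{i-1}\lambda^{i-t+1}\prod_{j=t}^{i}F_{j,\lambda}(x) + \lambda F_{i,\lambda}(x) + 1\right\}.
\end{equation*}
The remaining task is purely notational: reindex $\lambda^{i-t+1} = \lambda^{(i+1)-t}$ and observe that $\prod_{j=t}^{i}F_{j,\lambda}(x) = \prod_{j=t}^{(i+1)-1}F_{j,\lambda}(x)$, so the sum already has the right shape for indices $t = 1, \ldots, i-1$. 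The leftover term $\lambda F_{i,\lambda}(x)$ is precisely the $t = i$ contribution $\lambda^{(i+1)-i}\prod_{j=i}^{(i+1)-1}F_{j,\lambda}(x)$, so it can be absorbed into the sum to extend its range to $t = 1, \ldots, i$. This reconstitutes exactly the formula with $i$ replaced by $i+1$, closing the induction.

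There is no substantive obstacle here; the argument is a direct consequence of the chain rule applied to the one-step recursion defining $F_{i+1,\lambda}$, together with a careful bookkeeping of the exponents of $\lambda$ and of the products of $F_{j,\lambda}$. The only mild subtlety worth emphasizing is that $F_{i,\lambda}$ remains differentiable on $(0, c_{i-1})$ (which is guaranteed by the analysis in \S\ref{subsec:proof_of_main_4_part_1}, where the extended definition on $[0,1]$ in \eqref{F_{i,lambda}_extended} was shown to be smooth), so the derivatives used in the recursion are well-defined throughout the relevant interval.
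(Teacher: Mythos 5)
Your proposal is correct and follows essentially the same route as the paper: induction on $i$, differentiating the one-step recursion $F_{i+1,\lambda}(x) = \exp\{\lambda F_{i,\lambda}(x) - \lambda x - \lambda\}$ (equivalently, using $G'_{\lambda} = \lambda G_{\lambda}$ in \eqref{F_{i}_defn}), substituting the induction hypothesis, and absorbing the leftover $\lambda F_{i,\lambda}(x)$ term as the $t=i$ summand. The reindexing and the base case (with the empty sum convention) are handled exactly as in the paper's proof.
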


We set $r_{i}(x)$ to be the function $F_{i,\lambda}(x)$ for each $i$, and $x = c_{k,\lambda}$, in Lemma~\ref{lem:draw_probab_limit_lemma_1}. Recall, from the outline chalked out right above \S\ref{subsec:proof_of_main_4_part_1}, that we aim to show that the leading term in \eqref{g_{k}_derivative_general_Poisson}, in this case, will be of the same order of magnitude as $\lambda^{2k-1}c_{k,\lambda}$, so that the leading term of $H'_{k,\lambda}(c_{k,\lambda})$ is shown to be of the same order of magnitude as $\lambda^{2k}c_{k,\lambda}^{2}$. To this end, from \eqref{g_{j}_c_{k}_patterns}, we have 
\begin{equation}
G_{\lambda}(g_{k-i,\lambda}(F_{i,\lambda}(c_{k,\lambda}),F_{i+1,\lambda}(c_{k,\lambda}),\ldots,F_{k,\lambda}(c_{k,\lambda}))) = G_{\lambda}(F_{k-1,\lambda}(c_{k,\lambda}) - c_{k,\lambda}) = F_{k,\lambda}(c_{k,\lambda}) = c_{k,\lambda},\nonumber
\end{equation}
so that from \eqref{f_{k,i,lambda}_detailed}, using the fact that both $\left|\alpha_{k,i,\lambda}(r_{0}(x),r_{1}(x),\ldots,r_{k}(x))\right|$ and $G_{\lambda}(x)$ are $O(1)$, we obtain
\begin{equation}
f_{k,i,\lambda}(r_{0}(x),r_{1}(x),\ldots,r_{k}(x)) = O(c_{k,\lambda}) \text{ for } 1 \leqslant i \leqslant k-1.\label{claim_4}
\end{equation}
Next, from Lemma~\ref{lem:F_{i}_derivatives} and \eqref{F_{i}(c_{k,lambda})_limit}, and using \eqref{claim_4} in the second step, we deduce that 
\begin{align}\label{assemble_1}
& F'_{i,\lambda}(c_{k,\lambda}) = O\left(\lambda^{i} \prod_{j=1}^{i}F_{j,\lambda}(c_{k,\lambda})\right) = O(\lambda^{i}) \text{ as }\lambda \rightarrow \infty, \text{ for }1 \leqslant i \leqslant k-1\nonumber\\
& \implies \sum_{i=1}^{k-1}f_{k,i,\lambda}\left(F_{0,\lambda}(c_{k,\lambda}),F_{1,\lambda}(c_{k,\lambda}),\ldots,F_{k,\lambda}(c_{k,\lambda})\right)F'_{i,\lambda}(c_{k,\lambda}) = \sum_{i=1}^{k-1}O\left(\lambda^{i}c_{k,\lambda}\right).
\end{align}
Next, for any $0 < \epsilon < 1$, Lemma~\ref{lem:F_{i}_derivatives}, the fact that $c_{k,\lambda}$ is the fixed point of $F_{k,\lambda}$, and \eqref{F_{i}(c_{k,lambda})_limit} together yield 
\begin{align}
O(\lambda^{k}c_{k,\lambda}) \geqslant -F'_{k,\lambda}(c_{k,\lambda}) = \lambda^{k} \prod_{j=1}^{k}F_{j,\lambda}(c_{k,\lambda}) + O\left(\sum_{t=2}^{k}\lambda^{k-t+1}\prod_{j=t}^{k}F_{j,\lambda}(c_{k,\lambda})\right) \geqslant \lambda^{k}c_{k,\lambda}(1-\epsilon) + O(\lambda^{k-1} c_{k,\lambda})\label{claim_5}
\end{align}
as $\lambda \rightarrow \infty$. From \eqref{claim_4} and \eqref{claim_5}, we deduce that 
\begin{equation}\label{assemble_2}
\sum_{i=1}^{k-1}f_{k,i,\lambda}(F_{0,\lambda}(c_{k,\lambda}),F_{1,\lambda}(c_{k,\lambda}),\ldots,F_{k,\lambda}(c_{k,\lambda}))(-F'_{k,\lambda}(c_{k,\lambda})) = O(\lambda^{k} c_{k,\lambda}^{2}).
\end{equation}
Finally, \eqref{f_{k,0,lambda}_detailed}, \eqref{g_{j}_c_{k}_patterns} and \eqref{F_{i}(c_{k,lambda})_limit} together yield, as $\lambda \rightarrow \infty$, 
\begin{equation}\label{claim_6}
f_{k,0,\lambda}\left(F_{0,\lambda}(c_{k,\lambda}),F_{1,\lambda}(c_{k,\lambda}),\ldots,F_{k,\lambda}(c_{k,\lambda})\right) = \prod_{t=1}^{k-1}G_{\lambda}\left(F_{t-1,\lambda}(c_{k,\lambda})-c_{k,\lambda}\right) = \prod_{t=1}^{k-1}F_{t,\lambda}(c_{k,\lambda}) \rightarrow 1.
\end{equation}
Hence, from \eqref{claim_5} and \eqref{claim_6}, for any $0 < \epsilon < 1$, as $\lambda \rightarrow \infty$, we have
\begin{equation}\label{assemble_3}
f_{k,0,\lambda}(F_{0,\lambda}(c_{k,\lambda}),F_{1,\lambda}(c_{k,\lambda}),\ldots,F_{k,\lambda}(c_{k,\lambda})) (-F'_{k,\lambda}(c_{k,\lambda})) \geqslant (1-\epsilon)\lambda^{k}c_{k,\lambda} + O(\lambda^{k-1} c_{k,\lambda}).
\end{equation}
Substituting \eqref{assemble_1}, \eqref{assemble_2} and \eqref{assemble_3} in \eqref{g_{k}_derivative_general_Poisson}, using the fact that $G_{\lambda}(x) = e^{\lambda(x-1)}$ so that $G'_{\lambda}(x) = \lambda G_{\lambda}(x)$, and using Corollary~\ref{cor:c_{k}_fixed_point}, we have, as $\lambda \rightarrow \infty$, 
\begin{align}
&H'_{k,\lambda}(c_{k,\lambda}) = G'_{\lambda}(g_{k,\lambda}(F_{0,\lambda}(c_{k,\lambda}),F_{1,\lambda}(c_{k,\lambda}),\ldots,F_{k,\lambda}(c_{k,\lambda})))\frac{d}{dx}g_{k,\lambda}(F_{0,\lambda}(x),F_{1,\lambda}(x),\ldots,F_{k,\lambda}(x))\big|_{x=c_{k,\lambda}}\nonumber\\
&\geqslant \lambda H_{k,\lambda}(c_{k,\lambda})\lambda^{k-1}\left[\sum_{i=1}^{k-1}O(\lambda^{i}c_{k,\lambda}) + O(\lambda^{k} c_{k,\lambda}^{2}) + (1-\epsilon)\lambda^{k}c_{k,\lambda} + O(\lambda^{k-1} c_{k,\lambda})\right] \nonumber\\
&= \lambda^{k} c_{k,\lambda}\left[O(\lambda^{k-1}c_{k,\lambda}) + O(\lambda^{k} c_{k,\lambda}^{2}) + (1-\epsilon)\lambda^{k}c_{k,\lambda}\right] = (1-\epsilon)\lambda^{2k}c_{k,\lambda}^{2} + O(\lambda^{2k-1}c_{k,\lambda}^{2}),\nonumber
\end{align}
so that the leading term of $H'_{k,\lambda}(c_{k,\lambda})$ is indeed of the same order of magnitude as $\lambda^{2k}c_{k,\lambda}^{2}$, and $H'_{k,\lambda}(c_{k,\lambda}) \rightarrow \infty$ due to the second assertion made in \eqref{lambda^{i}c_{k,lambda}_limit_various_i}. This concludes the proof of Theorem~\ref{main:normal_draw_probab_limit_Poisson}.

\section{Proof of Theorem~\ref{main:Poisson_k=2_normal_phase_transition}}\label{sec:Poisson_k=2}
\subsection{Showing strict convexity of $H_{2,\lambda}$ on $[0,c_{2,\lambda}]$ for $\lambda \geqslant 2$}\label{subsec:Poisson_k=2_part_1} We begin by stating the first of the three objectives we wish to achieve in \S\ref{sec:Poisson_k=2}. We fix $k=2$, and we let the offspring distribution $\chi$ of $\mathcal{T}_{\chi}$ be Poisson$(\lambda)$. We show that the curve $y = H_{2,\lambda}(x)$ is strictly convex for all $x \in [0,c_{2,\lambda}]$, whenever $\lambda \geqslant 2$ -- we accomplish this by proving that the second derivative $H''_{2,\lambda}(x)$ is strictly positive for all $x \in [0,c_{2,\lambda}]$, for every $\lambda \geqslant 2$.

In order to keep the expression for $H''_{2,\lambda}(x)$ as uncluttered as possible, we set 
\begin{equation}
\alpha(x) = G_{\lambda}(1-F_{2,\lambda}(x)) \text{ and } \beta(x) = G_{\lambda}(F_{1,\lambda}(x) - F_{2,\lambda}(x)),\nonumber
\end{equation}
so that, from \eqref{H_{k}_defn}, we obtain 
\begin{equation}
H_{2,\lambda}(x) = G_{\lambda}(\alpha(x) - \beta(x)).\nonumber
\end{equation}
Note, at the very outset, that since $F_{1,\lambda}(x) \leqslant 1$ (evident from \eqref{F_{i,lambda}_extended}) and $G_{\lambda}$ is increasing, hence $\alpha(x) \geqslant \beta(x)$ for $x \in [0,c_{1,\lambda}]$. From Lemma~\ref{lem:F_{i}_derivatives}, which yields $F'_{1,\lambda}(x) = -\lambda F_{1,\lambda}(x)$ and $F'_{2,\lambda}(x) = -\lambda\left(\lambda F_{1,\lambda}(x) + 1\right) F_{2,\lambda}(x)$, we have
\begin{align}
&\alpha'(x) = \lambda^{2}\alpha(x)\left(\lambda F_{1,\lambda}(x) + 1\right) F_{2,\lambda}(x),\nonumber\\
&\beta'(x) = \lambda^{2}\beta(x)\left\{-F_{1,\lambda}(x) + \left(\lambda F_{1,\lambda}(x) + 1\right) F_{2,\lambda}(x)\right\}.\nonumber
\end{align}
Utilizing these expressions, we have
\begin{align}
H'_{2,\lambda}(x) &= \frac{d}{dx} G_{\lambda}(\alpha(x) - \beta(x)) = \lambda H_{2,\lambda}(x) (\alpha'(x) - \beta'(x)),\label{H_{2,lambda}_derivative}
\end{align}
and substituting the expressions for $\alpha'(x)$ and $\beta'(x)$ in \eqref{H_{2,lambda}_derivative}, then differentiating again,
\begin{align}
H''_{2,\lambda}(x) &= \lambda^{4} H_{2,\lambda}(x) \big[\lambda^{2}\{\alpha(x) - \beta(x)\}^{2}\left(\lambda F_{1,\lambda}(x) + 1\right)^{2}(F_{2,\lambda}(x))^{2} + \lambda^{2}(\beta(x))^{2}(F_{1,\lambda}(x))^{2} +\nonumber\\& 2\lambda^{2}\beta(x)\{\alpha(x) - \beta(x)\}\left(\lambda F_{1,\lambda}(x) + 1\right)F_{1,\lambda}(x)F_{2,\lambda}(x) + \lambda \{\alpha(x) - \beta(x)\} \left(\lambda F_{1,\lambda}(x) + 1\right)^{2} (F_{2,\lambda}(x))^{2} \nonumber\\& - \lambda \{\alpha(x) - \beta(x)\} F_{1,\lambda}(x) F_{2,\lambda}(x) - \{\alpha(x) - \beta(x)\}(\lambda F_{1,\lambda}(x)+1)^{2} F_{2,\lambda}(x) \nonumber\\&+ 2\lambda \beta(x) \left(\lambda F_{1,\lambda}(x) + 1\right) F_{1,\lambda}(x) F_{2,\lambda}(x) - \beta(x)F_{1,\lambda}(x)(\lambda F_{1,\lambda}(x)+1)\big]\label{H_{2,lambda}_double_derivative}\\
&= \lambda^{4} H_{2,\lambda}(x) \big[A_{1} + A_{2} + 2A_{3} + A_{4} - A_{5} - A_{6} + 2A_{7} - A_{8}\big],\label{H_{2,lambda}_double_derivative_abbreviated}
\end{align} 
where
\begin{align}
& A_{1} = \lambda^{2}\{\alpha(x) - \beta(x)\}^{2}\left(\lambda F_{1,\lambda}(x) + 1\right)^{2}(F_{2,\lambda}(x))^{2},\nonumber\\
& A_{2} = \lambda^{2}(\beta(x))^{2}(F_{1,\lambda}(x))^{2},\nonumber\\
& A_{3} = \lambda^{2}\beta(x)\{\alpha(x) - \beta(x)\}\left(\lambda F_{1,\lambda}(x) + 1\right)F_{1,\lambda}(x)F_{2,\lambda}(x),\nonumber\\
& A_{4} = \lambda \{\alpha(x) - \beta(x)\} \left(\lambda F_{1,\lambda}(x) + 1\right)^{2} (F_{2,\lambda}(x))^{2},\nonumber\\
& A_{5} = \lambda \{\alpha(x) - \beta(x)\} F_{1,\lambda}(x) F_{2,\lambda}(x),\nonumber\\
& A_{6} = \{\alpha(x) - \beta(x)\}(\lambda F_{1,\lambda}(x)+1)^{2} F_{2,\lambda}(x),\nonumber\\
& A_{7} = \lambda \beta(x) \left(\lambda F_{1,\lambda}(x) + 1\right) F_{1,\lambda}(x) F_{2,\lambda}(x),\nonumber\\
& A_{8} = \beta(x)F_{1,\lambda}(x)(\lambda F_{1,\lambda}(x)+1),\nonumber
\end{align}
and each $A_{i}$ is non-negative (in fact, strictly positive except for $A_{1}, A_{3}, A_{4}, A_{5}$ and $A_{6}$ at $x=0$). Thus, our aim now is to establish that the sum within the square brackets in \eqref{H_{2,lambda}_double_derivative_abbreviated} is strictly positive for each $x \in [0,c_{2,\lambda}]$, for every $\lambda \geqslant 2$. This is where Lemmas~\ref{convexity_lem_1} through \ref{convexity_lem_5}, all of whose proofs are deferred to \S\ref{appsec:Poisson_k=2} of the Appendix, come in. The objective each of them accomplishes is the collection of various terms from \eqref{H_{2,lambda}_double_derivative_abbreviated} and showing that their sums are strictly positive for $\lambda \geqslant 2$. 

The primary idea we employ here is as follows: we split the interval $[0,c_{2,\lambda}]$ into three pairwise disjoint sub-intervals (in some cases, we may combine two consecutive sub-intervals), namely $[0,\delta_{\lambda}]$, $(\delta_{\lambda}, \gamma_{\lambda}]$ and $(\gamma_{\lambda}, c_{2,\lambda}]$, where we define $\gamma_{\lambda}$ and $\delta_{\lambda}$ as follows:
\begin{equation} 
F_{2,\lambda}(\gamma_{\lambda}) = \frac{1}{\lambda} \text{ and } F_{2,\lambda}(\delta_{\lambda}) = \frac{5}{4\lambda}.\label{gamma_delta_defn}
\end{equation}
Evidently, to be able to do the above, we require 
\begin{equation}\label{gamma_delta_c_ineq}
0 < \delta_{\lambda} < \gamma_{\lambda} < c_{2,\lambda}.
\end{equation}

Several aspects of the above paragraph need justification right away, before we can proceed. By Lemma~\ref{lem:main_thm_1_1}, we know that $F_{2,\lambda}$ is strictly decreasing on $[0,c_{1,\lambda}]$, and $F_{2,\lambda}(0) = 1$. By definition of $c_{2,\lambda}$, we have $F_{2,\lambda}(c_{2,\lambda}) = c_{2,\lambda}$. Therefore, if we can show that
\begin{equation}
c_{2,\lambda} < \frac{1}{\lambda} < \frac{5}{4\lambda} < 1 \text{ for all } \lambda \geqslant 2,\label{gamma_delta_c_ineq_requirement}
\end{equation}
of which the second and third inequalities are immediately seen to hold, we can conclude that $\gamma_{\lambda}$ and $\delta_{\lambda}$, defined via \eqref{gamma_delta_defn}, exist and are unique, and that \eqref{gamma_delta_c_ineq} holds as well. This is where Lemma~\ref{lem:lambda c_{2,lambda} behaviour} proves useful:
\begin{lemma}\label{lem:lambda c_{2,lambda} behaviour}
The function $\eta_{\lambda} = \lambda c_{2,\lambda}$ is strictly increasing for $\lambda \in (0,\lambda_{0})$ and strictly decreasing for $\lambda \in (\lambda_{0}, \infty)$, where $\lambda_{0} \approx 2.43634$. The maximum value of $\eta_{\lambda}$ is $\approx 0.52839925$.
\end{lemma}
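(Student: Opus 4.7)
My plan is to exploit the fact that $c_{2,\lambda}$ is determined implicitly by the fixed-point equation $F_{2,\lambda}(c_{2,\lambda}) = c_{2,\lambda}$, which in view of \eqref{F_{i,lambda}_extended} reads $c_{2,\lambda} = \exp(\lambda e^{-\lambda c_{2,\lambda}} - \lambda c_{2,\lambda} - \lambda)$. Substituting $\eta = \eta_{\lambda} = \lambda c_{2,\lambda}$ and taking logarithms yields the clean implicit relation
\[
\ln \eta_{\lambda} + \eta_{\lambda} + \lambda\bigl(1 - e^{-\eta_{\lambda}}\bigr) = \ln \lambda. \qquad (\ast)
\]
Implicit differentiation of $(\ast)$ in $\lambda$ gives $\eta'_{\lambda}\bigl[1/\eta_{\lambda} + 1 + \lambda e^{-\eta_{\lambda}}\bigr] = 1/\lambda - 1 + e^{-\eta_{\lambda}}$, so that the sign of $\eta'_{\lambda}$ coincides with the sign of
\[
q(\lambda) := e^{-\eta_{\lambda}} + \tfrac{1}{\lambda} - 1,
\]
since the bracketed factor is manifestly positive for every $\lambda > 0$.

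Second, I would show that $q$ has exactly one zero on $(0,\infty)$. At any zero $\lambda_{\ast}$ of $q$ one has $\eta'(\lambda_{\ast}) = 0$, hence
\[
q'(\lambda_{\ast}) = -e^{-\eta_{\lambda_{\ast}}} \eta'(\lambda_{\ast}) - \lambda_{\ast}^{-2} = -\lambda_{\ast}^{-2} < 0,
\]
i.e.\ $q$ is strictly decreasing at every one of its zeros, which by continuity precludes more than one zero. For existence of a zero, as $\lambda \to 0^{+}$ one has $0 < \eta_{\lambda} < \lambda \to 0$ so $e^{-\eta_{\lambda}} - 1 = O(\lambda)$ and $q(\lambda) \sim 1/\lambda \to +\infty$; while as $\lambda \to \infty$ Theorem~\ref{main:normal_draw_probab_limit_Poisson} gives $\eta_{\lambda} = \lambda c_{2,\lambda} \to 0$ and \eqref{lambda^{i}c_{k,lambda}_limit_various_i} gives $\lambda^{2} c_{2,\lambda} \to \infty$, so that the Taylor expansion $q(\lambda) = -\eta_{\lambda} + 1/\lambda + O(\eta_{\lambda}^{2}) = \lambda^{-1}\bigl(1 - \lambda^{2} c_{2,\lambda}\bigr) + O(\eta_{\lambda}^{2})$ is eventually strictly negative. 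The intermediate value theorem then supplies a unique critical point $\lambda_{0}$ with $q > 0$ on $(0,\lambda_{0})$ and $q < 0$ on $(\lambda_{0},\infty)$, yielding both monotonicity assertions.

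Finally, to pin down the numerical values, I combine $q(\lambda_{0}) = 0$, i.e.\ $e^{-\eta_{0}} = 1 - 1/\lambda_{0}$ where $\eta_{0} := \eta_{\lambda_{0}}$, with $(\ast)$: direct substitution collapses $(\ast)$ to $\ln \lambda_{0} = 1 + \eta_{0} + \ln \eta_{0}$, equivalently $\lambda_{0} = e \cdot \eta_{0} \cdot e^{\eta_{0}}$. Pairing this with $\eta_{0} = -\ln(1 - 1/\lambda_{0})$ and solving numerically yields $\lambda_{0} \approx 2.43634$ and $\eta_{0} \approx 0.52839925$, matching the stated values; since $\eta_{\lambda}$ is increasing then decreasing with peak at $\lambda_{0}$, this $\eta_{0}$ is the maximum. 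The main technical point I anticipate needing care with is the large-$\lambda$ limit of $q$: concluding $q(\lambda) < 0$ depends on the two-sided orders of magnitude $\lambda c_{2,\lambda} \to 0$ together with $\lambda^{2} c_{2,\lambda} \to \infty$ from \eqref{lambda^{i}c_{k,lambda}_limit_various_i}, so one must invoke that theorem rather than settle for a one-sided bound; the rest is a routine implicit-function computation.
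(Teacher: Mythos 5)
Your proposal is correct, and it reaches the same critical equation as the paper but by a somewhat different mechanism at the key step. The sign quantity you isolate, $q(\lambda)=e^{-\eta_{\lambda}}+\tfrac{1}{\lambda}-1$, is exactly the paper's criterion in disguise: using the fixed-point relation, $\ln\bigl(c_{2,\lambda}e^{\lambda c_{2,\lambda}+1}\bigr)=\lambda q(\lambda)$, so your $(\ast)$ and the paper's formula \eqref{lambda c_{2,lambda} derivative} encode the same derivative of $\eta_{\lambda}$, and your system $\lambda_{0}=e\,\eta_{0}e^{\eta_{0}}$, $\eta_{0}=\ln\bigl(\lambda_{0}/(\lambda_{0}-1)\bigr)$ is identical to the paper's, giving the same numerics. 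Where you genuinely diverge is in proving that the sign changes only once: the paper establishes global strict monotonicity of the auxiliary function $c_{2,\lambda}e^{\lambda c_{2,\lambda}+1}$ by a second implicit-differentiation computation (using $e^{-x}-1-x<0$), together with its limits $e$ at $\lambda\to 0$ and $0$ at $\lambda\to\infty$; you instead use a local transversality trick — at any zero of $q$ one has $\eta'(\lambda_{*})=0$, hence $q'(\lambda_{*})=-\lambda_{*}^{-2}<0$, so every crossing is from positive to negative and at most one zero can occur. Your route avoids the paper's monotonicity computation entirely, at the price of needing the two-sided asymptotics: on the $\lambda\to\infty$ side you must invoke $\lambda^{2}c_{2,\lambda}\to\infty$ from \eqref{lambda^{i}c_{k,lambda}_limit_various_i} (the paper only needs $c_{2,\lambda},\,\lambda c_{2,\lambda}\to 0$ there), which is legitimate and non-circular since \eqref{lambda^{i}c_{k,lambda}_limit_various_i} is proved in \S\ref{sec:proof_of_main_4} without this lemma — indeed the paper's own proof also quotes it. Two small points to make explicit in a final write-up: the differentiability of $\eta_{\lambda}$ (needed both for the implicit differentiation of $(\ast)$ and for computing $q'$) should be justified, either by citing the implicit function theorem argument of \S\ref{subsec:proof_of_main_4_part_1} or by applying the implicit function theorem directly to $(\ast)$, whose $\eta$-partial $1/\eta+1+\lambda e^{-\eta}$ is positive; and the fact $\eta_{\lambda}=\lambda c_{2,\lambda}\to 0$ should be attributed to \eqref{lambda^{i}c_{k,lambda}_limit_various_i} (with $k=2$) rather than to the statement of Theorem~\ref{main:normal_draw_probab_limit_Poisson}, which only speaks of $\nl_{k,\lambda}$.
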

The proof of this lemma is deferred to \S\ref{appsec:Poisson_k=2}. This lemma guarantees that for all $\lambda > 0$, we have $\eta_{\lambda} < 1 \implies c_{2,\lambda} < \frac{1}{\lambda}$, thus proving that \eqref{gamma_delta_c_ineq_requirement} indeed holds.

Let us come back to chalking out an outline of our argument for showing that the expression in \eqref{H_{2,lambda}_double_derivative_abbreviated} is strictly positive for all $\lambda \geqslant 2$: for each $\lambda \geqslant 2$, on each of the sub-intervals $[0,\delta_{\lambda}]$, $(\delta_{\lambda}, \gamma_{\lambda}]$ and $(\gamma_{\lambda}, c_{2,\lambda}]$, we group the terms within the square brackets in the expression of \eqref{H_{2,lambda}_double_derivative_abbreviated} judiciously, so that the sum of the terms in each such group is strictly positive. We are now ready to state the lemmas that help accomplish this task. 

\begin{lemma}\label{convexity_lem_1}
For each $\lambda \geqslant 2.5$, for all $x \in (\gamma_{\lambda},c_{2,\lambda}]$, we have $A_{1} + A_{4} - A_{6} > 0$.
\end{lemma}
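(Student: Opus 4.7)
The plan is to factor $A_{1}+A_{4}-A_{6}$ so as to pull out all the non-negative quantities and reduce the inequality to a single scalar condition $Q(x)>0$, and then to verify that condition by combining an explicit evaluation at the right endpoint $x=c_{2,\lambda}$ with a monotonicity argument on the rest of the interval. All three of $A_{1},A_{4},A_{6}$ carry a common factor of $\big(\alpha(x)-\beta(x)\big)\big(\lambda F_{1,\lambda}(x)+1\big)^{2}F_{2,\lambda}(x)$, which is strictly positive on $(\gamma_{\lambda},c_{2,\lambda}]$: for $x>0$ one has $F_{1,\lambda}(x)<1$, so $1-F_{2,\lambda}(x)>F_{1,\lambda}(x)-F_{2,\lambda}(x)$, and the strict monotonicity of $G_{\lambda}$ gives $\alpha(x)>\beta(x)$. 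Factoring this positive piece out reduces the claim to proving $Q(x)>0$ on $(\gamma_{\lambda},c_{2,\lambda}]$, where
\[
Q(x)\ :=\ \lambda^{2}\big(\alpha(x)-\beta(x)\big)F_{2,\lambda}(x)+\lambda F_{2,\lambda}(x)-1.
\]

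The crucial computation is at the right endpoint. Using $F_{2,\lambda}(c_{2,\lambda})=c_{2,\lambda}$ together with the definition $F_{2,\lambda}(x)=G_{\lambda}\big(F_{1,\lambda}(x)-x\big)$, I would first observe that $\beta(c_{2,\lambda})=G_{\lambda}\big(F_{1,\lambda}(c_{2,\lambda})-c_{2,\lambda}\big)=F_{2,\lambda}(c_{2,\lambda})=c_{2,\lambda}$, while $\alpha(c_{2,\lambda})=G_{\lambda}(1-c_{2,\lambda})=e^{-\eta_{\lambda}}$ with $\eta_{\lambda}=\lambda c_{2,\lambda}$. Substituting,
\[
Q(c_{2,\lambda})\ =\ \lambda\,\eta_{\lambda}\,e^{-\eta_{\lambda}}+\eta_{\lambda}-\eta_{\lambda}^{2}-1,
\]
which I would show is strictly positive for all $\lambda\geqslant 2.5$ by exploiting the bounds and monotonicity of $\eta_{\lambda}$ furnished by Lemma~\ref{lem:lambda c_{2,lambda} behaviour} (in particular, $\eta_{\lambda}\leqslant 0.52839925$, and $\eta_{\lambda}$ is strictly decreasing for $\lambda>\lambda_{0}\approx 2.436$, so the right-hand side above can be written as a one-variable function of $\lambda$ and handled analytically).

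To extend positivity from the endpoint to the whole interval, the plan is to show that $Q$ is \emph{decreasing} in $x$ on $(\gamma_{\lambda},c_{2,\lambda}]$. Differentiating $Q$ via Lemma~\ref{lem:F_{i}_derivatives} and the explicit expressions for $\alpha'(x),\beta'(x)$ already derived in \S\ref{subsec:Poisson_k=2_part_1}, one sees that the contribution from $F_{2,\lambda}'(x)<0$ dominates the $(\alpha-\beta)$-contribution, since by the very definition of $\gamma_{\lambda}$ one has $\lambda F_{2,\lambda}(x)<1$ throughout the sub-interval. Monotonicity of $Q$ in $x$, together with $Q(c_{2,\lambda})>0$, then yields $Q(x)\geqslant Q(c_{2,\lambda})>0$ on all of $(\gamma_{\lambda},c_{2,\lambda}]$.

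The main obstacle is the numerical tightness of the endpoint inequality $Q(c_{2,\lambda})>0$ for $\lambda$ near $2.5$: because $\eta_{\lambda}$ is close to its maximum there, $Q(c_{2,\lambda})$ is of order $10^{-2}$, and it is precisely this tightness that forces the threshold $2.5$ in the statement (a coarser estimate would push the threshold up). Should monotonicity of $Q$ in $x$ prove delicate, the fallback is to parametrise the sub-interval by $u:=\lambda F_{2,\lambda}(x)\in[\eta_{\lambda},1)$, rewrite $\alpha-\beta=e^{-u}\big(1-e^{-\lambda(1-F_{1,\lambda}(x))}\big)$, and bound $Q$ below by a quantity depending only on $u$ and $\lambda$, which can then be verified positive by elementary calculus, again leaning on Lemma~\ref{lem:lambda c_{2,lambda} behaviour} to control $\eta_{\lambda}$.
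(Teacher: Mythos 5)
Your proposal follows essentially the same route as the paper's proof: factor out the common positive factor $\{\alpha(x)-\beta(x)\}(\lambda F_{1,\lambda}(x)+1)^{2}F_{2,\lambda}(x)$, show the reduced quantity $\lambda^{2}F_{2,\lambda}(x)\{\alpha(x)-\beta(x)\}+\lambda F_{2,\lambda}(x)$ is strictly decreasing in $x$ on $(\gamma_{\lambda},c_{2,\lambda}]$ (the paper does this via the sign of its derivative, using $\lambda F_{2,\lambda}(x)<1$ there and $\beta(x)\leqslant F_{2,\lambda}(x)$, hence $\lambda\beta(x)<1$), then evaluate at the endpoint to get $\lambda\eta_{\lambda}e^{-\eta_{\lambda}}-\eta_{\lambda}^{2}+\eta_{\lambda}$ and show it exceeds $1$ for $\lambda\geqslant 2.5$ by proving monotonicity in $\lambda$ (using the expression for $\eta'_{\lambda}$ and the bound $\eta_{\lambda}<0.5284$ from Lemma~\ref{lem:lambda c_{2,lambda} behaviour}) together with the numerical value $\approx 1.0279$ at $\lambda=2.5$. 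Your observation about the tightness near $\lambda=2.5$ matches why the threshold appears in the statement, and the remaining details you leave open are exactly the computations the paper carries out.
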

\begin{lemma}\label{convexity_lem_2}
For each $\lambda \geqslant 2$ and all $x \in (\gamma_{\lambda},c_{2,\lambda}]$, we have $A_{3} - A_{5} > 0$.
\end{lemma}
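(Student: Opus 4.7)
The approach is to exploit the shared factor structure of $A_{3}$ and $A_{5}$:
\begin{equation}
A_{3} - A_{5} = \lambda F_{1,\lambda}(x)\,F_{2,\lambda}(x)\,(\alpha(x) - \beta(x))\,\bigl[\lambda \beta(x)(\lambda F_{1,\lambda}(x) + 1) - 1\bigr].\nonumber
\end{equation}
On $(\gamma_{\lambda}, c_{2,\lambda}]$, the prefactor $\lambda F_{1,\lambda}(x)F_{2,\lambda}(x)(\alpha(x) - \beta(x))$ is strictly positive since $F_{1,\lambda}(x), F_{2,\lambda}(x) > 0$ and, because $F_{1,\lambda}(x) < 1$ for $x > 0$ and $G_{\lambda}$ is strictly increasing, $\alpha(x) > \beta(x)$. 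Hence the problem reduces to verifying
\begin{equation}
\phi(x) \;:=\; \lambda \beta(x)\,(\lambda F_{1,\lambda}(x) + 1) \;>\; 1 \quad \text{on } (\gamma_{\lambda}, c_{2,\lambda}].\nonumber
\end{equation}

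First I would exploit the identity $F_{2,\lambda}(x) = \exp\{\lambda(F_{1,\lambda}(x) - x - 1)\}$ to rewrite $\beta(x) = F_{2,\lambda}(x)\,\exp\{\lambda(x - F_{2,\lambda}(x))\}$, so that
\begin{equation}
\phi(x) = \lambda F_{2,\lambda}(x)\,e^{\lambda(x - F_{2,\lambda}(x))}\,(\lambda F_{1,\lambda}(x) + 1).\nonumber
\end{equation}
Next, I would evaluate $\phi$ at the endpoints. At $x = \gamma_{\lambda}$, substituting $F_{2,\lambda}(\gamma_{\lambda}) = 1/\lambda$ gives the clean formula $\phi(\gamma_{\lambda}) = \lambda/e + e^{\lambda \gamma_{\lambda} - 1}$, which exceeds $1$ trivially for $\lambda \geqslant e$; for $2 \leqslant \lambda < e$ I would use a lower bound on $\gamma_{\lambda}$ extracted from the explicit form of $F_{2,\lambda}$ to guarantee $e^{\lambda \gamma_{\lambda} - 1} > 1 - \lambda/e$. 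At $x = c_{2,\lambda}$, using $F_{2,\lambda}(c_{2,\lambda}) = c_{2,\lambda}$ and writing $\eta_{\lambda} = \lambda c_{2,\lambda}$,
\begin{equation}
\phi(c_{2,\lambda}) = \eta_{\lambda}\,(\lambda e^{-\eta_{\lambda}} + 1),\nonumber
\end{equation}
and I would combine the defining relation $\ln c_{2,\lambda} = \lambda e^{-\eta_{\lambda}} - \eta_{\lambda} - \lambda$ with the quantitative control on $\eta_{\lambda}$ provided by Lemma~\ref{lem:lambda c_{2,lambda} behaviour} to verify $\phi(c_{2,\lambda}) > 1$ for every $\lambda \geqslant 2$.

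Finally, I would bridge the endpoint estimates to the full interval. A direct computation yields
\begin{equation}
\phi'(x) = \lambda^{3}\beta(x)\bigl[(\lambda F_{1,\lambda}(x)+1)^{2}F_{2,\lambda}(x) - F_{1,\lambda}(x)(\lambda F_{1,\lambda}(x)+2)\bigr],\nonumber
\end{equation}
and I would show that the sign of the bracket changes at most once on $[\gamma_{\lambda}, c_{2,\lambda}]$, so that $\phi$ is unimodal (increasing then decreasing); this forces $\inf_{x \in [\gamma_{\lambda}, c_{2,\lambda}]}\phi(x) = \min\{\phi(\gamma_{\lambda}), \phi(c_{2,\lambda})\} > 1$. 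The main obstacle is the endpoint inequality $\eta_{\lambda}(\lambda e^{-\eta_{\lambda}} + 1) > 1$ at $x = c_{2,\lambda}$: since $\eta_{\lambda}$ attains values as small as $\approx 0.528$, this inequality is genuinely tight, and a careful manipulation of the transcendental defining equation of $c_{2,\lambda}$, together with the monotonicity of $\eta_{\lambda}$ from Lemma~\ref{lem:lambda c_{2,lambda} behaviour}, will be needed to obtain it uniformly in $\lambda \geqslant 2$.
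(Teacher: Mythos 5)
Your reduction to $\phi(x):=\lambda\beta(x)(\lambda F_{1,\lambda}(x)+1)>1$, your formula for $\phi'$, the unimodality strategy, and the two endpoint quantities all coincide with the paper's proof, so the route is essentially the same. One genuine improvement on the paper: your closed form at $\gamma_{\lambda}$ is correct, since $F_{1,\lambda}(\gamma_{\lambda})=e^{-\lambda\gamma_{\lambda}}$ and $\lambda\beta(\gamma_{\lambda})=e^{\lambda\gamma_{\lambda}-1}$ give $\phi(\gamma_{\lambda})=\lambda/e+e^{\lambda\gamma_{\lambda}-1}>(\lambda+1)/e\geqslant 3/e>1$ for every $\lambda\geqslant 2$, so no lower bound on $\gamma_{\lambda}$ is needed at all; the paper instead proves $\lambda\beta(\gamma_{\lambda})(\lambda F_{1,\lambda}(\gamma_{\lambda})+1)$ is increasing in $\lambda$ (via the implicit function theorem for $\gamma_{\lambda}$) and evaluates it numerically at $\lambda=2$. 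Be aware, though, that the two steps you defer are exactly where the paper's effort is concentrated: (i) the single sign change of the bracket in $\phi'$ is obtained by showing the bracket is strictly decreasing in $x$ on $[0,c_{2,\lambda}]$, which rests on the inequality $\lambda\eta_{\lambda}e^{-\eta_{\lambda}}+4\eta_{\lambda}>2$ for all $\lambda\geqslant 2$ (proved in two regimes of $\lambda$), combined with the bracket equalling $1/\lambda>0$ at $\gamma_{\lambda}$ and being negative at $c_{2,\lambda}$; (ii) the tight endpoint bound $\eta_{\lambda}(\lambda e^{-\eta_{\lambda}}+1)>1$ is not a consequence of the monotonicity of $\eta_{\lambda}$ alone from Lemma~\ref{lem:lambda c_{2,lambda} behaviour} — the paper differentiates $\eta_{\lambda}(\lambda e^{-\eta_{\lambda}}+1)$ in $\lambda$ using the explicit formula for $\eta'_{\lambda}$, shows it is strictly increasing, and checks the value $\approx 1.144$ at $\lambda=2$. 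With those two steps filled in as in the paper, your outline becomes a complete proof.
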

\begin{lemma}\label{convexity_lem_3}
For $\lambda \geqslant 2$ and $x \in (\gamma_{\lambda},c_{2,\lambda}]$, we have $A_{3} + 2A_{7} - A_{8} > 0$.
\end{lemma}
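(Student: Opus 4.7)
The plan is to factor out a common strictly positive term from $A_3 + 2A_7 - A_8$ and thereby reduce the claim to the positivity of a single bracket. A direct computation using the definitions of $A_3$, $A_7$, and $A_8$ shows that $A_3 = A_8 \cdot \lambda^2(\alpha(x)-\beta(x)) F_{2,\lambda}(x)$ and $A_7 = A_8 \cdot \lambda F_{2,\lambda}(x)$, where $A_8 = \beta(x) F_{1,\lambda}(x)(\lambda F_{1,\lambda}(x)+1)$, so
\begin{equation*}
A_3 + 2A_7 - A_8 = A_8 \cdot \Phi(x), \quad \text{where} \quad \Phi(x) := \lambda^2 \bigl(\alpha(x) - \beta(x)\bigr) F_{2,\lambda}(x) + 2\lambda F_{2,\lambda}(x) - 1.
\end{equation*}
Since $A_8 > 0$ on $(0, c_{1,\lambda}]$, the task reduces to proving $\Phi(x) > 0$ for $x \in (\gamma_\lambda, c_{2,\lambda}]$.

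First, I would dispatch the easy sub-case $F_{2,\lambda}(x) \geq 1/(2\lambda)$: here $2\lambda F_{2,\lambda}(x) - 1 \geq 0$, and since $\alpha(x) > \beta(x)$ on the interval (as $F_{1,\lambda}(x) < 1$ and $G_\lambda$ is strictly increasing, noted in the opening of \S\ref{subsec:Poisson_k=2_part_1}), the first term of $\Phi$ is strictly positive, giving $\Phi(x) > 0$ at once. By Lemma~\ref{lem:lambda c_{2,lambda} behaviour} and monotonicity of $F_{2,\lambda}$, this sub-case already handles the entire interval whenever $\lambda c_{2,\lambda} \geq 1/2$.

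For the complementary sub-case $F_{2,\lambda}(x) < 1/(2\lambda)$, I would exploit two identities particular to the Poisson pgf $G_\lambda(y) = e^{\lambda(y-1)}$: namely $\alpha - \beta = \beta\bigl(e^{\lambda(1-F_{1,\lambda})} - 1\bigr) \geq \lambda \beta (1 - F_{1,\lambda})$ (from $\alpha/\beta = e^{\lambda(1 - F_{1,\lambda})}$ and $e^y - 1 \geq y$), and $\beta = F_{2,\lambda} e^{\lambda(x - F_{2,\lambda})}$ (from $F_{2,\lambda}(x) = G_\lambda(F_{1,\lambda}(x) - x)$). Because $0 \leq x \leq c_{2,\lambda} \leq F_{2,\lambda}(x) < 1/(2\lambda)$ forces $\lambda\lvert x - F_{2,\lambda}(x)\rvert < 1/2$, we obtain $\beta(x) \geq F_{2,\lambda}(x) e^{-1/2}$, and hence the refined lower bound
\begin{equation*}
\lambda^2\bigl(\alpha(x)-\beta(x)\bigr) F_{2,\lambda}(x) \;\geq\; e^{-1/2}\, \lambda^3 F_{2,\lambda}(x)^2 \bigl(1 - F_{1,\lambda}(x)\bigr).
\end{equation*}

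The main obstacle will be closing this inequality uniformly in $\lambda \geq 2$ near the right endpoint $x = c_{2,\lambda}$, where $F_{2,\lambda}(x)$ is smallest. There, the fixed-point equation $F_{2,\lambda}(c_{2,\lambda}) = c_{2,\lambda}$ forces $\beta(c_{2,\lambda}) = c_{2,\lambda}$ and $\alpha(c_{2,\lambda}) = e^{-\lambda c_{2,\lambda}} = 1 + c_{2,\lambda} + (\ln c_{2,\lambda})/\lambda$, giving the clean closed-form expression
\begin{equation*}
\Phi(c_{2,\lambda}) = (\lambda c_{2,\lambda}) \cdot \lambda e^{-\lambda c_{2,\lambda}} - \bigl(1 - \lambda c_{2,\lambda}\bigr)^2.
\end{equation*}
Setting $\eta := \lambda c_{2,\lambda}$, I would verify $\lambda \eta e^{-\eta} > (1-\eta)^2$ for every $\lambda \geq 2$ using Lemma~\ref{lem:lambda c_{2,lambda} behaviour} (which bounds $\eta$ away from $1$) together with the implicit relation $\lambda(1 - e^{-\eta}) = \ln(\lambda/\eta) - \eta$ coming from the defining equation of $c_{2,\lambda}$; this is an elementary one-variable analysis comparing $\lambda + 2$ with $1/\eta + \ln(\lambda/\eta)$. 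To extend the positivity from $x = c_{2,\lambda}$ to the rest of $(\gamma_\lambda, c_{2,\lambda}]$, I would establish $\Phi'(x) < 0$ on the interval by differentiating with the help of Lemma~\ref{lem:F_{i}_derivatives}, reducing to the inequality $(\lambda F_{1,\lambda}+1)\bigl[\lambda(\alpha-\beta)(1 - \lambda F_{2,\lambda}) + 2\bigr] > \lambda^2 \beta F_{1,\lambda}$, whose verification via term-by-term bounds is the most delicate piece of the argument.
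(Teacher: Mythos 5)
Your reduction is exactly the paper's: factoring out $A_{8}=\beta(x)F_{1,\lambda}(x)(\lambda F_{1,\lambda}(x)+1)$ leaves precisely the bracket $\Phi(x)=\lambda^{2}\{\alpha(x)-\beta(x)\}F_{2,\lambda}(x)+2\lambda F_{2,\lambda}(x)-1$, and your final architecture (show $\Phi'<0$ on $(\gamma_{\lambda},c_{2,\lambda}]$, so the minimum sits at $x=c_{2,\lambda}$, then prove positivity there) is the paper's argument as well. Your algebraic reduction of $\Phi'(x)<0$ to $(\lambda F_{1,\lambda}+1)\bigl[\lambda(\alpha-\beta)(1-\lambda F_{2,\lambda})+2\bigr]>\lambda^{2}\beta F_{1,\lambda}$ is correct, and it is not actually delicate: on $(\gamma_{\lambda},c_{2,\lambda}]$ one has $\beta(x)\leqslant F_{2,\lambda}(x)<1/\lambda$ (since $F_{2,\lambda}(x)\geqslant x$ there and $G_{\lambda}$ is increasing, and by the definition of $\gamma_{\lambda}$), so $\lambda^{2}\beta F_{1,\lambda}<\lambda F_{1,\lambda}<2(\lambda F_{1,\lambda}+1)$, which already closes it; this is the same pair of facts \eqref{claim_13}--\eqref{claim_14} the paper invokes. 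The case split $F_{2,\lambda}\gtrless 1/(2\lambda)$ and the bound $\lambda^{2}(\alpha-\beta)F_{2,\lambda}\geqslant e^{-1/2}\lambda^{3}F_{2,\lambda}^{2}(1-F_{1,\lambda})$ are correct but become superfluous once you adopt the monotonicity-plus-endpoint plan.

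The genuine gap is the endpoint step. Your identity $\Phi(c_{2,\lambda})=\lambda\eta_{\lambda}e^{-\eta_{\lambda}}-(1-\eta_{\lambda})^{2}$ and the equivalent reformulation $\lambda+2>1/\eta_{\lambda}+\ln(\lambda/\eta_{\lambda})$ via the fixed-point relation are both right, but your claim that this follows from Lemma~\ref{lem:lambda c_{2,lambda} behaviour} together with elementary analysis does not hold as stated: that lemma only gives the \emph{upper} bound $\eta_{\lambda}\leqslant 0.5284$ (and monotonicity of $\eta_{\lambda}$ in $\lambda$), which cannot control the term $1/\eta_{\lambda}$. To beat $1/\eta_{\lambda}$ by $\lambda+2$ you need a \emph{lower} bound on $\eta_{\lambda}$ of order $1/\lambda$ (equivalently, a lower bound on $\lambda^{2}c_{2,\lambda}$ uniform over $\lambda\geqslant 2$), and the paper only proves $\lambda^{2}c_{2,\lambda}\rightarrow\infty$ asymptotically in \eqref{lambda^{i}c_{k,lambda}_limit_various_i}, not with an explicit constant valid from $\lambda=2$ onward. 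The step is repairable: either extract such a bound from the fixed-point relation (e.g.\ $\eta_{\lambda}\leqslant t$ forces $\lambda\leqslant t e^{(\lambda+1)t}$, so $\eta_{\lambda}>a/\lambda$ once $\lambda^{2}>ae^{a(1+1/\lambda)}$), or follow the paper's route, which differentiates $\lambda\eta_{\lambda}e^{-\eta_{\lambda}}-\eta_{\lambda}^{2}+2\eta_{\lambda}$ in $\lambda$ using \eqref{lambda c_{2,lambda} derivative}, shows it is increasing for $\lambda\geqslant 2$, and checks that its value at $\lambda=2$ is $\approx 1.3939>1$. As written, however, your justification of $\Phi(c_{2,\lambda})>0$ for all $\lambda\geqslant 2$ is incomplete.
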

Before we state the last couple of lemmas, we note that if we add up the expressions to the left of the inequalities in Lemmas~\ref{convexity_lem_1}, \ref{convexity_lem_2} and \ref{convexity_lem_3}, we can conclude that
\begin{equation}
A_{1} + 2A_{3} + A_{4} - A_{5} - A_{6} + 2A_{7} - A_{8} > 0 \text{ for all } x \in (\gamma_{\lambda},c_{2,\lambda}], \text{ for }  \lambda \geqslant 2.5.\label{convexity_case_1}
\end{equation}

\begin{lemma}\label{convexity_lem_4}
For $2 \leqslant \lambda < 2.5$ and $x \in (\gamma_{\lambda},c_{2,\lambda}]$, we have $A_{1} + A_{4} - A_{6} + 2A_{3} - A_{5} + 2A_{7} - A_{8} > 0$.
\end{lemma}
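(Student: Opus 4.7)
The plan is to leverage the fact that the identity $2A_{3} - A_{5} + 2A_{7} - A_{8} = (A_{3}-A_{5})+(A_{3}+2A_{7}-A_{8})$ is strictly positive for all $\lambda \geqslant 2$ and $x \in (\gamma_{\lambda}, c_{2,\lambda}]$ by Lemmas~\ref{convexity_lem_2} and \ref{convexity_lem_3}. The only possible obstruction to positivity of the full sum on $\lambda \in [2,2.5)$ is therefore that $A_{1}+A_{4}-A_{6}$ may fail to be positive (recall Lemma~\ref{convexity_lem_1} only certifies this for $\lambda \geqslant 2.5$). The strategy is to extract quantitative slack from the three groupings and show that together they dominate the worst-case deficit.

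Setting $L := \lambda F_{1,\lambda}(x)+1$ for brevity, I would first record the factorizations
\begin{align*}
A_{1}+A_{4}-A_{6} &= (\alpha-\beta)L^{2}F_{2,\lambda}\bigl[\lambda^{2}(\alpha-\beta)F_{2,\lambda}+\lambda F_{2,\lambda}-1\bigr],\\
A_{3}+2A_{7}-A_{8} &= \beta F_{1,\lambda}L\bigl[\lambda^{2}(\alpha-\beta)F_{2,\lambda}+2\lambda F_{2,\lambda}-1\bigr],\\
A_{3}-A_{5} &= \lambda(\alpha-\beta)F_{1,\lambda}F_{2,\lambda}\bigl[\lambda\beta L-1\bigr].
\end{align*}
The crucial structural observation is that the bracket in $A_{3}+2A_{7}-A_{8}$ exceeds the bracket in $A_{1}+A_{4}-A_{6}$ by exactly $\lambda F_{2,\lambda}>0$. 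Adding the first two and collecting like terms gives $(A_{1}+A_{4}-A_{6})+(A_{3}+2A_{7}-A_{8}) = L\bigl[\{(\alpha-\beta)LF_{2,\lambda}+\beta F_{1,\lambda}\}b_{1}+\lambda\beta F_{1,\lambda}F_{2,\lambda}\bigr]$, where $b_{1}:=\lambda^{2}(\alpha-\beta)F_{2,\lambda}+\lambda F_{2,\lambda}-1$. Since $F_{2,\lambda}(x)<1/\lambda$ on $(\gamma_{\lambda},c_{2,\lambda}]$, the bracket $b_{1}$ can dip below $0$, but the ``corrector'' $\lambda\beta F_{1,\lambda}F_{2,\lambda}$ is exactly the slack needed to absorb this negativity. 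The task then reduces to verifying that $\{(\alpha-\beta)LF_{2,\lambda}+\beta F_{1,\lambda}\}b_{1}+\lambda\beta F_{1,\lambda}F_{2,\lambda}$ plus the fully positive residual $A_{3}-A_{5}$ remains positive on the interval.

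Finally, to close the inequality I would use uniform quantitative bounds on the constituents for $\lambda\in[2,2.5)$ and $x\in(\gamma_{\lambda},c_{2,\lambda}]$: the estimate $\eta_{\lambda}=\lambda c_{2,\lambda}\leqslant \eta_{\lambda_{0}}\approx 0.52840$ from Lemma~\ref{lem:lambda c_{2,lambda} behaviour} (which bounds $\lambda F_{2,\lambda}$ from below by $\eta_{\lambda}$ on our interval), together with uniform positive lower bounds on $F_{1,\lambda}(x)$, $\beta(x)$, and $(\alpha-\beta)(x)$, and the upper bound $\lambda F_{2,\lambda}<1$. Substituting these reduces the claim to a finite collection of algebraic inequalities in $\lambda$ over a compact interval, which can be discharged by monotonicity arguments of the sort already employed in the proofs of Lemmas~\ref{convexity_lem_1}--\ref{convexity_lem_3}. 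The main obstacle will be calibrating these numerical bounds tightly enough to handle the worst case, which by Lemma~\ref{lem:lambda c_{2,lambda} behaviour} is expected to occur near $\lambda=\lambda_{0}\approx 2.43634$ and $x=c_{2,\lambda}$; some extra care (or supplementary monotonicity-in-$\lambda$ observation) may be required at the boundary $\lambda\to 2.5^{-}$ to dovetail seamlessly with Lemma~\ref{convexity_lem_1}.
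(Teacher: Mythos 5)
Your setup is sound and in fact mirrors the paper's: the factorizations of $A_{1}+A_{4}-A_{6}$, $A_{3}+2A_{7}-A_{8}$ and $A_{3}-A_{5}$ are exactly the three groupings the paper works with, and the paper likewise reuses the quantitative content of the proofs of Lemmas~\ref{convexity_lem_1}--\ref{convexity_lem_3} (extended to $\lambda\geqslant 2$) to control the only possibly negative block $A_{1}+A_{4}-A_{6}$. But the proposal stops precisely where the real work of this lemma begins. The assertion that the corrector $\lambda\beta F_{1,\lambda}F_{2,\lambda}$ (together with $A_{3}-A_{5}$) is ``exactly the slack needed to absorb'' the negativity of $b_{1}$ is exactly the claim to be proved, and it is not at all automatic: the deficit is multiplied by $\{(\alpha-\beta)LF_{2,\lambda}+\beta F_{1,\lambda}\}$ while the gains carry factors of $\beta$ and $\alpha-\beta$, so one must control the ratio $\beta/\alpha=e^{\lambda F_{1,\lambda}(x)-\lambda}\geqslant e^{\lambda e^{-\eta_{\lambda}}-\lambda}\approx 0.3584$ on $[2,2.5]$ in addition to $\lambda F_{2,\lambda}(x)\geqslant\eta_{\lambda}$ and $\lambda F_{2,\lambda}(x)<1$. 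The paper's proof is essentially one long verification of this balance (its inequalities \eqref{intermediate_3} through \eqref{intermediate_6}), and the final margins are tiny (the last grouping survives by about $0.006\,F_{1,\lambda}(x)\alpha(x)$), so ``discharged by monotonicity arguments of the sort already employed'' cannot be accepted as a substitute for actually exhibiting the groupings and constants.

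Two further ingredients you gesture at but do not supply are also needed to make the plan run. First, Lemma~\ref{convexity_lem_1} as stated gives nothing on $[2,2.5)$; you need the \emph{quantitative} extension of its proof, namely that $\lambda\eta_{\lambda}e^{-\eta_{\lambda}}-\eta_{\lambda}^{2}+\eta_{\lambda}$ is increasing in $\lambda\geqslant 2$ with value $\approx 0.869957$ at $\lambda=2$, which is what yields the uniform deficit bound $b_{1}\geqslant -0.130043$; similarly the constants $1.14446$ and $1.3939$ come from inside the proofs of Lemmas~\ref{convexity_lem_2} and \ref{convexity_lem_3}, not from their statements. Second, your use of Lemma~\ref{lem:lambda c_{2,lambda} behaviour} is slightly off: the bound $\eta_{\lambda}\leqslant 0.5284$ is an upper bound, whereas to exploit $\lambda F_{2,\lambda}(x)\geqslant\eta_{\lambda}$ you need a \emph{lower} bound on $\eta_{\lambda}$ over $[2,2.5)$, i.e.\ $\min_{\lambda\in[2,2.5)}\eta_{\lambda}=\eta_{2}\approx 0.523928$, which requires the increasing/decreasing information of that lemma in both directions. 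Until the numerical verification is actually carried out with such constants, the proof is a plausible plan rather than a proof.
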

\begin{lemma}\label{convexity_lem_5}
For $\lambda \geqslant 2$ and $x \in [\delta_{\lambda}, \gamma_{\lambda}]$, we have $2A_{3} - A_{5} > 0$.
\end{lemma}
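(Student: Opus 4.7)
The plan is to algebraically factor $2A_{3} - A_{5}$ and reduce it to a single positivity statement. I intend to pull out the common prefactor $\lambda\{\alpha(x)-\beta(x)\}F_{1,\lambda}(x)F_{2,\lambda}(x)$ from $2A_{3} - A_{5}$, observe that this prefactor is strictly positive on the middle interval (because $F_{1,\lambda}(x) < 1$ for $x > 0$ and the strict monotonicity of $G_{\lambda}$ gives $\alpha(x) > \beta(x)$), and thereby reduce the lemma to the single inequality
\[
2\lambda\,\beta(x)\bigl(\lambda F_{1,\lambda}(x)+1\bigr) \;>\; 1 \qquad \text{for all } x \in [\delta_{\lambda},\gamma_{\lambda}].
\]

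The main technical device I will deploy is a compact rewriting of $\beta$ in terms of $F_{2,\lambda}$. Starting from the recursion \eqref{F_{i,lambda}_extended}, namely $F_{2,\lambda}(x) = \exp\{\lambda F_{1,\lambda}(x) - \lambda x - \lambda\}$, taking logarithms yields $\lambda F_{1,\lambda}(x) - \lambda = \ln F_{2,\lambda}(x) + \lambda x$, and substituting into $\beta(x) = \exp\{\lambda(F_{1,\lambda}(x) - F_{2,\lambda}(x) - 1)\}$ should produce the identity
\[
\beta(x) \;=\; F_{2,\lambda}(x)\,\exp\bigl\{\lambda x - \lambda F_{2,\lambda}(x)\bigr\}.
\]
After this substitution, the target inequality becomes
\[
2\,\bigl(\lambda F_{2,\lambda}(x)\bigr)\,\bigl(\lambda F_{1,\lambda}(x)+1\bigr)\,\exp\bigl\{\lambda x - \lambda F_{2,\lambda}(x)\bigr\} \;>\; 1.
\]

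The concluding step will exploit the two-sided control on $F_{2,\lambda}$ supplied by the middle sub-interval. By the definitions of $\delta_{\lambda}$ and $\gamma_{\lambda}$ in \eqref{gamma_delta_defn} together with the strict monotonicity of $F_{2,\lambda}$ (Lemma~\ref{lem:main_thm_1_1}), we have $\lambda F_{2,\lambda}(x) \in [1,\,5/4]$ on $[\delta_{\lambda},\gamma_{\lambda}]$; moreover, the inequality $F_{1,\lambda}(x) \geqslant F_{2,\lambda}(x)$ established in the proof of \eqref{belongs_to_D_{i}} yields $\lambda F_{1,\lambda}(x) + 1 \geqslant 2$. Combined with $\lambda x \geqslant 0$, these bound the three variable factors on the left-hand side from below by $2$, $2$, and $e^{-5/4}$ respectively, producing the uniform lower bound $4\,e^{-5/4}$, which exceeds $1$ since $2\ln 2 > 5/4$.

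I expect the main obstacle to be quite mild: it amounts to spotting the correct algebraic rewriting of $\beta(x)$ so that the bounds defining $\delta_{\lambda}$ and $\gamma_{\lambda}$ can be applied cleanly. The hypothesis $\lambda \geqslant 2$ enters only implicitly, through the existence and ordering $0 < \delta_{\lambda} < \gamma_{\lambda} < c_{2,\lambda}$ already justified in \eqref{gamma_delta_c_ineq} via Lemma~\ref{lem:lambda c_{2,lambda} behaviour}.
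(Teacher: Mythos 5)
Your proposal is correct. The reduction is the same one the paper makes: factor out $\lambda\{\alpha(x)-\beta(x)\}F_{1,\lambda}(x)F_{2,\lambda}(x)$ (strictly positive on $[\delta_{\lambda},\gamma_{\lambda}]$ since $F_{1,\lambda}(x)<1$ for $x>0$ and $G_{\lambda}$ is strictly increasing) and prove $2\lambda\beta(x)\bigl(\lambda F_{1,\lambda}(x)+1\bigr)>1$. After that point your route genuinely differs. The paper bounds the two factors separately by monotonicity: it shows $\beta'(x)\geqslant \lambda\beta(x)>0$ on $[0,\gamma_{\lambda}]$ (using $F_{2,\lambda}(x)\geqslant F_{2,\lambda}(\gamma_{\lambda})=1/\lambda$), so $\beta(x)\geqslant\beta(\delta_{\lambda})$, uses $F_{1,\lambda}(x)\geqslant F_{1,\lambda}(\gamma_{\lambda})$, computes $\beta(\delta_{\lambda})=\frac{5}{4\lambda}e^{\lambda\delta_{\lambda}-5/4}$ and $\lambda F_{1,\lambda}(\gamma_{\lambda})=\lambda\gamma_{\lambda}+\lambda-\ln\lambda$ from the defining equations, and finishes with the $\lambda\geqslant 2$ numerical bound $\tfrac{5}{2}e^{-5/4}(2-\ln 2+1)\approx 1.65>1$. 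You instead use the identity $\beta(x)=F_{2,\lambda}(x)\,e^{\lambda x-\lambda F_{2,\lambda}(x)}$ (which is valid, and of which the paper's endpoint formula for $\beta(\delta_{\lambda})$ is the special case $x=\delta_{\lambda}$), then apply the two-sided control $\lambda F_{2,\lambda}(x)\in[1,5/4]$ coming directly from \eqref{gamma_delta_defn} and the monotonicity of $F_{2,\lambda}$, together with $\lambda F_{1,\lambda}(x)+1\geqslant\lambda F_{2,\lambda}(x)+1\geqslant 2$ (via $F_{1,\lambda}\geqslant F_{2,\lambda}$ on $[0,c_{1,\lambda}]$, established in the proof of \eqref{belongs_to_D_{i}}) and $\lambda x\geqslant 0$, to get the uniform bound $4e^{-5/4}>1$. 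Your version buys a cleaner argument: no derivative computation for $\beta$, no explicit formula for $F_{1,\lambda}(\gamma_{\lambda})$, no numerical evaluation tied to $\lambda=2$, and a constant lower bound valid for every $\lambda$ for which $\delta_{\lambda}<\gamma_{\lambda}<c_{2,\lambda}$ exist (so $\lambda\geqslant 2$ is used only through \eqref{gamma_delta_c_ineq}, exactly as you say). The paper's version yields a slightly larger constant ($\approx 1.65$ versus $\approx 1.15$), but nothing downstream needs that extra slack, so your argument is a legitimate and somewhat more economical substitute.
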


Next, we note that as $F_{2,\lambda}$ is strictly decreasing (Lemma~\ref{lem:main_thm_1_1}), we have $F_{2,\lambda}(x) \geqslant \frac{1}{\lambda}$ for $x \in [0,\gamma_{\lambda}]$, so that
\begin{align}\label{convexity_case_2}
& A_{4} - A_{6} \geqslant 0 \text{ and } 2A_{7} - A_{8} > 0.
\end{align}
Likewise, we have $F_{2,\lambda}(x) \geqslant \frac{5}{4\lambda}$ for $x \in [0,\delta_{\lambda}]$, so that 
\begin{align}
&A_{4} - A_{5} - A_{6} \geqslant \frac{1}{4}\{\alpha(x) - \beta(x)\} \left(\lambda F_{1,\lambda}(x) + 1\right)^{2} F_{2,\lambda}(x) - \lambda \{\alpha(x) - \beta(x)\} F_{1,\lambda}(x) F_{2,\lambda}(x) > 0\label{convexity_case_3}
\end{align}
by an application of the AM-GM inequality. 

We are now ready to consolidate all of the findings above to achieve the desired conclusion. Combining \eqref{convexity_case_1} and Lemma~\ref{convexity_lem_4}, we obtain
\begin{equation}
A_{1} + 2A_{3} + A_{4} - A_{5} - A_{6} + 2A_{7} - A_{8} > 0 \text{ for all } x \in (\gamma_{\lambda},c_{2,\lambda}], \text{ for }  \lambda \geqslant 2.\label{convexity_case_4}
\end{equation}
Adding the inequalities in Lemma~\ref{convexity_lem_5} and \eqref{convexity_case_2} (for the shorter interval $(\delta_{\lambda}, \gamma_{\lambda}]$), we obtain
\begin{align}
2A_{3} - A_{5} + A_{4} - A_{6} + 2A_{7} - A_{8} > 0 \text{ for all } x \in (\delta_{\lambda},\gamma_{\lambda}], \text{ for }  \lambda \geqslant 2.\label{convexity_case_5}
\end{align}
Adding the inequality in \eqref{convexity_case_3} to the second inequality in \eqref{convexity_case_2} (again, for the shorter interval $[0,\delta_{\lambda}]$), we obtain
\begin{equation}
2A_{7} - A_{8} + A_{4} - A_{5} - A_{6} > 0 \text{ for all } x \in [0,\delta_{\lambda}], \text{ for } \lambda \geqslant 2.\label{convexity_case_6}
\end{equation}
Combining the conclusions of \eqref{convexity_case_4}, \eqref{convexity_case_5} and \eqref{convexity_case_6}, we complete the proof of the desired claim that the expression in \eqref{H_{2,lambda}_double_derivative_abbreviated} is strictly positive for all $x \in [0,c_{2,\lambda}]$, for all $\lambda \geqslant 2$. This concludes the proof of the fact that $H_{2,\lambda}$ is strictly convex on $[0,c_{2,\lambda}]$, for all $\lambda \geqslant 2$.

\subsection{Studying the behaviour of the slope of $H_{2,\lambda}$ at $c_{2,\lambda}$ as a function of $\lambda$}\label{subsec:Poisson_k=2_part_2} The second objective of \S\ref{sec:Poisson_k=2} is to show that the slope of $H_{2,\lambda}$ at $x = c_{2,\lambda}$ is strictly increasing in $\lambda$ for all $\lambda \geqslant 1$. We start by noting, since $c_{2,\lambda}$ is the fixed point of $F_{2,\lambda}$, that $\alpha(c_{2,\lambda}) = F_{1,\lambda}(c_{2,\lambda})$ whereas $\beta(c_{2,\lambda}) = c_{2,\lambda}$ (using \eqref{F_{i}_defn}). Using these observations, the expression from \eqref{H_{2,lambda}_derivative}, the conclusion of Corollary~\ref{cor:c_{k}_fixed_point} and the notation $\eta_{\lambda} = \lambda c_{2,\lambda}$ introduced in Lemma~\ref{lem:lambda c_{2,lambda} behaviour}, we obtain
\begin{align}
&H'_{2,\lambda}(c_{2,\lambda}) = \lambda H_{2,\lambda}(c_{2,\lambda}) (\alpha'(c_{2,\lambda}) - \beta'(c_{2,\lambda}))
= \lambda^{2} \eta_{\lambda}^{2} e^{-2\eta_{\lambda}} + 2\lambda \eta_{\lambda}^{2} e^{-\eta_{\lambda}} - \lambda \eta_{\lambda}^{3} e^{-\eta_{\lambda}} - \eta_{\lambda}^{3}.\label{H'_{2,lambda}(c_{2,lambda})}
\end{align}
For ease of computation, we perform a term-by-term differentiation of the expression in \eqref{H'_{2,lambda}(c_{2,lambda})} with respect to $\lambda$, and substitute from \eqref{lambda c_{2,lambda} derivative} the expression for $\eta'_{\lambda}$ (the derivative of $\eta_{\lambda}$ with respect to $\lambda$). This yields
\begin{align}
%& \frac{d}{d\lambda}[\lambda \eta_{\lambda} e^{-\eta_{\lambda}}] = \eta_{\lambda} e^{-\eta_{\lambda}} + \lambda \eta'_{\lambda} e^{-\eta_{\lambda}} - \lambda \eta_{\lambda} \eta'_{\lambda} e^{-\eta_{\lambda}}\nonumber\\
%&= \eta_{\lambda} e^{-\eta_{\lambda}} + \lambda e^{-\eta_{\lambda}}(1-\eta_{\lambda})\frac{\eta_{\lambda}\left(1 + \lambda e^{-\eta_{\lambda}} - \lambda\right)}{\lambda\left(1 + \lambda \eta_{\lambda} e^{-\eta_{\lambda}} + \eta_{\lambda}\right)} = \eta_{\lambda} e^{-\eta_{\lambda}}[1 + (1-\eta_{\lambda})\frac{1 + \lambda e^{-\eta_{\lambda}} - \lambda}{1 + \lambda \eta_{\lambda} e^{-\eta_{\lambda}} + \eta_{\lambda}}]\nonumber\\
%&= \frac{\eta_{\lambda} e^{-\eta_{\lambda}}[{1 + \lambda \eta_{\lambda} e^{-\eta_{\lambda}} + \eta_{\lambda}} + (1-\eta_{\lambda})(1 + \lambda e^{-\eta_{\lambda}} - \lambda)]}{1 + \lambda \eta_{\lambda} e^{-\eta_{\lambda}} + \eta_{\lambda}}\nonumber\\
%&= \frac{\eta_{\lambda} e^{-\eta_{\lambda}}[1 + \lambda \eta_{\lambda} e^{-\eta_{\lambda}} + \eta_{\lambda} + 1 + \lambda e^{-\eta_{\lambda}} - \lambda - \eta_{\lambda} - \lambda \eta_{\lambda} e^{-\eta_{\lambda}} + \lambda \eta_{\lambda}]}{1 + \lambda \eta_{\lambda} e^{-\eta_{\lambda}} + \eta_{\lambda}}\nonumber\\
%&= \frac{\eta_{\lambda} e^{-\eta_{\lambda}}[2 + \lambda e^{-\eta_{\lambda}} - \lambda + \lambda \eta_{\lambda}]}{1 + \lambda \eta_{\lambda} e^{-\eta_{\lambda}} + \eta_{\lambda}}\nonumber\\
&\frac{d}{d\lambda}[\lambda^{2} \eta_{\lambda}^{2} e^{-2\eta_{\lambda}}] %= \frac{d}{d\lambda}[(\lambda \eta_{\lambda} e^{-\eta_{\lambda}})^{2}] 
= \frac{4\lambda \eta_{\lambda}^{2} e^{-2\eta_{\lambda}} + 2 \lambda^{2}\eta_{\lambda}^{2}e^{-3\eta_{\lambda}} - 2\lambda^{2}\eta_{\lambda}^{2}e^{-2\eta_{\lambda}} + 2\lambda^{2}\eta_{\lambda}^{3}e^{-2\eta_{\lambda}}}{1 + \lambda \eta_{\lambda} e^{-\eta_{\lambda}} + \eta_{\lambda}};\nonumber\\
& \frac{d}{d\lambda}[2\lambda \eta_{\lambda}^{2} e^{-\eta_{\lambda}}] %= 2\eta_{\lambda}^{2} e^{-\eta_{\lambda}} + 4\lambda \eta_{\lambda} \eta'_{\lambda} e^{-\eta_{\lambda}} - 2\lambda \eta_{\lambda}^{2} \eta'_{\lambda} e^{-\eta_{\lambda}} = 2\eta_{\lambda}^{2} e^{-\eta_{\lambda}} + 2\lambda \eta_{\lambda} e^{-\eta_{\lambda}} \eta'_{\lambda}[2 - \eta_{\lambda}]\nonumber\\
%&= 2\eta_{\lambda}e^{-\eta_{\lambda}}[\eta_{\lambda} + \lambda (2-\eta_{\lambda}) \frac{\eta_{\lambda}\left(1 + \lambda e^{-\eta_{\lambda}} - \lambda\right)}{\lambda\left(1 + \lambda \eta_{\lambda} e^{-\eta_{\lambda}} + \eta_{\lambda}\right)}]\nonumber\\
%&= \frac{2\eta_{\lambda}^{2}e^{-\eta_{\lambda}}}{1 + \lambda \eta_{\lambda} e^{-\eta_{\lambda}} + \eta_{\lambda}}[1 + \lambda \eta_{\lambda} e^{-\eta_{\lambda}} + \eta_{\lambda} + (2-\eta_{\lambda})(1 + \lambda e^{-\eta_{\lambda}} - \lambda)]\nonumber\\
%&= \frac{2\eta_{\lambda}^{2}e^{-\eta_{\lambda}}}{1 + \lambda \eta_{\lambda} e^{-\eta_{\lambda}} + \eta_{\lambda}}[1 + \lambda \eta_{\lambda} e^{-\eta_{\lambda}} + \eta_{\lambda} + 2 + 2\lambda e^{-\eta_{\lambda}} - 2\lambda - \eta_{\lambda} - \lambda \eta_{\lambda} e^{-\eta_{\lambda}} + \lambda \eta_{\lambda}]\nonumber\\
%&= \frac{2\eta_{\lambda}^{2}e^{-\eta_{\lambda}}}{1 + \lambda \eta_{\lambda} e^{-\eta_{\lambda}} + \eta_{\lambda}}[3 + 2\lambda e^{-\eta_{\lambda}} - 2\lambda + \lambda \eta_{\lambda}]\nonumber\\
= \frac{6\eta_{\lambda}^{2}e^{-\eta_{\lambda}} + 4\lambda \eta_{\lambda}^{2} e^{-2\eta_{\lambda}} - 4\lambda \eta_{\lambda}^{2}e^{-\eta_{\lambda}} + 2\lambda \eta_{\lambda}^{3}e^{-\eta_{\lambda}}}{1 + \lambda \eta_{\lambda} e^{-\eta_{\lambda}} + \eta_{\lambda}};\nonumber\\
& \frac{d}{d\lambda}[-\lambda \eta_{\lambda}^{3} e^{-\eta_{\lambda}}] %= \eta_{\lambda}^{3} e^{-\eta_{\lambda}} + 3 \lambda \eta_{\lambda}^{2} \eta'_{\lambda} e^{-\eta_{\lambda}} - \lambda \eta_{\lambda}^{3} \eta'_{\lambda} e^{-\eta_{\lambda}} = \eta_{\lambda}^{2} e^{-\eta_{\lambda}}[\eta_{\lambda} + 3 \lambda \eta'_{\lambda} - \lambda \eta_{\lambda} \eta'_{\lambda}]\nonumber\\
%&= \eta_{\lambda}^{2} e^{-\eta_{\lambda}}[\eta_{\lambda} + \lambda(3 - \eta_{\lambda}) \frac{\eta_{\lambda}\left(1 + \lambda e^{-\eta_{\lambda}} - \lambda\right)}{\lambda\left(1 + \lambda \eta_{\lambda} e^{-\eta_{\lambda}} + \eta_{\lambda}\right)}] = \eta_{\lambda}^{3} e^{-\eta_{\lambda}}[1 + (3 - \eta_{\lambda}) \frac{1 + \lambda e^{-\eta_{\lambda}} - \lambda}{1 + \lambda \eta_{\lambda} e^{-\eta_{\lambda}} + \eta_{\lambda}}]\nonumber\\
%&= \frac{\eta_{\lambda}^{3} e^{-\eta_{\lambda}}}{1 + \lambda \eta_{\lambda} e^{-\eta_{\lambda}} + \eta_{\lambda}}[{1 + \lambda \eta_{\lambda} e^{-\eta_{\lambda}} + \eta_{\lambda}} + 3 + 3\lambda e^{-\eta_{\lambda}} - 3\lambda - \eta_{\lambda} - \lambda \eta_{\lambda} e^{-\eta_{\lambda}} + \lambda \eta_{\lambda}]\nonumber\\
%&= \frac{\eta_{\lambda}^{3} e^{-\eta_{\lambda}}}{1 + \lambda \eta_{\lambda} e^{-\eta_{\lambda}} + \eta_{\lambda}}[4
%+ 3\lambda e^{-\eta_{\lambda}} - 3\lambda + \lambda \eta_{\lambda}]\nonumber\\
= \frac{-4 \eta_{\lambda}^{3} e^{-\eta_{\lambda}} - 3 \lambda \eta_{\lambda}^{3} e^{-2\eta_{\lambda}} + 3 \lambda \eta_{\lambda}^{3} e^{-\eta_{\lambda}} - \lambda \eta_{\lambda}^{4} e^{-\eta_{\lambda}}}{1 + \lambda \eta_{\lambda} e^{-\eta_{\lambda}} + \eta_{\lambda}};\nonumber\\
& \frac{d}{d\lambda}[-\eta_{\lambda}^{3}] %= 3\eta_{\lambda}^{2}\eta'_{\lambda} = 3\eta_{\lambda}^{2} \cdot \frac{\eta_{\lambda}\left(1 + \lambda e^{-\eta_{\lambda}} - \lambda\right)}{\lambda\left(1 + \lambda \eta_{\lambda} e^{-\eta_{\lambda}} + \eta_{\lambda}\right)}\nonumber\\
= \frac{-3\eta_{\lambda}^{3} - 3\lambda \eta_{\lambda}^{3} e^{-\eta_{\lambda}} + 3\lambda \eta_{\lambda}^{3}}{\lambda\left(1 + \lambda \eta_{\lambda} e^{-\eta_{\lambda}} + \eta_{\lambda}\right)}.\nonumber
\end{align}
The numerator of $\frac{d}{d\lambda}H'_{2,\lambda}(c_{2,\lambda})$ is then given by
\begin{align}
& \eta_{\lambda}^{2}\big[4\lambda^{2} e^{-2\eta_{\lambda}} + 2 \lambda^{3}e^{-3\eta_{\lambda}} - 2\lambda^{3}e^{-2\eta_{\lambda}} + 2\lambda^{3}\eta_{\lambda} e^{-2\eta_{\lambda}} + 6\lambda e^{-\eta_{\lambda}} + 4\lambda^{2} e^{-2\eta_{\lambda}} - 4\lambda^{2} e^{-\eta_{\lambda}} +\nonumber\\& 2\lambda^{2} \eta_{\lambda}e^{-\eta_{\lambda}} - 4 \lambda \eta_{\lambda} e^{-\eta_{\lambda}} - 3 \lambda^{2} \eta_{\lambda} e^{-2\eta_{\lambda}} + 3 \lambda^{2} \eta_{\lambda} e^{-\eta_{\lambda}} - \lambda^{2} \eta_{\lambda}^{2} e^{-\eta_{\lambda}} - 3\eta_{\lambda} - 3\lambda \eta_{\lambda} e^{-\eta_{\lambda}} + 3\lambda \eta_{\lambda}\big]\nonumber\\
%&= e^{-2\eta_{\lambda}}[8\lambda^{2} - 2\lambda^{3} + 2\lambda^{3}\eta_{\lambda} - 3\lambda^{2}\eta_{\lambda}] + e^{-\eta_{\lambda}}[6\lambda - 4\lambda^{2} + 5\lambda^{2}\eta_{\lambda} - 7\lambda \eta_{\lambda} - \lambda^{2}\eta_{\lambda}^{2}] \nonumber\\&+ 2\lambda^{3}e^{-3\eta_{\lambda}} + 3\lambda \eta_{\lambda} - 3\eta_{\lambda}\nonumber\\
&= \eta_{\lambda}^{2}\big\{2\lambda^{3}e^{-2\eta_{\lambda}}[e^{-\eta_{\lambda}} - 1 + \eta_{\lambda}] + \lambda^{2}e^{-\eta_{\lambda}}[8e^{-\eta_{\lambda}} - 3\eta_{\lambda}e^{-\eta_{\lambda}} - 4 + 5\eta_{\lambda} - \eta_{\lambda}^{2}] \nonumber\\&+ \lambda e^{-\eta_{\lambda}}[6 - 7\eta_{\lambda}] + 3\eta_{\lambda}(\lambda - 1)\big\}\nonumber\\
&= \eta_{\lambda}^{2}\big\{2\lambda^{3}e^{-2\eta_{\lambda}}[e^{-\eta_{\lambda}} - 1 + \eta_{\lambda}] + \lambda^{2}e^{-\eta_{\lambda}}[8e^{-\eta_{\lambda}} - 4] + \lambda^{2}e^{-\eta_{\lambda}}[5\eta_{\lambda} - 3\eta_{\lambda}e^{-\eta_{\lambda}} - \eta_{\lambda}^{2}] \nonumber\\&+ \lambda e^{-\eta_{\lambda}}[6 - 7\eta_{\lambda}] + 3\eta_{\lambda}(\lambda - 1)\big\},\nonumber
\end{align}
and our aim now is to prove that this entire expression is strictly positive for $\lambda \geqslant 1$.

From Lemma~\ref{lem:lambda c_{2,lambda} behaviour}, we have $8e^{-\eta_{\lambda}} - 4 \geqslant 8 e^{-0.5284} - 4 \approx 0.7164 > 0$ and $6 - 7\eta_{\lambda} \geqslant 6 - 7 \cdot 0.5284 \approx 2.3012 > 0$. We also note that $5\eta_{\lambda} - 3\eta_{\lambda}e^{-\eta_{\lambda}} - \eta_{\lambda}^{2} = 3\eta_{\lambda}(1 - e^{-\eta_{\lambda}}) + \eta_{\lambda}(1-\eta_{\lambda}) + \eta_{\lambda} > 0$ and $e^{-\eta_{\lambda}} \geqslant 1 - \eta_{\lambda}$. Thus $\frac{d}{d\lambda}H'_{2,\lambda}(c_{2,\lambda}) > 0$ for $\lambda \geqslant 1$, hence proving the second part of Theorem~\ref{main:Poisson_k=2_normal_phase_transition}.

\subsection{Proving the third and final part of Theorem~\ref{main:Poisson_k=2_normal_phase_transition}}\label{subsec:Poisson_k=2_part_3}
The objective of \S\ref{subsec:Poisson_k=2_part_3} is to prove that there exists a critical value $\lambda_{c}$ of $\lambda$ such that $\nl_{2,\lambda} = c_{2,\lambda}$ (and hence, by Theorem~\ref{main:bounds_on_nl_{k}_ml_{k}}, $\nd_{2,\lambda} = 0$) for all $2 \leqslant \lambda < \lambda_{c}$, and $\nl_{2,\lambda} < c_{2,\lambda}$ (and hence, $\nd_{2,\lambda} > 0$) for all $\lambda > \lambda_{c}$.

The argument can be outlined as follows. The convexity of $H_{2,\lambda}$ on $[0,c_{2,\lambda}]$ for $\lambda \geqslant 2$, as proved in \S\ref{subsec:Poisson_k=2_part_1}, guarantees that $y = H_{2,\lambda}(x)$ intersects $y=x$ at most twice in $[0,c_{2,\lambda}]$. If the intersection happens twice, it must happen at the points $x=\nl_{2,\lambda}$ and $x=c_{2,\lambda}$, in which case $\nl_{2,\lambda} < c_{2,\lambda}$. If the intersection takes place only once, then this must be at the point $x=c_{2,\lambda}$, in which case $\nl_{2,\lambda} = c_{2,\lambda}$. Recall, from \S\ref{sec:proof_of_main_4}, the justification as to why $H'_{2,\lambda}(\nl_{2,\lambda}) \leqslant 1$ for every $\lambda > 0$.  

If we can show that $H'_{2,\lambda}(c_{2,\lambda})$ is strictly less than $1$ at $\lambda = 2$ and it is strictly greater than $1$ for some value of $\lambda$ strictly exceeding $2$, then using our conclusion from \S\ref{subsec:Poisson_k=2_part_2}, we can deduce that there is precisely one $\lambda_{c} > 2$ such that $H'_{2,\lambda}(c_{2,\lambda}) < 1$ for $\lambda < \lambda_{c}$ and $H'_{2,\lambda}(c_{2,\lambda}) > 1$ for $\lambda > \lambda_{c}$. In the former case, $c_{2,\lambda}$ has to be the only point of intersection between $y = H_{2,\lambda}(x)$ and $y=x$ within the interval $[0,c_{2,\lambda}]$ (this follows by noticing that the curve $y = H_{2,\lambda}(x)$ travels from above $y=x$ to below $y=x$ at $x=\nl_{2,\lambda}$, and travels from below $y=x$ to above $y=x$ at $x=c_{2,\lambda}$ when $\nl_{2,\lambda} < c_{2,\lambda}$, so that in such a scenario, we would have $H'_{2,\lambda}(c_{2,\lambda}) > 1$). In the latter case, there have to be two points of intersection between $y = H_{2,\lambda}(x)$ and $y=x$ within the interval $[0,c_{2,\lambda}]$.

At $\lambda = 2$, we have $H'_{2,\lambda}(c_{2,\lambda}) \approx 0.721129 < 1$, and at $\lambda = 2.5$, we have $H'_{2,\lambda}(c_{2,\lambda}) \approx 1.06445 > 1$. This accomplishes the task we set for ourselves in the previous paragraph. Solving $H'_{2,\lambda_{c}}\left(c_{2,\lambda_{c}}\right) = 1$ numerically, we obtain $\lambda_{c} \approx 2.41$. This accomplishes what we set out to prove in \S\ref{subsec:Poisson_k=2_part_3}, and concludes the proof of Theorem~\ref{main:Poisson_k=2_normal_phase_transition}. 

\begin{remark}
Although our proof technique does not cover the range $1 < \lambda < 2$, plotting $H_{2,\lambda}$ on $[0,c_{2,\lambda}]$ for various values of $\lambda$ in $(1,2)$ does seem to suggest that $H_{2,\lambda}$ is strictly convex on $[0,c_{2,\lambda}]$ for $\lambda \in (1,2)$ (see Figure~\ref{convex_lambda_1_2}). If this were to be proved analytically, it would give us a \emph{unique} threshold for the phase transition phenomenon, showing that $\nd_{2,\lambda} = 0$ for all $0 < \lambda < \lambda_{c}$ and $\nd_{2,\lambda} > 0$ for all $\lambda > \lambda_{c}$. 
\begin{figure}[h!]
  \centering
    \includegraphics[width=0.6\textwidth]{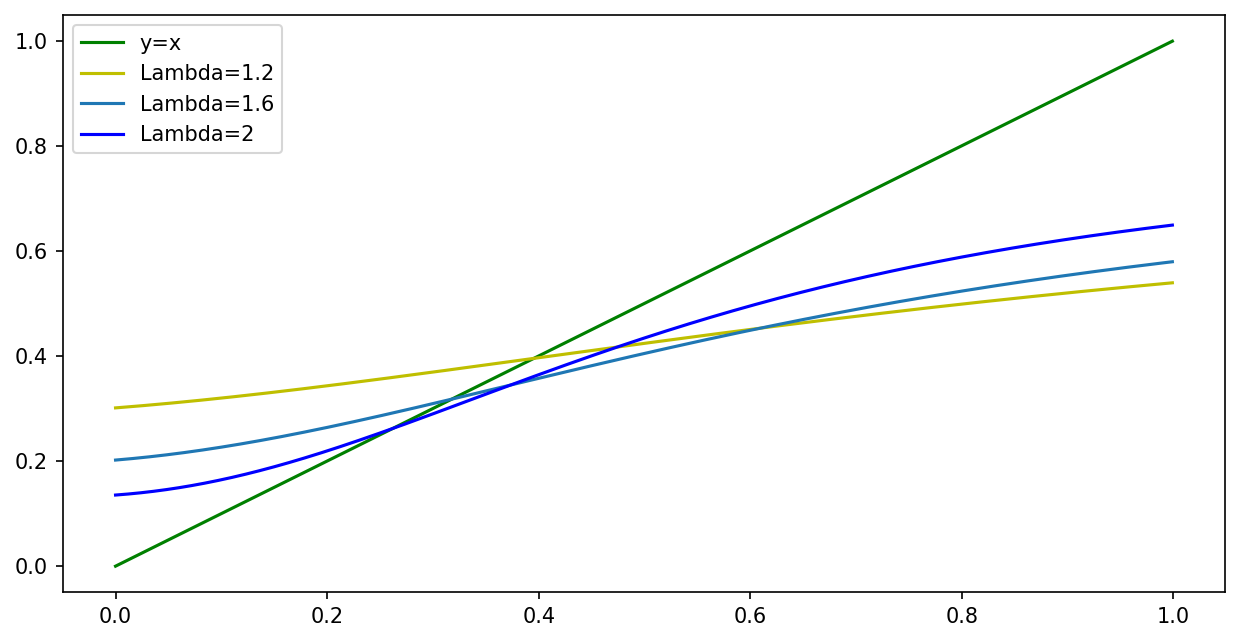}
\caption{$H_{2,\lambda}(x)$ strictly convex for $x \in [0,c_{2,\lambda}]$, for $\lambda = 1.2$ and $\lambda = 1.6$}
  \label{convex_lambda_1_2}
\end{figure}
\end{remark}

\section{Proofs of Theorems~\ref{lem:decay_rate_nl_{2,lambda}} and ~\ref{main:misere_normal_comparison}}\label{sec:misere_normal_comparison}
Recall that the two objectives we accomplish in \S\ref{sec:misere_normal_comparison} are 
\begin{enumerate*}
\item showing that $\lambda^{i}\nl_{2,\lambda} \rightarrow 0$ as $\lambda \rightarrow \infty$, for every $i \in \mathbb{N}$, which is a far stronger claim than the second part of Theorem~\ref{main:normal_draw_probab_limit_Poisson} as far as $k=2$ is concerned,
\item establishing the inequalities $\ml_{2,\lambda} \leqslant \nl_{2,\lambda}$, $\nd_{2,\lambda} < \md_{2,\lambda}$ and $\ml_{2,\lambda} \leqslant \nl_{2,\lambda} < \nw_{2,\lambda}$ for all $\lambda$ sufficiently large, giving us a means to compare the probabilities of the various outcomes of the $2$-jump normal game with those of the $2$-jump mis\`{e}re game when both are played on $\mathcal{T}_{\chi}$ with $\chi$ being Poisson$(\lambda)$.
\end{enumerate*}

\subsection{Proof of Theorem~\ref{lem:decay_rate_nl_{2,lambda}}}\label{subsec:misere_normal_comparison_part_1}
We outline the key idea for our argument. First and foremost, we note that the proof is already done for $i=1$ due to Theorem~\ref{main:normal_draw_probab_limit_Poisson}. For any fixed $i \in \mathbb{N}$ with $i \geqslant 2$, to show that $\lambda^{i}\nl_{2,\lambda} \rightarrow 0$ as $\lambda \rightarrow \infty$, it suffices to show, for \emph{any arbitrary} $c > 0$, that $\lambda^{i}\nl_{2,\lambda} < c \Longleftrightarrow \nl_{2,\lambda} < c \lambda^{-i}$ for all $\lambda$ sufficiently large.

For the same value of $i \geqslant 2$ and $c > 0$ considered above, by the second part of \eqref{lambda^{i}c_{k,lambda}_limit_various_i}, we know that $\lambda^{i} c_{2,\lambda} > c \Longleftrightarrow c_{2,\lambda} > c \lambda^{-i}$ for all $\lambda$ large enough. Recall, from Theorem~\ref{main:Poisson_k=2_normal_phase_transition} and the discussion included towards the beginning of \S\ref{sec:proof_of_main_4}, that $\nl_{2,\lambda} < c_{2,\lambda}$ for all $\lambda$ sufficiently large, and that the curve $y = H_{2,\lambda}(x)$ stays \emph{above} the curve $y=x$ for all $x \in [0,\nl_{2,\lambda})$, then stays \emph{beneath} it for all $x \in (\nl_{2,\lambda},c_{2,\lambda})$, for all $\lambda$ sufficiently large. Therefore, it suffices for us to show that the curve $y = H_{2,\lambda}(x)$ lies beneath the curve $y=x$ at $x = c \lambda^{-i}$, for all $\lambda$ sufficiently large. In other words,
\begin{lemma}\label{lem:decay_rate_nl_{2,lambda}_precursor}
Fix $c > 0$ and $i \in \mathbb{N}$ with $i \geqslant 2$. Then $H_{2,\lambda}(c \lambda^{-i}) < c \lambda^{-i}$ for all $\lambda$ sufficiently large.
\end{lemma}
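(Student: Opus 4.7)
The plan is to substitute $y_\lambda := c\lambda^{-i}$ and exploit the explicit Poisson$(\lambda)$ form $G_\lambda(z) = e^{\lambda(z-1)}$ together with the recursive identities in \eqref{F_{i,lambda}_extended}. The crucial observation is that, because $i \geq 2$, we have $\lambda y_\lambda = c\lambda^{1-i} \to 0$ as $\lambda \to \infty$, which puts us in a regime where a first-order Taylor expansion of $F_{1,\lambda}(y_\lambda) = e^{-\lambda y_\lambda}$ is accurate. Substituting this expansion into $F_{2,\lambda}(y_\lambda) = \exp\{\lambda F_{1,\lambda}(y_\lambda) - \lambda y_\lambda - \lambda\}$ and collecting terms, the exponent becomes $-c\lambda^{2-i}(1+o(1)) - c\lambda^{1-i}$; hence $F_{2,\lambda}(y_\lambda) \to e^{-c}$ when $i=2$, and $F_{2,\lambda}(y_\lambda) \to 1$ when $i \geq 3$. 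In either case, $F_{2,\lambda}(y_\lambda) \geq e^{-2c}$ for all $\lambda$ sufficiently large, so $F_{2,\lambda}(y_\lambda)$ is bounded away from $0$ uniformly in $\lambda$.

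The remainder of the argument is a short chain of $G_\lambda$ compositions, each of which turns a quantity bounded away from $1$ into an exponentially small one in $\lambda$. From the formulas derived just after Theorem~\ref{thm:main_1_normal} for the case $k=2$,
\[
g_{2,\lambda}\bigl(F_{0,\lambda}(y_\lambda), F_{1,\lambda}(y_\lambda), F_{2,\lambda}(y_\lambda)\bigr) = G_\lambda\bigl(1 - F_{2,\lambda}(y_\lambda)\bigr) - G_\lambda\bigl(F_{1,\lambda}(y_\lambda) - F_{2,\lambda}(y_\lambda)\bigr),
\]
which is non-negative by monotonicity of $G_\lambda$ (since $F_{1,\lambda}(y_\lambda) \leq 1$) and bounded above by its first summand, giving $g_{2,\lambda}(\cdots) \leq G_\lambda(1 - F_{2,\lambda}(y_\lambda)) = e^{-\lambda F_{2,\lambda}(y_\lambda)} \leq e^{-\lambda e^{-2c}}$. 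Applying $G_\lambda$ once more,
\[
H_{2,\lambda}(y_\lambda) = e^{\lambda(g_{2,\lambda}(\cdots) - 1)} \leq e^{\lambda(e^{-\lambda e^{-2c}} - 1)} \leq e^{-\lambda/2}
\]
for all $\lambda$ large enough.

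Since $e^{-\lambda/2}$ decays faster than any polynomial in $\lambda$, we get $e^{-\lambda/2} < c\lambda^{-i} = y_\lambda$ for all sufficiently large $\lambda$, which establishes the claim $H_{2,\lambda}(c\lambda^{-i}) < c\lambda^{-i}$. The one mildly delicate step is the initial Taylor expansion: one must separate the borderline case $i=2$, in which $F_{2,\lambda}(y_\lambda)$ converges to the \emph{nontrivial} constant $e^{-c}$, from $i \geq 3$, where the limit is $1$, and then extract a single lower bound (any fixed positive number such as $e^{-2c}$) that works uniformly for all large $\lambda$. Once that is in hand, the two successive $G_\lambda$ compositions cascade essentially mechanically, each one producing an exponentially small quantity, and the final comparison with the polynomial $c\lambda^{-i}$ is immediate.
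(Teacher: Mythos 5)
Your proposal is correct and follows essentially the same route as the paper: Taylor-expand $F_{1,\lambda}$ and $F_{2,\lambda}$ at $c\lambda^{-i}$, separate $i=2$ from $i\geqslant 3$ to get a uniform positive lower bound on $F_{2,\lambda}(c\lambda^{-i})$, and then let the two $G_{\lambda}$ compositions produce an exponentially small bound that beats $c\lambda^{-i}$. The only (harmless) difference is that the paper also bounds the factor $1-e^{\lambda F_{1,\lambda}(c\lambda^{-i})-\lambda}$ by $c\lambda^{-i+2}$ via $1-e^{-x}\leqslant x$, whereas you simply drop this factor by bounding it by $1$; your cruder estimate still yields $H_{2,\lambda}(c\lambda^{-i})\leqslant e^{-\lambda/2}$ for large $\lambda$, so the argument goes through.
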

The proof of this lemma is deferred to \S\ref{appsec:misere_normal_comparison} of the Appendix.

\subsection{Proof of Theorem~\ref{main:misere_normal_comparison}}\label{subsec:misere_normal_comparison_part_2}
We first show that $\ml_{2,\lambda} \leqslant \nl_{2,\lambda}$ for all $\lambda$ sufficiently large. In fact, we prove that the inequality $\ml_{2,\lambda}^{(n)} < \nl_{2,\lambda}^{(n)}$ holds for all $n \in \mathbb{N}$, via induction on $n$ (recall the definitions of these probabilities from \S\ref{subsec:notations}). It is straightforward to see that $\ml_{2,\lambda}^{(1)} = 0$ and $\nl_{2,\lambda}^{(1)} = e^{-\lambda}$, thus verifying the base case for the inductive argument. Suppose $\ml_{2,\lambda}^{(n)} < \nl_{2,\lambda}^{(n)}$ for some $n \in \mathbb{N}$. From \eqref{fixed_point_recursion_eq} and \eqref{H_{k}_defn} (which is applied to $k=2$ and Poisson$(\lambda)$ offspring distribution), we have 
\begin{align}
\nl_{2,\lambda}^{(n+2)} = H_{2,\lambda}(\nl_{2,\lambda}^{(n)}) = G_{\lambda}\left(g_{2,\lambda}\left(1,F_{1,\lambda}(\nl_{2,\lambda}^{(n)}),F_{2,\lambda}(\nl_{2,\lambda}^{(n)})\right)\right).\label{nl^{n+2}}
\end{align}
We also note (to be used later) that as $\nl_{2,\lambda}$ is a fixed point of $H_{2,\lambda}$ (by Theorem~\ref{thm:main_1_normal}), we have 
\begin{equation}\label{nl_lambda}
\nl_{2,\lambda} = G_{\lambda}\left(g_{2,\lambda}\left(1,F_{1,\lambda}(\nl_{2,\lambda}),F_{2,\lambda}(\nl_{2,\lambda})\right)\right) = \exp\left\{\lambda g_{2,\lambda}\left(1,F_{1,\lambda}(\nl_{2,\lambda}),F_{2,\lambda}(\nl_{2,\lambda})\right) - \lambda\right\}.
\end{equation}
A relation analogous to \eqref{fixed_point_recursion_eq} derived in \S\ref{sec:thm_1_misere_proof}, \eqref{mathcal{J}_{k}_defn} (again, for $k=2$ and Poisson$(\lambda)$ offspring), the fact (mentioned in \S\ref{sec:thm_1_misere_proof} and argued the same way as in \S\ref{subsec:thm:main_1_normal_proof_part_2}) that $J_{2,\lambda}$ is increasing on $[0,c_{1,\lambda}]$, the fact (deduced from \eqref{g_{i}_recursive_defn} and \eqref{gamma_{i}_recursive_defn}) that $\gamma_{2}(1,F_{1,\lambda}(x),F_{2,\lambda}(x)) = e^{\lambda e^{-\lambda}}g_{2,\lambda}(1,F_{1,\lambda}(x),F_{2,\lambda}(x))$, and the induction hypothesis together yield 
\begin{align}
\ml_{2,\lambda}^{(n+2)} = J_{2,\lambda}(\ml_{2,\lambda}^{(n)}) \leqslant J_{2,\lambda}(\nl_{2,\lambda}^{(n)}) = G_{\lambda}\left(e^{\lambda e^{-\lambda}}g_{2,\lambda}\left(1,F_{1,\lambda}(\nl_{2,\lambda}^{(n)}),F_{2,\lambda}(\nl_{2,\lambda}^{(n)})\right) + e^{-\lambda}\right) - e^{-\lambda}.\label{ml^{n+2}}
\end{align}
Therefore, to complete the inductive step of showing that $\ml_{2,\lambda}^{(n+2)} < \nl_{2,\lambda}^{(n+2)}$, it suffices to show that the expression in \eqref{ml^{n+2}} is strictly less than that in \eqref{nl^{n+2}} for all $\lambda$ large enough.

Note that $G'_{\lambda}(x) = \lambda G_{\lambda}(x)$ for all $x \in [0,1]$ when we consider Poisson$(\lambda)$ to be our offspring distribution. Since $G''_{\lambda}(x) = \sum_{i=2}^{\infty}i (i-1) \chi(i) x^{i} \geqslant 0$ for $x \in [0,1]$, hence $G_{\lambda}$ is convex. Theorem 21.2 of \cite{simon_blume} states that a continuously differentiable function $f$ defined on an interval $I$ of $\mathbb{R}$ is convex if and only if $f(y) - f(x) \geqslant f'(x)(y-x)$ for all $x, y \in I$. Applying this result with $f$ replaced by $G_{\lambda}$, we obtain, from \eqref{nl^{n+2}} and \eqref{ml^{n+2}}: 
\begin{align}\label{intermediate_7}
\nl_{2,\lambda}^{(n+2)} - \ml_{2,\lambda}^{(n+2)} &\geqslant -\lambda G_{\lambda}\left(e^{\lambda e^{-\lambda}}g_{2,\lambda}\left(1,F_{1,\lambda}(\nl_{2,\lambda}^{(n)}),F_{2,\lambda}(\nl_{2,\lambda}^{(n)})\right) + e^{-\lambda}\right)\nonumber\\&\left[g_{2,\lambda}\left(1,F_{1,\lambda}(\nl_{2,\lambda}^{(n)}),F_{2,\lambda}(\nl_{2,\lambda}^{(n)})\right)\left(e^{\lambda e^{-\lambda}} - 1\right) + e^{-\lambda}\right] + e^{-\lambda}.
\end{align}
In what follows, we show that the expression on the right side of \eqref{intermediate_7} is non-negative for all $\lambda$ large enough.

\subsubsection{Bound on the first factor of the first term of \eqref{intermediate_7}}\label{subsubsec:misere_normal_comparison_part_2_subpart_1}
Since $J_{2,\lambda}$ is increasing on $[0,c_{1,\lambda}]$ and $\nl_{2,\lambda}^{(n)} \leqslant \nl_{2,\lambda}$ (evident from \eqref{increasing_sequence_nl_nw} and \eqref{nl_nw_limit}), given any $\epsilon > 0$, we have
\begin{align}\label{intermediate_8}
&G_{\lambda}\left(e^{\lambda e^{-\lambda}}g_{2,\lambda}\left(1,F_{1,\lambda}(\nl_{2,\lambda}^{(n)}),F_{2,\lambda}(\nl_{2,\lambda}^{(n)})\right) + e^{-\lambda}\right) \leqslant G_{\lambda}\left(e^{\lambda e^{-\lambda}}g_{2,\lambda}\left(1,F_{1,\lambda}(\nl_{2,\lambda}),F_{2,\lambda}(\nl_{2,\lambda})\right) + e^{-\lambda}\right)\nonumber\\
&= \exp\left\{e^{\lambda e^{-\lambda}} \lambda \left[g_{2,\lambda}\left(1,F_{1,\lambda}(\nl_{2,\lambda}),F_{2,\lambda}(\nl_{2,\lambda})\right) - 1\right] + \lambda e^{-\lambda} + \lambda e^{\lambda e^{-\lambda}} - \lambda\right\}\nonumber\\
&= (\nl_{2,\lambda})^{e^{\lambda e^{-\lambda}}} \exp\{\lambda e^{\lambda e^{-\lambda}} + \lambda e^{-\lambda} - \lambda\}, \text{ using \eqref{nl_lambda}}; \nonumber\\
&\leqslant \nl_{2,\lambda} \exp\left\{\lambda \sum_{i=1}^{\infty}\frac{(\lambda e^{-\lambda})^{i}}{i!} + \lambda e^{-\lambda}\right\}, \quad \text{since } e^{\lambda e^{-\lambda}} > 1 \text{ and } \nl_{2,\lambda} < 1 \text{ implies } (\nl_{2,\lambda})^{e^{\lambda e^{-\lambda}}} \leqslant \nl_{2,\lambda};\nonumber\\
&= \nl_{2,\lambda} \exp\left\{\lambda^{2}e^{-\lambda} \sum_{j=0}^{\infty}\frac{(\lambda e^{-\lambda})^{j}}{(j+1)!} + \lambda e^{-\lambda}\right\} \leqslant \nl_{2,\lambda} \exp\left\{\lambda^{2}e^{-\lambda} \sum_{j=0}^{\infty}\frac{(\lambda e^{-\lambda})^{j}}{j!} + \lambda e^{-\lambda}\right\}\nonumber\\
&\leqslant \nl_{2,\lambda} \exp\{\lambda^{2} e^{-\lambda} e^{\lambda e^{-\lambda}} + \lambda e^{-\lambda}\} \leqslant \nl_{2,\lambda} \exp\{\lambda^{2} e^{-\lambda} e^{e^{-1}} + \lambda e^{-\lambda}\} < (1+\epsilon)\nl_{2,\lambda}
\end{align}
for all $\lambda$ sufficiently large, where, in the last line, we utilize the fact that the maximum value of $\lambda e^{-\lambda}$ is $e^{-1}$, and that both $\lambda^{2} e^{-\lambda}$ and $\lambda e^{-\lambda}$ converge to $0$ as $\lambda \rightarrow \infty$. It is crucial to note that how large $\lambda$ needs to be for \eqref{intermediate_8} to hold depends on $\epsilon$ alone, and \emph{not} on $n$.

Note that, as a step in the derivation of \eqref{intermediate_8}, we obtain the bound $e^{\lambda e^{-\lambda}} - 1 \leqslant \lambda e^{-\lambda} e^{\lambda e^{-\lambda}} \leqslant \lambda e^{-\lambda} e^{e^{-1}}$. This will prove useful in obtaining a bound for the entire expression on the right side of \eqref{intermediate_7}.

\subsubsection{Bound on the entire right side of \eqref{intermediate_7}}\label{subsubsec:misere_normal_comparison_part_2_subpart_2}
Applying the bound from \eqref{intermediate_8}, the bound mentioned at the very end of \S\ref{subsubsec:misere_normal_comparison_part_2_subpart_1}, and the rather crude bound $g_{2,\lambda}(1,F_{1,\lambda}(\nl_{2,\lambda}^{(n)}),F_{2,\lambda}(\nl_{2,\lambda}^{(n)})) \leqslant 1$ to \eqref{intermediate_7}, we obtain
\begin{align}
\nl_{2,\lambda}^{(n+2)} - \ml_{2,\lambda}^{(n+2)} &\geqslant -\lambda (1+\epsilon) \nl_{2,\lambda}[\lambda e^{-\lambda} e^{e^{-1}} + e^{-\lambda}] + e^{-\lambda} = e^{-\lambda}[1 - (1+\epsilon)\{e^{e^{-1}}\lambda^{2}\nl_{2,\lambda} + \lambda \nl_{2,\lambda}\}]\label{claim_7}
\end{align}
for all $\lambda$ sufficiently large. Theorem~\ref{lem:decay_rate_nl_{2,lambda}} guarantees that both $\lambda^{2}\nl_{2,\lambda}$ and $\lambda \nl_{2,\lambda}$ converge to $0$ as $\lambda \rightarrow \infty$, so that $(1+\epsilon)\{e^{e^{-1}}\lambda^{2}\nl_{2,\lambda} + \lambda \nl_{2,\lambda}\}$ converges to $0$ as well. Consequently, the final expression of \eqref{claim_7} is strictly positive for all $\lambda$ sufficiently large. Note, once again, that how large $\lambda$ needs to be for this to happen depends on $\epsilon$ alone, and \emph{not} on $n$. This completes the inductive argument, showing that for all $\lambda$ sufficiently large, we have $\ml_{2,\lambda}^{(n)} < \nl_{2,\lambda}^{(n)}$ for every $n \in \mathbb{N}$. Taking the limit as $n \rightarrow \infty$, using \eqref{nl_nw_limit} and an analogous relation for the mis\`{e}re games, we deduce that $\ml_{2,\lambda} \leqslant \nl_{2,\lambda}$, thus completing the proof of the first part of Theorem~\ref{main:misere_normal_comparison}.

\subsubsection{Proving that $\nd_{2,\lambda} < \md_{2,\lambda}$ for all $\lambda$ large enough}\label{subsubsec:misere_normal_comparison_part_2_subpart_3} Proving $\nd_{2,\lambda} < \md_{2,\lambda}$ for all $\lambda$ sufficiently large establishes the second claim of Theorem~\ref{main:misere_normal_comparison}. From \S\ref{sec:proof_of_main_3}, we know that $\nd_{2,\lambda} = F_{2,\lambda}(\nl_{2,\lambda}) - \nl_{2,\lambda}$, whereas $\md_{2,\lambda} = F_{2,\lambda}(\ml_{2,\lambda}) - \ml_{2,\lambda} - e^{-\lambda}$. The rest of \S\ref{subsubsec:misere_normal_comparison_part_2_subpart_3} is dedicated to comparing these two expressions.

We set out to find a suitable upper bound on $\nd_{2,\lambda}$ for all $\lambda$ sufficiently large. We start by obtaining suitable approximations to a couple of infinite series. From Theorem~\ref{lem:decay_rate_nl_{2,lambda}}, we know that $\lambda \nl_{2,\lambda} \rightarrow 0$ as $\lambda \rightarrow \infty$, implying that for all $\lambda$ sufficiently large,
\begin{align}
\lambda \nl_{2,\lambda} < i+1 \Longleftrightarrow \frac{(\lambda \nl_{2,\lambda})^{i+1}}{(i+1)!} < \frac{(\lambda \nl_{2,\lambda})^{i}}{i!} \text{ for all } i \in \mathbb{N}_{0} \text{ and } \lim_{\rightarrow \infty}\frac{(\lambda \nl_{2,\lambda})^{i}}{i!} = 0,\nonumber
\end{align}
so that by the well-known alternating series estimation theorem (see, for instance, \S 8.4 of \cite{stewart2012essential}), we have
\begin{equation}
e^{-\lambda \nl_{2,\lambda}} - 1 \leqslant -\lambda \nl_{2,\lambda} + \frac{\lambda^{2}\nl_{2,\lambda}^{2}}{2}.\label{alternating_approx_1}
\end{equation}
Applying Theorem~\ref{lem:decay_rate_nl_{2,lambda}} one more time, we know that each of $\lambda^{2}\nl_{2,\lambda}$, $\lambda^{3}\nl_{2,\lambda}$ and $\lambda \nl_{2,\lambda}$ approaches $0$ in the limit as $\lambda \rightarrow \infty$. Consequently, $\left|-\lambda^{2}\nl_{2,\lambda} + \lambda^{3}\nl_{2,\lambda}^{2}/2 - \lambda \nl_{2,\lambda}\right| \rightarrow 0$ as $\lambda \rightarrow \infty$. Moreover, 
\begin{equation}
\lim_{\lambda \rightarrow \infty} \frac{\lambda^{3}\nl_{2,\lambda}^{2}}{\lambda^{2}\nl_{2,\lambda}} = \lim_{\lambda \rightarrow \infty} \lambda \nl_{2,\lambda} = 0, \text{ again by Theorem~\ref{lem:decay_rate_nl_{2,lambda}},}\label{claim_8}
\end{equation}
thus showing that $\lambda^{3}\nl_{2,\lambda}^{2} = o(\lambda^{2}\nl_{2,\lambda})$ as $\lambda \rightarrow \infty$. Therefore, we have $-\lambda^{2}\nl_{2,\lambda} + \lambda^{3}\nl_{2,\lambda}^{2}/2 - \lambda \nl_{2,\lambda} < 0$ for all $\lambda$ sufficiently large. Applying the alternating series estimation theorem and arguing the same way as above, we deduce that
\begin{align}
\exp\left\{-\lambda^{2}\nl_{2,\lambda} + \frac{\lambda^{3}\nl_{2,\lambda}^{2}}{2} - \lambda \nl_{2,\lambda}\right\} &\leqslant 1 - \lambda^{2}\nl_{2,\lambda} + \frac{\lambda^{3}\nl_{2,\lambda}^{2}}{2} - \lambda \nl_{2,\lambda} \nonumber\\&+ \frac{1}{2}\left(-\lambda^{2}\nl_{2,\lambda} + \frac{\lambda^{3}\nl_{2,\lambda}^{2}}{2} - \lambda \nl_{2,\lambda}\right)^{2}.\label{alternating_approx_2}
\end{align}
It is evident from \eqref{claim_8} that the leading term of $-\lambda^{2}\nl_{2,\lambda} + \lambda^{3}\nl_{2,\lambda}^{2}/2 - \lambda \nl_{2,\lambda}$ is $-\lambda^{2}\nl_{2,\lambda}$, so that we can write $(-\lambda^{2}\nl_{2,\lambda} + \lambda^{3}\nl_{2,\lambda}^{2}/2 - \lambda \nl_{2,\lambda})^{2} = O(\lambda^{4} \nl_{2,\lambda}^{2})$. This observation, along with Theorem~\ref{lem:decay_rate_nl_{2,lambda}}, reveals that for any fixed $\epsilon > 0$, for all $\lambda$ large enough, we have
\begin{align}
&\lim_{\lambda \rightarrow \infty} \frac{\lambda^{4}\nl_{2,\lambda}^{2}}{\nl_{2,\lambda}} = \lim_{\lambda \rightarrow \infty} \lambda^{4} \nl_{2,\lambda} = 0 \implies \frac{\lambda^{3}\nl_{2,\lambda}^{2}}{2} + \frac{1}{2}\left(-\lambda^{2}\nl_{2,\lambda} + \frac{\lambda^{3}\nl_{2,\lambda}^{2}}{2} - \lambda \nl_{2,\lambda}\right)^{2} \leqslant \epsilon \nl_{2,\lambda}.\label{claim_9}
\end{align}

We have now gathered all the bounds / approximations we need in order to proceed to find a suitable bound for $\nd_{2,\lambda}$: given any $\epsilon > 0$, applying \eqref{alternating_approx_1}, followed by \eqref{alternating_approx_2} and finally \eqref{claim_9}, we have
\begin{align}\label{new_intermediate_1}
\nd_{2,\lambda} &= F_{2,\lambda}(\nl_{2,\lambda}) - \nl_{2,\lambda} = \exp\{\lambda e^{-\lambda \nl_{2,\lambda}} - \lambda \nl_{2,\lambda} - \lambda\} - \nl_{2,\lambda} \nonumber\\
&\leqslant \exp\left\{-\lambda^{2}\nl_{2,\lambda} + \frac{\lambda^{3}\nl_{2,\lambda}^{2}}{2} - \lambda \nl_{2,\lambda}\right\} - \nl_{2,\lambda}\nonumber\\
&\leqslant 1 - \lambda^{2}\nl_{2,\lambda} + \frac{\lambda^{3}\nl_{2,\lambda}^{2}}{2} - \lambda \nl_{2,\lambda} + \frac{1}{2}\left(-\lambda^{2}\nl_{2,\lambda} + \frac{\lambda^{3}\nl_{2,\lambda}^{2}}{2} - \lambda \nl_{2,\lambda}\right)^{2} - \nl_{2,\lambda}\nonumber\\
&\leqslant 1 - \lambda^{2}\nl_{2,\lambda} - \lambda \nl_{2,\lambda} - (1-\epsilon)\nl_{2,\lambda}. 
\end{align}
This completes our deduction of the desired upper bound on $\nd_{2,\lambda}$ for $\lambda$ large enough.

We now come to the derivation of a suitable lower bound on $\md_{2,\lambda}$. We first obtain a rather simple bound, using the inequality $e^{-x} > 1-x$ for all $x > 0$:
\begin{align}\label{new_intermediate_2}
&\md_{2,\lambda} = F_{2,\lambda}(\ml_{2,\lambda}) - \ml_{2,\lambda} - e^{-\lambda} = \exp\left\{\lambda e^{-\lambda \ml_{2,\lambda}} - \lambda \ml_{2,\lambda} - \lambda\right\} - \ml_{2,\lambda} - e^{-\lambda} \nonumber\\
&\geqslant \exp\{-\lambda^{2}\ml_{2,\lambda} - \lambda \ml_{2,\lambda}\} - \ml_{2,\lambda} - e^{-\lambda} \geqslant 1 - \lambda^{2}\ml_{2,\lambda} - \lambda \ml_{2,\lambda} - \ml_{2,\lambda} - e^{-\lambda}.
\end{align}
Note that, since we have already proved while concluding \S\ref{subsubsec:misere_normal_comparison_part_2_subpart_2} that $\ml_{2,\lambda} \leqslant \nl_{2,\lambda}$ for all $\lambda$ large enough, we can further bound the expression on the right side of \eqref{new_intermediate_2} to obtain $\md_{2,\lambda} \geqslant 1 - (\lambda^{2}+\lambda+1)\nl_{2,\lambda} - e^{-\lambda}$. However, this simple bound is not quite enough to compare with the bound obtained in \eqref{new_intermediate_1} and arrive at our desired conclusion. This is the difficulty we overcome in what follows.

We know that $\nl_{2,\lambda}$ is a fixed point of $H_{2,\lambda}$ by Theorem~\ref{thm:main_1_normal}, that $\ml_{2,\lambda}$ is a fixed point of $J_{2,\lambda}$ by Theorem~\ref{thm:main_1_misere}, and that $J_{2,\lambda}$ is increasing on $[0,c_{1,\lambda}]$ (mentioned in \S\ref{sec:thm_1_misere_proof} and argued the same way as in \S\ref{subsec:thm:main_1_normal_proof_part_2}). Using the expressions for $J_{2,\lambda}$ and $H_{2,\lambda}$ that we obtain by considering $k=2$ and Poisson$(\lambda)$ offspring in \eqref{mathcal{J}_{k}_defn} and \eqref{H_{k}_defn}, we have, given any $\epsilon > 0$,
\begin{align}\label{new_intermediate_3}
& (\lambda^{2}+\lambda+1)\ml_{2,\lambda} + e^{-\lambda} = (\lambda^{2} + \lambda + 1)J_{2,\lambda}(\ml_{2,\lambda}) + e^{-\lambda} \leqslant (\lambda^{2} + \lambda + 1)J_{2,\lambda}(\nl_{2,\lambda}) + e^{-\lambda}\nonumber\\
&= (\lambda^{2} + \lambda + 1) G_{\lambda}\left(e^{\lambda e^{-\lambda}}g_{2,\lambda}(1,F_{1,\lambda}(\nl_{2,\lambda}),F_{2,\lambda}(\nl_{2,\lambda})) + e^{-\lambda}\right) - (\lambda^{2}+\lambda)e^{-\lambda} \nonumber\\
&= (\lambda^{2} + \lambda + 1)\exp\left\{e^{\lambda e^{-\lambda}} \lambda \left[g_{2,\lambda}(1,F_{1,\lambda}(\nl_{2,\lambda}),F_{2,\lambda}(\nl_{2,\lambda})) - 1\right] + \lambda e^{\lambda e^{-\lambda}} + \lambda e^{-\lambda} - \lambda\right\} - (\lambda^{2}+\lambda)e^{-\lambda}\nonumber\\
&< (\lambda^{2} + \lambda + 1)\exp\left\{e^{\lambda e^{-\lambda}} \lambda \left[g_{2,\lambda}(1,F_{1,\lambda}(\nl_{2,\lambda}),F_{2,\lambda}(\nl_{2,\lambda})) - 1\right] + \lambda e^{-\lambda}\right\} - (\lambda^{2}+\lambda)e^{-\lambda} \left(\text{as } e^{\lambda e^{-\lambda}} > 1\right);\nonumber\\
&= (\lambda^{2} + \lambda + 1)(\nl_{2,\lambda})^{e^{\lambda e^{-\lambda}}} e^{\lambda e^{-\lambda}} - (\lambda^{2}+\lambda)e^{-\lambda} < (\lambda^{2} + \lambda + 1)(1+\epsilon)\nl_{2,\lambda}-(\lambda^{2}+\lambda)e^{-\lambda}.
\end{align}
for all $\lambda$ large enough. From \eqref{new_intermediate_2} and \eqref{new_intermediate_3}, we obtain
\begin{equation}\label{new_intermediate_2_3_combined}
\md_{2,\lambda} > 1 - (\lambda^{2} + \lambda + 1)(1+\epsilon)\nl_{2,\lambda} + (\lambda^{2}+\lambda)e^{-\lambda},
\end{equation}
so that from \eqref{new_intermediate_1} and \eqref{new_intermediate_2_3_combined}, we have
\begin{align}
\md_{2,\lambda} - \nd_{2,\lambda} > -(\lambda^{2}+\lambda+2)\epsilon \nl_{2,\lambda} + (\lambda^{2}+\lambda)e^{-\lambda},\label{new_intermediate_1_2_3_combined}
\end{align}
and our objective is to show that the expression on the right side of \eqref{new_intermediate_1_2_3_combined} is non-negative for all $\lambda$ sufficiently large. Given any $\epsilon > 0$, we have $\lambda^{2} + \lambda + 2 < (1+\epsilon)(\lambda^{2}+\lambda)$ for all $\lambda$ large enough, and from Theorem~\ref{main:compare_1_vs_2} (which is yet to be proved), we have $\nl_{2,\lambda} \leqslant \nl_{1,\lambda}$ for all $\lambda$ large enough. To establish \eqref{new_intermediate_1_2_3_combined}, it thus suffices to prove that, given any $\epsilon > 0$ such that $\epsilon (1+\epsilon) < 1$, 
\begin{equation}\label{new_intermediate_1_2_3_combined_modified}
\frac{e^{-\lambda}}{\epsilon (1+\epsilon)} > \nl_{1,\lambda} \text{ for all } \lambda \text{ sufficiently large}.
\end{equation}

To establish \eqref{new_intermediate_1_2_3_combined_modified}, we need to deduce certain properties of $\nl_{1,\lambda}$, which we do by examining the function $H_{1,\lambda}$ (evident from Theorem~\ref{thm:main_1_normal}). Setting $k=1$ and $\chi$ to be Poisson$(\lambda)$ in \eqref{H_{k}_defn}, we obtain 
\begin{equation}
H_{1,\lambda}(x) = e^{-\lambda e^{-\lambda x}} \implies \lambda^{3}(\lambda e^{-\lambda x} - 1) e^{-\lambda x - \lambda e^{-\lambda x}}.\label{H''_{1,lambda}}
\end{equation}
Note that, using \eqref{F_{i,lambda}_extended}, for all $\lambda > e$, we have
\begin{equation}
F_{1,\lambda}\left(\frac{\ln \lambda}{\lambda}\right) = \exp\left\{-\lambda \cdot \frac{\ln \lambda}{\lambda}\right\} = \frac{1}{\lambda} < \frac{\ln \lambda}{\lambda},\nonumber
\end{equation}
and as Lemma~\ref{lem:main_thm_1_1} states that $F_{1,\lambda}$ is strictly decreasing on $[0,1]$ and its unique fixed point is $c_{1,\lambda}$, we conclude that 
\begin{equation}
c_{1,\lambda} < \frac{\ln \lambda}{\lambda} \text{ for all } \lambda > e \implies \lambda e^{-\lambda x} > 1 \text{ for all } x \in [0, c_{1,\lambda}], \text{ for all } \lambda > e.\nonumber
\end{equation}
This finding, when applied to \eqref{H''_{1,lambda}}, reveals that $H_{1,\lambda}$ is strictly convex on $[0,c_{1,\lambda}]$ for all $\lambda > e$. Consequently, the curve $y=H_{1,\lambda}(x)$ intersects the line $y=x$ at at most two points inside the interval $[0,c_{1,\lambda}]$ -- if there are two intersections, then these happen at $\nl_{1,\lambda}$ and $c_{1,\lambda}$, and if there is only one intersection, then this happens at $c_{1,\lambda}$ (in which case $\nl_{1,\lambda} = c_{1,\lambda}$). This conclusion, along with Theorem~\ref{main:normal_draw_probab_limit_Poisson} whose proof guarantees that $\nl_{1,\lambda} < c_{1,\lambda}$ for all $\lambda$ large enough, reveals that the curve $y=H_{1,\lambda}(x)$ stays \emph{above} the line $y=x$ when $x \in [0,\nl_{1,\lambda})$, and \emph{beneath} it when $x \in (\nl_{1,\lambda},c_{1,\lambda})$, for $\lambda$ sufficiently large.

We now note that $\lambda c_{1,\lambda} \rightarrow \infty$ as $\lambda \rightarrow \infty$, due to \eqref{lambda^{i}c_{k,lambda}_limit_various_i}, so that $\frac{e^{-\lambda}}{\epsilon(1+\epsilon)} < c_{1,\lambda}$ for all $\lambda$ sufficiently large. Therefore, to establish \eqref{new_intermediate_1_2_3_combined_modified}, it suffices to show that $y=H_{1,\lambda}(x)$ lies beneath $y=x$ at $x = \frac{e^{-\lambda}}{\epsilon(1+\epsilon)}$. Using $x > 1 - e^{-x}$ for all $x > 0$, we have
\begin{align}
e^{\lambda} H_{1,\lambda}\left(\frac{e^{-\lambda}}{\epsilon(1+\epsilon)}\right) &= e^{\lambda} \exp\left\{-\lambda \exp\left\{-\lambda \cdot \frac{e^{-\lambda}}{\epsilon(1+\epsilon)}\right\}\right\} \nonumber\\
&= \exp\left\{\lambda\left[1 - \exp\left\{-\frac{\lambda e^{-\lambda}}{\epsilon(1+\epsilon)}\right\}\right]\right\} < \exp\left\{\frac{\lambda^{2} e^{-\lambda}}{\epsilon(1+\epsilon)}\right\}.\nonumber
\end{align}
Since the right side of the above inequality goes to $1$ as $\lambda \rightarrow \infty$, and we chose $\epsilon$ above such that $\epsilon (1+\epsilon) < 1$, the right side of the above inequality is strictly less than $\frac{1}{\epsilon (1+\epsilon)}$ for all $\lambda$ sufficiently large, thus proving that $H_{1,\lambda}\left(\frac{e^{-\lambda}}{\epsilon(1+\epsilon)}\right) < \frac{e^{-\lambda}}{\epsilon (1+\epsilon)}$. In other words, we have proved that $y=H_{1,\lambda}(x)$ lies beneath $y=x$ at $x = \frac{e^{-\lambda}}{\epsilon(1+\epsilon)}$, as desired. This concludes \S\ref{subsubsec:misere_normal_comparison_part_2_subpart_3}, and with it, the proof of the fact that $\nd_{2,\lambda} < \md_{2,\lambda}$ for all $\lambda$ large enough. 

\subsubsection{Proving that $\nl_{2,\lambda} < \nw_{2,\lambda}$ for all $\lambda$ large enough}\label{subsubsec:misere_normal_comparison_part_2_subpart_4} Proving $\nl_{2,\lambda} < \nw_{2,\lambda}$ for all $\lambda$ sufficiently large establishes the third and final claim made in the statement of Theorem~\ref{main:misere_normal_comparison}. By Theorem~\ref{thm:main_1_normal}, we have $\nw_{2,\lambda} = 1 - F_{2,\lambda}(\nl_{2,\lambda})$, so that it suffices to show that $F_{2,\lambda}(\nl_{2,\lambda}) < 1 - \nl_{2,\lambda}$. From \eqref{F_{i,lambda}_extended}, we obtain
\begin{align}
F_{2,\lambda}(\nl_{2,\lambda}) &= \exp\{\lambda e^{-\lambda \nl_{2,\lambda}} - \lambda \nl_{2,\lambda} - \lambda\} \nonumber\\
&\leqslant \exp\left\{-\lambda^{2} \nl_{2,\lambda} + \frac{\lambda^{3}\nl_{2,\lambda}^{2}}{2} - \lambda \nl_{2,\lambda}\right\}, \text{ by \eqref{alternating_approx_1}};\nonumber\\
&\leqslant 1 - \lambda^{2} \nl_{2,\lambda} + \frac{\lambda^{3}\nl_{2,\lambda}^{2}}{2} - \lambda \nl_{2,\lambda} + \frac{1}{2}\left(-\lambda^{2} \nl_{2,\lambda} + \frac{\lambda^{3}\nl_{2,\lambda}^{2}}{2} - \lambda \nl_{2,\lambda}\right)^{2}, \text{ by \eqref{alternating_approx_2}};\nonumber\\
&\leqslant 1 - \lambda^{2} \nl_{2,\lambda} - \lambda \nl_{2,\lambda} + \epsilon \nl_{2,\lambda} \text{ by \eqref{claim_9};}\nonumber
\end{align}
and this is less than $1-\nl_{2,\lambda}$ for all $\lambda$ large enough since $\nl_{2,\lambda}=o\left(\lambda^{2} \nl_{2,\lambda}\right)$. This concludes \S\ref{subsubsec:misere_normal_comparison_part_2_subpart_4} and the proof of the entire Theorem~\ref{main:misere_normal_comparison}.

\section{Proof of Theorem~\ref{main:compare_1_vs_2}}\label{sec:compare_1_vs_2}
Recall that the objectives of \S\ref{sec:compare_1_vs_2} is to show that the inequalities $\nl_{2,\lambda} \leqslant \nl_{1,\lambda}$, $\nd_{2,\lambda} < \nd_{1,\lambda}$ and $\nw_{1,\lambda} < \nw_{2,\lambda}$ hold for all $\lambda$ sufficiently large. 

\subsection{Showing that $\nl_{2,\lambda} \leqslant \nl_{1,\lambda}$ for all $\lambda$ sufficiently large}\label{subsec:compare_1_vs_2_part_1} We begin with the first inequality, and we establish this by first showing that $\nl_{2,\lambda}^{(n)} \leqslant \nl_{1,\lambda}^{(n)}$ for all $n \in \mathbb{N}$, which is proved by induction on $n$, then taking the limit as $n \rightarrow \infty$ and using \eqref{nl_nw_limit}.

Since $\nl_{1,\lambda}^{(1)} = \nl_{2,\lambda}^{(1)} = e^{-\lambda}$, the base case for the inductive argument is verified. Suppose $\nl_{2,\lambda}^{(n)} \leqslant \nl_{1,\lambda}^{(n)}$ for some $n \in \mathbb{N}$. From \eqref{fixed_point_recursion_eq}, we have $\nl_{1,\lambda}^{(n+2)} = H_{1,\lambda}(\nl_{1,\lambda}^{(n)})$ and $\nl_{2,\lambda}^{(n+2)} = H_{2,\lambda}(\nl_{2,\lambda}^{(n)})$, so that our task now is to compare these two quantities. To this end, we observe, setting $k=2$ and considering Poisson$(\lambda)$ offspring in \eqref{H_{k}_defn},
\begin{align}
H_{2,\lambda}(x) &= G_{\lambda}\big(G_{\lambda}(1-F_{2,\lambda}(x)) - G_{\lambda}(F_{1,\lambda}(x)-F_{2,\lambda}(x))\big)\nonumber\\
&= G_{\lambda}\left(e^{-\lambda F_{2,\lambda}(x)} - e^{\lambda F_{1,\lambda}(x) - \lambda F_{2,\lambda}(x) - \lambda}\right) = G_{\lambda}\left(e^{-\lambda F_{2,\lambda}(x)}\left[1 - e^{\lambda F_{1,\lambda}(x) - \lambda}\right]\right).\nonumber
\end{align}
Applying this, the induction hypothesis and the increasing nature of $H_{1,\lambda}$ on $[0,1]$ (as proved in \S\ref{subsec:thm:main_1_normal_proof_part_2}), we obtain
\begin{align}\label{intermediate_10}
\nl_{1,\lambda}^{(n+2)} - \nl_{2,\lambda}^{(n+2)} &\geqslant H_{1,\lambda}\left(\nl_{2,\lambda}^{(n)}\right) - H_{2,\lambda}\left(\nl_{2,\lambda}^{(n)}\right) \nonumber\\
&= G_{\lambda}\left(1 - F_{1,\lambda}(\nl_{2,\lambda}^{(n)})\right) - G_{\lambda}\left(e^{-\lambda F_{2,\lambda}(\nl_{2,\lambda}^{(n)})}\left[1 - e^{\lambda F_{1,\lambda}(\nl_{2,\lambda}^{(n)}) - \lambda}\right]\right).
\end{align}
We now construct a non-negative lower bound for the expression in \eqref{intermediate_10}. 

Since $\nl_{2,\lambda}^{(n)} \leqslant \nl_{2,\lambda}$ (evident from \eqref{increasing_sequence_nl_nw} and \eqref{nl_nw_limit}) and since $F_{2,\lambda}$ is strictly decreasing (by Lemma~\ref{lem:main_thm_1_1}), given any $0 < \epsilon < 1$, we have, using $e^{-x}-1 > -x$ for all $x > 0$,
\begin{align}\label{intermediate_11}
F_{2,\lambda}(\nl_{2,\lambda}^{(n)}) \geqslant F_{2,\lambda}(\nl_{2,\lambda}) = \exp\{\lambda e^{-\lambda \nl_{2,\lambda}} - \lambda \nl_{2,\lambda} - \lambda\} \geqslant \exp\{-\lambda^{2}\nl_{2,\lambda} - \lambda \nl_{2,\lambda}\} > 1-\epsilon
\end{align}
for all $\lambda$ sufficiently large, since both $\lambda^{2}\nl_{2,\lambda}$ and $\lambda \nl_{2,\lambda}$ converges to $0$ as $\lambda \rightarrow \infty$ due to Theorem~\ref{lem:decay_rate_nl_{2,lambda}}. Note, crucially, that how large $\lambda$ needs to be for \eqref{intermediate_11} to hold depends on $\epsilon$ alone, and not on $n$. Using \eqref{intermediate_11} and $1-e^{-x} < x$ for all $x > 0$, the expression on the right side of \eqref{intermediate_10} is bounded below by
\begin{align}
& G_{\lambda}\left(1 - F_{1,\lambda}(\nl_{2,\lambda}^{(n)})\right) - G_{\lambda}\left(e^{-\lambda(1-\epsilon)}\left[1 - e^{\lambda F_{1,\lambda}(\nl_{2,\lambda}^{(n)}) - \lambda}\right]\right)\nonumber\\
& \geqslant G_{\lambda}\left(1 - F_{1,\lambda}(\nl_{2,\lambda}^{(n)})\right) - G_{\lambda}\left(e^{-\lambda(1-\epsilon)}\lambda\{1 - F_{1,\lambda}(\nl_{2,\lambda}^{(n)})\}\right),\nonumber
\end{align}
and this is strictly positive for all $\lambda$ sufficiently large since $\lambda e^{-\lambda(1-\epsilon)} \rightarrow 0$ as $\lambda \rightarrow \infty$ (and hence eventually becomes strictly less than $1$) and $G_{\lambda}$ is strictly increasing. This completes the inductive proof and brings us to the end of \S\ref{subsec:compare_1_vs_2_part_1}.

\subsection{Showing that $\nd_{2,\lambda} < \nd_{1,\lambda}$ and $\nw_{1,\lambda} < \nw_{2,\lambda}$ for all $\lambda$ sufficiently large}\label{subsec:compare_1_vs_2_part_2} From Theorem~\ref{thm:main_1_normal}, we have the following lower bound on $\nd_{1,\lambda}$:
\begin{align}\label{intermediate_13}
\nd_{1,\lambda} = F_{1,\lambda}(\nl_{1,\lambda}) - \nl_{1,\lambda} = e^{-\lambda \nl_{1,\lambda}} - \nl_{1,\lambda} \geqslant 1 - \lambda \nl_{1,\lambda} - \nl_{1,\lambda},
\end{align}
and comparing this with the lower bound on $\nd_{2,\lambda}$ obtained in \eqref{new_intermediate_1}, we see that the objective of \S\ref{subsec:compare_1_vs_2_part_2} will be accomplished if we can show that $\nl_{1,\lambda} < \lambda \nl_{2,\lambda}$ for all $\lambda$ sufficiently large.

Recall that we estabilshed in \S\ref{subsubsec:misere_normal_comparison_part_2_subpart_3} that $H_{1,\lambda}$ is strictly convex on $[0,c_{1,\lambda}]$ for $\lambda > e$, so that $y=H_{1,\lambda}(x)$ lies \emph{above} $y=x$ on $[0,\nl_{1,\lambda})$, and \emph{beneath} $y=x$ on $(\nl_{1,\lambda},c_{1,\lambda})$, for all $\lambda$ sufficiently large. From \eqref{lambda^{i}c_{k,lambda}_limit_various_i}, we have $\lambda c_{1,\lambda} \rightarrow \infty$, whereas Theorem~\ref{lem:decay_rate_nl_{2,lambda}} yields $\lambda^{2}\nl_{2,\lambda} \rightarrow 0$, so that $\lambda \nl_{2,\lambda} < c_{1,\lambda}$ for all $\lambda$ sufficiently large. The goal set in the previous paragraph will, therefore, be accomplished, if we can show that $y=H_{1,\lambda}(x)$ lies beneath $y=x$ at $x = \lambda \nl_{2,\lambda}$.

Using $e^{-x} > 1 - x$ for $x > 0$, we have 
\begin{equation}
H_{1,\lambda}(\lambda \nl_{2,\lambda}) = \exp\left\{-\lambda e^{-\lambda^{2}\nl_{2,\lambda}}\right\} < \exp\left\{-\lambda + \lambda^{3}\nl_{2,\lambda}\right\} < \lambda e^{-\lambda} = \lambda \nl_{2,\lambda}^{(1)} \leqslant \lambda \nl_{2,\lambda}\nonumber
\end{equation}
for all $\lambda$ sufficiently large, since we have $\lambda^{3}\nl_{2,\lambda} \rightarrow 0$ by Theorem~\ref{lem:decay_rate_nl_{2,lambda}}. This completes the proof of $\nd_{1,\lambda} > \nd_{2,\lambda}$ for all $\lambda$ sufficiently large. 

Since $\nw_{1,\lambda} = 1 - \nl_{1,\lambda} - \nd_{1,\lambda}$ and $\nw_{2,\lambda} = 1 - \nl_{2,\lambda} - \nd_{2,\lambda}$, it follows immediately from the previous conclusion and the conclusion drawn in \S\ref{subsec:compare_1_vs_2_part_1} that $\nw_{1,\lambda} < \nw_{2,\lambda}$ for all $\lambda$ sufficiently large.

\section{Proof of Theorem~\ref{main:avg_dur}}\label{sec:avg_dur}
Recall that we intend to show that when $\nl_{k} = c_{k}$ and $\max\left\{H'_{k}(c_{k}), \left|F'_{k}(c_{k})\right|\right\} < 1$, the expected duration of the $k$-jump normal game is finite. Here, unlike the previous few sections, we consider \emph{any} offspring distribution $\chi$ that satisfies the restrictions discussed in \S\ref{subsec:notations}. 

Suppose $\nl_{k} = c_{k}$, $H'_{k}(c_{k}) \leqslant \gamma$ and $|F'_{k}(c_{k})| \leqslant \gamma$ for some $\gamma < 1$. Given $0 < \epsilon < 1-\gamma$, from \eqref{nl_nw_limit} and the continuity of $H'_{k}$ and $F'_{k}$ (due to the continuity of $G'$), we know there exists $N \in \mathbb{N}$ such that $H'_{k}(x) \leqslant \gamma+\epsilon$ and $|F'_{k}(x)| \leqslant \gamma+\epsilon$ for all $\nl_{k}^{(N)} \leqslant x \leqslant \nl_{k} = c_{k}$. Using \eqref{fixed_point_recursion_eq} and \eqref{increasing_sequence_nl_nw}, for all $n \geqslant N$, we have
\begin{align}\label{intermediate_14}
\nl_{k} - \nl_{k}^{(n+2)} = H_{k}(\nl_{k}) - H_{k}(\nl_{k}^{(n)}) = H'_{k}(\xi_{n}) (\nl_{k} - \nl_{k}^{(n)}) \leqslant (\epsilon + \gamma)(\nl_{k} - \nl_{k}^{(n)}),
\end{align}
where $\nl_{k}^{(n)} < \xi_{n} < \nl_{k}$ (by the mean value theorem). Likewise, from \eqref{nw^{(n+1)}_closed_form_in_terms_of_nl^{(n)}} and \eqref{increasing_sequence_nl_nw}, for all $n \geqslant N$, we have
\begin{align}\label{intermediate_15}
\nw_{k} - \nw_{k}^{(n+2)} \leqslant \nw_{k} - \nw_{k}^{(n+1)} = F_{k}(\nl_{k}^{(n)}) - F_{k}(\nl_{k}) \leqslant (\epsilon + \gamma)(\nl_{k} - \nl_{k}^{(n)}).
\end{align}
Denoting the (random) duration of the game (starting at the root $\phi$ of $\mathcal{T}_{\chi}$) by $T$, recalling the definition of $\ND_{k}^{(n)}$ from \S\ref{subsec:notations}, and letting $C = \sum_{n=1}^{N+1}(1 - \nw_{k}^{(n)} - \nl_{k}^{(n)})$, we have
\begin{align}
\E[T] &= \sum_{n=1}^{\infty}\Prob[T \geqslant n] = \sum_{n=1}^{\infty}\Prob[\phi \in \ND_{k}^{(n)}] = \sum_{n=1}^{\infty}\nd_{k}^{(n)} \nonumber\\
&= C + \sum_{n=N+2}^{\infty}(1 - \nw_{k}^{(n)} - \nl_{k}^{(n)}) = C + \sum_{n=N+2}^{\infty}\{(\nw_{k} - \nw_{k}^{(n)}) + (\nl_{k} - \nl_{k}^{(n)})\},\nonumber
\end{align}
since $1 - \nw_{k} - \nl_{k} = \nd_{k} = 0$ (by Theorem~\ref{main:bounds_on_nl_{k}_ml_{k}}, as $\nl_{k} = c_{k}$). It converges as \eqref{intermediate_14} and \eqref{intermediate_15} guarantee exponential decay of the summands as $n \rightarrow \infty$. 

To establish the second part of Theorem~\ref{main:avg_dur}, we show that $|F'_{k}(c_{k})| < 1 \implies H'_{k}(c_{k}) < 1$ when $k = 2, 3$ (which then guarantees that the first part of Theorem~\ref{main:avg_dur} holds). Letting 
\begin{equation}
a_{0} = G'(1-c_{2}) = |F'_{1}(c_{2})| \text{ and } a_{1} = G'(F_{1}(c_{2})-c_{2}),\nonumber
\end{equation}
we have (also making use of using Lemma~\ref{g_{j}_c_{k}_patterns})
\begin{equation}
F'_{2}(c_{2}) = -a_{1}(a_{0}+1) \text{ and } H'_{2}(c_{2}) = a_{1}^{2}(a_{0}^{2} + 2a_{0} - a_{1}a_{0} - a_{1}) < \{a_{1}(a_{0}+1)\}^{2},\nonumber
\end{equation}
thus proving our claim for $k=2$. 

The proof is similar for $k=3$, albeit requiring more involved computations, as follows. Letting 
\begin{equation}
b_{0} = \left|F'_{1}(c_{3})\right| = G'(1-c_{3}), \ b_{1} = G'(F_{1}(c_{3})-c_{3}) \text{ and } b_{2} = G'(F_{2}(c_{3})-c_{3}),\nonumber
\end{equation}
we have $F'_{3}(c_{3}) = -b_{2}(b_{1}b_{0} + b_{1} + 1)$. This, along with Lemma~\ref{g_{j}_c_{k}_patterns}, yields
\begin{align}
& \frac{d}{dx}G(g_{2}(1,F_{2}(x),F_{3}(x)))\big|_{x=c_{3}} = b_{2}[b_{1}^{2}b_{0}^{2} + 2b_{0}b_{1}^{2} + b_{0}b_{1} + b_{1}^{2} - b_{2}b_{1}^{2}b_{0} - b_{2}b_{1}^{2} - b_{2}b_{1}],\nonumber\\
& \frac{d}{dx}G(g_{2}(F_{1}(x),F_{2}(x),F_{3}(x)))\big|_{x=c_{3}} = b_{2}[-b_{1}b_{0} + b_{2}b_{1}^{2}b_{0} + b_{2}b_{1}^{2} + 2b_{2}b_{1} + b_{2}b_{1}b_{0} - b_{2}^{2}b_{1}b_{0} - b_{2}^{2}b_{1} - b_{2}^{2}].\nonumber
\end{align}
These together give us 
\begin{align}
H'_{3}(c_{3}) &= b_{2}^{2}[b_{1}^{2}b_{0}^{2} + 2b_{0}b_{1}^{2} + 2b_{0}b_{1} + b_{1}^{2} + 2b_{1} + 1 + \{- 2b_{2}b_{1}^{2}b_{0} - 2b_{2}b_{1}^{2} - 3b_{2}b_{1} - b_{2}b_{1}b_{0} + b_{2}^{2}b_{1}b_{0} + b_{2}^{2}b_{1} \nonumber\\&+ b_{2}^{2} - 2b_{1} - 1\}] < \{b_{2}[b_{1}b_{0} + b_{1} + 1]\}^{2}\nonumber
\end{align}
The above inequality is obtained as follows. We use $F_{2}(c_{3}) < F_{1}(c_{3})$ (since we have already shown that $F_{i}(x) \leqslant F_{i-1}(x)$ holds for $x \in [0,c_{i-1}]$, in the proof of Equation~\eqref{belongs_to_D_{i}}) and the increasing nature of $G'$ to deduce $b_{2} < b_{1}$. This, in turn, yields $b_{2}^{2}b_{1} < b_{2}b_{1}^{2}$, $b_{2}^{2} < b_{2}b_{1}$ and $b_{2}^{2}b_{1}b_{0} < b_{2}b_{1}^{2}b_{0}$.

\begin{remark}
We have verified the inequality $H'_{k}(c_{k}) < |F'_{k}(c_{k})|^{2}$ for a few higher values of $k$ as well, and we conjecture that this is true for all $k \in \mathbb{N}$, but have been unable to discern a pattern in the expression for $H'_{k}(c_{k})$ that is not too complicated to work with in order to prove this conjecture.
\end{remark}

\clearpage \section{Appendix}\label{sec:appendix}

\subsection{Proofs of lemmas from \S\ref{sec:thm_1_normal_proof}}\label{appsec:thm_1_normal_proof}
\begin{proof}[Proof of Lemma~\ref{lem_normal_compactness}]
As mentioned previously, the argument in this proof resembles that employed to prove Proposition 7 of \cite{holroyd_martin}. We show that the sets $\widetilde{\NL} = \NL \setminus (\bigcup_{n=1}^{\infty}\NL^{(n)})$ and $\widetilde{\NW} = \NW \setminus (\bigcup_{n=1}^{\infty}\NW^{(n)})$ are empty. From \eqref{normal_main_recursion_1}, for $u$ to be in $\widetilde{\NW}$, there exists some $v \in \Gamma_{k}(u) \cap \NL$. If $v$ were in $\NL^{(n)}$ for some $n \in \mathbb{N}$, then the game starting at $u$ is won by P1 (playing the first round) in less than $n+1$ rounds by moving the token from $u$ to $v$ in the first round. This implies $u \in \NW^{(n+1)}$, contradicting the assumption that $u \in \widetilde{\NW}$. Thus $v$ must be in $\widetilde{\NL}$. Equivalently, we have $u \in \widetilde{\NW}$ iff $\Gamma_{k}(u) \cap \NL = \Gamma_{k}(u) \cap \widetilde{\NL} \neq \emptyset$. Moreover, since the players are assumed to play optimally, P1 moves the token from $u$ to some $v \in \Gamma_{k}(u) \cap \widetilde{\NL}$ in the first round. 

From \eqref{normal_main_recursion_2}, for $v$ to be in $\widetilde{\NL}$, every vertex in $\Gamma_{k}(v)$ must be in $\NW$. If for some $n \in \mathbb{N}$, $w \in \NW^{(n)}$ for every $w \in \Gamma_{k}(v)$, then no matter how P1 moves in the first round of the game starting at $v$, she loses in less than $n+1$ rounds, thus implying $v \in \NL^{(n+1)}$ and contradicting the assumption that $v \in \widetilde{\NL}$. Thus there exists some $w \in \Gamma_{k}(v)$ with $w \in \widetilde{\NW}$. Equivalently, $v \in \widetilde{\NL}$ iff $\Gamma_{k}(v) \subset \NW$ and $\Gamma_{k}(v) \cap \widetilde{\NW} \neq \emptyset$. Moreover, under optimal play, P1 moves the token from $v$ to some $w \in \Gamma_{k}(v) \cap \widetilde{\NW}$ in the first round in her attempt to prolong the game as much as possible.

These observations reveal that if the game begins at $u_{0} \in \widetilde{\NL}$, the token gets moved to some $u_{1} \in \Gamma_{k}(u_{0}) \cap \widetilde{\NW}$ in the first round, to some $u_{2} \in \Gamma_{k}(u_{1}) \cap \widetilde{\NL}$ in the second round, and so on. It continues till eternity, with each player \emph{always} being able to make a move. The game thus ends in a draw, contradicting the definition of $\widetilde{\NL}$. Therefore, $\widetilde{\NL} = \emptyset$, and likewise, $\widetilde{\NW} = \emptyset$ as well.
\end{proof}

\begin{proof}[Proof of Lemma~\ref{lem:main_thm_1_1}]
We show, for all $i \in \mathbb{N}$, 
\begin{enumerate}
\item \label{F_{i}_claim_1} that $F_{i}$ is a strictly decreasing function on $[0,c_{i-1}]$, where $F_{i}$ is as defined in \eqref{F_{i}_defn} and $c_{i-1}$ is the unique fixed point of $F_{i-1}$, 
\item \label{F_{i}_claim_extra} that $F_{i}(0) = 1$,
\item \label{F_{i}_claim_2} that $c_{i}$ exists and is uniquely defined,
\item \label{F_{i}_claim_3} and that $\chi(0) < c_{i} < c_{i-1}$.
\end{enumerate}
We prove these claims together, via induction on $i$. We note, at the very outset, that $G'(x) = \sum_{i=1}^{\infty}i \chi(i) x^{i} > 0$ for all $x  > 0$, guaranteeing $G$ is strictly increasing on $[0,1]$ -- these facts are repeatedly utilized below.

Since $F'_{1}(x) = -G'(1-x) < 0$ for all $x \in [0,1)$ (as $\chi(0) < 1$), $F_{1}$ is strictly decreasing on $[0,1]$, proving \eqref{F_{i}_claim_1} for $i=1$. This also implies that $F_{1}(x) - x$ is strictly decreasing on $[0,1]$. Moreover, $F_{1}(0) = G(1) = 1 > 0$ (this verifies \eqref{F_{i}_claim_extra}), so that the curve $y=F_{1}(x)-x$ lies above the $x$-axis at $x=0$, and $F_{1}(1) - 1 = \chi(0) - 1 < 0$, so that the curve $y=F_{1}(x)-x$ lies below the $x$-axis at $x=1$. This fact, along with \eqref{F_{i}_claim_1} for $i=1$, guarantees that $F_{1}(x) - x$ has a unique root in $(0,1)$, which we denote by $c_{1}$. This proves \eqref{F_{i}_claim_2} for $i=1$. Finally, $F_{1}(\chi(0)) = G(1-\chi(0)) > G(0) = \chi(0)$ (as $\chi(0) < 1$). This, along with \eqref{F_{i}_claim_1} and \eqref{F_{i}_claim_2} for $i=1$, allows us to conclude that $c_{1} > \chi(0)$, thus proving \eqref{F_{i}_claim_3} for $i=1$. This completes verifying the base case for the induction.

Suppose we have shown that all of \eqref{F_{i}_claim_1}, \eqref{F_{i}_claim_extra}, \eqref{F_{i}_claim_2} and \eqref{F_{i}_claim_3} hold for some $i \in \mathbb{N}$. From \eqref{F_{i}_defn}, we have $F'_{i+1}(x) = G'(F_{i}(x)-x)(F'_{i}(x) - 1)$. By the induction hypothesis, $F'_{i}(x) - 1 < F'_{i}(x) < 0$ for all $x \in [0,c_{i}] \subset [0,c_{i-1}]$, whereas $G'(F_{i}(x)-x) > 0$ for $x \in [0,c_{i}]$, thus proving that $F'_{i+1}(x) < 0$ and hence $F_{i+1}$ is strictly decreasing on $[0,c_{i}]$. This proves \eqref{F_{i}_claim_1}. 

By the induction hypothesis, $F_{i+1}(0) = G(F_{i}(0) - 0) = G(1) = 1$, proving \eqref{F_{i}_claim_extra}. Note that this also implies that the curve $y=F_{i+1}(x)-x$ lies above the $x$-axis at $x=0$, whereas $F_{i+1}(c_{i}) - c_{i} = G(F_{i}(c_{i}) - c_{i}) - c_{i} = G(0) - c_{i} = \chi(0) - c_{i} < 0$ (using the induction hypothesis pertaining to \eqref{F_{i}_claim_3}), so that the curve $y=F_{i+1}(x)-x$ lies below the $x$-axis at $x=c_{i}$. This fact, along with \eqref{F_{i}_claim_1} that we have already proved, guarantees that $F_{i+1}(x) - x$ has a unique root, $c_{i+1}$, in $(0,c_{i})$ -- this proves \eqref{F_{i}_claim_2} and the second inequality of \eqref{F_{i}_claim_3}.

Due to \eqref{F_{i}_claim_1} and as $c_{i}$ is the fixed point of $F_{i}$ in $[0,c_{i-1}]$, we know that the curve $y=F_{i}(x)$ lies strictly above the line $y=x$ for $x \in [0,c_{i})$, and strictly below the line $y=x$ for $x \in (c_{i},c_{i-1}]$. Since $\chi(0) \in [0,c_{i})$ (because of the induction hypothesis pertaining to \eqref{F_{i}_claim_3}), we conclude that $F_{i}(\chi(0)) > \chi(0)$. This, along with the strictly increasing nature of $G$ on $[0,1]$, yields 
\begin{equation}
F_{i+1}(\chi(0)) = G(F_{i}(\chi(0)) - \chi(0)) > G(0) = \chi(0).\label{lem_claim_1} 
\end{equation}
Once again, by \eqref{F_{i}_claim_1} and as $c_{i+1}$ is the fixed point of $F_{i+1}$ in $[0,c_{i}]$, we know that the curve $y=F_{i+1}(x)$ lies strictly above the line $y=x$ for $x \in [0,c_{i+1})$, and strictly below the line $y=x$ for $x \in (c_{i+1},c_{i}]$. This, along with \eqref{lem_claim_1}, implies that $\chi(0) \in [0,c_{i+1})$, thus completing the proof of \eqref{F_{i}_claim_3}. 
\end{proof}

\begin{proof}[Proof of Lemma~\ref{lem:main_thm_1_3}]
\sloppy Recall that we wish to prove $p_{i,j,n} = g_{i+1}(F_{j-i-1}(\nl^{(n)}), F_{j-i}(\nl^{(n)}), F_{k-i+1}(\nl^{(n)}), \ldots,\\ F_{k}(\nl^{(n)}))$ for $0 \leqslant i < j \leqslant k$, where $F_{i}$s and $g_{i}$s are as defined in \eqref{F_{i}_defn} and \eqref{g_{i}_recursive_defn} respectively. We prove this via induction on $i$. 

First, we note that for $i=0$ and $j = 1$, the claim follows from \eqref{c_{0,1,n}_recursion}. Suppose the claim holds for $p_{0,\ell,n}$ for $1 \leqslant \ell \leqslant j-1$, for some $2 \leqslant j \leqslant k$. From \eqref{c_{0,j,n}_recursion}, the induction hypothesis and \eqref{F_{i}_defn}:
\begin{align}
p_{0,j,n} &= G\left(1 - \nl^{(n)} - \sum_{\ell=1}^{j-2}p_{0,\ell,n}\right) - G\left(1 - \nl^{(n)} - \sum_{\ell=1}^{j-1}p_{0,\ell,n}\right)\nonumber\\
&= G\left(1 - \nl^{(n)} - \sum_{\ell=1}^{j-2}(F_{\ell-1}(\nl^{(n)}) - F_{\ell}(\nl^{(n)}))\right) - G\left(1 - \nl^{(n)} - \sum_{\ell=1}^{j-1}(F_{\ell-1}(\nl^{(n)}) - F_{\ell}(\nl^{(n)}))\right)\nonumber\\
&= G\left(F_{j-2}(\nl^{(n)}) - \nl^{(n)}\right) - G\left(F_{j-1}(\nl^{(n)}) - \nl^{(n)}\right) \nonumber\\
&= F_{j-1}(\nl^{(n)}) - F_{j}(\nl^{(n)}) = g_{1}\left(F_{j-1}(\nl^{(n)}) - F_{j}(\nl^{(n)})\right),\nonumber
\end{align}
thus proving the claim for $i=0$ and \emph{all} $1 \leqslant j \leqslant k$.

Suppose the claim holds for all $p_{i-1,j,n}$ for all $i-1 < j \leqslant k$, for some $1 \leqslant i \leqslant k-1$. From \eqref{c_{i,j,n}_recursion} and the induction hypothesis, for any $j$ with $i < j \leqslant k$, we see that 
\begin{align}
p_{i,j,n} ={}& G\left(\sum_{\ell=j-1}^{k}p_{i-1,\ell,n}\right) - G\left(\sum_{\ell=j}^{k}p_{i-1,\ell,n}\right)\nonumber\\
\begin{split}
={}& G\left(\sum_{\ell=j-1}^{k}g_{i}\left(F_{\ell-i}(\nl^{(n)}), F_{\ell-i+1}(\nl^{(n)}), F_{k-i+2}(\nl^{(n)}), \ldots, F_{k}(\nl^{(n)})\right)\right)\\ &- G\left(\sum_{\ell=j}^{k}g_{i}\left(F_{\ell-i}(\nl^{(n)}), F_{\ell-i+1}(\nl^{(n)}), F_{k-i+2}(\nl^{(n)}), \ldots, F_{k}(\nl^{(n)})\right)\right).\label{claim_10}
\end{split}
\end{align}
We analyze the two terms in \eqref{claim_10} individually, to avoid cluttering. Using \eqref{g_{i}_recursive_defn}, we have
\begin{align}
\MoveEqLeft[3] G\left(\sum_{\ell=j-1}^{k}g_{i}\left(F_{\ell-i}(\nl^{(n)}), F_{\ell-i+1}(\nl^{(n)}), F_{k-i+2}(\nl^{(n)}), \ldots, F_{k}(\nl^{(n)})\right)\right)\nonumber\\
\begin{split}
={}& G\Bigg(\sum_{\ell=j-1}^{k}G\left(g_{i-1}\left(F_{\ell-i}(\nl^{(n)}), F_{k-i+2}(\nl^{(n)}), \ldots, F_{k}(\nl^{(n)})\right)\right) \nonumber\\&- G\left(g_{i-1}\left(F_{\ell-i+1}(\nl^{(n)}), F_{k-i+2}(\nl^{(n)}), \ldots, F_{k}(\nl^{(n)})\right)\right)\Bigg)
\end{split}\nonumber\\
\begin{split}
={}& G\Bigg(G\left(g_{i-1}\left(F_{j-1-i}(\nl^{(n)}), F_{k-i+2}(\nl^{(n)}), \ldots, F_{k}(\nl^{(n)})\right)\right) \nonumber\\&- G\left(g_{i-1}\left(F_{k-i+1}(\nl^{(n)}), F_{k-i+2}(\nl^{(n)}), \ldots, F_{k}(\nl^{(n)})\right)\right)\Bigg) 
\end{split}\nonumber\\
={}& G\left(g_{i}\left(F_{j-i-1}(\nl^{(n)}), F_{k-i+1}(\nl^{(n)}), F_{k-i+2}(\nl^{(n)}), \ldots, F_{k}(\nl^{(n)})\right)\right).\label{claim_11}
\end{align}
The second term of \eqref{claim_10} is analyzed as follows:
\begin{align}
\MoveEqLeft[3] G\left(\sum_{\ell=j}^{k}g_{i}\left(F_{\ell-i}(\nl^{(n)}), F_{\ell-i+1}(\nl^{(n)}), F_{k-i+2}(\nl^{(n)}), \ldots, F_{k}(\nl^{(n)})\right)\right)\nonumber\\
\begin{split}
={}& G\Bigg(\sum_{\ell=j}^{k}G\left(g_{i-1}\left(F_{\ell-i}(\nl^{(n)}), F_{k-i+2}(\nl^{(n)}), \ldots, F_{k}(\nl^{(n)})\right)\right) \nonumber\\&- G\left(g_{i-1}\left(F_{\ell-i+1}(\nl^{(n)}), F_{k-i+2}(\nl^{(n)}), \ldots, F_{k}(\nl^{(n)})\right)\right)\Bigg)
\end{split}\nonumber\\
\begin{split}
={}& G\Bigg(G\left(g_{i-1}\left(F_{j-i}(\nl^{(n)}), F_{k-i+2}(\nl^{(n)}), \ldots, F_{k}(\nl^{(n)})\right)\right) \nonumber\\&- G\left(g_{i-1}\left(F_{k-i+1}(\nl^{(n)}), F_{k-i+2}(\nl^{(n)}), \ldots, F_{k}(\nl^{(n)})\right)\right)\Bigg)
\end{split} \nonumber\\
={}& G\left(g_{i}\left(F_{j-i}(\nl^{(n)}), F_{k-i+1}(\nl^{(n)}), F_{k-i+2}(\nl^{(n)}), \ldots, F_{k}(\nl^{(n)})\right)\right).\label{claim_12}
\end{align}
Substituting from \eqref{claim_11} and \eqref{claim_12} in \eqref{claim_10} and using \eqref{g_{i}_recursive_defn}, we obtain
\begin{align}
\begin{split}
p_{i,j,n} ={}& G\left(g_{i}\left(F_{j-i-1}(\nl^{(n)}), F_{k-i+1}(\nl^{(n)}), F_{k-i+2}(\nl^{(n)}), \ldots, F_{k}(\nl^{(n)})\right)\right)\nonumber\\&- G\left(g_{i}\left(F_{j-i}(\nl^{(n)}), F_{k-i+1}(\nl^{(n)}), F_{k-i+2}(\nl^{(n)}), \ldots, F_{k}(\nl^{(n)})\right)\right)
\end{split}\nonumber\\
={}& g_{i+1}(F_{j-i-1}(\nl^{(n)}), F_{j-i}(\nl^{(n)}), F_{k-i+1}(\nl^{(n)}), F_{k-i+2}(\nl^{(n)}), \ldots, F_{k}(\nl^{(n)})),\nonumber
\end{align} 
thus completing the inductive step of the argument. This completes the proof of Lemma~\ref{lem:main_thm_1_3}.
\end{proof}

\begin{proof}[Proof of Equation~\eqref{belongs_to_D_{i}}]
Recall that we intend to show that for any $0 \leqslant i_{1} < i_{2} \leqslant k-j$ and all $x \in [0,c_{k-1}]$, 
\begin{align}
1 \geqslant g_{j}\big(F_{i_{1}}(x), F_{k-j+1}(x), F_{k-j+2}(x), \ldots, F_{k}(x)\big) \geqslant g_{j}\big(F_{i_{2}}(x), F_{k-j+1}(x), F_{k-j+2}(x), \ldots, F_{k}(x)\big) \geqslant 0.\nonumber
\end{align}
We prove this via induction on $j$. It is important to keep in mind that $G$ is strictly increasing on $[0,1]$ and that $G(x) \in [0,1]$ for all $x \in [0,1]$, as these fact is used repeatedly in what follows.

First, we prove, via induction on $i$, that $F_{i}(x) \geqslant F_{i+1}(x)$ for all $x \in [0,c_{i}]$, for each $i \in \mathbb{N}_{0}$. the base case of $i = 0$ is immediate as $F_{0}(x) = 1$ and $F_{1}(x) = G(1-x) \in [0,1]$. Suppose $F_{i}(x) \geqslant F_{i+1}(x)$ for all $x \in [0,c_{i}]$, for some $i \in \mathbb{N}_{0}$. By \eqref{F_{i}_defn} and the induction hypothesis, we have $F_{i+1}(x) = G(F_{i}(x)-x) \geqslant G(F_{i+1}(x)-x) = F_{i+2}(x)$ for all $x \in [0,c_{i+1}] \subset [0,c_{i}]$. When $0 \leqslant i_{1} < i_{2} < k$, we thus have 
\begin{equation}
g_{1}(F_{i_{1}}(x), F_{k}(x)) = F_{i_{1}}(x) - F_{k}(x) \geqslant F_{i_{2}}(x) - F_{k}(x) = g_{1}(F_{i_{2}}(x),F_{k}(x))\nonumber
\end{equation}
for all $x \in [0,c_{k-1}]$, thus establishing \eqref{belongs_to_D_{i}} for $j = 1$. 

We assume that \eqref{belongs_to_D_{i}} holds for some $j < k$, and now we prove it for $j+1$. For any $0 \leqslant i_{1} < i_{2} \leqslant k-(j+1)$ and $x \in [0,c_{k-1}]$, we obtain, using \eqref{g_{i}_recursive_defn},
\begin{align}
\MoveEqLeft[3] g_{j+1}\big(F_{i_{1}}(x), F_{k-j}(x), F_{k-j+1}(x), \ldots, F_{k}(x)\big) - g_{j+1}\big(F_{i_{2}}(x), F_{k-j}(x), F_{k-j+1}(x), \ldots, F_{k}(x)\big)\nonumber\\
\begin{split}
={}& \left[G\left(g_{j}\left(F_{i_{1}}(x), F_{k-j+1}(x), \ldots, F_{k}(x)\right)\right) - G\left(g_{j}\left(F_{k-j}(x), F_{k-j+1}(x), \ldots, F_{k}(x)\right)\right)\right]\nonumber\\& - \left[G\left(g_{j}\left(F_{i_{2}}(x), F_{k-j+1}(x), \ldots, F_{k}(x)\right)\right) - G\left(g_{j}\left(F_{k-j}(x), F_{k-j+1}(x), \ldots, F_{k}(x)\right)\right)\right]
\end{split}\nonumber\\
={}& G\left(g_{j}\left(F_{i_{1}}(x), F_{k-j+1}(x), \ldots, F_{k}(x)\right)\right) - G\left(g_{j}\left(F_{i_{2}}(x), F_{k-j+1}(x), \ldots, F_{k}(x)\right)\right)\nonumber
\end{align}
and this is non-negative by our induction hypothesis. For $0 \leqslant i \leqslant k-(j+1)$ and $x \in [0,c_{k-1}]$, using \eqref{g_{i}_recursive_defn},
\begin{align}
\MoveEqLeft[3] g_{j+1}\big(F_{i}(x), F_{k-j}(x), F_{k-j+1}(x), \ldots, F_{k}(x)\big)\nonumber\\
\begin{split}
={}& G\left(g_{j}\left(F_{i}(x), F_{k-j+1}(x), \ldots, F_{k}(x)\right)\right) - G\left(g_{j}\left(F_{k-j}(x), F_{k-j+1}(x), \ldots, F_{k}(x)\right)\right)
\end{split}\nonumber
\end{align}
and this is non-negative because our induction hypothesis guarantees that $g_{j}\left(F_{i}(x), F_{k-j+1}(x), \ldots, F_{k}(x)\right) \geqslant g_{j}\left(F_{k-j}(x), F_{k-j+1}(x), \ldots, F_{k}(x)\right)$ (by setting $i_{1}=i$ and $i_{2}=k-j$), and $G$ is increasing on $[0,1]$. It is also bounded above by $1$ since $G\left(g_{j}\left(F_{i}(x), F_{k-j+1}(x), \ldots, F_{k}(x)\right)\right) \leqslant 1$, since $G(x) \leqslant 1$ for all $x \in [0,1]$. This completes the proof of Equation~\eqref{belongs_to_D_{i}}.
\end{proof}

\subsection{Proofs of lemmas from \S\ref{sec:proof_of_main_4}}\label{appsec:proof_of_main_4}

\begin{proof}[Proof of Lemma~\ref{lem:draw_probab_limit_lemma_1}]
Recall that, given a sequence of functions $\{r_{i}\}_{0 \leqslant i \leqslant k}$ defined and differentiable on an interval $I$ of $\mathbb{R}$, with $(r_{i}(x), r_{k-j+1}(x), r_{k-j+2}(x), \ldots, r_{k}(x)) \in \mathcal{D}_{j}$ (defined as in \eqref{D_{i}_domain_defn}) for all $x \in I$ and all $0 \leqslant i < i+j \leqslant k$, Lemma~\ref{lem:draw_probab_limit_lemma_1} describes an expression for the derivative of $g_{k,\lambda}(r_{0}(x),r_{1}(x),\ldots,r_{k}(x))$ with respect to $x$, described via \eqref{g_{k}_derivative_general_Poisson}, \eqref{f_{k,0,lambda}_detailed} and \eqref{f_{k,i,lambda}_detailed}, and this is what we establish now. Here, we focus on the offspring distribution Poisson$(\lambda)$.

Recall that the pgf of Poisson$(\lambda)$ is $G_{\lambda}(x) = e^{\lambda(x-1)}$, so that $G'_{\lambda}(x) = \lambda G_{\lambda}(x)$. We prove the lemma using induction on $k$. When $k=2$, we have
\begin{align}
& \frac{d}{dx}g_{2,\lambda}(r_{0}(x),r_{1}(x),r_{2}(x)) = \frac{d}{dx}G_{\lambda}(r_{0}(x)-r_{2}(x)) - \frac{d}{dx}G_{\lambda}(r_{1}(x)-r_{2}(x))\nonumber\\
&= \lambda G_{\lambda}(r_{0}(x)-r_{2}(x))(r'_{0}(x)-r'_{2}(x)) - \lambda G_{\lambda}(r_{1}(x)-r_{2}(x))(r'_{1}(x)-r'_{2}(x)),\nonumber
\end{align}
which proves the base case for the induction. 

Suppose \eqref{g_{k}_derivative_general_Poisson}, \eqref{f_{k,0,lambda}_detailed} and \eqref{f_{k,i,lambda}_detailed} hold for some $k \geqslant 2$. Using the induction hypothesis and \eqref{g_{i}_recursive_defn}, we have
\begin{align}
\MoveEqLeft[3] \frac{d}{dx}g_{k+1,\lambda}(r_{0}(x),r_{1}(x),\ldots,r_{k+1}(x)) \nonumber\\
\begin{split}
={}& G'_{\lambda}\left(g_{k,\lambda}(r_{0}(x),r_{2}(x),\ldots,r_{k+1}(x))\right) \frac{d}{dx}g_{k,\lambda}\left(r_{0}(x),r_{2}(x),\ldots,r_{k+1}(x)\right) \nonumber\\&- G'_{\lambda}\left(g_{k,\lambda}(r_{1}(x),r_{2}(x),\ldots,r_{k+1}(x))\right) \frac{d}{dx}g_{k,\lambda}\left(r_{1}(x),r_{2}(x),\ldots,r_{k+1}(x)\right)
\end{split}\nonumber\\
\begin{split}
={}& \lambda^{k} G_{\lambda}\left(g_{k,\lambda}(r_{0}(x),r_{2}(x),\ldots,r_{k+1}(x))\right) f_{k,0,\lambda}\left(r_{0}(x),r_{2}(x),\ldots,r_{k+1}(x)\right) (r'_{0}(x) - r'_{k+1}(x))\nonumber\\& - \lambda^{k} G_{\lambda}\left(g_{k,\lambda}(r_{1}(x),r_{2}(x),\ldots,r_{k+1}(x))\right) f_{k,0,\lambda}\left(r_{1}(x),r_{2}(x),\ldots,r_{k+1}(x)\right) (r'_{1}(x) - r'_{k+1}(x))\nonumber\\& + \lambda^{k} \sum_{i=2}^{k} \Big[G_{\lambda}(g_{k,\lambda}(r_{0}(x),r_{2}(x),\ldots,r_{k+1}(x))) f_{k,i-1,\lambda}(r_{0}(x),r_{2}(x),\ldots,r_{k+1}(x)) \nonumber\\&- G_{\lambda}(g_{k,\lambda}(r_{1}(x),r_{2}(x),\ldots,r_{k+1}(x))) f_{k,i-1,\lambda}(r_{1}(x),r_{2}(x),\ldots,r_{k+1}(x))\Big](r'_{i}(x) - r'_{k+1}(x)) 
\end{split}\nonumber\\
\begin{split}
={}& \lambda^{k} G_{\lambda}\left(g_{k,\lambda}\left(r_{0}(x),r_{2}(x),\ldots,r_{k+1}(x)\right)\right) \prod_{t=1}^{k-1}G_{\lambda}\left(g_{t,\lambda}\left(r_{0}(x), r_{k-t+2}(x), r_{k-t+3}(x), \ldots, r_{k+1}(x)\right)\right) \nonumber\\& (r'_{0}(x) - r'_{k+1}(x)) \nonumber\\&- \lambda^{k} G_{\lambda}\left(g_{k,\lambda}\left(r_{1}(x),r_{2}(x),\ldots,r_{k+1}(x)\right)\right) \prod_{t=1}^{k-1}G_{\lambda}\left(g_{t,\lambda}\left(r_{1}(x),r_{k-t+2}(x),r_{k-t+3}(x),\ldots,r_{k+1}(x)\right)\right) \nonumber\\&(r'_{1}(x) - r'_{k+1}(x)) \nonumber\\& + \lambda^{k} \sum_{i=2}^{k} \Bigg[G_{\lambda}\left(g_{k,\lambda}\left(r_{0}(x),r_{2}(x),\ldots,r_{k+1}(x)\right)\right) \prod_{t=1}^{k-(i-1)}G_{\lambda}\Big(g_{t,\lambda}\big(r_{i}(x),r_{k-t+2}(x),r_{k-t+3}(x),\nonumber\\&\ldots,r_{k+1}(x)\big)\Big) \alpha_{k,i-1,\lambda}\left(r_{0}(x),r_{2}(x),\ldots,r_{k+1}(x)\right) \nonumber\\&- G_{\lambda}\left(g_{k,\lambda}\left(r_{1}(x),r_{2}(x),\ldots,r_{k+1}(x)\right)\right)\prod_{t=1}^{k-(i-1)}G_{\lambda}\left(g_{t,\lambda}\left(r_{i}(x),r_{k-t+2}(x),r_{k-t+3}(x),\ldots,r_{k+1}(x)\right)\right) \nonumber\\&\alpha_{k,i-1,\lambda}\left(r_{1}(x),r_{2}(x),\ldots,r_{k+1}(x)\right)\Bigg](r'_{i}(x) - r'_{k+1}(x)) 
\end{split}\nonumber\\
\begin{split}
={}& \lambda^{k} \prod_{t=1}^{(k+1)-1}G_{\lambda}\left(g_{t,\lambda}\left(r_{0}(x), r_{(k+1)-t+1}(x), r_{(k+1)-t+2}(x), \ldots, r_{k+1}(x)\right)\right) (r'_{0}(x) - r'_{k+1}(x))\nonumber\\& - \lambda^{k} \prod_{t=1}^{(k+1)-1}G_{\lambda}\left(g_{t,\lambda}\left(r_{1}(x),r_{(k+1)-t+1}(x),r_{(k+1)-t+2}(x),\ldots,r_{k+1}(x)\right)\right)(r'_{1}(x) - r'_{k+1}(x))\nonumber\\& + \lambda^{k} \sum_{i=2}^{k} \prod_{t=1}^{(k+1)-i}G_{\lambda}\left(g_{t,\lambda}\left(r_{i}(x),r_{(k+1)-t+1}(x),r_{(k+1)-t+2}(x),\ldots,r_{k+1}(x)\right)\right)\nonumber\\& \alpha_{k+1,i,\lambda}(r_{0}(x),\ldots,r_{k+1}(x)) (r'_{i}(x) - r'_{k+1}(x)),
\end{split} \nonumber
\end{align} 
where
\sloppy 
\begin{align}
\alpha_{k+1,i,\lambda}(r_{0}(x),\ldots,r_{k+1}(x)) &= G_{\lambda}(g_{k,\lambda}(r_{0}(x),r_{2}(x),\ldots,r_{k+1}(x))) \alpha_{k,i-1,\lambda}(r_{0}(x),r_{2}(x), \ldots,r_{k+1}(x)) \nonumber\\&- G_{\lambda}(g_{k,\lambda}(r_{1}(x),r_{2}(x),\ldots,r_{k+1}(x))) \alpha_{k,i-1,\lambda}(r_{1}(x),r_{2}(x),\ldots,r_{k+1}(x))\nonumber
\end{align}
is bounded by $a_{k+1,i} = 2a_{k,i-1}$ (using the induction hypothesis).
\end{proof}

\begin{proof}[Proof of Lemma~\ref{lem:F_{i}_derivatives}]
Recall that our objective is to prove that $F'_{i,\lambda}(x) = -\lambda\left[\sum_{t=1}^{i-1}\lambda^{i-t}\prod_{j=t}^{i-1}F_{j,\lambda}(x)+1\right]F_{i,\lambda}(x)$ for all $x \in (0,c_{i-1})$ and $i \in \mathbb{N}$. We prove this by induction on $i$. When $i=1$, we have $F'_{1,\lambda}(x) = -\lambda e^{-\lambda x} = -\lambda F_{1,\lambda}(x)$, proving the base case for the induction. 

Suppose the lemma holds for some $i \geqslant 1$. Then, from \eqref{F_{i}_defn} and the induction hypothesis, we have
\begin{align}
F'_{i+1,\lambda}(x) ={}& G'_{\lambda}(F_{i,\lambda}(x)-x) [F'_{i,\lambda}(x) - 1] \nonumber\\
={}& -\lambda G_{\lambda}(F_{i,\lambda}(x)-x)\left[\lambda\{\sum_{t=1}^{i-1}\lambda^{i-t}\prod_{j=t}^{i-1}F_{j,\lambda}(x)+1\}F_{i,\lambda}(x) + 1\right] \nonumber\\
={}&-\lambda F_{i+1,\lambda}(x) \left[\sum_{t=1}^{i-1}\lambda^{i-t+1} F_{i,\lambda}(x)\prod_{j=t}^{i-1}F_{j,\lambda}(x) + \lambda F_{i,\lambda}(x) + 1\right]\nonumber\\
={}& -\lambda \left[\sum_{t=1}^{i}\lambda^{(i+1)-t}\prod_{j=t}^{i}F_{j,\lambda}(x) + 1\right]F_{i+1,\lambda}(x),\nonumber 
\end{align}
thus completing the inductive proof.
\end{proof}

\subsection{Proofs of lemmas from \S\ref{sec:Poisson_k=2}}\label{appsec:Poisson_k=2}
\begin{proof}[Proof of Lemma~\ref{lem:lambda c_{2,lambda} behaviour}]
Recall that our objective here is to prove that $\eta_{\lambda} = \lambda c_{2,\lambda}$ is is strictly increasing for $\lambda \in (0,\lambda_{0})$ and strictly decreasing for $\lambda \in (\lambda_{0}, \infty)$, where $\lambda_{0} \approx 2.43634$. We accomplish this by showing that the derivative of $\eta_{\lambda}$ with respect to $\lambda$ is strictly positive, and we justify the differentiability of $\eta_{\lambda}$ with respect to $\lambda$ by recalling that $c_{2,\lambda}$ is differentiable with respect to $\lambda$, as proved in detail in \S\ref{subsec:proof_of_main_4_part_1}. 

To find an expression for the derivative of $\eta_{\lambda}$, we first find the derivative $c'_{2,\lambda}$ of $c_{2,\lambda}$ using the definition of $c_{2,\lambda}$ (i.e.\ $c_{2,\lambda} = F_{2,\lambda}(c_{2,\lambda})$). This yields 
\begin{align}
&c'_{2,\lambda} = \frac{d}{d\lambda}F_{2,\lambda}(c_{2,\lambda}) = \frac{d}{d\lambda}\left[\exp\left\{\lambda e^{-\lambda c_{2,\lambda}} - \lambda c_{2,\lambda} - \lambda\right\}\right]\nonumber\\
\implies & c'_{2,\lambda} = \left[e^{-\lambda c_{2,\lambda}} + \lambda\left\{-c_{2,\lambda} - \lambda c'_{2,\lambda}\right\}e^{-\lambda c_{2,\lambda}} - \left\{c_{2,\lambda} + \lambda c'_{2,\lambda}\right\} - 1\right]c_{2,\lambda}\nonumber\\
\implies & c'_{2,\lambda} = \frac{\left[e^{-\lambda c_{2,\lambda}} - c_{2,\lambda} - 1 - \lambda c_{2,\lambda} e^{-\lambda c_{2,\lambda}}\right] c_{2,\lambda}}{1 + \lambda^{2} c_{2,\lambda} e^{-\lambda c_{2,\lambda}} + \lambda c_{2,\lambda}},\label{c_{2,lambda}_derivative}
\end{align}

Next, we examine the behaviour of $c_{2,\lambda}e^{\lambda c_{2,\lambda}+1}$ as a function of $\lambda$. For all $\lambda > 0$, substituting from \eqref{c_{2,lambda}_derivative} and noting that $e^{-\lambda c_{2,\lambda}} < 1$, we have
\begin{align}
&\frac{d}{d\lambda}\left[c_{2,\lambda}e^{\lambda c_{2,\lambda}}\right] = \left[c'_{2,\lambda}\left(1 + \lambda c_{2,\lambda}\right) + c_{2,\lambda}^{2}\right]e^{\lambda c_{2,\lambda}}
= \frac{(e^{-\lambda c_{2,\lambda}} - 1 - \lambda c_{2,\lambda}) c_{2,\lambda} e^{\lambda c_{2,\lambda}}}{1 + \lambda^{2} c_{2,\lambda} e^{-\lambda c_{2,\lambda}} + \lambda c_{2,\lambda}} < 0.\nonumber
\end{align}
Hence $c_{2,\lambda}e^{\lambda c_{2,\lambda}}$, and consequently, $c_{2,\lambda} e^{\lambda c_{2,\lambda}+1}$, is strictly decreasing in $\lambda$. Recall that \eqref{lambda^{i}c_{k,lambda}_limit_various_i} applied to $k=2$ guarantees that $\lambda c_{2,\lambda} \rightarrow 0$ (and hence $c_{2,\lambda} \rightarrow 0$ as well) as $\lambda \rightarrow \infty$. Consequently, 
\begin{equation}
\lim_{\lambda \rightarrow \infty} c_{2,\lambda} e^{\lambda c_{2,\lambda}+1} = 0.\nonumber
\end{equation}
On the other hand,  
\begin{equation}
\lim_{\lambda \rightarrow 0} c_{2,\lambda} = \lim_{\lambda \rightarrow 0} F_{2,\lambda}(c_{2,\lambda}) = \lim_{\lambda \rightarrow 0}\exp\left\{\lambda\left(e^{-\lambda c_{2,\lambda}} - c_{2,\lambda} - 1\right)\right\} = 1 \implies \lim_{\lambda \rightarrow 0}c_{2,\lambda} e^{\lambda c_{2,\lambda}+1} = e > 1.\nonumber
\end{equation}
From the three observations made above, we conclude that there exists a unique $\lambda_{0}$ with $c_{2,\lambda_{0}} e^{\lambda_{0} c_{2,\lambda_{0}} + 1} = 1$. 

We now explore some useful characteristics of this $\lambda_{0}$. Using, again, $c_{2,\lambda} = F_{2,\lambda}(c_{2,\lambda})$, we have
\begin{align}
&c_{2,\lambda_{0}} = \exp\left\{\lambda_{0}\left(e^{-\lambda_{0} c_{2,\lambda_{0}}} - c_{2,\lambda_{0}} - 1\right)\right\}\nonumber\\
\implies & c_{2,\lambda_{0}} e^{\lambda_{0} c_{2,\lambda_{0}}+1} = \exp\left\{\lambda_{0} e^{-\lambda_{0} c_{2,\lambda_{0}}} - \lambda_{0} + 1\right\} = 1 \implies c_{2,\lambda_{0}} = \frac{1}{\lambda_{0}} \ln\left(\frac{\lambda_{0}}{\lambda_{0}-1}\right)\nonumber
\end{align}
where the second step follows from how $\lambda_{0}$ has been defined above. Using the above derivation, we obtain 
\begin{align}
c_{2,\lambda_{0}} e^{\lambda_{0} c_{2,\lambda_{0}} + 1} = \frac{1}{\lambda_{0}} \ln\left(\frac{\lambda_{0}}{\lambda_{0}-1}\right) e \cdot \frac{\lambda_{0}}{\lambda_{0}-1} = 1 \implies \lambda_{0} \approx 2.43634.\nonumber
\end{align}

We now come to the derivative of $\eta_{\lambda}$ with respect to $\lambda$, and we make use of the observations made above:
\begin{align}\label{lambda c_{2,lambda} derivative}
&\eta'_{\lambda} = c_{2,\lambda} + \lambda c'_{2,\lambda} = \frac{c_{2,\lambda}\left(1 + \lambda e^{-\lambda c_{2,\lambda}} - \lambda\right)}{1 + \lambda^{2} c_{2,\lambda} e^{-\lambda c_{2,\lambda}} + \lambda c_{2,\lambda}} = \frac{c_{2,\lambda} \ln\left(c_{2,\lambda} e^{\lambda c_{2,\lambda} + 1}\right)}{1 + \lambda^{2} c_{2,\lambda} e^{-\lambda c_{2,\lambda}} + \lambda c_{2,\lambda}}.
\end{align}
Since $c_{2,\lambda} e^{\lambda c_{2,\lambda}+1}$ is strictly decreasing and takes the value $1$ at $\lambda = \lambda_{0}$, we have $\ln\left(c_{2,\lambda} e^{\lambda c_{2,\lambda} + 1}\right) > 0$ for $\lambda < \lambda_{0}$ and $\ln\left(c_{2,\lambda} e^{\lambda c_{2,\lambda} + 1}\right) < 0$ for $\lambda > \lambda_{0}$. Hence $\eta'_{\lambda} > 0$ for $0 < \lambda < \lambda_{0}$ and $\eta'_{\lambda} < 0$ for $\lambda > \lambda_{0}$. Thus $\eta_{\lambda}$ is strictly increasing for $\lambda \in (0,\lambda_{0})$, strictly decreasing for $\lambda \in (\lambda_{0}, \infty)$, and its maximum value, attained at $\lambda_{0}$, is $\approx 0.52839925$. 
\end{proof}

Before we begin the proofs of Lemmas~\ref{convexity_lem_1} through \ref{convexity_lem_5}, we urge the reader to recall the definitions of $\alpha(x)$, $\beta(x)$, and $A_{i}$ for $1 \leqslant i \leqslant 8$, from \S\ref{sec:Poisson_k=2}.

\begin{proof}[Proof of Lemma~\ref{convexity_lem_1}]
Recall that our objective here is to prove, after taking out the common factor of $\{\alpha(x) - \beta(x)\}F_{2,\lambda}(x)(\lambda F_{1,\lambda}(x)+1)^{2}$, that 
\begin{equation}
\lambda^{2}F_{2,\lambda}(x)\{\alpha(x)-\beta(x)\} + \lambda F_{2,\lambda}(x) > 1\label{claim_16}
\end{equation}
for all $x \in (\gamma_{\lambda},c_{2,\lambda}]$ and all $\lambda \geqslant 2.5$. Differentiating the left side of this inequality with respect to $x$ gives
\begin{align}
\MoveEqLeft[3] \lambda^{2} F'_{2,\lambda}(x)\{\alpha(x) - \beta(x)\} + \lambda^{2}F_{2,\lambda}(x)\{\alpha'(x) - \beta'(x)\} + \lambda F'_{2,\lambda}(x)\nonumber\\
\begin{split}
={}& -\lambda^{3}\left(\lambda F_{1,\lambda}(x) + 1\right) F_{2,\lambda}(x)\{\alpha(x) - \beta(x)\} + \lambda^{2}F_{2,\lambda}(x)[\lambda^{2}\alpha(x)(\lambda F_{1,\lambda}(x) + 1) F_{2,\lambda}(x) \nonumber\\&- \lambda^{2}\beta(x)\{-F_{1,\lambda}(x) + (\lambda F_{1,\lambda}(x) + 1) F_{2,\lambda}(x)\}] - \lambda^{2}(\lambda F_{1,\lambda}(x) + 1) F_{2,\lambda}(x)
\end{split}\nonumber\\
={}& \lambda^{3}(\lambda F_{1,\lambda}(x) + 1) F_{2,\lambda}(x)\{\alpha(x) - \beta(x)\}[\lambda F_{2,\lambda}(x)-1] + \lambda^{3}F_{1,\lambda}(x)F_{2,\lambda}(x)\{\lambda\beta(x) - 1\} - \lambda^{2}F_{2,\lambda}(x).\label{claim_15}
\end{align}
In what follows, we show that this derivative is strictly negative for $x \in (\gamma_{\lambda},c_{2,\lambda}]$, for $\lambda > 0$.

Since $F_{2,\lambda}$ is strictly decreasing on $[0,c_{1,\lambda}]$ by Lemma~\ref{lem:main_thm_1_1}, and as $c_{2,\lambda}$ is the unique fixed point of $F_{2,\lambda}$, we have $F_{2,\lambda}(x) \geqslant x$ for all $0 \leqslant x \leqslant c_{2,\lambda}$. Thus 
\begin{equation}
\beta(x) = G_{\lambda}(F_{1,\lambda}(x) - F_{2,\lambda}(x)) \leqslant G_{\lambda}(F_{1,\lambda}(x) - x) = F_{2,\lambda}(x).\label{claim_13}
\end{equation}
Due to the strictly decreasing nature of $F_{2,\lambda}$, we also have 
\begin{equation}
F_{2,\lambda}(x) < F_{2,\lambda}(\gamma_{\lambda}) = \frac{1}{\lambda} \text{ for } x > \gamma_{\lambda},\label{claim_14}
\end{equation}
so that from \eqref{claim_13} and \eqref{claim_14}, we obtain
\begin{equation}
\lambda F_{2,\lambda}(x)-1 < 0 \text{ and } \lambda\beta(x) - 1 < 0 \text{ for all } x \in (\gamma_{\lambda},c_{2,\lambda}].\label{claim_27}
\end{equation}
Thus the derivative in \eqref{claim_15} strictly negative for $x \in (\gamma_{\lambda},c_{2,\lambda}]$, implying that the function on the left side of \eqref{claim_16} is strictly decreasing for $x \in (\gamma_{\lambda},c_{2,\lambda}]$. 

Utilizing the above finding, for all $\lambda > 0$ and $\gamma_{\lambda} < x \leqslant c_{2,\lambda}$, we have
\begin{align}
\lambda^{2}F_{2,\lambda}(x)\{\alpha(x)-\beta(x)\} + \lambda F_{2,\lambda}(x) &\geqslant \lambda^{2}F_{2,\lambda}(c_{2,\lambda})\{\alpha(c_{2,\lambda})-\beta(c_{2,\lambda})\} + \lambda F_{2,\lambda}(c_{2,\lambda}) \nonumber\\
&= \lambda^{2}c_{2,\lambda}\{e^{-\lambda c_{2,\lambda}} - c_{2,\lambda}\} + \lambda c_{2,\lambda} = \lambda \eta_{\lambda} e^{-\eta_{\lambda}} - \eta_{\lambda}^{2} + \eta_{\lambda}.\label{claim_17}
\end{align}
Differentiating the right side of \eqref{claim_17} with respect to $\lambda$ and substituting from \eqref{lambda c_{2,lambda} derivative}, we have
\begin{align}
&\eta_{\lambda} e^{-\eta_{\lambda}} + \lambda \eta'_{\lambda} e^{-\eta_{\lambda}} - \lambda \eta_{\lambda} \eta'_{\lambda} e^{-\eta_{\lambda}} - 2\eta_{\lambda}\eta'_{\lambda} + \eta'_{\lambda}\nonumber\\
&=\eta_{\lambda} e^{-\eta_{\lambda}} + \frac{\eta_{\lambda}\left(1 + \lambda e^{-\eta_{\lambda}} - \lambda\right)}{\lambda\left(1 + \lambda \eta_{\lambda} e^{-\eta_{\lambda}} + \eta_{\lambda}\right)}\left[\lambda e^{-\eta_{\lambda}} - \lambda \eta_{\lambda} e^{-\eta_{\lambda}} - 2\eta_{\lambda} + 1\right]\nonumber\\
&= \frac{\eta_{\lambda}[\lambda^{2} e^{-\eta_{\lambda}}\{e^{-\eta_{\lambda}} - 1 + \eta_{\lambda}\} + 2\lambda \eta_{\lambda}\{1 - e^{-\eta_{\lambda}}\} + \{3\lambda e^{-\eta_{\lambda}} - 2\eta_{\lambda} + 1 - \lambda\}]}{\lambda\left(1 + \lambda \eta_{\lambda} e^{-\eta_{\lambda}} + \eta_{\lambda}\right)}\nonumber\\
&\geqslant \frac{\eta_{\lambda}\{3 \lambda e^{-0.5284} - 2 \cdot 0.5284 + 1 - \lambda\}}{\lambda\left(1 + \lambda \eta_{\lambda} e^{-\eta_{\lambda}} + \eta_{\lambda}\right)} = \frac{\eta_{\lambda}\{0.7686 \lambda - 0.0568\}}{\lambda\left(1 + \lambda \eta_{\lambda} e^{-\eta_{\lambda}} + \eta_{\lambda}\right)} > 0 \text{ for all } \lambda \geqslant 2,\nonumber
\end{align}
where we use $e^{-x} - 1 \geqslant -x$ for all $x \geqslant 0$, and that $\eta_{\lambda} < 0.5284$ for all $\lambda > 0$ from Lemma~\ref{lem:lambda c_{2,lambda} behaviour}. Thus $\lambda \eta_{\lambda} e^{-\eta_{\lambda}} - \eta_{\lambda}^{2} + \eta_{\lambda}$ is strictly increasing for $\lambda \geqslant 2$. Since its value at $\lambda = 2.5$ is $\approx 1.0279 > 1$, hence we conclude that 
\begin{equation}
\lambda \eta_{\lambda} e^{-\eta_{\lambda}} - \eta_{\lambda}^{2} + \eta_{\lambda} > 1 \text{ for all } \lambda \geqslant 2.5.\label{claim_18}
\end{equation}
Combining \eqref{claim_17} and \eqref{claim_18}, we conclude that \eqref{claim_16} does hold for all $x \in (\gamma_{\lambda},c_{2,\lambda}]$ and $\lambda \geqslant 2.5$, as desired.
\end{proof}

\begin{proof}[Proof of Lemma~\ref{convexity_lem_2}]
Recall that our objective here is to show, after taking out the common factor of $\lambda \{\alpha(x) - \beta(x)\}F_{1,\lambda}(x) F_{2,\lambda}(x)$, that
\begin{equation}
\lambda \beta(x) (\lambda F_{1,\lambda}(x)+1) > 1\label{claim_19}
\end{equation}
for all $x \in (\gamma_{\lambda},c_{2,\lambda}]$ and $\lambda \geqslant 2$. Differentiating the left side of \eqref{claim_19} with respect to $x$ yields 
\begin{equation}\label{intermediate_1}
\lambda \beta'(x) (\lambda F_{1,\lambda}(x)+1) + \lambda^{2}\beta(x)F'_{1,\lambda}(x) = \lambda^{3}\beta(x)[F_{2,\lambda}(x)(\lambda F_{1,\lambda}(x)+1)^{2}-F_{1,\lambda}(x)(\lambda F_{1,\lambda}(x)+2)].
\end{equation}
For $x < c_{2,\lambda}$, we have 
\begin{align}\label{intermediate_2}
&\frac{d}{dx}[F_{2,\lambda}(x)(\lambda F_{1,\lambda}(x)+1)^{2}-F_{1,\lambda}(x)(\lambda F_{1,\lambda}(x)+2)]\nonumber\\
&=\lambda (\lambda F_{1,\lambda}(x) + 1)[F_{1,\lambda}(x)\{2 - \lambda^{2}F_{1,\lambda}(x)F_{2,\lambda}(x) - 4\lambda F_{2,\lambda}(x)\} - F_{2,\lambda}(x)]\nonumber\\
&\leqslant \lambda (\lambda F_{1,\lambda}(x) + 1)[F_{1,\lambda}(x)\{2 - \lambda^{2}F_{1,\lambda}(c_{2,\lambda})F_{2,\lambda}(c_{2,\lambda}) - 4\lambda F_{2,\lambda}(c_{2,\lambda})\} - F_{2,\lambda}(x)] \text{ (by Lemma~\ref{lem:main_thm_1_1})}\nonumber\\ 
&= \lambda (\lambda F_{1,\lambda}(x) + 1)[F_{1,\lambda}(x)\{2 - \lambda \eta_{\lambda} e^{-\eta_{\lambda}} - 4 \eta_{\lambda}\} - F_{2,\lambda}(x)].
\end{align}

We need to examine the behaviour of $2 - \lambda \eta_{\lambda} e^{-\eta_{\lambda}} - 4 \eta_{\lambda}$ as a function of $\lambda$. Substituting from \eqref{lambda c_{2,lambda} derivative},  
\begin{align}
& \frac{d}{d\lambda}[\lambda \eta_{\lambda} e^{-\eta_{\lambda}}] 
= \eta_{\lambda} e^{-\eta_{\lambda}}[1 + (1-\eta_{\lambda}) \frac{1 + \lambda e^{-\eta_{\lambda}} - \lambda}{1 + \lambda \eta_{\lambda} e^{-\eta_{\lambda}} + \eta_{\lambda}}] 
= \eta_{\lambda} e^{-\eta_{\lambda}} \cdot \frac{2 + \lambda e^{-\eta_{\lambda}} - \lambda + \lambda \eta_{\lambda}}{1 + \lambda \eta_{\lambda} e^{-\eta_{\lambda}} + \eta_{\lambda}} > 0\nonumber
\end{align}
since $e^{-\eta_{\lambda}} - 1 + \eta_{\lambda} \geqslant 0$. Thus $\lambda \eta_{\lambda} e^{-\eta_{\lambda}}$ is strictly increasing in $\lambda$. Since its value is $\approx 2.25080$ at $\lambda = 8.644$, we conclude that $\lambda \eta_{\lambda} e^{-\eta_{\lambda}} > 2$ for all $\lambda \geqslant 8.644$. This shows that $2 - \lambda \eta_{\lambda} e^{-\eta_{\lambda}} - 4 \eta_{\lambda} < 0$ for all $\lambda \geqslant 8.644$. 

On the other hand, 
\begin{align}
& \frac{d}{d\lambda}[\lambda \eta_{\lambda} e^{-\eta_{\lambda}} + 4\eta_{\lambda}] = \eta_{\lambda} e^{-\eta_{\lambda}} \cdot \frac{2 + \lambda e^{-\eta_{\lambda}} - \lambda + \lambda \eta_{\lambda}}{1 + \lambda \eta_{\lambda} e^{-\eta_{\lambda}} + \eta_{\lambda}} + \frac{4\eta_{\lambda}\left(1 + \lambda e^{-\eta_{\lambda}} - \lambda\right)}{\lambda\left(1 + \lambda \eta_{\lambda} e^{-\eta_{\lambda}} + \eta_{\lambda}\right)}\nonumber\\
&= \frac{\eta_{\lambda}}{\lambda\left(1 + \lambda \eta_{\lambda} e^{-\eta_{\lambda}} + \eta_{\lambda}\right)}[\{6\lambda e^{-\eta_{\lambda}} + 4 - 4\lambda\} + \lambda^{2}e^{-\eta_{\lambda}}\{e^{-\eta_{\lambda}} - 1 + \eta_{\lambda}\}]\nonumber\\
&\geqslant \frac{\eta_{\lambda}[\{6\lambda e^{-0.5284} + 4 - 4\lambda\} + \lambda^{2}e^{-\eta_{\lambda}}\{e^{-\eta_{\lambda}} - 1 + \eta_{\lambda}\}]}{\lambda\left(1 + \lambda \eta_{\lambda} e^{-\eta_{\lambda}} + \eta_{\lambda}\right)} \text{ (by Lemma~\ref{lem:lambda c_{2,lambda} behaviour})}\nonumber\\
&= \frac{\eta_{\lambda}[\{4 - 0.462715\lambda\} + \lambda^{2}e^{-\eta_{\lambda}}\{e^{-\eta_{\lambda}} - 1 + \eta_{\lambda}\}]}{\lambda\left(1 + \lambda \eta_{\lambda} e^{-\eta_{\lambda}} + \eta_{\lambda}\right)}\nonumber
\end{align}
which is non-negative for all $\lambda \leqslant 4/0.462715 \approx 8.644$. This shows that $\lambda \eta_{\lambda} e^{-\eta_{\lambda}} + 4\eta_{\lambda}$ is increasing for $\lambda \leqslant 8.644$, and at $\lambda = 2$ we have $4\eta_{\lambda} = 2.0957 > 2$. Therefore $\lambda \eta_{\lambda} e^{-\eta_{\lambda}} + 4\eta_{\lambda} > 2$, and hence $2 - \lambda \eta_{\lambda} e^{-\eta_{\lambda}} - 4 \eta_{\lambda} < 0$ for all $\lambda \leqslant 8.644$.

The above findings show that $2 - \lambda \eta_{\lambda} e^{-\eta_{\lambda}} - 4 \eta_{\lambda} < 0$ for all $\lambda \geqslant 2$, so that the expression on the right side of \eqref{intermediate_2} is strictly negative for all $\lambda \geqslant 2$. This implies that, via \eqref{intermediate_2}, that $F_{2,\lambda}(x)(\lambda F_{1,\lambda}(x)+1)^{2}-F_{1,\lambda}(x)(\lambda F_{1,\lambda}(x)+2)$ is strictly decreasing for $x \leqslant c_{2,\lambda}$ and $\lambda \geqslant 2$. In the next paragraph, we focus on the value of this function at $c_{2,\lambda}$, as a function of $\lambda$. 

When $x = c_{2,\lambda}$ and $\lambda \geqslant 2$, we have
\begin{align}
& F_{2,\lambda}(c_{2,\lambda})(\lambda F_{1,\lambda}(c_{2,\lambda})+1)^{2}-F_{1,\lambda}(c_{2,\lambda})(\lambda F_{1,\lambda}(c_{2,\lambda})+2)\nonumber\\
&< F_{2,\lambda}(c_{2,\lambda})(\lambda F_{1,\lambda}(c_{2,\lambda})+1)^{2}-F_{1,\lambda}(c_{2,\lambda})(\lambda F_{1,\lambda}(c_{2,\lambda})+1)\nonumber\\ 
&=(\lambda F_{1,\lambda}(c_{2,\lambda})+1)\{c_{2,\lambda}(\lambda F_{1,\lambda}(c_{2,\lambda})+1) - F_{1,\lambda}(c_{2,\lambda})\}, \text{ since } F_{2,\lambda}(c_{2,\lambda}) = c_{2,\lambda}; \nonumber\\
&\leqslant (\lambda F_{1,\lambda}(c_{2,\lambda})+1)\{0.5284 F_{1,\lambda}(c_{2,\lambda})+ c_{2,\lambda} - F_{1,\lambda}(c_{2,\lambda})\}, \text{ since } \eta_{\lambda} \leqslant 0.5284 \text{ by Lemma~\ref{lem:lambda c_{2,lambda} behaviour}};\nonumber\\
&= (\lambda F_{1,\lambda}(c_{2,\lambda})+1)\{-0.4716 F_{1,\lambda}(c_{2,\lambda}) + c_{2,\lambda}\}\nonumber\\
&= (\lambda F_{1,\lambda}(c_{2,\lambda})+1)\{-0.4716 e^{-\lambda c_{2,\lambda}} + c_{2,\lambda}\} \nonumber\\
&\leqslant (\lambda F_{1,\lambda}(c_{2,\lambda})+1)\{-0.4716 e^{-0.5284} + 0.2619\} < 0,\label{claim_21}
\end{align}
where in the last step we use 
\begin{enumerate}
\item the fact (proved in \S\ref{subsec:proof_of_main_4_part_1}) that $c_{2,\lambda}$ is strictly decreasing in $\lambda$, and hence $c_{2,\lambda} \leqslant c_{2,2} \approx 0.2619$ for $\lambda \geqslant 2$,
\item and, once again, that $\eta_{\lambda} \leqslant 0.5284$, by Lemma~\ref{lem:lambda c_{2,lambda} behaviour}.
\end{enumerate}

Next, we focus on the behaviour of $F_{2,\lambda}(x)(\lambda F_{1,\lambda}(x)+1)^{2}-F_{1,\lambda}(x)(\lambda F_{1,\lambda}(x)+2)$ at $x = \gamma_{\lambda}$, as a function of $\lambda$. Since $F_{2,\lambda}(\gamma_{\lambda}) = \frac{1}{\lambda}$ (by definition of $\gamma_{\lambda}$ in \eqref{gamma_delta_defn}), we have 
\begin{equation}
-F_{1,\lambda}(\gamma_{\lambda})(\lambda F_{1,\lambda}(\gamma_{\lambda})+2) + F_{2,\lambda}(\gamma_{\lambda})(\lambda F_{1,\lambda}(\gamma_{\lambda})+1)^{2} = \frac{1}{\lambda} > 0.\label{claim_22}
\end{equation}

For each $\lambda \geqslant 2$, since we have proved above that $F_{2,\lambda}(x)(\lambda F_{1,\lambda}(x)+1)^{2}-F_{1,\lambda}(x)(\lambda F_{1,\lambda}(x)+2)$ is strictly decreasing when $x \in [0,c_{2,\lambda}]$, from \eqref{claim_21} and \eqref{claim_22} we conclude that $F_{2,\lambda}(x)(\lambda F_{1,\lambda}(x)+1)^{2}-F_{1,\lambda}(x)(\lambda F_{1,\lambda}(x)+2)$ is initially strictly positive, and then strictly negative, on the interval $(\gamma_{\lambda},c_{2,\lambda}]$. This, along with \eqref{intermediate_1}, implies that $\lambda \beta(x) (\lambda F_{1,\lambda}(x)+1)$ is initially strictly increasing, and then strictly decreasing, on $(\gamma_{\lambda},c_{2,\lambda}]$. Therefore, for each $\lambda \geqslant 2$,
\begin{multline}
\lambda \beta(x) (\lambda F_{1,\lambda}(x)+1) \geqslant \min\left\{\lambda \beta(c_{2,\lambda}) (\lambda F_{1,\lambda}(c_{2,\lambda})+1), \lambda \beta(\gamma_{\lambda}) (\lambda F_{1,\lambda}(\gamma_{\lambda})+1)\right\} \text{ for all } x \in (\gamma_{\lambda},c_{2,\lambda}].\label{claim_25}
\end{multline}

The idea, now, is to consider the values of $\lambda \beta(x) (\lambda F_{1,\lambda}(x)+1)$ at both $x = c_{2,\lambda}$ and $x = \gamma_{\lambda}$ to see which of them is the minimum, and then use this global minima of $\lambda \beta(x) (\lambda F_{1,\lambda}(x)+1)$ on $(\gamma_{\lambda},c_{2,\lambda}]$ to deduce that \eqref{claim_19} holds.

At the very outset of \S\ref{subsec:Poisson_k=2_part_2}, we have mentioned that $\beta(c_{2,\lambda}) = c_{2,\lambda}$. Substituting from \eqref{lambda c_{2,lambda} derivative}, the derivative of the value of $\lambda \beta(x) (\lambda F_{1,\lambda}(x)+1)$ at $x=c_{2,\lambda}$, with respect to $\lambda$, becomes
\begin{align}
\frac{d}{d\lambda}[\lambda \beta(c_{2,\lambda}) (\lambda F_{1,\lambda}(c_{2,\lambda})+1)] &= \frac{d}{d\lambda}\left[\lambda c_{2,\lambda} \left(\lambda e^{-\lambda c_{2,\lambda}} + 1\right)\right] = \frac{d}{d\lambda}\left[\eta_{\lambda}\left(\lambda e^{-\eta_{\lambda}}+1\right)\right]\nonumber\\
&= \eta_{\lambda}e^{-\eta_{\lambda}} +  \frac{\eta_{\lambda}\left(1 + \lambda e^{-\eta_{\lambda}} - \lambda\right)}{\lambda\left(1 + \lambda \eta_{\lambda} e^{-\eta_{\lambda}} + \eta_{\lambda}\right)}[\lambda e^{-\eta_{\lambda}} - \lambda \eta_{\lambda} e^{-\eta_{\lambda}} + 1]\nonumber\\
&= \frac{\eta_{\lambda}}{\lambda\left(1 + \lambda \eta_{\lambda} e^{-\eta_{\lambda}} + \eta_{\lambda}\right)}[\{3\lambda e^{-\eta_{\lambda}} + 1 - \lambda\} + \lambda^{2} e^{-\eta_{\lambda}}\{e^{-\eta_{\lambda}} - 1 + \eta_{\lambda}\}], \nonumber
\end{align}
and this is strictly positive since $3 e^{-\eta_{\lambda}} \geqslant 3 e^{-0.5284} \approx 1.7686 > 1$ for all $\lambda > 0$, by Lemma~\ref{lem:lambda c_{2,lambda} behaviour}. Thus $\lambda \beta(c_{2,\lambda}) (\lambda F_{1,\lambda}(c_{2,\lambda})+1)$ is strictly increasing in $\lambda$, and its value at $\lambda = 2$ is $\approx 1.14446 > 1$. Thus
\begin{equation}
\lambda \beta(c_{2,\lambda}) (\lambda F_{1,\lambda}(c_{2,\lambda})+1) \geqslant 1.14446 > 1 \text{ for all } \lambda \geqslant 2.\label{claim_23}
\end{equation}

We now need to examine the behaviour of the value of $\lambda \beta(x) (\lambda F_{1,\lambda}(x)+1)$ at $x=\gamma_{\lambda}$, as a function of $\lambda$. Via a similar application of the implicit function theorem as that used to justify the differentiability of $c_{k,\lambda}$ with respect to $\lambda$ in \S\ref{subsec:proof_of_main_4_part_1}, we conclude that $\gamma_{\lambda}$ is differentiable with respect to $\lambda$ as well. Writing $\theta_{\lambda} = \lambda \gamma_{\lambda}$, we then have
\begin{align}
F_{2,\lambda}(\gamma_{\lambda}) = \exp\{\lambda e^{-\theta_{\lambda}} - \theta_{\lambda} - \lambda\} = \frac{1}{\lambda} 
\implies & \theta'_{\lambda} = \frac{e^{-\theta_{\lambda}} - 1 + \frac{1}{\lambda}}{\lambda e^{-\theta_{\lambda}} + 1}.\nonumber
\end{align}
Differentiating the value of $\lambda \beta(x) (\lambda F_{1,\lambda}(x)+1)$ at $x=\gamma_{\lambda}$ with respect to $\lambda$ and substituting from above, we have
\begin{align}
\frac{d}{d\lambda}[\lambda^{2} \beta(\gamma_{\lambda}) F_{1,\lambda}(\gamma_{\lambda}) + \lambda \beta(\gamma_{\lambda})] 
={}& \frac{d}{d\lambda}[\lambda^{2} e^{\lambda e^{-\theta_{\lambda}} - 1 - \lambda - \theta_{\lambda}} + \lambda e^{\lambda e^{-\theta_{\lambda}} - 1 - \lambda}]\nonumber\\
\begin{split}
={}& 2\lambda \beta(\gamma_{\lambda})F_{1,\lambda}(\gamma_{\lambda}) + \lambda^{2}\{e^{-\theta_{\lambda}} - \lambda \theta'_{\lambda} e^{-\theta_{\lambda}} - 1 - \theta'_{\lambda}\}\beta(\gamma_{\lambda})F_{1,\lambda}(\gamma_{\lambda}) \nonumber\\&+  \beta(\gamma_{\lambda}) + \lambda\{e^{-\theta_{\lambda}} - \lambda \theta'_{\lambda} e^{-\theta_{\lambda}} - 1\}\beta(\gamma_{\lambda})
\end{split}\nonumber\\
={}& \lambda \beta(\gamma_{\lambda})F_{1,\lambda}(\gamma_{\lambda}) + \beta(\gamma_{\lambda}) - \frac{\lambda \beta(\gamma_{\lambda})}{\lambda e^{-\theta_{\lambda}} + 1}\nonumber\\
={}& \lambda \beta(\gamma_{\lambda}) \cdot \frac{\lambda e^{-2\theta_{\lambda}} + e^{-\theta_{\lambda}} - 1}{\lambda e^{-\theta_{\lambda}} + 1} + \beta(\gamma_{\lambda})\nonumber\\
>{}& \lambda \beta(\gamma_{\lambda}) \cdot \frac{\lambda e^{-2\eta_{\lambda}} + e^{-\eta_{\lambda}} - 1}{\lambda e^{-\theta_{\lambda}} + 1} + \beta(\gamma_{\lambda}) \nonumber\\
\geqslant{}& \lambda \beta(\gamma_{\lambda}) \cdot \frac{2 e^{-2 \cdot 0.5284} + e^{-0.5284} - 1}{\lambda e^{-\theta_{\lambda}} + 1} + \beta(\gamma_{\lambda})> 0,\nonumber
\end{align}
for $\lambda \geqslant 2$, where we use $\gamma_{\lambda} < c_{2,\lambda} \implies \theta_{\lambda} < \eta_{\lambda} < 0.5284$ (from Lemma~\ref{lem:lambda c_{2,lambda} behaviour}). This tells us that $\lambda \beta(\gamma_{\lambda}) (\lambda F_{1,\lambda}(\gamma_{\lambda}) + 1)$ is strictly increasing for $\lambda \geqslant 2$, and its value at $\lambda = 2$ is $\approx 1.20824 > 1$. Thus
\begin{equation}
\lambda \beta(\gamma_{\lambda}) (\lambda F_{1,\lambda}(\gamma_{\lambda}) + 1) \geqslant 1.20824 > 1 \text{ for all } \lambda \geqslant 2.\label{claim_24}
\end{equation}

From \eqref{claim_25}, \eqref{claim_23} and \eqref{claim_24}, we conclude that \eqref{claim_19} holds for all $x \in (\gamma_{\lambda},c_{2,\lambda}]$, for each $\lambda \geqslant 2$, completing our proof.
\end{proof}

\begin{proof}[Proof of Lemma~\ref{convexity_lem_3}]
Recall that, after setting aside the common factor of $\beta(x) F_{1,\lambda}(x) (\lambda F_{1,\lambda}(x) + 1)$, our objective here is to prove that 
\begin{equation}\label{claim_26}
\lambda^{2} F_{2,\lambda}(x)\{\alpha(x) - \beta(x)\} + 2\lambda F_{2,\lambda}(x) > 1
\end{equation}
for $x \in (\gamma_{\lambda},c_{2,\lambda}]$ and $\lambda \geqslant 2$. Differentiating the left side with respect to $x$, we have
\begin{align}
&\lambda^{3}F_{2,\lambda}(x)(\lambda F_{1,\lambda}(x)+1)\{\alpha(x) - \beta(x)\}[\lambda F_{2,\lambda}(x) - 1] + \lambda^{2}F_{2,\lambda}(x)[\lambda F_{1,\lambda}(x)\{\lambda \beta(x) - 2\} - 2],\nonumber
\end{align}
and using \eqref{claim_27}, we conclude that the above is strictly negative whenever $x > \gamma_{\lambda}$. Thus $\lambda^{2} F_{2,\lambda}(x)\{\alpha(x) - \beta(x)\} + 2\lambda F_{2,\lambda}(x)$ is strictly decreasing for $x \in [\gamma_{\lambda},c_{2,\lambda})$, and its minimum is attained at $c_{2,\lambda}$. In other words,
\begin{equation}\label{claim_28}
\lambda^{2} F_{2,\lambda}(x)\{\alpha(x) - \beta(x)\} + 2\lambda F_{2,\lambda}(x) \geqslant \lambda^{2}c_{2,\lambda}\{e^{-\lambda c_{2,\lambda}} - c_{2,\lambda}\} + 2\lambda c_{2,\lambda} \text{ for all } x \in [\gamma_{\lambda},c_{2,\lambda}).
\end{equation}

Differentiating this minima with respect to $\lambda$ and substituting from \eqref{lambda c_{2,lambda} derivative} yield
\begin{align}
& \frac{d}{d\lambda}[\lambda^{2}c_{2,\lambda}\{e^{-\lambda c_{2,\lambda}} - c_{2,\lambda}\} + 2\lambda c_{2,\lambda}] = \frac{d}{d\lambda}[\lambda \eta_{\lambda} e^{-\eta_{\lambda}} - \eta_{\lambda}^{2} + 2\eta_{\lambda}]\nonumber\\
&= \frac{\eta_{\lambda}[2\lambda\{2 e^{-\eta_{\lambda}} - 1\} + 2\{1-\eta_{\lambda}\} + 2\lambda \eta_{\lambda}\{1 - e^{-\eta_{\lambda}}\} + \lambda^{2} e^{-\eta_{\lambda}}\{e^{-\eta_{\lambda}} - 1 + \eta_{\lambda}\}]}{\lambda\left(1 + \lambda \eta_{\lambda} e^{-\eta_{\lambda}} + \eta_{\lambda}\right)},\nonumber
\end{align}
and to show that this is strictly positive for $\lambda \geqslant 2$, it suffices to show that the first two summands in the numerator are both strictly positive. This follows since $\eta_{\lambda} < 0.5284$, from Lemma~\ref{lem:lambda c_{2,lambda} behaviour}. Thus $\lambda^{2}c_{2,\lambda}\{e^{-\lambda c_{2,\lambda}} - c_{2,\lambda}\} + 2\lambda c_{2,\lambda}$ is strictly increasing in $\lambda$. Its value at $\lambda = 2$ is $\approx 1.3939 > 1$, so that we conclude that
\begin{equation}
\lambda^{2}c_{2,\lambda}\{e^{-\lambda c_{2,\lambda}} - c_{2,\lambda}\} + 2\lambda c_{2,\lambda} > 1 \text{ for all } \lambda \geqslant 2.\label{claim_29}
\end{equation}
From \eqref{claim_28} and \eqref{claim_29}, we conclude that \eqref{claim_26} holds, and the proof is complete.
\end{proof}

\begin{proof}[Proof of Lemma~\ref{convexity_lem_4}]
It is important, in order to understand the inequalities we need to establish in this proof, to revisit the proofs of Lemmas~\ref{convexity_lem_1}, \ref{convexity_lem_2} and \ref{convexity_lem_3}, for all $\lambda \geqslant 2$ and $x \in (\gamma_{\lambda},c_{2,\lambda}]$. Note that the entire argument outlined in the proof of Lemma~\ref{convexity_lem_1} extends to $\lambda \geqslant 2$, with the only difference being that we now consider the value of $\lambda \eta_{\lambda} e^{-\eta_{\lambda}} - \eta_{\lambda}^{2} + \eta_{\lambda}$ at $\lambda = 2$ instead of at $\lambda = 2.5$, and this value is $\approx 0.869957$. Consequently, from \eqref{claim_17} and the fact (shown in the proof of Lemma~\ref{convexity_lem_1}) that $\lambda \eta_{\lambda} e^{-\eta_{\lambda}} - \eta_{\lambda}^{2} + \eta_{\lambda}$ is strictly increasing for $\lambda \geqslant 2$, we conclude that
\begin{equation}
\lambda^{2}F_{2,\lambda}(x)\{\alpha(x)-\beta(x)\} + \lambda F_{2,\lambda}(x) \geqslant 0.869957 \text{ for all } x \in (\gamma_{\lambda},c_{2,\lambda}], \text{ for all } \lambda \geqslant 2.\label{claim_30}
\end{equation}
This leads to
\begin{align}\label{claim_31}
\begin{split}
&\lambda^{2}\{\alpha(x) - \beta(x)\}^{2}(\lambda F_{1,\lambda}(x) + 1)^{2}(F_{2,\lambda}(x))^{2} + \lambda \{\alpha(x) - \beta(x)\}(\lambda F_{1,\lambda}(x) + 1)^{2}(F_{2,\lambda}(x))^{2} \nonumber\\&- \{\alpha(x) - \beta(x)\}(\lambda F_{1,\lambda}(x)+1)^{2}F_{2,\lambda}(x)
\end{split}\nonumber\\
={}& \{\alpha(x) - \beta(x)\}^{2}(\lambda F_{1,\lambda}(x) + 1)^{2}F_{2,\lambda}(x) [\lambda^{2}F_{2,\lambda}(x)\{\alpha(x)-\beta(x)\} + \lambda F_{2,\lambda}(x) - 1]\nonumber\\
\geqslant {}& -0.130043\{\alpha(x) - \beta(x)\}(\lambda F_{1,\lambda}(x) + 1)^{2}F_{2,\lambda}(x) \text{ for all } x \in (\gamma_{\lambda},c_{2,\lambda}], \text{ for } \lambda \geqslant 2.
\end{align}

From \eqref{claim_25}, \eqref{claim_23} and \eqref{claim_24} in the proof of Lemma~\ref{convexity_lem_2}, we have $\lambda \beta(x) (\lambda F_{1,\lambda}(x)+1) \geqslant 1.14446$ for all $\lambda \geqslant 2$, so that
\begin{align}
& \lambda^{2}\beta(x)\{\alpha(x)-\beta(x)\}(\lambda F_{1,\lambda}(x)+1) F_{1,\lambda}(x) F_{2,\lambda}(x) - \lambda \{\alpha(x) - \beta(x)\} F_{1,\lambda}(x) F_{2,\lambda}(x)\nonumber\\
&= \lambda \{\alpha(x) - \beta(x)\} F_{1,\lambda}(x) F_{2,\lambda}(x)[\lambda \beta(x) (\lambda F_{1,\lambda}(x)+1) - 1]\nonumber\\
&\geqslant 0.14446 \lambda \{\alpha(x) - \beta(x)\} F_{1,\lambda}(x) F_{2,\lambda}(x) \text{ for all } x \in (\gamma_{\lambda},c_{2,\lambda}], \text{ for } \lambda \geqslant 2.\label{claim_32}
\end{align}

Finally, from the proof of Lemma~\ref{convexity_lem_3}, we obtain $\lambda^{2} F_{2,\lambda}(x)\{\alpha(x) - \beta(x)\} + 2\lambda F_{2,\lambda}(x) \geqslant 1.3939$, so that
\begin{align}
\begin{split}
&\lambda^{2} \beta(x) \{\alpha(x) - \beta(x)\}\left(\lambda F_{1,\lambda}(x) + 1\right) F_{1,\lambda}(x) F_{2,\lambda}(x) + 2\lambda \beta(x) F_{1,\lambda}(x) F_{2,\lambda}(x) \left(\lambda F_{1,\lambda}(x) + 1\right) \nonumber\\&- \beta(x)F_{1,\lambda}(x)(\lambda F_{1,\lambda}(x) + 1)
\end{split}\nonumber\\
={}& \beta(x)F_{1,\lambda}(x)(\lambda F_{1,\lambda}(x) + 1)[\lambda^{2} \{\alpha(x) - \beta(x)\} F_{2,\lambda}(x) + 2 \lambda F_{2,\lambda}(x) - 1]\nonumber\\
\geqslant{}& 0.3939 \beta(x)F_{1,\lambda}(x)(\lambda F_{1,\lambda}(x) + 1) \text{ for all } x \in (\gamma_{\lambda},c_{2,\lambda}], \text{ for } \lambda \geqslant 2.\label{claim_33}
\end{align}

To prove Lemma~\ref{convexity_lem_4}, it thus suffices to show that for $\lambda \geqslant 2$ and $x \in (\gamma_{\lambda},c_{2,\lambda}]$, the expressions on the right sides of \eqref{claim_31}, \eqref{claim_32} and \eqref{claim_33} add up to a strictly positive quantity. In other words, we need to show, for all $x \in (\gamma_{\lambda},c_{2,\lambda}]$ and each $\lambda \in [2,2.5)$, that
\begin{multline}
0.14446 \lambda \{\alpha(x) - \beta(x)\} F_{1,\lambda}(x) F_{2,\lambda}(x) + 0.3939\beta(x)F_{1,\lambda}(x)(\lambda F_{1,\lambda}(x) + 1) \\> 0.130043\{\alpha(x) - \beta(x)\} (\lambda F_{1,\lambda}(x) + 1)^{2}F_{2,\lambda}(x).\nonumber  
\end{multline}
Separating the terms involving the factor $\alpha(x)$ from those that involve the factor $\beta(x)$, we find that this is equivalent to proving, for all $x \in (\gamma_{\lambda},c_{2,\lambda}]$, for all $\lambda \in [2,2.5)$,
\begin{align}\label{intermediate_3}
&\beta(x)[0.3939\lambda (F_{1,\lambda}(x))^{2} + 0.3939 F_{1,\lambda}(x) + 0.130043 \lambda^{2} (F_{1,\lambda}(x))^{2}F_{2,\lambda}(x) \nonumber\\&+ 0.115626\lambda F_{1,\lambda}(x)F_{2,\lambda}(x) + 0.130043 F_{2,\lambda}(x)] - \alpha(x)[0.130043 \lambda^{2} (F_{1,\lambda}(x))^{2}F_{2,\lambda}(x) \nonumber\\&+ 0.115626\lambda F_{1,\lambda}(x)F_{2,\lambda}(x) + 0.130043 F_{2,\lambda}(x)] > 0.
\end{align}

The idea now is to demarcate the terms in \eqref{intermediate_3} into a few different groups, and then show that the sum of the terms in each such group is strictly positive for $\lambda \geqslant 2$. In this endeavour, we make use of certain facts that we enumerate below:
\begin{enumerate}
\item Recall from Lemma~\ref{lem:lambda c_{2,lambda} behaviour} that $\eta_{\lambda}$ increases for $\lambda \in [2,2.43634]$ and decreases for $\lambda \in [2.43634,2.5]$, and $\eta_{2} \approx 0.523928 < 0.528322 \approx \eta_{2.5}$. Therefore, $\min\{\eta_{\lambda}: \lambda \in [2, 2.5)\} = \eta_{2} \approx 0.523928$.
\item Since, substituting from \eqref{lambda c_{2,lambda} derivative}, we have
\begin{equation}
\frac{d}{d\lambda}[\lambda e^{-\eta_{\lambda}} - \lambda] = \frac{e^{-\eta_{\lambda}} - 1 - \eta_{\lambda}}{1 + \lambda \eta_{\lambda} e^{-\eta_{\lambda}} + \eta_{\lambda}} < 0,\nonumber
\end{equation} 
we conclude that $e^{\lambda e^{-\eta_{\lambda}} - \lambda}$ is strictly decreasing as a function of $\lambda$ for all $\lambda > 0$. Its minimum value for $\lambda \in [2,2.5]$, attained at $\lambda = 2.5$, is $\approx 0.358432$.
\item Since, substituting from \eqref{lambda c_{2,lambda} derivative}, we have
\begin{equation} 
\frac{d}{d\lambda}[\lambda e^{-\eta_{\lambda}}] = \frac{e^{-\eta_{\lambda}}[1 + \lambda \eta_{\lambda}]}{1 + \lambda \eta_{\lambda} e^{-\eta_{\lambda}} + \eta_{\lambda}} > 0,\nonumber
\end{equation}
the function $\lambda e^{-\eta_{\lambda}}$ is strictly increasing in $\lambda$ for all $\lambda > 0$, so that its minimum value for $\lambda \in [2,2.5]$, attained at $\lambda = 2$, is $\approx 1.18438$.
\item We make use of the definitions of $\alpha(x)$ and $\beta(x)$ from \S\ref{sec:Poisson_k=2}, namely that 
\begin{equation}
\alpha(x) = e^{-\lambda F_{2,\lambda}(x)} \text{ and } \beta(x) = e^{\lambda F_{1,\lambda}(x) - \lambda F_{2,\lambda}(x) - \lambda}.\nonumber
\end{equation}
\item We also make use of the decreasing nature of $F_{2,\lambda}$ and $F_{1,\lambda}$ from Lemma~\ref{lem:main_thm_1_1}, that $c_{2,\lambda}$ is the fixed point of $F_{2,\lambda}$, and the definition of $\gamma_{\lambda}$ from \eqref{gamma_delta_defn}. In particular, we make use of the inequalities 
\begin{equation}
F_{2,\lambda}(\gamma_{\lambda}) > F_{2,\lambda}(x) \geqslant F_{2,\lambda}(c_{2,\lambda}) \text{ and } F_{1,\lambda}(x) = e^{-\lambda x} \geqslant e^{-\lambda c_{2,\lambda}} = e^{-\eta_{\lambda}}\nonumber
\end{equation}
for all $x \in (\gamma_{\lambda},c_{2,\lambda}]$.
\end{enumerate}

The first such group of terms from \eqref{intermediate_3} that we consider is as follows: for $x \in (\gamma_{\lambda},c_{2,\lambda}]$ and $\lambda \in [2,2.5)$,
\begin{align}\label{intermediate_4}
& \beta(x)[0.3939\lambda (F_{1,\lambda}(x))^{2} + 0.130043 \lambda^{2} (F_{1,\lambda}(x))^{2}F_{2,\lambda}(x)] - 0.130043 \lambda^{2} \alpha(x) (F_{1,\lambda}(x))^{2}F_{2,\lambda}(x)\nonumber\\
&\geqslant \lambda (F_{1,\lambda}(x))^{2}[0.3939 \beta(x) + 0.130043 \lambda \beta(x) F_{2,\lambda}(c_{2,\lambda}) - 0.130043 \lambda \alpha(x)F_{2,\lambda}(\gamma_{\lambda})]\nonumber\\
&= \lambda (F_{1,\lambda}(x))^{2}\left[0.3939 \beta(x) + 0.130043 \beta(x) \lambda c_{2,\lambda} - 0.130043 \lambda \alpha(x) \cdot \frac{1}{\lambda}\right]\nonumber\\
&= \lambda (F_{1,\lambda}(x))^{2}[0.3939 \beta(x) + 0.130043 \beta(x) \eta_{\lambda} - 0.130043 \alpha(x)]\nonumber\\
&\geqslant \lambda (F_{1,\lambda}(x))^{2}[0.3939 \beta(x) + 0.130043 \beta(x) \cdot 0.523928 - 0.130043 \alpha(x)]\nonumber\\
&= \lambda (F_{1,\lambda}(x))^{2} e^{-\lambda F_{2,\lambda}(x)}[0.462033 e^{\lambda F_{1,\lambda}(x) - \lambda} - 0.130043]\nonumber\\
&\geqslant \lambda (F_{1,\lambda}(x))^{2} e^{-\lambda F_{2,\lambda}(x)}[0.462033 e^{\lambda e^{-\eta_{\lambda}} - \lambda} - 0.130043] \nonumber\\
&\geqslant \lambda (F_{1,\lambda}(x))^{2} e^{-\lambda F_{2,\lambda}(x)}[0.462033 \cdot 0.358432 - 0.130043] = 0.035564 \lambda (F_{1,\lambda}(x))^{2} \alpha(x) > 0.
\end{align}

The second group of terms from \eqref{intermediate_3} that we consider, for $x \in (\gamma_{\lambda},c_{2,\lambda}]$ and $\lambda \in [2,2.5)$, is as follows:
\begin{align}\label{intermediate_5}
& \beta(x)[0.3939 F_{1,\lambda}(x) + 0.115626\lambda F_{1,\lambda}(x)F_{2,\lambda}(x)] - 0.115626\lambda \alpha(x) F_{1,\lambda}(x)F_{2,\lambda}(x)\nonumber\\
&\geqslant \beta(x)[0.3939 F_{1,\lambda}(x) + 0.115626\lambda F_{1,\lambda}(x)F_{2,\lambda}(c_{2,\lambda})] - 0.115626\lambda \alpha(x) F_{1,\lambda}(x)F_{2,\lambda}(\gamma_{\lambda})\nonumber\\
&= \beta(x)[0.3939 F_{1,\lambda}(x) + 0.115626\lambda c_{2,\lambda} F_{1,\lambda}(x)] - 0.115626\lambda \alpha(x) F_{1,\lambda}(x) \cdot \frac{1}{\lambda}\nonumber\\
&= \beta(x)[0.3939 F_{1,\lambda}(x) + 0.115626 \eta_{\lambda} F_{1,\lambda}(x)] - 0.115626 \alpha(x) F_{1,\lambda}(x)\nonumber\\
&\geqslant \beta(x)[0.3939 F_{1,\lambda}(x) + 0.115626 \cdot 0.523928 F_{1,\lambda}(x)] - 0.115626\alpha(x) F_{1,\lambda}(x)\nonumber\\
&= F_{1,\lambda}(x)e^{-\lambda F_{2,\lambda}(x)}[0.454479 e^{\lambda F_{1,\lambda}(x) - \lambda} - 0.115626]\nonumber\\
&\geqslant F_{1,\lambda}(x)e^{-\lambda F_{2,\lambda}(x)}[0.454479 e^{\lambda e^{-\eta_{\lambda}} - \lambda} - 0.115626]\nonumber\\
&\geqslant F_{1,\lambda}(x)e^{-\lambda F_{2,\lambda}(x)}[0.454479 \cdot 0.358432 - 0.115626] = 0.047274 F_{1,\lambda}(x) \alpha(x) > 0.
\end{align}

We now add the remaining terms of \eqref{intermediate_3} (i.e.\ the ones that have not been taken into account in the expressions on the left sides of \eqref{intermediate_4} and \eqref{intermediate_5}) with the expressions on the right sides of \eqref{intermediate_4} and \eqref{intermediate_5} to get
\begin{align}
&0.130043 \beta(x) F_{2,\lambda}(x) - 0.130043 \alpha(x) F_{2,\lambda}(x) + 0.035564 \lambda (F_{1,\lambda}(x))^{2}\alpha(x) + 0.047274 F_{1,\lambda}(x) \alpha(x) \nonumber\\
&= 0.130043 F_{2,\lambda}(x) \{\beta(x) - \alpha(x)\} + 0.035564 \lambda (F_{1,\lambda}(x))^{2}\alpha(x) + 0.047274 F_{1,\lambda}(x) \alpha(x) \nonumber\\
&= 0.130043 F_{2,\lambda}(x) e^{-\lambda F_{2,\lambda}(x)}[e^{\lambda F_{1,\lambda}(x) - \lambda} - 1] + 0.035564 \lambda (F_{1,\lambda}(x))^{2}\alpha(x) + 0.047274 F_{1,\lambda}(x) \alpha(x) \nonumber\\
&\geqslant 0.130043 F_{2,\lambda}(x) \alpha(x)[e^{\lambda e^{-\eta_{\lambda}} - \lambda} - 1] + 0.035564 \lambda (F_{1,\lambda}(x))^{2}\alpha(x) + 0.047274 F_{1,\lambda}(x) \alpha(x) \nonumber\\
&\geqslant 0.130043 F_{2,\lambda}(x) \alpha(x)[0.358432 - 1] + 0.035564 \lambda (F_{1,\lambda}(x))^{2}\alpha(x) + 0.047274 F_{1,\lambda}(x) \alpha(x) \nonumber\\
&= -0.083431 F_{2,\lambda}(x) \alpha(x) + 0.035564 \lambda (F_{1,\lambda}(x))^{2}\alpha(x) + 0.047274 F_{1,\lambda}(x) \alpha(x) \nonumber\\
&\geqslant -0.083431 F_{1,\lambda}(x) \alpha(x) + 0.035564 \lambda (F_{1,\lambda}(x))^{2}\alpha(x) + 0.047274 F_{1,\lambda}(x) \alpha(x) \nonumber\\
&\geqslant F_{1,\lambda}(x) \alpha(x)[0.035564 \lambda F_{1,\lambda}(x) - 0.036157] \nonumber\\
&\geqslant F_{1,\lambda}(x) \alpha(x)[0.035564 \lambda e^{-\eta_{\lambda}} - 0.036157] \nonumber\\
&\geqslant F_{1,\lambda}(x) \alpha(x)[0.035564 \cdot 1.18438 - 0.036157] = 0.005964 F_{1,\lambda}(x) \alpha(x) > 0,\label{intermediate_6}
\end{align}
where make use of the inequality $F_{2,\lambda}(x) \leqslant F_{1,\lambda}(x)$ for $x \in [0,c_{1,\lambda}]$ (as shown in the proof of \eqref{belongs_to_D_{i}}). From \eqref{intermediate_4}, \eqref{intermediate_5} and \eqref{intermediate_6}, we conclude that the inequality in \eqref{intermediate_3} does hold, and this brings us to the end of the proof. 
\end{proof}

\begin{proof}[Proof of Lemma~\ref{convexity_lem_5}]
Recall that, after taking out the common factor of $\lambda \{\alpha(x)-\beta(x)\} F_{1,\lambda}(x) F_{2,\lambda}(x)$, our objective here is to show that 
\begin{equation}
2\lambda \beta(x) (\lambda F_{1,\lambda}(x)+1) > 1 \text{ for } x \in [\delta_{\lambda}, \gamma_{\lambda}], \text{ for all } \lambda \geqslant 2.\label{claim_34}
\end{equation}
The idea now is to come up with suitable lower bounds on each of $\beta(x)$ and $\lambda F_{1,\lambda}(x)$, the first of which we accomplish by examining whether $\beta(x)$ exhibits monotonicity as a function of $x$. For the second, we recall from Lemma~\ref{lem:main_thm_1_1} that $F_{1,\lambda}$ is strictly decreasing on $[0,1]$, so that 
\begin{equation}\label{claim_35}
F_{1,\lambda}(x) \geqslant F_{1,\lambda}(\gamma_{\lambda}) \text{ for } x \in [\delta_{\lambda}, \gamma_{\lambda}].
\end{equation}

Recalling from \S\ref{sec:Poisson_k=2} the expression for the derivative of $\beta(x)$ with respect to $x$, we have, for $x \in [0,\gamma_{\lambda}]$:
\begin{align}
\beta'(x) &= \lambda^{2}\beta(x)\left\{-F_{1,\lambda}(x) + \left(\lambda F_{1,\lambda}(x) + 1\right) F_{2,\lambda}(x)\right\}\nonumber\\
&\geqslant \lambda^{2}\beta(x)[-F_{1,\lambda}(x) + (\lambda F_{1,\lambda}(x)+1)F_{2,\lambda}(\gamma_{\lambda})] = \lambda \beta(x) > 0,\nonumber
\end{align}
where we obtain the inequality since $F_{2,\lambda}$ is strictly decreasing on $[0,c_{1,\lambda}]$ (again from Lemma~\ref{lem:main_thm_1_1}). This shows that $\beta(x)$ is a strictly increasing function of $x$ for $x \in [0,\gamma_{\lambda}]$, so that 
\begin{equation}\label{claim_36}
\beta(x) \geqslant \beta(\delta_{\lambda}) \text{ for } x \in [\delta_{\lambda}, \gamma_{\lambda}].
\end{equation}

Pur next aim is to obtain suitable expressions for $F_{1,\lambda}(\gamma_{\lambda})$ and $\beta(\delta_{\lambda})$. Recalling the definitions of $\alpha(x)$ and $\beta(x)$ from \S\ref{sec:Poisson_k=2}, and the definitions of $\delta_{\lambda}$ and $\gamma_{\lambda}$ from \eqref{gamma_delta_defn}, we have: 
\begin{align}
&F_{2,\lambda}(\delta_{\lambda}) = \exp\{\lambda e^{-\lambda \delta_{\lambda}} - \lambda \delta_{\lambda} - \lambda\} = \frac{5}{4\lambda} \nonumber\\
\implies & \beta(\delta_{\lambda}) = e^{\lambda F_{1,\lambda}(\delta_{\lambda}) - \lambda F_{2,\lambda}(\delta_{\lambda}) - \lambda} = \exp\{\lambda e^{-\lambda \delta_{\lambda}} - \frac{5}{4} - \lambda\} = \frac{5}{4\lambda}e^{\lambda \delta_{\lambda} - \frac{5}{4}};\label{claim_37}
\end{align}
and
\begin{align}
&F_{2,\lambda}(\gamma_{\lambda}) = \exp\{\lambda F_{1,\lambda}(\gamma_{\lambda}) - \lambda \gamma_{\lambda} - \lambda\} = \frac{1}{\lambda} \implies \lambda F_{1,\lambda}(\gamma_{\lambda}) = \lambda \gamma_{\lambda} + \lambda - \ln \lambda.\label{claim_38}
\end{align}

Combining our findings from \eqref{claim_35}, \eqref{claim_36}, \eqref{claim_37} and \eqref{claim_38}, for $x \in (\delta_{\lambda},\gamma_{\lambda}]$ and $\lambda \geqslant 2$, we have
\begin{align}
2\lambda \beta(x) (\lambda F_{1,\lambda}(x)+1) &\geqslant 2\lambda \beta(\delta_{\lambda}) (\lambda F_{1,\lambda}(\gamma_{\lambda})+1) \nonumber\\
&= \frac{5}{2}e^{\lambda \delta_{\lambda} - \frac{5}{4}}(\lambda \gamma_{\lambda} + \lambda - \ln \lambda + 1) > \frac{5}{2}e^{- \frac{5}{4}}(\lambda - \ln \lambda + 1).\nonumber
\end{align}
Since $\lambda - \ln \lambda$ is strictly increasing for $\lambda > 1$, we have $\frac{5}{2}e^{- \frac{5}{4}}(\lambda - \ln \lambda + 1) \geqslant \frac{5}{2}e^{-\frac{5}{4}}(2 - \ln 2 + 1) \approx 1.6523 > 1$ for $\lambda \geqslant 2$. This completes the proof of \eqref{claim_34} and brings us to the end of the proof of Lemma~\ref{convexity_lem_5}.
\end{proof}

\subsection{Proof of Lemma~\ref{lem:decay_rate_nl_{2,lambda}_precursor}}\label{appsec:misere_normal_comparison}
Recall that our objective here is to prove, for given $c > 0$ and $i \in \mathbb{N}$ with $i \geqslant 2$, the inequality $H_{2,\lambda}(c \lambda^{-i}) < c \lambda^{-i}$ for all $\lambda$ sufficiently large.

To begin with, we note that $c_{2,\lambda} > c \lambda^{-i}$ for all $\lambda$ sufficiently large, since \eqref{lambda^{i}c_{k,lambda}_limit_various_i} tells us that $\lambda^{2} c_{2,\lambda}$, and hence $\lambda^{i} c_{2,\lambda}$ as well, approaches $\infty$ as $\lambda \rightarrow \infty$. 

From \eqref{F_{i,lambda}_extended}, and applying Taylor expansion, we have 
\begin{equation}
F_{1,\lambda}(c \lambda^{-i}) = e^{-c \lambda^{-i+1}} \text{ and } F_{2,\lambda}(c \lambda^{-i}) = \exp\{\lambda e^{-c \lambda^{-i+1}} - c \lambda^{-i+1} - \lambda\} = \exp\{-c \lambda^{-i+2} + O(\lambda^{-i+1})\}.\nonumber
\end{equation}
Note that the second expression yields
\begin{equation}
\lim_{\lambda \rightarrow \infty} F_{2,\lambda}(c \lambda^{-2}) = e^{-c} \text{ and } \lim_{\lambda \rightarrow \infty} F_{2,\lambda}(c \lambda^{-i}) = 1 \text{ for each } i \geqslant 3.\nonumber
\end{equation}
Thus, given $0 < \epsilon < e^{-c} < 1$, we have, for all $\lambda$ sufficiently large, 
\begin{equation}
F_{2,\lambda}(c \lambda^{-i}) > M > 0, \text{ with } M = e^{-c}-\epsilon \text{ for } i=2 \text{ and } M = 1-\epsilon \text{ for } i \geqslant 3.\label{claim_39}
\end{equation}

Next, using the inequality $1 - e^{-x} \leqslant x$ for $x \geqslant 0$, we have 
\begin{equation}
1 - e^{\lambda F_{1,\lambda}(c \lambda^{-i}) - \lambda} \leqslant \lambda\left\{1-F_{1,\lambda}(c \lambda^{-i})\right\} = \lambda(1 - e^{-c \lambda^{-i+1}}) \leqslant c \lambda^{-i+2}.\label{claim_40}
\end{equation}
Thus, for all $\lambda$ sufficiently large, using \eqref{claim_39}, \eqref{claim_40} and the fact that $G_{\lambda}$ is increasing, we have
\begin{align}\label{one_last_eq}
H_{2,\lambda}\left(\frac{c}{\lambda^{i}}\right) &= G_{\lambda}\left[e^{-\lambda F_{2,\lambda}\left(\frac{c}{\lambda^{i}}\right)}\left\{1 - e^{\lambda F_{1,\lambda}\left(\frac{c}{\lambda^{i}}\right) - \lambda}\right\}\right] \nonumber\\
&< G_{\lambda}\left[e^{-\lambda M} \cdot \frac{c}{\lambda^{i-2}}\right] = \exp\left\{\frac{c}{\lambda^{i-3}} e^{-\lambda M} - \lambda\right\}.
\end{align} 
Since $i \geqslant 2$, we have $c \lambda^{-i+3} e^{-\lambda M} \leqslant c \lambda e^{-\lambda M} \rightarrow 0$, so that \eqref{one_last_eq} is bounded above by $e^{-\lambda/2}$ for all $\lambda$ sufficiently large, and this in turn is bounded above by $c \lambda^{-i}$ for all $\lambda$ sufficiently large (since $\lambda^{i} e^{-\lambda/2} \rightarrow 0$ as $\lambda \rightarrow \infty$). This establishes the desired inequality $H_{2,\lambda}(c \lambda^{-i}) < c \lambda^{-i}$ for all $\lambda$ sufficiently large.

\bibliography{Mult_GW_games}

\begin{thebibliography}{45}
\providecommand{\natexlab}[1]{#1}
\providecommand{\url}[1]{\texttt{#1}}
\expandafter\ifx\csname urlstyle\endcsname\relax
  \providecommand{\doi}[1]{doi: #1}\else
  \providecommand{\doi}{doi: \begingroup \urlstyle{rm}\Url}\fi

\bibitem[Aldous and Bandyopadhyay(2005)]{aldous2005survey}
David~J Aldous and Antar Bandyopadhyay.
\newblock A survey of max-type recursive distributional equations.
\newblock \emph{Annals of Applied Probability}, 15\penalty0 (2):\penalty0
  1047--1110, 2005.

\bibitem[Anderson~Jr(1974)]{slither}
William~N Anderson~Jr.
\newblock Maximum matching and the game of slither.
\newblock \emph{Journal of Combinatorial Theory, Series B}, 17\penalty0
  (3):\penalty0 234--239, 1974.

\bibitem[Athreya and Jagers(2012)]{athreya_jagers}
Krishna~B Athreya and Peter Jagers.
\newblock \emph{Classical and modern branching processes}, volume~84.
\newblock Springer Science \& Business Media, 2012.

\bibitem[Athreya and Ney(1972)]{athreya_ney}
Krishna~B Athreya and Peter~E Ney.
\newblock \emph{Branching processes}.
\newblock Springer, Berlin, Heidelberg, 1972.
\newblock \doi{https://doi.org/10.1007/978-3-642-65371-1}.

\bibitem[Athreya and Vidyashankar(2001)]{athreya_vidyashankar}
Krishna~B Athreya and AN~Vidyashankar.
\newblock Branching processes.
\newblock \emph{Stochastic processes: theory and methods}, 19:\penalty0 35--53,
  2001.

\bibitem[Basu et~al.(2016)Basu, Holroyd, Martin, W{\"a}stlund,
  et~al.]{trapping_games}
Riddhipratim Basu, Alexander~E Holroyd, James~B Martin, Johan W{\"a}stlund,
  et~al.
\newblock Trapping games on random boards.
\newblock \emph{The Annals of Applied Probability}, 26\penalty0 (6):\penalty0
  3727--3753, 2016.

\bibitem[Beveridge et~al.(2014)Beveridge, Dudek, Frieze, M{\"u}ller, and
  Stojakovi{\'c}]{maker_breaker_geometric}
Andrew Beveridge, Andrzej Dudek, Alan Frieze, Tobias M{\"u}ller, and
  Milo{\v{s}} Stojakovi{\'c}.
\newblock Maker-breaker games on random geometric graphs.
\newblock \emph{Random structures \& algorithms}, 45\penalty0 (4):\penalty0
  553--607, 2014.

\bibitem[Bienaym{\'e}(1845)]{bienayme}
Ir{\'e}n{\'e}e-Jules Bienaym{\'e}.
\newblock De la loi de multiplication et de la dur{\'e}e des familles.
\newblock \emph{Soc. Philomat. Paris Extraits, S{\'e}r}, 5\penalty0
  (37-39):\penalty0 4, 1845.

\bibitem[Bohman et~al.(2007)Bohman, Frieze, {\L}uczak, Pikhurko, Smyth,
  Spencer, and Verbitsky]{bohman}
Tom Bohman, Alan Frieze, Tomasz {\L}uczak, Oleg Pikhurko, Clifford Smyth, Joel
  Spencer, and Oleg Verbitsky.
\newblock First-order definability of trees and sparse random graphs.
\newblock \emph{Combinatorics, Probability and Computing}, 16\penalty0
  (3):\penalty0 375--400, 2007.

\bibitem[Chv{\'a}tal and Erd{\"o}s(1978)]{biased_positional}
Va{\v{s}}ek Chv{\'a}tal and Paul Erd{\"o}s.
\newblock Biased positional games.
\newblock In \emph{Annals of Discrete Mathematics}, volume~2, pages 221--229.
  Elsevier, 1978.

\bibitem[Ferber et~al.(2015)Ferber, Glebov, Krivelevich, and
  Naor]{biased_random_boards}
Asaf Ferber, Roman Glebov, Michael Krivelevich, and Alon Naor.
\newblock Biased games on random boards.
\newblock \emph{Random Structures \& Algorithms}, 46\penalty0 (4):\penalty0
  651--676, 2015.

\bibitem[Fraenkel(2012)]{survey_games}
Aviezri Fraenkel.
\newblock Combinatorial games: selected bibliography with a succinct gourmet
  introduction.
\newblock \emph{The Electronic Journal of Combinatorics}, pages DS2--Aug, 2012.

\bibitem[Fraenkel(2004)]{complexity_appeal}
Aviezri~S Fraenkel.
\newblock Complexity, appeal and challenges of combinatorial games.
\newblock \emph{Theoretical Computer Science}, 313\penalty0 (3):\penalty0
  393--415, 2004.

\bibitem[Hefetz et~al.(2014)Hefetz, Krivelevich, Stojakovi{\'c}, and
  Szab{\'o}]{positional_games_book}
Dan Hefetz, Michael Krivelevich, Milo{\v{s}} Stojakovi{\'c}, and Tibor
  Szab{\'o}.
\newblock \emph{Positional games}.
\newblock Springer, 2014.

\bibitem[Holroyd and Martin(2021)]{holroyd_martin}
Alexander~E Holroyd and James~B Martin.
\newblock Galton-watson games.
\newblock \emph{Random Structures \& Algorithms}, 59\penalty0 (4):\penalty0
  495--521, 2021.

\bibitem[Holroyd et~al.(2017)Holroyd, Levy, Podder, and Spencer]{podder_3}
Alexander~E Holroyd, Avi Levy, Moumanti Podder, and Joel Spencer.
\newblock Second order logic on random rooted trees.
\newblock \emph{Discrete Mathematics}, 342\penalty0 (1):\penalty0 152--167,
  2017.

\bibitem[Holroyd et~al.(2019)Holroyd, Marcovici, and Martin]{percolation_games}
Alexander~E Holroyd, Ir{\`e}ne Marcovici, and James~B Martin.
\newblock Percolation games, probabilistic cellular automata, and the hard-core
  model.
\newblock \emph{Probability Theory and Related Fields}, 174\penalty0
  (3):\penalty0 1187--1217, 2019.

\bibitem[Johnson et~al.(2020)Johnson, Podder, and Skerman]{podder_4}
Tobias Johnson, Moumanti Podder, and Fiona Skerman.
\newblock Random tree recursions: Which fixed points correspond to tangible
  sets of trees?
\newblock \emph{Random Structures \& Algorithms}, 56\penalty0 (3):\penalty0
  796--837, 2020.

\bibitem[Kim et~al.(2005)Kim, Pikhurko, Spencer, and Verbitsky]{kim}
Jeong~Han Kim, Oleg Pikhurko, Joel~H Spencer, and Oleg Verbitsky.
\newblock How complex are random graphs in first order logic?
\newblock \emph{Random Structures \& Algorithms}, 26\penalty0 (1-2):\penalty0
  119--145, 2005.

\bibitem[Kupavskii and Zhukovskii(2018)]{maksim_3}
Andrey Kupavskii and Maksim Zhukovskii.
\newblock Short monadic second order sentences about sparse random graphs.
\newblock \emph{SIAM Journal on Discrete Mathematics}, 32\penalty0
  (4):\penalty0 2916--2940, 2018.

\bibitem[Mach et~al.(2018)Mach, Sturm, and Swart]{mach2018new}
Tibor Mach, Anja Sturm, and Jan~M Swart.
\newblock A new characterization of endogeny.
\newblock \emph{Mathematical Physics, Analysis and Geometry}, 21\penalty0
  (4):\penalty0 30, 2018.

\bibitem[Mach et~al.(2020)Mach, Sturm, and Swart]{mach2020recursive}
Tibor Mach, Anja Sturm, and Jan~M Swart.
\newblock Recursive tree processes and the mean-field limit of stochastic
  flows.
\newblock \emph{Electronic Journal of Probability}, 25:\penalty0 1--63, 2020.

\bibitem[Martin and Stasi{\'n}ski(2020)]{martin2020minimax}
James~B Martin and Roman Stasi{\'n}ski.
\newblock Minimax functions on galton--watson trees.
\newblock \emph{Combinatorics, Probability and Computing}, 29\penalty0
  (3):\penalty0 455--484, 2020.

\bibitem[Matushkin and Zhukovskii(2018)]{maksim_4}
AD~Matushkin and ME~Zhukovskii.
\newblock First order sentences about random graphs: small number of
  alternations.
\newblock \emph{Discrete Applied Mathematics}, 236:\penalty0 329--346, 2018.

\bibitem[Ostrovsky and Zhukovskii(2017)]{maksim_1}
LB~Ostrovsky and ME~Zhukovskii.
\newblock Monadic second-order properties of very sparse random graphs.
\newblock \emph{Annals of pure and applied logic}, 168\penalty0 (11):\penalty0
  2087--2101, 2017.

\bibitem[Pikhurko et~al.(2006)Pikhurko, Veith, and Verbitsky]{pikhurko}
Oleg Pikhurko, Helmut Veith, and Oleg Verbitsky.
\newblock The first order definability of graphs: upper bounds for quantifier
  depth.
\newblock \emph{Discrete applied mathematics}, 154\penalty0 (17):\penalty0
  2511--2529, 2006.

\bibitem[Podder(2019)]{podder_5}
Moumanti Podder.
\newblock The first order theory of ${G}(n, c/n)$.
\newblock \emph{European Journal of Combinatorics}, 78:\penalty0 214--235,
  2019.

\bibitem[Podder and Spencer(2017{\natexlab{a}})]{podder_1}
Moumanti Podder and Joel Spencer.
\newblock First order probabilities for galton--watson trees.
\newblock In \emph{A Journey Through Discrete Mathematics}, pages 711--734.
  Springer, 2017{\natexlab{a}}.

\bibitem[Podder and Spencer(2017{\natexlab{b}})]{podder_2}
Moumanti Podder and Joel Spencer.
\newblock Galton-watson probability contraction.
\newblock \emph{Electronic Communications in Probability}, 22:\penalty0 Paper
  no.\ 20, 2017{\natexlab{b}}.

\bibitem[R{\'a}th et~al.(2021)R{\'a}th, Swart, and Terpai]{rath2021frozen}
Bal{\'a}zs R{\'a}th, Jan~M Swart, and Tam{\'a}s Terpai.
\newblock Frozen percolation on the binary tree is nonendogenous.
\newblock \emph{The Annals of Probability}, 49\penalty0 (5):\penalty0
  2272--2316, 2021.

\bibitem[R{\'a}th et~al.(2022)R{\'a}th, Swart, and Sz{\H{o}}ke]{rath2022phase}
Bal{\'a}zs R{\'a}th, Jan~M Swart, and M{\'a}rton Sz{\H{o}}ke.
\newblock A phase transition between endogeny and nonendogeny.
\newblock \emph{Electronic Journal of Probability}, 27:\penalty0 1--43, 2022.

\bibitem[Razafimahatratra and Zhukovskii(2020)]{maksim_6}
AS~Razafimahatratra and M~Zhukovskii.
\newblock Zero--one laws for k-variable first-order logic of sparse random
  graphs.
\newblock \emph{Discrete Applied Mathematics}, 276:\penalty0 121--128, 2020.

\bibitem[Simon et~al.(1994)Simon, Blume, et~al.]{simon_blume}
Carl~P Simon, Lawrence Blume, et~al.
\newblock \emph{Mathematics for economists}, volume~7.
\newblock Norton New York, 1994.

\bibitem[Spencer(1991)]{spencer_threshold}
Joel Spencer.
\newblock Threshold spectra via the ehrenfeucht game.
\newblock \emph{Discrete Applied Mathematics}, 30\penalty0 (2-3):\penalty0
  235--252, 1991.

\bibitem[Spencer and Thoma(1997)]{spencer_thoma}
Joel Spencer and Lubos Thoma.
\newblock On the limit values of probabilities for the first order properties
  of graphs.
\newblock \emph{Contemporary trends in discrete mathematics}, 49:\penalty0
  317--336, 1997.

\bibitem[Spencer and St~John(1998)]{spencer_stjohn}
Joel~H Spencer and Katherine St~John.
\newblock Random unary predicates: Almost sure theories and countable models.
\newblock \emph{Random Structures \& Algorithms}, 13\penalty0 (3-4):\penalty0
  229--248, 1998.

\bibitem[Stewart(2012)]{stewart2012essential}
James Stewart.
\newblock \emph{Essential calculus: Early transcendentals}.
\newblock Cengage Learning, 2012.

\bibitem[Stojakovi{\'c}(2014)]{milos_thesis}
Milo{\v{s}} Stojakovi{\'c}.
\newblock Games on graphs.
\newblock In \emph{International Conference on Conceptual Structures}, pages
  31--36. Springer, 2014.

\bibitem[Stojakovi{\'c} and Szab{\'o}(2005)]{milos_tibor}
Milo{\v{s}} Stojakovi{\'c} and Tibor Szab{\'o}.
\newblock Positional games on random graphs.
\newblock \emph{Random Structures \& Algorithms}, 26\penalty0 (1-2):\penalty0
  204--223, 2005.

\bibitem[Stojakovi{\'c} and Trkulja(2021)]{hamiltonian_maker_breaker}
Milo{\v{s}} Stojakovi{\'c} and Nikola Trkulja.
\newblock Hamiltonian maker--breaker games on small graphs.
\newblock \emph{Experimental Mathematics}, 30\penalty0 (1):\penalty0 595--604,
  2021.
\newblock \doi{10.1080/10586458.2019.1586599}.

\bibitem[Verbitsky(2005)]{verbitsky}
Oleg Verbitsky.
\newblock The first order definability of graphs with separators via the
  ehrenfeucht game.
\newblock \emph{Theoretical computer science}, 343\penalty0 (1-2):\penalty0
  158--176, 2005.

\bibitem[W{\"a}stlund(2012)]{wastlund}
Johan W{\"a}stlund.
\newblock Replica symmetry of the minimum matching.
\newblock \emph{Annals of Mathematics}, pages 1061--1091, 2012.

\bibitem[Watson and Galton(1875)]{galton_watson}
Henry~William Watson and Francis Galton.
\newblock On the probability of the extinction of families.
\newblock \emph{The Journal of the Anthropological Institute of Great Britain
  and Ireland}, 4:\penalty0 138--144, 1875.

\bibitem[Zhukovskii(2016)]{maksim_2}
ME~Zhukovskii.
\newblock On infinite spectra of first order properties of random graphs.
\newblock \emph{Moscow Journal of Combinatorics and Number Theory}, 6\penalty0
  (4):\penalty0 73--102, 2016.

\bibitem[Zhukovskii(2020)]{maksim_5}
ME~Zhukovskii.
\newblock Logical laws for short existential monadic second-order sentences
  about graphs.
\newblock \emph{Journal of Mathematical Logic}, 20\penalty0 (02):\penalty0
  2050007, 2020.

\end{thebibliography}
\end{document}